\documentclass{amsart}
\usepackage{amssymb}
\usepackage{amsbsy, amsthm, amsmath,amstext, amsopn}
\usepackage{tikz-cd}
\usepackage{multicol}
\usepackage{tikz}
\usetikzlibrary{arrows,snakes,backgrounds}
\usetikzlibrary{decorations.markings}
\usetikzlibrary{matrix}
\usetikzlibrary{graphs}

\textwidth=14.5cm \oddsidemargin=1cm
\evensidemargin=1cm

\theoremstyle{definition}
\newtheorem{theorem}{Theorem}[section]
\newtheorem{prop}[theorem]{Proposition}

\theoremstyle{definition}

\newtheorem{definition}[theorem]{Definition}

\newtheorem{observation}[theorem]{Observation}

\theoremstyle{remark}
\newtheorem{rmk}[theorem]{Remark}

\newcommand{\Z}{\mathbb{Z}}
\newcommand{\Q}{\mathbb{Q}}
\newcommand{\R}{\mathbb{R}}

\newcommand{\A}{\mathcal{A}}
\newcommand{\B}{\mathcal{B}}
\newcommand{\X}{\mathcal{X}}

\newcommand{\C}{\mathbb{C}}

\newcommand{\Spec}{\operatorname{Spec}}
\newcommand{\Conf}{\operatorname{Conf}}
\DeclareMathOperator{\End}{End}

\newcommand\ontop[2]{\genfrac{}{}{0pt}{}{#1}{#2}}

\newcommand{\dud}[2]{{\small\begin{pmatrix}  #1 \\   \\ #2 \end{pmatrix}}}

\newcommand{\tcfl}[3]{{\small\begin{pmatrix}  & #1 \\ #2 &  \\ & #3 \end{pmatrix}}}
\newcommand{\tcfr}[3]{{\small\begin{pmatrix}  #1 & \\ & #3  \\ #2 & \end{pmatrix}}}
\newcommand{\tcfu}[3]{{\small\begin{pmatrix}  & #1 & \\ #2 & & #3  \\ & & \end{pmatrix}}}
\newcommand{\tcfd}[3]{{\small\begin{pmatrix}  & & \\ #1 & & #3  \\ & #2 & \end{pmatrix}}}

\newcommand{\qcf}[4]{{\small\begin{pmatrix}  & #1 & \\ #2 & & #4  \\ & #3 & \end{pmatrix}}}
\newcommand{\qcfs}[4]{{\small\begin{pmatrix}  & #1 & \\ #2 & \diagup & #4  \\ & #3 & \end{pmatrix}}}
\newcommand{\qcfb}[4]{{\small\begin{pmatrix}  & #1 & \\ #2 & \diagdown & #4  \\ & #3 & \end{pmatrix}}}

\newcommand{\dul}[2]{{\small\begin{pmatrix}  & #1 \\ #2 &  \\ &  \end{pmatrix}}}
\newcommand{\dld}[2]{{\small\begin{pmatrix}  & \\ #1 &  \\ & #2 \end{pmatrix}}}
\newcommand{\dlr}[2]{{\small\begin{pmatrix}  &  & \\ #1 & & #2  \\ & & \end{pmatrix}}}
\newcommand{\dur}[2]{{\small\begin{pmatrix}  #1 & \\ & #2  \\ & \end{pmatrix}}}
\newcommand{\ddr}[2]{{\small\begin{pmatrix}  & \\ & #2 \\ #1 & \end{pmatrix}}}

\def\DynkinNodeSize{3.5mm}
\def\DynkinArrowLength{3mm}
\tikzset{
  dnode/.style={
    circle,
    inner sep=0pt,
    minimum size=\DynkinNodeSize,
    fill=white,
    draw},
  middlearrow/.style={
    decoration={markings,
      mark=at position 0.6 with
      {\draw (0:0mm) -- +(+135:\DynkinArrowLength); \draw (0:0mm) -- +(-135:\DynkinArrowLength);},
    },
    postaction={decorate}
  },
  leftrightarrow/.style={
    decoration={markings,
      mark=at position 0.999 with
      {
      \draw (0:0mm) -- +(+135:\DynkinArrowLength); \draw (0:0mm) -- +(-135:\DynkinArrowLength);
      },
      mark=at position 0.001 with
      {
      \draw (0:0mm) -- +(+45:\DynkinArrowLength); \draw (0:0mm) -- +(-45:\DynkinArrowLength);
      },
    },
    postaction={decorate}
  },
  sedge/.style={
  },
  dedge/.style={
    middlearrow,
    double distance=0.5mm,
  },
  tedge/.style={
    middlearrow,
    double distance=1.0mm+\pgflinewidth,
    postaction={draw}, 
  },
  infedge/.style={
    leftrightarrow,
    double distance=0.5mm,
  },
}

\newcounter{Qcount}
\begin{document}

\stepcounter{Qcount}

\title{Cluster Structures on Higher Teichmuller Spaces for Classical Groups}

\author{Ian Le}
\address{Department of Mathematics\\ University of Chicago\\Chicago, IL 60637}
\email{ile@math.uchicago.edu}

\begin{abstract} Let $S$ be a surface, $G$ a simply-connected classical group, and $G'$ the associated adjoint form of the group. We show that the spaces of moduli spaces of framed local systems $\X_{G',S}$ and $\A_{G,S}$, which were constructed by Fock and Goncharov (\cite{FG1}), have the structure of cluster varieties, and thus together form a cluster ensemble. This simplifies some of the proofs in \cite{FG1}, and also allows one to quantize higher Teichmuller space following  the formalism of  \cite{FG2}, \cite{FG3}, and \cite{FG5}, which was previously only possible when $G$ had type $A$.

\end{abstract}

\maketitle

\tableofcontents

\section{Introduction}

Let $S$ be a topological surface $S$ of with non-empty boundary, let $G$ be a simply connected semi-simple group, and let $G'$ be the adjoint form of this group. We are interested in the space $\mathcal{L}_{G,S}$, the moduli space of $G$-local systems on the surface $S$, or, equivalently, the space of representations of $\pi_1(S)$ into $G$. There are two closely related spaces $\X_{G',S}$ and $\A_{G,S}$, which were constructed by Fock and Goncharov \cite{FG1}. Both the the spaces $\X_{G',S}$ and $\A_{G,S}$ are variations on the space $\mathcal{L}_{G,S}$; they parameterize local systems with certain types of framing at the boundary of $S$.

One advantage of studying the spaces $\X_{G',S}$ and $\A_{G,S}$ is that they are simpler; for example, they have rational co-ordinate charts, while $\mathcal{L}_{G,S}$ does not, in general. In our view, the spaces $\X_{G',S}$ and $\A_{G,S}$ are perhaps more fundamental, and allow one to understand properties of the space $\mathcal{L}_{G,S}$ that are otherwise hard to see.

In \cite{FG1}, Fock and Goncharov show that each of the spaces $\X_{G',S}$ and $\A_{G,S}$ has an atlas of coordinate charts such that all transition functions involve only addition, multiplication and division. In other words, these spaces each have a {\it positive atlas} and may be called {\it positive varieties}. The main ingredient in the construction of this positive atlas of coordinate charts is Lusztig's theory of total positivity. The fact that $\X_{G',S}$ and $\A_{G,S}$ are positive varieties is quite striking; it is the key result in \cite{FG1}.

For the spaces $\X_{PGL_n,S}$ and $\A_{SL_n,S}$, Fock and Goncharov show more: these spaces have cluster-like structures on their rings of functions, and thus the fact that they are positive varieties is a consequence of the fact that they are {\it cluster varieties}. Moreover, the pair of spaces $(\A_{SL_n,S}, \X_{PGL_n,S})$ are tightly connected, and form what they call a {\it cluster ensemble}.

Let us review some of the consequences of the theory. The moduli space of representations of $\pi_1(S)$ into a split real group $G(\R)$ has several components, one of which is called higher Teichmuller space, because in many ways it behaves like Teichmuller space. Because $\X_{G',S}$ and $\A_{G,S}$ are positive varieties, it makes sense to take the positive $\R_{>0}$ points of these moduli spaces. The resulting spaces $\X_{G',S}(\R_{>0})$ and $\A_{G,S}(\R_{>0})$ parameterize positive representations. With some work, Fock and Goncharov identify positive representations with higher Teichmuller space, thus giving it an algebro-geometric description. The theory gives an explicit parameterization of positive representations, and it becomes manifest that the space of representations is contractible. Moreover, one can show that all positive representations are discrete and faithful \cite{FG1}, \cite{L}.

In this paper, we will construct cluster ensemble structures on $\X_{G',S}$ and $\A_{G,S}$ for $G$ a classical group, i.e. for $G$ of type $B, C$ and $D$. In other words, we show that $(\A_{G,S}, \X_{G',S})$ forms a cluster ensemble. In doing so, we give what we think to be a beautiful set of functions on the spaces $\X_{G',S}$ and $\A_{G,S}$ which generalizes the ``canonical functions'' when $G=SL_n$. The canonical functions for $SL_n$ have a long history, see \cite{RR}, \cite{S}, \cite{HK}, \cite{Hen}. As in the case of $G=SL_n$, our functions will induce positive structures on $\X_{G',S}$ and $\A_{G,S}$, which will agree with the positive structures defined in \cite{FG1}.

The cluster structures on double Bruhat cells and in flag varieties is well known (\cite{BFZ}), and they form a building block for the cluster structures on $\X_{G',S}$ and $\A_{G,S}$. The cluster structures on $\X_{G',S}$ and $\A_{G,S}$ are constructed via ideal triangulations of $S$. The cluster structure on each triangle is close to the cluster structure on the Borel subgroup $B$ for the group $G$. The first main problem is to correctly identify the cluster structure on each triangle and to understand how these structures glue together. We were inspired in our solution to this problem by the idea of ``amalgamation'' developed in \cite{FG4}.

For each triangulation of the surface $S$ and for any ordering of the vertices in each triangle of the triangulation, one can write down an associated seed for the cluster ensemble. (This seed can be constructed out of a reduced word for the longest element $w_0$ of the Weyl group of $G$.) When $G=SL_n$ or $PGL_n$ these functions (for a particular choice of the reduced word for $w_0$) somewhat miraculously exhibit $S_3$ symmetry. This is not the case for general groups $G$. We give the sequence of mutations that relates seeds associated to different orderings of the vertices in a given triangle in the triangulation.

Moreover, we give a sequence of mutations relating seeds that correspond to different triangulations. This problem reduces to the problem of relating seeds coming from triangulations that differ by a ``flip,'' which is the change of triangulation that results from replacing one diagonal of a quadrilateral by another. When $G$ has type $A$, this sequence of mutations comes the octahedron recurrence, which has been very well-studied \cite{RR}, \cite{S}, \cite{HK}. We will give an analogue of the octahedron recurrence for $G$ is a classical group.

Both the sequences of mutations realizing $S_3$ symmetries and flips of triangulations are relatives the octahedron recurrence (\cite{S}) and the tetrahedron recurrence (which is related to the Jeu de Taquin operation on tableaux and the action of the cactus group on crystals \cite{HK}, \cite{Hen}). The computation of these sequences of mutations forms the technical heart of this paper. This has several consequences.

\begin{enumerate}
\item We get an alternative construction of $\X_{G',S}$ and $\A_{G,S}$ as positive varieties. This allows somewhat simpler proofs of many of the results in \cite{FG1}.
\item The system of co-ordinate charts is naturally mapping class group equivariant. We see therefore get an injection of the mapping class group into the cluster modular group for $(\A_{G,S}, \X_{G',S})$.
\item Using the formalism of \cite{FG2}, \cite{FG3}, \cite{FG5}, we can construct the quantization of $\X_{G',S}$ for $G$ any classical group, a result previous only known for $G'=PGL_n$. This quantization if a $*$-algebra, and one also gets quantum representations of this algebra. One can also construct the symplectic double of $\X_{G',S}$.
\item One observes that for classical groups, the cluster ensembles $(\A_{G,S}, \X_{G',S})$ and $(\A_{(G')^{\vee},S}, \X_{G^{\vee},S})$ have Langlands dual seeds. Here $G^{\vee}$ is the Langlands dual group of $G$. This is some manifestation of mirror symmetry, and gives support for conjectures of \cite{GS}.
\item One sees that the deformed algebras $\X_{q,G',S}$ and $\X_{q^{\vee},G^{\vee},S}$ are closely related, as predicted in \cite{FG2},
\item Using \cite{Le}, one can define higher laminations for all classical groups.
\item One can prove sharper versions of the saturation conjecture \cite{Le2}.
\end{enumerate}

In fact, for a general reductive group, our constructions allow us to construct the cluster ensemble structure for $\X_{G',S}$ and $\A_{G,S}$. However, in the exceptional cases, we do not know how to realize the $S_3$ symmetries and the flip. This is the only piece of the puzzle missing that prevents us from getting a mapping class group equivariant quantization of higher Teichmuller space and all the associated results above.

Let us explain the structure of this paper. In Section 2, we review some of the work of Fock and Goncharov. We define the spaces $\X_{G',S}$ and $\A_{G,S}$ and relate them to spaces of configurations of points in $G/B$ and $G/U$: $\Conf_m \A_G$ and $\Conf_m \B_G$. We also recall the necessary facts about cluster algebras.

In Sections 3-5, we give the cluster algebra structure on the spaces $\Conf_m \A_G$ for $G=Sp_{2n}, Spin_{2n+1}, Sp_{2n+2}$. The structure of these three sections is much parallel. This is in part so that the sections may be read independently, and in part to emphasize the similarities (and differences) between these three different cases. A reader only interested in one case may skip the others. These sections each contain the construction of the seed for the cluster algebra structure on $\Conf_m \A_G$, as well the sequences of mutations that realize the $S_3$ symmetries on each triangle and the flip of a triangulation. These sections form the technical heart of the paper. We have included complete formulas in all three cases though in some of the proofs of these formulas, we refer to previous sections if an identity we need has already been proven. Thus in Sections 4 and 5, we emphasize those aspects of the proof of the mutation identities which are novel.

The formulas and mutation sequences can become quite involved. We try to alleviate this in two ways. First of all, much of the important information in our computations is succinctly captured in the diagrams, so that one can extract most of the content of the computations from the diagrams once one understands how to read them. Most of the general phenomena can be seen in small cases, like $Sp_8$, $Spin_9$ or $Spin_{10}$, which are treated in the diagrams.

Second, throughout Sections 3-5, we try to explain the conceptual underpinnings of the calculations. This is done in several ways. We show how the cluster algebra structure is related to reduced words in the Weyl group; we show how the cactus sequence of mutation plays a role in constructing the seeds and computing some of the $S_3$ symmetries; we explain how folding the seed for $SL_{2n}$ gives the seed for $Sp_{2n}$, how Langlands duality relates the seeds for $Sp_{2n}$ and $Spin_{2n+1}$, and how unfolding the seed for $Spin_{2n+1}$ gives the seed for $Spin_{2n+2}$. The latter fact is perhaps most important: all the sequences of mutations that we consider arise from the ones for $Sp_{2n}$ by Langlands duality or unfolding.

The remainder of the paper proceeds as follows. Section 6 treats the cluster-like structure on $\Conf_m \B_G$. Section 7 gives applications to the quantization of higher Teichmuller space. Section 8 is an appendix containing some calculations that are parallel to ones treated in the main body of the text, but seemed unnecessary to include.


\medskip
\noindent{\bf Acknowledgments} 
I thank Joel Kamnitzer for first explaining to me the role that the cactus group plays in the representation theory of $SL_n$. The role of the cactus group and the tetrahedron recurrence only slowly became clear to me in the course of writing this paper. I am also indebted to Inna Zakharevich for showing me how to make the diagrams in this paper.

\section{Background}

\subsection{Setup}

Let $S$ be a compact oriented surface, with or without boundary, and possibly with a finite number of marked points on each boundary component. We will refer to this whole set of data--the surface and the marked points on the boundary--by $S$. We will always take $S$ to be hyperbolic, meaning it either has negative Euler characteristic, or contains enough marked points on the boundary (in other words, we can give it the structure of a hyperbolic surface such that the boundary components that do no contain marked points are cusps, and all the marked points are also cusps).

Let $G$ be a semi-simple algebraic group. When $G$ is adjoint, i.e., has trivial center (for example, when $G=PGL_m$), we can define a higher Teichmuller space $\X_{G,S}$. On the other hand, for $G$ simply-connected  (for example, when $G=SL_m$), we can define the higher Teichmuller space $\A_{G,S}$. They will be the space of local systems of $S$ with structure group $G$ with some extra structure of a framing of the local system at the boundary components of $S$. Alternatively, these spaces describe homomorphisms of $\pi_1(S)$ into $G$ modulo conjugation plus some extra data.

When $S$ does has at least one hole, the spaces $\X_{G,S}$ and $\A_{G,S}$ have a distinguished collection of coordinate systems, equivariant under the action of the mapping class group of $S$. Using an elaboration of Lusztig's work on total positivity, one can show that all the transition functions between these coordinate systems are subtraction-free, and give a {\it positive atlas} on the corresponding moduli space. This positive atlas gives the spaces $\X_{G,S}$ and $\A_{G,S}$ the structure of a {\it positive variety}.

The existence of these extraordinary positive co-ordinate charts depends on G. Lusztig's theory of positivity in semi-simple Lie groups \cite{Lu}, \cite{Lu2}, and is a reflection of the cluster algebra structure of the ring of functions on these spaces.

\subsection{Definition of the spaces $\X_{G,S}$ and $\A_{G,S}$}

The data of a framing of a local system involves the geometry of the flag variety associated to a group. Let $B$ be a Borel subgroup, a maximal solvable subgroup of $G$. Then ${\mathcal B} = G/B$ is the flag variety. Let $U := [B,B]$ be a maximal unipotent subgroup in $G$. Then we will call $\A = G/U$ the ``principal affine space'' (sometimes also referred to as the ``base affine space''). We will refer to elements of $\A$ as ``principal flags.''

Let ${\mathcal L}$ be a $G$-local system on $S$. For any space $X$ on equipped with a $G$-action, we can form the associated bundle ${\mathcal L}_{X}$. For $X=G/B$ we get the associated flag bundle ${\mathcal L}_{\mathcal B}$, and for $X=G/U$, we get the associated principal flag bundle ${\mathcal L}_{\mathcal A}$.

\begin{definition}
A {\rm framed  $G$-local system on $S$} is a pair $({\mathcal L}, \beta)$, where  ${\mathcal L}$ is a  $G$-local system on $S$, and  $\beta$ a flat section of the restriction of ${\mathcal L}_{\mathcal B}$ to the punctured boundary of $S$.

The space ${\mathcal X}_{G', S}$ is the moduli space of framed $G$-local systems on $S$. 
\end{definition}

The definition of the space ${\mathcal A}_{G, S}$ is slightly more complicated. It involves twisted local systems. We shall define this notion.

Let $G$ is simply-connected. The maximal length element $w_0$ of the Weyl group of $G$ has a natural lift to $G$, denoted $\overline w_0$. Let $s_G:= {\overline w}^2_0$. It turns out that $s_G$ is in the center of $G$ and that $s^2_G =e$. Depending on $G$, $s_G$ will have order one or order two. For example, for $G = SL_{2k}$, $s_G$ has order two, while for $G = SL_{2k+1}$, $s_G$ has order one.

The fundamental group $\pi_1(S)$ has a natural central extension by $\Z / 2\Z$. We see this as follows. For a surface $S$, let $T'S$ be the tangent bundle with the zero-section removed. $\pi_1(T'S)$ is a central extension of $\pi_1(S)$ by $\Z$:

$$\Z \rightarrow \pi_1(T'S) \rightarrow \pi_1(S).$$

The quotient of $\pi_1(S)$ by the central subgroup $2 \Z \subset \Z$, gives ${\overline \pi}_1(S)$ which is a central extension of $\pi_1(S)$ by $\Z / 2\Z$:

$$\Z / 2\Z \rightarrow {\overline \pi}_1(S) \rightarrow \pi_1(S).$$

Let $\sigma_S \in {\overline \pi}_1(S)$ denote the non-trivial element of the center.

A {\it twisted $G$-local system} is a representation ${\overline \pi}_1(S)$ in $G$ such that $\sigma_S$ maps to $s_G$. Such a representation gives a local system on $T'S$.

Now we must describe the framing data for a twisted local system. Let ${\mathcal L}$ be a twisted $G$-local system on $S$. Such a twisted local system gives an associated principal affine bundle ${\overline {\mathcal L}}_{\mathcal A}$ on the punctured tangent bundle $T'S$. For any boundary component of $S$, we will construct sections of the punctured tangent bundle above these boundary components. Given any boundary component, consider the outward pointing unit tangent vectors along this component--this gives a section of the punctured tangent bundle above each boundary component of $S$. We get a bunch of loops and arcs in $T'S$ lie over the boundary of $S$. Call this the {\em lifted boundary}.

\begin{definition}

A {\it decorated $G$-local system} on $S$ consists of $({\mathcal L}, \alpha)$, where ${\mathcal L}$ is a twisted local system on $S$ and $\alpha$ is a flat section of ${\overline {\mathcal L}}_{\mathcal A}$ restricted to the lifted boundary.

The space ${\mathcal A}_{G, S}$ is the moduli space of decorated $G$-local systems on $S$.
\end{definition}

Note that in the case where $s_G=e$, a decorated local system is just a local system on $S$ along with a flat section of ${\mathcal L}_{\mathcal A}$ restricted to the boundary. One can generally pretend that this is the case without much danger.

\subsection{Relation to configurations of flags}

The positive co-ordinate systems on ${\mathcal X}_{G', S}$ and ${\mathcal A}_{G, S}$ arise by rationally identifing them with spaces of configurations of flags. For more details, see \cite{FG1}.

Let $S$ be a hyperbolic surface. Give it some hyperbolic structure such that the boundary components that do no contain marked points are cusps, and all the marked points are also cusps. The particular choice of hyperbolic structure will turn out not to matter. Then the universal cover of $S$ will be a subset of the hyperbolic plane, and all these cusps will lie at the boundary at infinity of the hyperbolic plane. These cusps form a set $C$ that has a cyclic ordering. $C$ also carries a natural action of $\pi_1(S)$. The action of $\pi_1(S)$ preserves the cyclic ordering on $C$. The set $C$ with its cyclic order is independent of our choice of hyperbolic structure on $C$. An {\em ideal triangulation} of $S$ consists of a triangulation of $S$ that has vertices at the cusps of $S$. An ideal triangulation of $S$ induces an ideal triangulation of its universal cover. This triangulation of the universal cover will have its vertices at the set $C$.

A $\pi_1(S)$-equivariant configuration of flags (respectively principal affine flags) parameterized by $C$ is a map $\beta: C \rightarrow {\mathcal B}$ (respectively a map $\beta: C \rightarrow {\mathcal A}$) such that there is a map $\rho: \pi_1(S) \rightarrow G$ such that for $\gamma \in \pi_1(S)$,

$$\beta(\gamma \cdot c) = \rho(\gamma) \cdot c$$
for all points $c \in C$.

Starting with any point of ${\mathcal X}_{G', S}$ (respectively ${\mathcal A}_{G, S}$), we may look at the universal cover of $S$. On the universal cover, the local system becomes trivial, and the framing of the local system then gives a flag (repectively a principal affine flag) at each point of the cyclic set $C$. Thus any point in ${\mathcal X}_{G', S}$ (respectively ${\mathcal A}_{G, S}$) gives a $\pi_1(S)$ equivariant configuration of flags (respectively principal affine flags) parameterized by $C$. 

\begin{theorem}
\cite{FG1} The space ${\mathcal X}_{G', S}$ has a positive atlas that comes from identifying a framed local system with a $\pi_1(S)$-equivariant positive configuration of flags parameterized by $C$.

The space ${\mathcal A}_{G, S}$ has a positive atlas that comes from identifying a decorated local system with a $\pi_1(S)$-equivariant twisted positive cyclic configuration of principal affine flags parameterized by $C$.

\end{theorem}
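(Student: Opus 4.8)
The plan is to fix an ideal triangulation $T$ of $S$, build an explicit rational coordinate chart out of $T$, and then show that the charts coming from different triangulations are related by subtraction-free transition functions, the positivity being supplied by Lusztig's theory of total positivity.

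By the discussion above, a framed (resp.\ decorated) local system is the same datum as a $\pi_1(S)$-equivariant configuration $\beta\colon C\to\B$ (resp.\ $\beta\colon C\to\A$), and for a generic point of the moduli space every set of vertices of the lifted triangulation lying on a common triangle is sent to a tuple of flags in general position. Lift $T$ to the universal cover; the chart is then assembled triangle by triangle and edge by edge. On a single triangle whose vertices carry flags $B_1,B_2,B_3$ (resp.\ principal affine flags $A_1,A_2,A_3$) one uses Fock--Goncharov's coordinates on $\Conf_3\B$ (resp.\ $\Conf_3\A$), built from a reduced word for the longest element $w_0$: for $\A$ one exploits the open $U\times U$ double orbit together with Lusztig's parametrization of its totally positive part, and for $\B$ the analogous picture attached to the Bruhat decomposition of $G/B$. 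For each internal edge of $T$, shared by two triangles, the four flags sitting at the corners of the resulting ideal quadrilateral give a point of $\Conf_4$, and the datum not already recorded by the two adjacent copies of $\Conf_3$ is an edge parameter valued in the maximal torus $H=B/U$ (for $\A$), or its analogue for $\B$. Collecting the triangle coordinates and the edge parameters over all of $T$ produces a rational map from $\X_{G',S}$ (resp.\ $\A_{G,S}$) to a torus $(\C^{*})^{N}$. This map is birational: from the torus-valued data one reconstructs, one triangle at a time, all the flags along the triangulation, hence the equivariant configuration, hence the local system; in the $\A$ case one carries along the $\Z/2\Z$-twist by $s_G$, which only affects signs and is harmless, as already remarked.

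Positivity now rests on two points. First, one declares a configuration of flags to be \emph{positive} when it lies in the image of Lusztig's positive parametrization for one --- equivalently, by Lusztig's theorems, for any --- choice of reduced word and triangulation; the coordinate functions just constructed are then manifestly valued in $\R_{>0}$ on positive configurations, so each chart restricts to a positive atlas. Second, since the chart depended only on the combinatorial data of $T$, the whole atlas is equivariant under the mapping class group of $S$.

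The main obstacle is to show that two charts, attached to two ideal triangulations, differ by a subtraction-free rational transformation. Any two ideal triangulations are joined by a sequence of flips --- replacing one diagonal of an ideal quadrilateral by the other --- so it suffices to compute the transition map for a single flip. This is a calculation inside $\Conf_4\B$ (resp.\ $\Conf_4\A$): one expresses the triangle-and-edge coordinates for one triangulation of the square in terms of those for the other. For $G$ of type $A$ the resulting formula is exactly the octahedron recurrence and is visibly subtraction-free; for a general $G$ the subtraction-freeness is forced by Lusztig's theorem that total positivity in $G$ --- equivalently, the positivity of the transition maps between the Lusztig parametrizations attached to different reduced words of $w_0$, and of the relevant double Bruhat cells --- is preserved. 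Establishing and, indeed, refining this last point by realizing each flip as an explicit sequence of cluster mutations is precisely the program carried out in the body of this paper.
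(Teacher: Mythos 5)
This theorem is not proved in the paper at all: it is quoted from \cite{FG1}, and the surrounding text (\S\S 2.4--2.5) only recapitulates the construction. Your sketch follows exactly that construction --- identify framed/decorated local systems with $\pi_1(S)$-equivariant configurations, triangulate, coordinatize each triangle via Lusztig's parametrization attached to a reduced word for $w_0$ (the maps $(U^-)^n/H \to \Conf_{n+2}(\B)$ and $H\times (B^-)^n \to \Conf_{n+2}(\A)$ of \S 2.4), and reduce the change of triangulation to a single flip on an ideal quadrilateral --- so in architecture it is the same argument as the cited source.

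One concrete detail is reversed and would matter if you fleshed the sketch out. You attach a free $H$-valued edge parameter to each internal edge ``for $\A$, or its analogue for $\B$.'' It is the other way around: for the $\X$-space (configurations in $\B$) the face functions are invariants of triples of flags and each internal edge carries an $H$-valued gluing/shear parameter, so that $\Conf_4\B \simeq \Conf_3\B \times H \times \Conf_3\B$; for the $\A$-space the edge functions are regular invariants of the \emph{pair} of principal flags at the two ends of the edge, and two triangles may be glued only when their edge functions agree --- gluing data is traded for a matching condition, not an extra torus factor. If you literally put free $H$-parameters on the edges of the $\A$-chart the dimension count and the birationality claim both fail. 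Separately, the assertion that subtraction-freeness of the flip ``is forced by Lusztig's theorem'' is where the real work lives for general $G$; in \cite{FG1} this requires an actual argument, and making it explicit (as a mutation sequence) is the content of the present paper, as you note at the end.
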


This theorem gives a birational equivalence between ${\mathcal X}_{G', S}$ (respectively ${\mathcal A}_{G, S}$) and the stack of $\pi_1$-equivariant positive configurations of flags (respectively principal affine flags) parameterized by $C$. This birational morphism is an isomorphism on positive points: the positive points of ${\mathcal X}_{G', S}$ (respectively ${\mathcal A}_{G, S}$) correspond to positive representations of $\rho: \pi_1(S) \rightarrow G$ plus decoration. Any point of ${\mathcal X}_{G', S}(\R_{>0})$ (respectively ${\mathcal A}_{G, S}(\R_{>0})$) gives a $\pi_1(S)$-equivariant positive configuration of flags parameterized by $C$. This configuration of flags uniquely determines the representation $\rho$ as well as the decoration on the corresponding local system.

When $S$ is a disk with marked points on the boundary we simply get moduli spaces of configurations of points in the flag variety ${\mathcal B}: = G'/B$ and twisted configurations of points of the principal affine variety ${\mathcal A}:= G/U$. For more details, see \cite{FG1}

\subsection{Positive Structures}

In this section, we explain how to construct the positive structures on ${\mathcal X}_{G', S}$ and ${\mathcal A}_{G, S}$.

Let $T=\Z [x_1^{\pm 1}, x_2^{\pm 1}, \dots, x_n^{\pm 1} ]$ be a split algebraic torus. A positive rational function on $T$ is a nonzero rational function on $T$ which can be written as a ratio of two polynomials in the $x_i$ with positive integral coefficients.

A positive rational morphism $\phi: T_1 \rightarrow T_2$ of two split tori is a morphism such that for any character $\chi$ of $T_2$, $\phi^*(\chi)$ is a positive rational function.

A positive atlas on an irreducible space $\mathcal{Y}$ over $\Q$ is a non-empty collection of split tori $T_{\mathbf{c}}$ along with of birational isomorphisms over $\Q$
$$\alpha_{\mathbf{c}} : T_{\mathbf{c}} \rightarrow \mathcal{Y}$$
such that for any pair $\mathbf{c}, \mathbf{c'}$, we have that the map $\phi_{\mathbf{c}, \mathbf{c'}} :=\alpha_{\mathbf{c}}^{-1} \circ \alpha_{\mathbf{c'}}$  is a positive birational morphism from $T_{\mathbf{c'}}$ to $T_{\mathbf{c}}$. (One can also require that $\alpha_{\mathbf{c}}$ is regular on a complement to a divisor given by positive rational function, but this will not play a role for us.) Each map $\alpha_{\mathbf{c}}$ gives a positive co-ordinate chart on $\mathcal{Y}$.

A positive rational function $F$ on $\mathcal{Y}$ is a rational function given by a subtraction free rational function in one, and hence in all, of the coordinate systems of the positive atlas on  $\mathcal{Y}$.

A positive rational map  $\mathcal{Y} \rightarrow  \mathcal{Z}$ is a rational map given by positive rational functions in one, and hence in all, positive coordinate systems.

When $S$ is a disc with $m$ marked points, the spaces ${\mathcal X}_{G', S}$ and ${\mathcal A}_{G, S}$ become the spaces of configurations of points in the flag variety $\Conf_m \B$ and configurations of points in the affine variety, $\Conf_m \A$. We will first describe the positive structures on these spaces.

We start with some notation. Let $U^+$ and $U^-$ denote the upper and lower unipotent subgroups of $G'$. Also let $B^+$ and $B^-$ denote the upper and lower Borel subgroups of $G'$. We can identify points in $\B$ with Borel subgroups. If $B$ is a Borel subgroup, we let $g \cdot B$ denote the Borel subgroup $gBg^{-1}$. The positive structures on these spaces are defined as follows. 

There is a is a birational map
$$\underbrace{U^- \times \cdots  \times U^-}_{\mbox{$n$ copies}}/H \longrightarrow {\Conf}_{n+2}({\B})$$
that sends
\begin{equation} \label{ConfB}
(u_1, \ldots , u_{n}) \rightarrow (B^-, B^+, u_1\cdot B^+, u_1u_2\cdot B^+, \ldots , u_1 ... u_n\cdot B^+).
\end{equation}

Here $H$ diagonally on the $U^-$ by conjugation. There is a natural positive structure on $U^-$ (\cite{FZ}), and this positive structure is preserved by conjugation. Thus $(U^-)^n/H$ is a positive variety, and birational equivalence induces a positive structure on ${\Conf}_{n+2}({\B})$.

Similarly, there is a birational map 
$$H \times \underbrace{B^- \times \cdots  \times B^-}_{\mbox{$n$ copies}}/ \longrightarrow {\Conf}_{n+2}({\A})$$
that sends
\begin{equation} \label{ConfA}
(h, b_1, \ldots , b_{n}) \rightarrow (U^-, h \cdot \overline{w_0}U^-, b_1\cdot \overline{w_0}U^-, b_1b_2\cdot \overline{w_0}U^-, \ldots , b_1 ... b_n \cdot \overline{w_0}U^-).
\end{equation}

The natural positive structure on $H \times (B^-)^n$ induces a positive structure on ${\Conf}_{n+2}({\A})$. (Note that $\overline{w}$ denotes a particular lift of the element $w$ of the Weyl group to $G$. See \cite{BFZ} or \cite{FG1}. Here, $w_0$ denotes the longest element of the Weyl group.)

The positive structures on ${\Conf}_{n+2}({\B})$ and ${\Conf}_{n+2}({\A})$ are invariant under cyclic shift and twisted cyclic shift, respectively. The cyclic shift on ${\Conf}_{n}({\B})$ maps $(B_1, B_2, \dots, B_n)$ to $(B_n, B_1, \dots, B_{n-1})$. The twisted cyclic shift on ${\Conf}_{n}({\A})$ maps $(U_1, U_2, \dots, U_n)$ to $(s_G \cdot U_n, U_1, \dots, U_{n-1})$.

\subsection{Reduction to the case of $\Conf_m \A$ or $\Conf_m \B$}

We now recall the positive structures on the spaces ${\mathcal X}_{G', S}$ and ${\mathcal A}_{G, S}$ as constructed in \cite{FG1}.

In \cite{FG1}, the authors explain how to construct a positive co-ordinate chart on ${\Conf}_{m}({\B})$ for each triangulation of an $m$-gon. If we place the $m$ flags at the vertices of an $m$-gon, then to each triangle in the triangulation of the $m$-gon, we get a configuration of three flags, and to this configuration of three flags, we attach some {\em face functions}. The face functions give a positive co-ordinate chart on ${\Conf}_{3}({\B})$. Any edge in the triangulation belongs to two triangles. We attach to each edge a set of {\em edge functions}, which depend on the four flags at the corners of the two triangles. For any edge, its edge functions along with the face functions for the triangles sharing that edge form a positive co-ordinate chart on ${\Conf}_{4}({\B})$. Thus the face functions give invariants of three flags, while the edge functions tell us how to glue two configurations of three flags into a configuration of four flags. Gluing along all edges of a triangulation will give a configuration of $m$ flags.

There are also positive co-ordinate charts on ${\Conf}_{m}({\A})$ attached to each triangulation of an $m$-gon. These are constructed slightly differently. To each edge in the triangulation, we attach a set of {\em edge functions} which depend on the two flags at the ends of the edge. To each triangle in the triangulation, we get a configuration of three principal flags, and in addition to the edge functions of each pair of edges in the triangle, we attach some {\em face functions}, which depend on all three of the principal flags (not just two at a time). Thus whereas for ${\Conf}_{m}({\B})$, the edge functions give us the data for gluing configurations of three flags, for ${\Conf}_{m}({\A})$, two configurations of three principal flags can be glued along an edge only if the edge functions along that edge are identical. Thus we exchange gluing data for restrictions on when we can glue.

For a general surface $S$, we use the identification of ${\mathcal X}_{G', S}$ (respectively ${\mathcal A}_{G, S}$) with $\pi_1$-equivariant configurations of flags (respectively, principal flags) parameterized by the cyclic set $C$. Any ideal triangulation of $S$ gives a triangulation of the infinite polygon with vertices the points of the cyclic set $C$. We take the face and edge functions for this triangulation. Because of $\pi_1$-equivariance, we get a finite set of functions coming from the edges and faces of the triangulation of $S$. This set of functions forms a positive co-ordinate chart for ${\mathcal X}_{G', S}$ (respectively ${\mathcal A}_{G, S}$). The co-ordinate charts coming from different triangulations of the surface give a positive atlas on ${\mathcal X}_{G', S}$ (respectively ${\mathcal A}_{G, S}$).

One of the goals of this paper will be to show that these edge and face functions can be realized as part of a cluster ensemble structure on the pair of spaces $({\mathcal X}_{G', S}, {\mathcal A}_{G, S})$.

\subsection{Cluster algebras} \label{cluster}

We review here the basic definitions of cluster algebras, following \cite{W}. Cluster algebras are commutative rings that come equipped with a collection of distinguished sets of generators, called \emph{cluster variables} or $\A$-coordinates. One can obtain one set of generators from another set of generators by a process called mutation.

Each set of generators belongs to a seed, which roughly consists of the set of generators along with a $B$-matrix. The $B$-matrix encodes how one mutates from one seed to any adjacent seed. Starting from any initial seed, the process of mutation gives all the seeds (and all the sets of generators) for the cluster algebra. The cluster variables are coordinates on the $\A$-space.

The same combinatorial data underlying a seed gives rise to a second, related, algebraic structure, called $\X$-coordinates. The $\X$-coordinates are functions on the $\X$ space. The $\A$-coordinates and $\X$-coordinates are related by a canonical monomial transformation, which gives a map from the $\A$-space to the $\X$-space. Together, the data of the $\A$-space and the $\X$-space, along with their distinguished sets of coordinates, is called a cluster ensemble.

Cluster algebras and $\X$-coordinates are defined by seeds. A seed $\Sigma = (I,I_0,B,d)$ consists of the following data: 
\begin{enumerate}
\item An index set $I$ with a subset $I_0 \subset I$ of ``frozen'' indices. 
\item A rational $I \times I$ \emph{exchange matrix} $B$. It should have the property that $b_{ij} \in \Z$ unless both $i$ and $j$ are frozen.  
\item A set $d = \{d_i \}_{i \in I}$ of positive integers that skew-symmetrize $B$; that is, $b_{ij}d_j = -b_{ji}d_i$ for all $i,j \in I$.
\end{enumerate}

For most purposes, the values of $d_i$ are only important up to simultaneous scaling. Also note that the values of $b_{ij}$ where $i$ and $j$ are both frozen will play no role in the cluster algebra, though it is sometimes convenient to assign values to $b_{ij}$ for bookkeeping purposes. These values become important in \emph{amalgamation}, where one unfreezes some of the frozen variables.

Let $k \in I \setminus I_0$ be an unfrozen index of a seed $\Sigma$.  We say another seed $\Sigma' = \mu_k(\Sigma)$ is obtained from $\Sigma$ by mutation at $k$ if we identify the index sets in such a way that the frozen variables and $d_i$ are preserved, and the exchange matrix $B'$ of $\Sigma'$ satisfies
\begin{align}\label{eq:matmut}
b'_{ij} = \begin{cases}
-b_{ij} & i = k \text{ or } j=k \\
b_{ij} & b_{ik}b_{kj} \leq 0 \\
b_{ij} + |b_{ik}|b_{kj} & b_{ik}b_{kj} > 0.
\end{cases}
\end{align}
Two seeds $\Sigma$ and $\Sigma'$ are said to be mutation equivalent if they are related by a finite sequence of mutations.

To a seed $\Sigma$ we associate a collection of \emph{cluster variables} $\{A_i\}_{i \in I}$ and a split algebraic torus $\A_\Sigma := \Spec \Z[A^{\pm 1}_I]$, where $\Z[A^{\pm 1}_I]$ denotes the ring of Laurent polynomials in the cluster variables.  If $\Sigma'$ is obtained from $\Sigma$ by mutation at $k \in I \setminus I_0$, there is a birational \emph{cluster transformation} $\mu_k: \A_\Sigma \to \A_{\Sigma'}$.  This is defined by the \emph{exchange relation}
\begin{align}\label{eq:Atrans}
\mu_k^*(A'_i) = \begin{cases}
A_i & i \neq k \\
A_k^{-1}\biggl(\prod_{b_{kj}>0}A_j^{b_{kj}} + \prod_{b_{kj}<0}A_j^{-b_{kj}}\biggr) & i = k.
\end{cases}
\end{align}
These transformations provide gluing data between any tori $\A_\Sigma$ and $\A_{\Sigma'}$ of mutation equivalent seeds $\Sigma$ and $\Sigma'$.  The $\A$-space $\A_{|\Sigma|}$ is defined as the scheme obtained from gluing together all such tori of seeds mutation equivalent with an initial seed $\Sigma$.  

\begin{definition}
Let $\Sigma$ be a seed.  The cluster algebra $\A(\Sigma)$ is the $\Z$-subalgebra of the function field of $\A_{|\Sigma|}$ generated by the collection of all cluster variables of seeds mutation equivalent to $\Sigma$.  The upper cluster algebra $\overline{\A}(\Sigma)$ is
\[
\overline{\A}(\Sigma) := \Z[\A_{|\Sigma|}] = \bigcap_{\Sigma' \sim \Sigma} \Z[\A_{\Sigma'}] \subset \mathbb{Q}(\A_{|\Sigma|}),
\]
or the intersection of all Laurent polynomial rings in the cluster variables of seeds mutation equivalent to $\Sigma$.  
\end{definition}

Given a seed $\Sigma$ we also associate a second algebraic torus $\X_\Sigma := \Spec \Z[X_I^{\pm 1}]$, where $\Z[X_I^{\pm 1}]$ again denotes the Laurent polynomial ring in the variables $\{X_i\}_{i \in I}$.  If $\Sigma'$ is obtained from $\Sigma$ by mutation at $k \in I \setminus I_0$, we again have a birational map $\mu_k: \X_\Sigma \to \X_{\Sigma'}$.  It is defined by
\begin{align}\label{eq:Xtrans}
\mu_k^*(X'_i) = \begin{cases}
X_iX_k^{[b_{ik}]_+}(1+X_k)^{-b_{ik}} & i \neq k  \\
X_k^{-1} & i = k,
\end{cases}
\end{align}
where $[b_{ik}]_+:=\mathrm{max}(0,b_{ik})$.  The $\X$-space $\X_{|\Sigma|}$ is defined as the scheme obtained from gluing together all such tori of seeds mutation equivalent with an initial seed $\Sigma$.

Since $B$ is skew-symmetrizable, there is a canonical Poisson structure on each $\X_\Sigma$ given by
\[
\{X_i,X_j\} = b_{ij}d_jX_iX_j.
\]
The cluster transformations of equation \ref{eq:Xtrans} intertwine the Poisson brackets on $\X_{\Sigma}$ and $\X_{\Sigma'}$, hence these assemble into a Poisson structure on $\X_{|\Sigma|}$.  

Now we will describe the natural map from $\A_{\Sigma}$ to $\X_{\Sigma}$. Let us assume that the entries of the $B$-matrix are all integers. Then we can define $p: \A_\Sigma \to \X_\Sigma$ by
$$p^*(X_i) = \prod_{j \in I}A_j^{B_{ij}}.$$

This formula appears to depend on the seed, but it actually intertwines the mutation of both the $\A$-coordinates and the $\X$-coordinates. In other words, if $\Sigma'$ is obtained from $\Sigma$ by mutation at $k$, there is a commutative diagram

\vspace{-2mm}
 \[
\begin{tikzcd}
\A_{\Sigma} \arrow[dashed]{r}{\mu_k} \arrow{d}{p} & \A_{\Sigma'} \arrow{d}{p'} \\
\X_{\Sigma} \arrow[dashed]{r}{\mu_k} & \X_{\Sigma'} \\
\end{tikzcd}
\]

If $B$ is not integral, we still have a well-defined map from $\A_{\Sigma}$ to $\X_{\Sigma}^*$, where $\X_{\Sigma}^*$ is obtained from $\X_{\Sigma}$ by projecting to the unfrozen variables.

\section{The cluster algebra structure on $\Conf_m \A$ for $G=Sp_{2n}$}

\subsection{Construction of the seed}

In this section we explain how to construct the seeds for the cluster structure on $\Conf_m \A$ when $G=Sp_{2n}$. Throughout this section, $G=Sp_{2n}$ unless otherwise noted.

Recall that $Sp_{2n}$ is associated to the type $C$ Dynkin diagram:

\begin{multicols}{2}
\begin{center}
\begin{tikzpicture}
    \draw (-1,0) node[anchor=east]  {$C_n$};

    \node[dnode,label=below:$1$] (1) at (0,0) {};
    \node[dnode,label=below:$2$] (2) at (1,0) {};
    \node[dnode,label=below:$n-2$] (3) at (3,0) {};
    \node[dnode,label=below:$n-1$] (4) at (4,0) {};
    \node[dnode,label=below:$n$] (5) at (5,0) {};

    \path (1) edge[sedge] (2)
          (2) edge[sedge,dashed] (3)
          (3) edge[sedge] (4)
          (5) edge[dedge] (4)
          ;

\draw[yshift=-2cm]
  node[below,text width=6cm] 
  {
  Figure 1 $C_n$ Dynkin diagram
  };

\end{tikzpicture}
\end{center}
\end{multicols}

Figure 1 Dynkin diagram

The nodes of the diagram correspond to $n-1$ short roots, numbered $1, 2, \dots, n-1$, and one long root, which is numbered $n$. To describe the cluster structure on $\Conf_3 \A$, we need to give the following data: the set $I$ parameterizing vertices, the functions on $\Conf_3 \A$ corresponding to each vertex, the $B$-matrix for this seed, and the multipliers $d_i$ for each vertex $i$.

The $B$-matrix is encoded via a quiver which  consists of $n^2+2n$ vertices, of which $n+2$ have $d_i=\frac{1}{2}$, while the remaining vertices have $d_i=1$. We color the vertices with $d_i=\frac{1}{2}$ white, while the remaining vertices are black. There are $n$ edge functions for each edge of the triangle, and $n^2-n$ face functions. There is one white vertex for each edge. The $B$-matrix is read off from the quiver by the following rules:

\begin{itemize}
\item An arrow from $i$ to $j$ means that $b_{ij}>0$.
\item $|b_{ij}|=2$ if $d_i=1$ and $d_j=\frac{1}{2}$.
\item $|b_{ij}|=1$ otherwise.
\end{itemize}

In Figure 2, we see the quiver for $Sp_6$. The generalization for other values of $n$ should be clear.

\begin{center}
\begin{tikzpicture}[scale=2]
\begin{scope}[xshift=-1cm]
  \foreach \x in {1, 2}
    \foreach \y in {0, 1, 2, 3}
      \node[] (x\y\x) at (\x,-\y) {\Large $\ontop{x_{\y\x}}{\bullet}$};
  \foreach \x in {3}
    \foreach \y in {0, 1, 2, 3}
      \node[] (x\y\x) at (\x,-\y) {\Large $\ontop{x_{\y\x}}{\circ}$};
  \node[] (y1) at (0,-2)  {\Large $\ontop{y_1}{\bullet}$};
  \node[] (y2) at (0,-1)  {\Large $\ontop{y_2}{\bullet}$};
  \node[] (y3) at (4,-1)  {\Large $\ontop{y_3}{\circ}$};
  \draw [->] (x31) to (x21);
  \draw [->] (x21) to (x11);
  \draw [->] (x11) to (x01);
  \draw [->] (x32) to (x22);
  \draw [->] (x22) to (x12);
  \draw [->] (x12) to (x02);
  \draw [->] (x33) to (x23);
  \draw [->] (x23) to (x13);
  \draw [->] (x13) to (x03);
  \draw [->, dashed] (y2) .. controls +(up:2) and +(up:2) .. (y3);
  \draw [->, dashed] (y1) to (y2);

  \draw [->, dashed] (x03) to (x02);
  \draw [->, dashed] (x02) to (x01);
  \draw [->] (y3) to (x13);
  \draw [->] (x13) to (x12);
  \draw [->] (x12) to (x11);
  \draw [->] (x11) to (y2);
  \draw [->] (x23) to (x22);
  \draw [->] (x22) to (x21);
  \draw [->] (x21) to (y1);
  \draw [->, dashed] (x33) to (x32);
  \draw [->, dashed] (x32) to (x31);

  \draw [->] (x01) to (x12);
  \draw [->] (x02) to (x13);
  \draw [->] (x03) to (y3);
  \draw [->] (y2) to (x21);
  \draw [->] (x11) to (x22);
  \draw [->] (x12) to (x23);
  \draw [->] (y1) to (x31);
  \draw [->] (x21) to (x32);
  \draw [->] (x22) to (x33);

\end{scope}

\draw[yshift=-3.5cm,xshift=1cm]
  node[below,text width=6cm] 
  {
  Figure 2. Quiver encoding the cluster structure for $\Conf_3 \A_{Sp_{6}}$
  };

\end{tikzpicture}
\end{center}

A dotted arrow means that $b_{ij}$ is half the value it would be if the arrow were solid. In other words, $|b_{ij}|=1$ if $d_i=1$ and $d_j=\frac{1}{2}$.
and $|b_{ij}|=\frac{1}{2}$ otherwise.

We will no longer use single letters like $i, j$ to denote vertices of the quiver, because it will be convenient for us to use the pairs $(i, j)$ to parameterize the vertices of the quiver. In the formulas in the remainder of this section, we will not refer to the particular entries of the $B$-matrix, $b_{ij}$. Instead, the values of the entries of the $B$-matrix will be encoded in quivers. This will hopefully avoid any notational confusion.

Label the vertices of the quiver $x_{ij}$ and $y_k$, where $0 \leq i \leq n+1$, $1 \leq j \leq n$, $1 \leq k \leq n$. The white vertices correspond to $x_{in}$ and $y_n$. The vertices $y_k$ and $x_{ij}$ for $i= 0$ or $n$ are frozen. We will sometimes write $x_{i,j}$ for $x_{ij}$ for orthographic reasons. Note that the dotted arrows only go between frozen vertices, thus the entries $b_{ij}$ of the $B$-matrix are integral unless $i$ and $j$ are both frozen, and thus the $B$-matrix defines a cluster algebra.

Our next goal will be to define the functions attached to the vertices in the quiver. We first need to recall some facts about the representation theory of $Sp_{2n}$. The fundamental representations of $Sp_{2n}$ are labelled by the fundamental weights $\omega_1, \dots, \omega_n$. $Sp_{2n}$ has a standard $2n$-dimensional representation $V$. Let $<-,->$ be the symplectic pairing. Then the representation $V_{\omega_i}$ corresponding to $\omega_i$ is a direct summand of $\bigwedge\nolimits^i V$. In fact, it is the kernel of the homomorphism $\bigwedge\nolimits^i V \rightarrow \bigwedge\nolimits^{i-2} V$ that comes from contracting with the symplectic form. Note that the symplectic form also induces an isomorphism between $\bigwedge\nolimits^i V$ and $\bigwedge\nolimits^{2n-i} V$.

Let $\A_{G}$ denote the principal affine space for $G$. We will sometimes drop the subscript ``$G$''if it is clear which group we are referring to. There is a natural embedding of $Sp_{2n} \hookrightarrow SL_{2n}$ given by taking the subgroup of $SL_{2n}$ that fixes the symplectic form $<v_i, v_{2n+1-i}>=(-1)^{i-1}$ (the signs are chosen here to be compatible with the positive structures on $Sp_{2n}$ and $SL_{2n}$). This induces a natural embedding of $\A_{Sp_{2n}}$ inside $\A_{SL_{2n}}$, which we now describe.

The variety $\A_{Sp_{2n}}$ parameterizes chains of isotropic vector spaces 
$$V_1 \subset V_2 \subset \cdots \subset V_n \subset V$$ inside the $2n$-dimensional standard representation $V$, where $\operatorname{dim} V_i= i$, and where each $V_i$ is equipped with a volume form.

Equivalently, a point of $\A_{Sp_{2n}}$ is given by a sequence of vectors 
$$v_1, v_2, \dots, v_n,$$
where $$V_i:=<v_1, \dots, v_i>$$ is isotropic, and where $v_i$ is only determined up to adding linear combinations of $v_j$ for $j < i$.

The volume form on $V_i$ is then $v_1 \wedge \cdots \wedge v_i$.

From the sequence of vectors $v_1, \dots, v_n$, we can complete to a symplectic basis $v_1, v_2, \dots v_{2n}$, where $<v_i, v_{2n+1-i}>=(-1)^{i-1}$, and $<v_i,v_j>=0$ otherwise. Equivalently, the symplectic form induces an isomorphism $<-,-> : V \rightarrow V^*$. At the same time, there are perfect pairings
$$\bigwedge\nolimits^k V \times \bigwedge\nolimits^k V^* \rightarrow F$$
$$\bigwedge\nolimits^{2n-k} V \times \bigwedge\nolimits^k V \rightarrow F$$
that induce an isomorphism
$$\bigwedge\nolimits^{2n-k} V \simeq \bigwedge\nolimits^k V^*.$$ Composing this with the inverse of the isomorphism 
$$<-,-> : \bigwedge\nolimits^k V \rightarrow \bigwedge\nolimits^k V^*$$
gives an isomorphism 
$$\bigwedge\nolimits^{2n-k} V \simeq \bigwedge\nolimits^k V^* \simeq \bigwedge\nolimits^k V.$$
Then $v_{n+1}, \dots, v_{2n}$ are chosen so that this isomorphism takes $v_1 \wedge \cdots v_{i}$ to $v_1 \wedge \cdots v_{2n-i}$.

Then $v_1, v_2, \dots v_{2n}$ determines a point of $\A_{SL_{2n}}$, as $\A_{SL_{2n}}$ parameterizes chains of vector subspaces  
$$V_1 \subset V_2 \subset \cdots \subset V_{2n} \subset V$$ along with volume forms $v_1 \wedge \cdots v_{i}$, $1 \leq i \leq 2n$.

From the embedding 
$$\A_{Sp_{2n}} \hookrightarrow \A_{SL_{2n}},$$
 one naturally gets an embedding $\Conf_m \A_{Sp_{2n}} \hookrightarrow \Conf_m \A_{SL_{2n}}$. We will define the cluster functions on $\Conf_m \A_{Sp_{2n}}$ via certain cluster functions on $\Conf_m \A_{SL_{2n}}$. In fact, we will see later in this paper that the entire cluster algebra structure on $\Conf_m \A_{Sp_{2n}}$ comes from {\em folding} the cluster algebra structure on $\Conf_m \A_{SL_{2n}}$.

It is well-known that the functions on $\A_G$ are naturally isomorphic to 
$$\bigoplus_{\lambda \in \Lambda_+} V_{\lambda}.$$
The functions on $\Conf_3 \A_{SL_{2n}}$ that we will use to define the cluster structure on $\Conf_3 \A_{Sp_{2n}}$ will be invariants of triple tensor products of representations of $SL_{2n}$, i.e., they will lie inside 
$$[V_{\lambda} \otimes V_{\mu} \otimes V_{\nu}]^G.$$
Here the factors $V_{\lambda}, V_{\mu}, V_{\nu}$ correspond to a graded subspace of the vector space of functions on the first, second, and third flags, respectively.

Let $1 \leq a, b, c, d < N$ be four integers satisfying $a > N-c > b$, $a > N-d > b$ and $a+b+c+d=2N$. Then there is a one-dimensional space of invariants inside the representation $$[V_{\omega_a+\omega_b}  \otimes V_{\omega_c} \otimes V_{\omega_d}]^{SL_N}.$$ We pick out the function given by the two equivalent webs in Figure 3.

\vspace{5mm}

\begin{center}
\begin{tikzpicture}[scale=0.4]
\begin{scope}[decoration={
    markings,
    mark=at position 0.5 with {\arrow{>}}},
    xshift=-8cm, yshift=-2cm
    ] 
\draw [postaction={decorate}] (-4,6) -- (-2,3) node [midway,below left] {$a$}; 
\draw [postaction={decorate}] (0,0) -- (-2,3) node [midway,below left] {$N-a$};
\draw [postaction={decorate}] (-1,8) -- (1,5) node [midway,above right] {$b$};
\draw [postaction={decorate}] (3,2) -- (1,5) node [midway,above right] {$N-b$};
\draw [postaction={decorate}] (-4,-6) -- (0,0) node [midway, above left] {$c$};
\draw [postaction={decorate}] (10,2) -- (3,2) node [midway, above] {$d$};
\draw [postaction={decorate}] (0,0) -- (3,2) node [midway, below right] {$a+c-N$};

\draw (-2.3,2.8) -- (-2,3) ;
\draw (0.7,4.8) -- (1,5) ;
\end{scope}

\begin{scope}[decoration={
    markings,
    mark=at position 0.5 with {\arrow{>}}},
    xshift=10cm, yshift=-2cm
    ] 
\draw [postaction={decorate}] (-4,6) -- (-2,3) node [midway,below left] {$b$}; 
\draw [postaction={decorate}] (0,0) -- (-2,3) node [midway,below left] {$N-b$};
\draw [postaction={decorate}] (-1,8) -- (1,5) node [midway,above right] {$a$};
\draw [postaction={decorate}] (3,2) -- (1,5) node [midway,above right] {$N-a$};
\draw [postaction={decorate}] (-4,-6) -- (0,0) node [midway, above left] {$c$};
\draw [postaction={decorate}] (10,2) -- (3,2) node [midway, above] {$d$};
\draw [postaction={decorate}] (0,0) -- (3,2) node [midway, below right] {$b+c-N$};

\draw (-2.3,2.8) -- (-2,3) ;
\draw (0.7,4.8) -- (1,5) ;
\end{scope}

\draw[yshift=-10cm,xshift=2cm]
  node[below,text width=6cm] 
  {
 Figure 3. Two equivalent webs for the function $\tcfr{a,b}{c}{d}$
  };

\end{tikzpicture}
\end{center}

Webs are a convenient tool for encoding tensor product invariants and for computing with them. See \cite{CKM} for their definition and for how one computes with them. The above diagram defines the invariant we are looking for up to a sign, which depends on conventions for webs that vary widely from author to author. For that reason, we give a more concrete description of the function.

Given three flags 
$$u_1, \dots, u_N;$$
$$v_1, \dots, v_N;$$
$$w_1, \dots, w_N;$$
first consider the forms
$$U_a := u_1 \wedge \cdots \wedge u_a,$$
$$U_b := u_1 \wedge \cdots \wedge u_b,$$
$$V_c := v_1 \wedge \cdots \wedge v_c,$$ 
$$W_d := w_1 \wedge \cdots \wedge w_d.$$
There is a natural map 
$$\phi_{a+c-N, N-a}: \bigwedge\nolimits^{c} V \rightarrow \bigwedge\nolimits^{a+c-N} V \otimes \bigwedge\nolimits^{N-a} V.$$
There are also natural maps 
$$U_b \wedge - \wedge W_d :  \bigwedge\nolimits^{a+c-N} V \rightarrow \bigwedge\nolimits^{N} V \simeq F$$ and 
$$U_a \wedge - : \bigwedge\nolimits^{N-a} V \rightarrow \bigwedge\nolimits^{N} V \simeq F.$$
Applying these maps to the first and second factors of $\phi_{a+c-N,N-a}(V_c)$, respectively, and then multiplying, we get get the value of our function. This is a function on $\Conf_3 \A_{SL_{N}}$. Let $N=2n$. They pulling back gives a function on $\Conf_3 \A_{Sp_{2n}}$. We will use the notation $\tcfr{a, b}{c}{d}$ to denote this function on either of those two spaces.

An equivalent way to calculate the function, associated to the second web above, is to use the natural map
$$\phi_{N-a, a+d-N}: \bigwedge\nolimits^{d} V \rightarrow \bigwedge\nolimits^{N-a} V \otimes \bigwedge\nolimits^{a+c-N} V.$$
There are natural maps 
$$U_b \wedge V_c \wedge - :  \bigwedge\nolimits^{a+d-N} V \rightarrow \bigwedge\nolimits^{N} V \simeq F$$ and 
$$U_a \wedge - : \bigwedge\nolimits^{N-a} V \rightarrow \bigwedge\nolimits^{N} V \simeq F.$$
Applying these maps to the second and first factors of $\phi_{N-a, a+c-N}$, respectively, and then multiplying, we get get the value of our function. We will use $\tcfr{b, a}{c}{d}$ to denote this function. Note that $\tcfr{b, a}{c}{d}=\tcfr{a, b}{c}{d}$.

Now let us define another, simpler, set of functions. Let $0 \leq a, b, c <N$ be three integers such that $a+b+c=N$. Then there is a one-dimensional space of invariants inside the representation $V_{\omega_a}  \otimes V_{\omega_b} \otimes V_{\omega_c}]^{SL_N}$. We pick out the function given by the web in Figure 4:

\begin{center}
\begin{tikzpicture}[scale=0.4]
\begin{scope}[decoration={
    markings,
    mark=at position 0.5 with {\arrow{>}}}
    ] 
\draw [postaction={decorate}] (-4,6) -- (-2,3) node [midway,below left] {$a$}; 
\draw [postaction={decorate}] (0,0) -- (-2,3) node [midway,below left] {$N-a$};
\draw [postaction={decorate}] (-4,-6) -- (0,0) node [midway,below right] {$b$};
\draw [postaction={decorate}] (8,0) -- (0,0) node [midway,above] {$c$};

\draw (-2.3,2.7) -- (-2,3) ;
\draw[yshift=-6.85cm]
  node[below,text width=6cm] 
  {
 Figure 4. Web for the function $\tcfr{a}{b}{c}$, where $a+b+c=N$.
  };
\end{scope}
\end{tikzpicture}
\end{center}

The function can be calculated as follows. Given three flags 
$$u_1, \dots, u_N;$$
$$v_1, \dots, v_N;$$
$$w_1, \dots, w_N;$$
first consider the forms
$$U_a := u_1 \wedge \cdots \wedge u_a,$$ 
$$V_b := v_1 \wedge \cdots \wedge v_b,$$
$$W_c := w_1 \wedge \cdots \wedge w_c.$$
Then $U_a \wedge V_n \wedge W_c$ is a multiple of $e_1 \wedge \dots \wedge e_N$, and this multiple is the value of our function. Call this function $\tcfr{a}{b}{c}$, whether it is viewed as a function on $\Conf_3 \A_{SL_{N}}$ or $\Conf_3 \A_{Sp_{2n}}$ when $N=2n$.

We are now ready to define the functions attached to the vertices. Here are the rules:

\begin{enumerate}
\item We assign the function $\dud{k}{2n-k}$ to $y_k$.
\item When $i \geq j$, we assign the function $\tcfr{n-i}{n+i-j}{j}$ to $x_{ij}$.
\item When $i < j$ and $i \neq 0$, we assign the function $\tcfr{n-i, 2n+i-j}{n}{j}$ to $x_{ij}$.
\item When $i=0$, we assign the function $\tcfr{2n-j}{}{j}$ to $x_{ij}$.
\end{enumerate}

This completely describes the cluster structure on $\Conf_3 \A_{Sp_{2n}}$. Note that the cluster structure is not symmetric with respect to the three flags. Performing various $S_3$ symmetries, we obtain six different possible cluster structures on  $\Conf_3 \A_{Sp_{2n}}$. We will later see that these six structures are related by sequences of mutations. In Figure 5, we depict two of the cluster structures for $\Conf_3 \A_{Sp_{8}}$ that are obtained from the original cluster structure by an $S_3$ symmetry.

\begin{center}
\begin{tikzpicture}[scale=2]
  \node (x01) at (-1,0) {$\tcfr{7}{}{1}$};
  \node (x02) at (0,0) {$\tcfr{6}{}{2}$};
  \node (x03) at (1,0) {$\tcfr{5}{}{3}$};
  \node (x04) at (2,0) {$\tcfr{4}{}{4}$};
  \node (x11) at (-1,-1) {$\tcfr{3}{4}{1}$};
  \node (x12) at (0,-1) {$\tcfr{3, 7}{4}{2}$};
  \node (x13) at (1,-1) {$\tcfr{3, 6}{4}{3}$};
  \node (x14) at (2,-1) {$\tcfr{3, 5}{4}{4}$};
  \node (x21) at (-1,-2) {$\tcfr{2}{5}{1}$};
  \node (x22) at (0,-2) {$\tcfr{2}{4}{2}$};
  \node (x23) at (1,-2) {$\tcfr{2, 7}{4}{3}$};
  \node (x24) at (2,-2) {$\tcfr{2, 6}{4}{4}$};
  \node (x31) at (-1,-3) {$\tcfr{1}{6}{1}$};
  \node (x32) at (0,-3) {$\tcfr{1}{5}{2}$};
  \node (x33) at (1,-3) {$\tcfr{1}{4}{3}$};
  \node (x34) at (2,-3) {$\tcfr{1, 7}{4}{4}$};
  \node (x41) at (-1,-4) {$\tcfr{}{7}{1}$};
  \node (x42) at (0,-4) {$\tcfr{}{6}{2}$};
  \node (x43) at (1,-4) {$\tcfr{}{5}{3}$};
  \node (x44) at (2,-4) {$\tcfr{}{4}{4}$};
  \node (y1) at (-2,-3) {$\dud{1}{7}$};
  \node (y2) at (-2,-2) {$\dud{2}{6}$};
  \node (y3) at (-2,-1) {$\dud{3}{5}$};
  \node (y4) at (3,-1) {$\dud{4}{4}$};

  \draw [->] (x41) to (x31);
  \draw [->] (x31) to (x21);
  \draw [->] (x21) to (x11);
  \draw [->] (x11) to (x01);
  \draw [->] (x42) to (x32);
  \draw [->] (x32) to (x22);
  \draw [->] (x22) to (x12);
  \draw [->] (x12) to (x02);
  \draw [->] (x43) to (x33);
  \draw [->] (x33) to (x23);
  \draw [->] (x23) to (x13);
  \draw [->] (x13) to (x03);
  \draw [->] (x44) to (x34);
  \draw [->] (x34) to (x24);
  \draw [->] (x24) to (x14);
  \draw [->] (x14) to (x04);
  \draw [->, dashed] (y3) .. controls +(up:2) and +(up:2) .. (y4);
  \draw [->, dashed] (y2) to (y3);
  \draw [->, dashed] (y1) to (y2);

  \draw [->, dashed] (x04) to (x03);
  \draw [->, dashed] (x03) to (x02);
  \draw [->, dashed] (x02) to (x01);
  \draw [->] (y4) to (x14);
  \draw [->] (x14) to (x13);
  \draw [->] (x13) to (x12);
  \draw [->] (x12) to (x11);
  \draw [->] (x11) to (y3);
  \draw [->] (x24) to (x23);
  \draw [->] (x23) to (x22);
  \draw [->] (x22) to (x21);
  \draw [->] (x21) to (y2);
  \draw [->] (x34) to (x33);
  \draw [->] (x33) to (x32);
  \draw [->] (x32) to (x31);
  \draw [->] (x31) to (y1);
  \draw [->, dashed] (x44) to (x43);
  \draw [->, dashed] (x43) to (x42);
  \draw [->, dashed] (x42) to (x41);

  \draw [->] (x01) to (x12);
  \draw [->] (x02) to (x13);
  \draw [->] (x03) to (x14);
  \draw [->] (x04) to (y4);
  \draw [->] (y3) to (x21);
  \draw [->] (x11) to (x22);
  \draw [->] (x12) to (x23);
  \draw [->] (x13) to (x24);
  \draw [->] (y2) to (x31);
  \draw [->] (x21) to (x32);
  \draw [->] (x22) to (x33);
  \draw [->] (x23) to (x34);
  \draw [->] (y1) to (x41);
  \draw [->] (x31) to (x42);
  \draw [->] (x32) to (x43);
  \draw [->] (x33) to (x44);

\begin{scope}[yshift=-5cm]
  \node (x01) at (-2,-1) {$\dud{7}{1}$};
  \node (x02) at (-2,-2) {$\dud{6}{2}$};
  \node (x03) at (-2,-3) {$\dud{5}{3}$};
  \node (x04) at (-2,-4) {$\dud{4}{4}$};
  \node (x11) at (-1,-1) {$\tcfr{3}{1}{4}$};
  \node (x12) at (-1,-2) {$\tcfr{3, 7}{2}{4}$};
  \node (x13) at (-1,-3) {$\tcfr{3, 6}{3}{4}$};
  \node (x14) at (-1,-4) {$\tcfr{3, 5}{4}{4}$};
  \node (x21) at (0,-1) {$\tcfr{2}{1}{5}$};
  \node (x22) at (0,-2) {$\tcfr{2}{2}{4}$};
  \node (x23) at (0,-3) {$\tcfr{2, 7}{3}{4}$};
  \node (x24) at (0,-4) {$\tcfr{2, 6}{4}{4}$};
  \node (x31) at (1,-1) {$\tcfr{1}{1}{6}$};
  \node (x32) at (1,-2) {$\tcfr{1}{2}{5}$};
  \node (x33) at (1,-3) {$\tcfr{1}{3}{4}$};
  \node (x34) at (1,-4) {$\tcfr{1, 7}{4}{4}$};
  \node (x41) at (2,-1) {$\tcfr{}{1}{7}$};
  \node (x42) at (2,-2) {$\tcfr{}{2}{6}$};
  \node (x43) at (2,-3) {$\tcfr{}{3}{5}$};
  \node (x44) at (2,-4) {$\tcfr{}{4}{4}$};
  \node (y1) at (1,0) {$\dud{7}{1}$};
  \node (y2) at (0,0) {$\dud{6}{2}$};
  \node (y3) at (-1,0) {$\dud{5}{3}$};
  \node (y4) at (-1,-5) {$\dud{4}{4}$};

  \draw [->] (x01) to (x11);
  \draw [->] (x11) to (x21);
  \draw [->] (x21) to (x31);
  \draw [->] (x31) to (x41);
  \draw [->] (x02) to (x12);
  \draw [->] (x12) to (x22);
  \draw [->] (x22) to (x32);
  \draw [->] (x32) to (x42);
  \draw [->] (x03) to (x13);
  \draw [->] (x13) to (x23);
  \draw [->] (x23) to (x33);
  \draw [->] (x33) to (x43);
  \draw [->] (x04) to (x14);
  \draw [->] (x14) to (x24);
  \draw [->] (x24) to (x34);
  \draw [->] (x34) to (x44);
  \draw [->, dashed] (y4) .. controls +(left:2) and +(left:2) .. (y3);
  \draw [->, dashed] (y3) to (y2);
  \draw [->, dashed] (y2) to (y1);

  \draw [->, dashed] (x01) to (x02);
  \draw [->, dashed] (x02) to (x03);
  \draw [->, dashed] (x03) to (x04);
  \draw [->] (y3) to (x11);
  \draw [->] (x11) to (x12);
  \draw [->] (x12) to (x13);
  \draw [->] (x13) to (x14);
  \draw [->] (x14) to (y4);
  \draw [->] (y2) to (x21);
  \draw [->] (x21) to (x22);
  \draw [->] (x22) to (x23);
  \draw [->] (x23) to (x24);
  \draw [->] (y1) to (x31);
  \draw [->] (x31) to (x32);
  \draw [->] (x32) to (x33);
  \draw [->] (x33) to (x34);
  \draw [->, dashed] (x41) to (x42);
  \draw [->, dashed] (x42) to (x43);
  \draw [->, dashed] (x43) to (x44);

  \draw [->] (x12) to (x01);
  \draw [->] (x13) to (x02);
  \draw [->] (x14) to (x03);
  \draw [->] (y4) to (x04);
  \draw [->] (x21) to (y3);
  \draw [->] (x22) to (x11);
  \draw [->] (x23) to (x12);
  \draw [->] (x24) to (x13);
  \draw [->] (x31) to (y2);
  \draw [->] (x32) to (x21);
  \draw [->] (x33) to (x22);
  \draw [->] (x34) to (x23);
  \draw [->] (x41) to (y1);
  \draw [->] (x42) to (x31);
  \draw [->] (x43) to (x32);
  \draw [->] (x44) to (x33);

\draw[yshift=-5.5cm]
  node[below,text width=6cm] 
  {
  Figure 5. Two cluster structures for $\Conf_3 \A_{Sp_{8}}$ related by $S_3$ symmetries.
  };

\end{scope}

\end{tikzpicture}
\end{center}

Let us describe in more detail how to obtain these other cluster structures. Let us first describe the quivers. If the $S_3$ symmetry is even, i.e., a rotation, we just rotate the quiver. If the $S_3$ symmetry is odd, i.e., a transposition, we transpose the quiver and also reverse the arrows.

The functions attached to the vertices come from permuting the arguments in our notation for the function. For example, rotating the function $\tcfr{n-i, 2n+i-j}{n}{j}$ gives the function $\tcfr{j}{n-i, 2n+i-j}{n}$, while transposing the first two arguments gives $\tcfr{n}{n-i, 2n+i-j}{j}$. A function of the form $\tcfr{a}{b, c}{d}$ is defined as follows. There is the {\emph twisted cyclic shift} map 
$$T: \Conf_3 \A  \rightarrow \Conf_3 \A$$
which acts on three principal flags $A, B, C$ as follows:
$$T: (A,B,C) \rightarrow (s_G \cdot C, A, B).$$
Then 
$$\tcfr{a}{b, c}{d} := (T^2)^*\tcfr{b, c}{d}{a}.$$
A function of the form ${a; b; c, d}$ is similarly defined using the cyclic shift map.

Another set of functions will be useful to us. Let $0 \leq a, b, c <N$ be three integers such that $a+b+c=2N$. Then there is a one-dimensional space of invariants inside the representation $V_{\omega_a}  \otimes V_{\omega_b} \otimes V_{\omega_c}]^{SL_N}$. We pick out the function given by the web in Figure 6.

\begin{center}
\begin{tikzpicture}[scale=0.4]
\begin{scope}[decoration={
    markings,
    mark=at position 0.5 with {\arrow{>}}}
    ] 
\draw [postaction={decorate}] (-4,6) -- (0,0) node [midway,below left] {$a$}; 
\draw [postaction={decorate}] (0,0) -- (-2,-3) node [midway,below right] {$N-b$};
\draw [postaction={decorate}] (-4,-6) -- (-2,-3) node [midway,below right] {$b$};
\draw [postaction={decorate}] (0,0) -- (4,0) node [midway,above] {$N-c$};
\draw [postaction={decorate}] (8,0) -- (4,0) node [midway,above] {$c$};

\draw (4.0,-0.4) -- (4,0) ;
\draw (-2.3, -2.8) -- (-2,-3) ;

\end{scope}

\draw[yshift=-6cm]
  node[below,text width=6cm] 
  {
  Figure 6. Web for the function $\tcfr{a}{b}{c}$ where $a+b+c=2N$.
  };

\end{tikzpicture}
\end{center}

The function can be calculated as follows. Given three flags $u_1, \dots, u_N$; $v_1, \dots, v_N$; and $w_1, \dots, w_N$; first consider the forms $U_a := u_1 \wedge \cdots \wedge u_a$, $V_b := v_1 \wedge \cdots \wedge v_b$, and $W_c := w_1 \wedge \cdots \wedge w_c$. There is a natural map 
$$\phi_{n-b, n-c}: \bigwedge\nolimits^{a} V \rightarrow \bigwedge\nolimits^{N-b} V \otimes \bigwedge\nolimits^{N-c} V.$$
We can take $\phi_{n-b, n-c}(U_a)$ take the wedge of the first factor with $V_b$ and the second factor with $W_c$ and then multiply to get our function. In both cases, we take the wedge on the left, i.e. 
$$V_b \wedge -: \bigwedge\nolimits^{N-b} V \rightarrow  \bigwedge\nolimits^{N} V \simeq F,$$
$$W_c \wedge -: \bigwedge\nolimits^{N-c} V \rightarrow  \bigwedge\nolimits^{N} V \simeq F.$$

Call this function $\tcfr{a}{b}{c}$, whether it is viewed as a function on $\Conf_3 \A_{SL_{N}}$ or $\Conf_3 \A_{Sp_{2n}}$ when $N=2n$. Note that we have the following equalities of functions:

\begin{equation} \label{dualities}
\begin{split}
\dud{k}{2n-k}&=\dud{2n-k}{k} \\
\tcfr{n-i}{n+i-j}{j}&=\tcfr{n+i}{n-i+j}{2n-j} \\
\tcfr{n-i, 2n+i-j}{n}{j}&=\tcfr{n+i, j-i}{n}{2n- j}
\end{split}
\end{equation}
These equalities arise because the symplectic form induces an isomorphism between $\bigwedge\nolimits^i V$ and $\bigwedge\nolimits^{2n-i} V$.

The cluster structure for $\Conf_m \A$ comes from triangulating an $m$-gon and then attaching the cluster structure on $\Conf_3 \A$ to each triangle. Let us make this more precise. On $\Conf_3 \A_{Sp_{2n}}$, consider the vertices labelled $y_k$, $x_{0j}$ and $x_{nj}$. Attached to these vertices are the functions are $\dud{k}{2n-k}$, $\tcfr{2n-j}{}{j}$, and $\tcfr{}{2n-j}{j}$, respectively. These are exactly the vertices that were frozen. Moreover, all the functions attached to these vertices only depend on two of the three flags. We will call these the {\em edge functions}. Let us call all other functions {\em face functions}.

To form the quiver for $\Conf_m \A_{Sp_{2n}}$, we first take a triangulation of an $m$-gon. On each of the $m-2$ triangles, attach any one of the six quivers formed from performing $S_3$ symmetries on the quiver for $\Conf_3 \A_{Sp_{2n}}$ described above. Each edge of each of these triangles has $n$ frozen vertices. Let us describe how to glue two triangles together.

Let $T_1, T_2$ be two triangles with edges $e_1, e_2, e_3$ and $e_4, e_5, e_6$, respectively. Suppose that we would like to glue the edges $e_1$ and $e_4$. $e_1$ and $e_4$ each have $n$ frozen vertices. We will glue these $2n$ vertices together in pairs to form $n$ vertices. Each frozen vertex is glued to another vertex that shares the same function (using the fact that $\dud{k}{2n-k}=\dud{2n-k}{k}$, etc.). These vertices then become unfrozen. If vertices $i$ and $j$ are glued with $i'$ and $j'$ to get new vertices $i''$ and $j''$, then we declare that $$b_{i''j''}=b_{ij}+b_{i'j'}.$$
In other words, two dotted arrows in the same direction glue to give us a solid arrow, whereas two dotted arrows in the opposite direction cancel to give us no arrow. One can easily check that any gluing will result in no dotted arrows using the unfrozen vertices. The arrows involving vertices that were not previously frozen remain the same. Figure 7 below shows one gluing between two triangles for $Sp_6$.

\begin{center}
\begin{tikzpicture}[scale=2.2]
  \node (x-31) at (-3,0) {$\dul{1}{5}$};
  \node (x-21) at (-2,0) {$\tcfl{1}{2}{3}$};
  \node (x-11) at (-1,0) {$\tcfl{1}{1}{4}$};
  \node (x01) at (0,0) {$\dud{1}{5}$};
  \node (x11) at (1,0) {$\tcfr{2}{5}{5}$};
  \node (x21) at (2,0) {$\tcfr{3}{5}{4}$};
  \node (x31) at (3,0) {$\ddr{5}{1}$};
  \node (x-32) at (-3,-1) {$\dul{2}{4}$};
  \node (x-22) at (-2,-1) {$\tcfl{2}{2, 5}{3}$};
  \node (x-12) at (-1,-1) {$\tcfl{2}{1}{3}$};
  \node (x02) at (0,-1) {$\dud{2}{4}$};
  \node (x12) at (1,-1) {$\tcfr{3}{4}{5}$};
  \node (x22) at (2,-1) {$\tcfr{3}{4}{1, 4}$};
  \node (x32) at (3,-1) {$\ddr{4}{2}$};
  \node (x-33) at (-3,-2) {$\dul{3}{3}$};
  \node (x-23) at (-2,-2) {$\tcfl{3}{2, 4}{3}$};
  \node (x-13) at (-1,-2) {$\tcfl{3}{1, 5}{3}$};
  \node (x03) at (0,-2) {$\dud{3}{3}$};
  \node (x13) at (1,-2) {$\tcfr{3}{3}{1, 5}$};
  \node (x23) at (2,-2) {$\tcfr{3}{3}{2, 4}$};
  \node (x33) at (3,-2) {$\ddr{3}{3}$};
  \node (y-1) at (-0.5,1) {$\dld{1}{5}$};
  \node (y-2) at (-1.5,1) {$\dld{2}{4}$};
  \node (y-3) at (-2.5,-3) {$\dld{3}{3}$};
  \node (y1) at (0.5,1) {$\tcfr{1}{}{5}$};
  \node (y2) at (1.5,1) {$\tcfr{2}{}{4}$};
  \node (y3) at (2.5,-3) {$\tcfr{3}{}{3}$};

  \draw [->] (x01) to (x11);
  \draw [->] (x11) to (x21);
  \draw [->] (x21) to (x31);
  \draw [->] (x02) to (x12);
  \draw [->] (x12) to (x22);
  \draw [->] (x22) to (x32);
  \draw [->] (x03) to (x13);
  \draw [->] (x13) to (x23);
  \draw [->] (x23) to (x33);
  \draw [->, dashed] (y1) to (y2);
  \draw [->, dashed] (y2) .. controls +(right:1) and +(up:1) .. (y3);

  \draw [->] (x03) to (x02);
  \draw [->] (x02) to (x01);

  \draw [->] (x13) to (x12);
  \draw [->] (x12) to (x11);
  \draw [->] (x11) to (y1);
  \draw [->] (y3) to (x23);
  \draw [->] (x23) to (x22);
  \draw [->] (x22) to (x21);
  \draw [->] (x21) to (y2);
  \draw [->, dashed] (x33) to (x32);
  \draw [->, dashed] (x32) to (x31);

  \draw [->] (y1) to (x01);
  \draw [->] (x11) to (x02);
  \draw [->] (x12) to (x03);
 \draw [->] (y2) to (x11);
  \draw [->] (x21) to (x12);
  \draw [->] (x22) to (x13);
  \draw [->] (x31) to (x22);
  \draw [->] (x32) to (x23);
  \draw [->] (x33) to (y3);

  \draw [->] (x01) to (x-11);
  \draw [->] (x-11) to (x-21);
  \draw [->] (x-21) to (x-31);
  \draw [->] (x02) to (x-12);
  \draw [->] (x-12) to (x-22);
  \draw [->] (x-22) to (x-32);
  \draw [->] (x03) to (x-13);
  \draw [->] (x-13) to (x-23);
  \draw [->] (x-23) to (x-33);
  \draw [->, dashed] (y-1) to (y-2);
  \draw [->, dashed] (y-2) .. controls +(left:1) and +(up:1) .. (y-3);

  \draw [->] (x-13) to (x-12);
  \draw [->] (x-12) to (x-11);
  \draw [->] (x-11) to (y-1);
  \draw [->] (y-3) to (x-23);
  \draw [->] (x-23) to (x-22);
  \draw [->] (x-22) to (x-21);
  \draw [->] (x-21) to (y-2);
  \draw [->, dashed] (x-33) to (x-32);
  \draw [->, dashed] (x-32) to (x-31);

  \draw [->] (y-1) to (x01);
  \draw [->] (x-11) to (x02);
  \draw [->] (x-12) to (x03);
  \draw [->] (y-2) to (x-11);
  \draw [->] (x-21) to (x-12);
  \draw [->] (x-22) to (x-13);
  \draw [->] (x-31) to (x-22);
  \draw [->] (x-32) to (x-23);
  \draw [->] (x-33) to (y-3);

\draw[yshift=-3.85cm]
  node[below,text width=6cm] 
  {
  Figure 7. The functions and quiver for the cluster algebra on $\Conf_4 \A_{Sp_{6}}$.
  };

\end{tikzpicture}
\end{center}

Repeat this procedure for each interior edge of the triangulation, and one arrives at the quiver for $\Conf_m \A_{Sp_{2n}}$. The procedure for gluing triangles is very reminiscent of the ``amalgamation'' procedure in \cite{FG4}.

\subsection{Reduced words}

The goal of this section is to relate the cluster structure on $\Conf_3 \A_{Sp_{2n}}$ given in the previous section to Berenstein, Fomin and Zelevinsky's cluster structure on $B$, the Borel in the group $G$ (\cite{BFZ}). This will allow us to see that the cluster structure described above induces a positive structure on $\A_{G,S}$, and moreover that this positive structure on $\A_{G,S}$ is identical to the one given in \cite{FG1}. In fact, it was the comparison between the cluster structures in \cite{BFZ} and \cite{FG1} that was our original motivation for the construction of the previous section.

Moreover, this comparison gives heuristics for how to construct the cluster structure on $\A_{G,S}$ when $G$ is an exceptional group. It will also clarify some computations that we do later in the paper. However, for a reader not interested in the positive structure on $\A_{G,S}$, and more interested in just understanding the cluster structure on $\A_{G,S}$, this section is not logically necessary.

Let us also mention that our cluster structure can also be related to the cluster structure on Bruhat cells in \cite{FG4}. The paper \cite{W} contains the clearest and most definitive statements on the relationship between \cite{FG4} and \cite{BFZ}.

Recall from equation \eqref{ConfA} that the positive structure on $\Conf_3 \A_{Sp_{2n}}$ is given by a map $H \times B^-  \longrightarrow {\Conf}_{3}({\A})$
that sends
$$
(h, b) \rightarrow (U^-, h \cdot \overline{w_0}U^-, b\cdot \overline{w_0}U^-).
$$


Then the natural positive structure on $H \times B$ induces a positive structure on $\Conf_3 \A_{Sp_{2n}}$. This positive structure coincides with the one given in \eqref{ConfA} \cite{FG1}. Let us now restrict our attention to triples of principal flags of the form $(U^-, \overline{w_0}U^-, b\cdot \overline{w_0}U^-).$ We can consider the map

$$i: b \in B^- \rightarrow (U^-, \overline{w_0}U^-, b\cdot \overline{w_0}U^-) \in \Conf_3 \A_{Sp_{2n}}.$$

Now let us recall the constructions of \cite{BFZ}. For $u, v$ elements of the Weyl group $W$ of $G$, we have the double Bruhat cell
$$G^{u,v}=B^+ \cdot u \cdot B^+ \cap B^- \cdot v \cdot B^v.$$
The cell $G^{w_0,e}$ is the on open part of $B^-$.

\begin{prop} The cluster algebra constructed above on $\Conf_3 \A_{Sp_{2n}}$, when restricted to the image of $i$, coincides with the cluster algebra structure given in \cite{BFZ} on $B^-=G^{w_0,e}.$
\end{prop}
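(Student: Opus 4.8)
The plan is to identify, under the map $i\colon b\mapsto(U^-,\overline{w_0}U^-,b\cdot\overline{w_0}U^-)$, the explicit invariant functions $\tcfr{n-i}{n+i-j}{j}$, $\tcfr{n-i,2n+i-j}{n}{j}$, $\tcfr{2n-j}{}{j}$, $\tcfr{}{2n-j}{j}$ and $\dud{k}{2n-k}$ attached to the vertices of our quiver with the generalized minors $\Delta_{u\omega_i,v\omega_i}$ that Berenstein--Fomin--Zelevinsky use as the initial cluster for $G^{w_0,e}=B^-$ (see \cite{BFZ}), and then to check that the two quivers agree. Concretely, I would proceed as follows. First, restrict to $SL_{2n}$: since every function in sight is by definition pulled back from $\Conf_3\A_{SL_{2n}}$ along the folding embedding $\Conf_3\A_{Sp_{2n}}\hookrightarrow\Conf_3\A_{SL_{2n}}$, and since the BFZ cluster structure on $B^-_{Sp_{2n}}$ is itself obtained by folding the one on $B^-_{SL_{2n}}$ (this is the folding story the paper promises to develop in later sections, which I may invoke), it suffices to prove the corresponding statement for $SL_N$, $N=2n$, and then fold. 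For $SL_N$ the functions $\tcfr{a}{b}{c}$ and $\tcfr{a,b}{c}{d}$ restricted to the image of $i$ become, after unwinding the wedge-product definitions given above with $u_\bullet$ the standard flag $e_1,\dots,e_N$, $v_\bullet=\overline{w_0}$-rotated, and $w_\bullet=b\cdot(\text{rotated})$, exactly flag minors of the matrix $b\in B^-$, i.e. determinants $\det(b_{K,L})$ of submatrices on consecutive-or-specified row and column sets; the bookkeeping is: the first flag contributes a fixed set of rows, the second (the $\overline{w_0}$-flag) fixes the "initial-segment" column conventions, and $b$ supplies the matrix entries. This matches $\Delta_{u\omega_i,\,v\omega_i}$ for the appropriate $u,v\in W$.

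Second, I would match the combinatorial data. BFZ attach to a reduced word $\mathbf{i}$ for $w_0$ (a double reduced word $(w_0,e)$, really a reduced word for $w_0$ read in the $B^-$ factor) an initial seed whose cluster variables are minors indexed by the prefixes of $\mathbf{i}$, and whose quiver is read off from the "string diagram" or wiring diagram of $\mathbf{i}$. The task is to exhibit the specific reduced word for $w_0$ in the Weyl group of $Sp_{2n}$ whose BFZ seed reproduces the quiver of Figure 2 on the nose — the frozen vertices $y_k$ and $x_{0j},x_{nj}$ matching the frozen minors $\Delta_{\omega_i,\omega_i}$ and $\Delta_{w_0\omega_i,\omega_i}$, the rows $x_{ij}$ of the quiver matching the successive prefixes, the black/white coloring matching the short/long simple roots (hence the multipliers $d_i\in\{1,\tfrac12\}$ skew-symmetrizing $B$), and the arrow multiplicities matching the Cartan pairing $\langle\alpha_i^\vee,\alpha_j\rangle$ that governs edge multiplicities in the BFZ quiver. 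The natural candidate is the "standard" word $w_0 = (s_1 s_2\cdots s_n)(s_1\cdots s_{n-1})\cdots(s_1 s_2)(s_1)$ or its reverse, which reading column-by-column produces precisely the staircase of functions in rules (1)--(4) of Section 3.1; I would verify the prefix-minor dictionary column by column and then compare arrows locally (each elementary box of the wiring diagram contributes a bounded local piece of the quiver, and there are only a few types).

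Third, having matched initial seeds (cluster variables, frozen set, $B$-matrix, multipliers), the proposition follows immediately because a cluster algebra is determined by any one of its seeds: mutation equivalence classes coincide, so the cluster algebra generated by iterated mutation of our seed on $\Conf_3\A_{Sp_{2n}}|_{\mathrm{im}\,i}$ equals the BFZ cluster algebra on $G^{w_0,e}=B^-$. One small point to dispatch is that $i$ is a birational isomorphism onto its image (clear from \eqref{ConfA}: fixing $h=e$ and the first two flags, $b\mapsto b\cdot\overline{w_0}U^-$ is injective on the open $B^-$-orbit), so "restricting to the image of $i$" is the same as pulling back along a birational equivalence and the cluster structures transport.

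I expect the main obstacle to be the second step — pinning down the exact reduced word and checking that the BFZ wiring-diagram quiver reproduces Figure 2 with the correct arrow multiplicities and the correct $d_i$-pattern. The identification of individual functions with minors (Step 1) is essentially a determinant/Plücker computation and, granting the folding reduction to $SL_N$, is routine; but verifying the quiver match requires care with BFZ's sign/orientation conventions and with how edge multiplicities degenerate in the non-simply-laced $C_n$ case (this is exactly where the white/black, $d_i=\tfrac12$ vertices and the dotted-arrow convention enter), and it is where one genuinely has to look at the combinatorics of type $C$ rather than quote type $A$. The folding compatibility — that BFZ's construction for $Sp_{2n}$ really is the $\sigma$-fixed subquiver of the one for $SL_{2n}$, matching our folding of the function side — is the other place where care is needed, but the excerpt explicitly defers the folding discussion to later sections, so here I would invoke it.
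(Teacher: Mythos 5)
Your overall strategy---identify each cluster variable with a generalized minor $\Delta_{u\omega_j,\omega_j}$ attached to a subword of a reduced word for $w_0$, then invoke the fact that a seed determines the cluster algebra---is the same as the paper's. But there are two concrete problems with how you propose to carry it out. First, your candidate reduced word $(s_1\cdots s_n)(s_1\cdots s_{n-1})\cdots(s_1)$ has length $n(n+1)/2$, whereas $\ell(w_0)=n^2$ in type $C_n$; it is not a reduced word for $w_0$ of $Sp_{2n}$ at all. The word that actually works is $(s_n s_{n-1}\cdots s_2 s_1)^n$, whose subwords $u_{ij}=(s_n\cdots s_1)^{i-1}s_n s_{n-1}\cdots s_j$ index the vertices $x_{ij}$ and reproduce exactly the staircase of functions in rules (1)--(4); since you flag the reduced-word identification as the crux of the argument, getting it wrong is not a cosmetic slip.

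Second, the reduction ``prove the statement for $SL_N$ and then fold'' cannot simply be invoked here. The BFZ initial seed for $B^-_{SL_{2n}}$ built from the standard word $s_1\cdots s_{2n-1}s_1\cdots s_{2n-2}\cdots s_1$ is \emph{not} invariant under the folding involution $\sigma$; the $\sigma$-invariant seed of $\Conf_3\A_{SL_{2n}}$ that folds to the $Sp_{2n}$ seed is reached only after a nontrivial subsequence of the cactus mutation sequence (it corresponds to the intermediate word $(s_n s_{n-1}s_{n+1}\cdots s_1 s_{2n-1})^n$), and establishing that mutation equivalence is itself a substantial computation that logically comes after the present proposition, not before it. The paper avoids this circularity by working directly in type $C$: it fixes the reduced word $(s_n\cdots s_1)^n$, and uses the embedding $Sp_{2n}\hookrightarrow SL_{2n}$ (with an explicit symplectic form and pinning) only as a computational device to evaluate both the generalized minors $\Delta_{u_{ij}\omega_j,\omega_j}(b)$ and the restricted web functions as determinants of the same submatrices of $b$ (rows $2n+1-i,\dots,2n+j-i$ taken mod $2n$, first $j$ columns). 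If you restructure your argument along those lines---direct type-$C$ comparison, with the $SL_{2n}$ embedding used only for evaluating determinants---the rest of your outline (matching frozen sets, multipliers, and the $B$-matrix read off the wiring diagram) goes through.
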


\begin{proof} $G^{w_0,e}$ is the on open part of $B^-$. Following \cite{BFZ}, to get a cluster structure on this subset, we must choose a reduced-word for $w_0$. In the numbering of the nodes of the Dynkin diagram given above for $Sp_{2n}$, we choose the reduced word expression $$w_0=(s_n s_{n-1} \cdots s_2 s_1)^n.$$
Here our convention is that the above word corresponds to the string $i_1, i_2, \dots, i_{n-1}, i_n$ repeated $n$ times. 

Now let $G_0 = U^- H U^+ \subset G$ be the open subset of elements of $G$ having Gaussian decomposition $x=[x]_- [x]_0 [x]_+$. Then for any two elements $u, v \in W$, and any fundamental weight $\omega_i$, we can define the {\em generalized minor} $\Delta_{u\omega_i, v\omega_i}(x)$. It is a rational function on $G$ which is given generically by the formula

$$\Delta_{u\omega_i, v\omega_i}(x) := ([\overline{u}^{-1}x \overline{v}]_0)^{\omega_i}.$$

In our situation, we are interested in such minors when $u, v=e$, or when $v=e$ and $u=u_{ij}=(s_n s_{n-1} \cdots s_2s_1)^{i-1} s_{n}s_{n-1}\cdots s_j$ for $1 \leq i \leq n$ and $n \geq j \geq 1$.

Then the cluster functions on $B^-$ given in \cite{BFZ} are $\Delta_{\omega_i, \omega_i}$ for $1 \leq i \leq n$ (these are the functions associated to $u, v=e$), and 
$$\Delta_{u_{ij}\omega_j, \omega_j},$$
which are the functions associated to $v=e$ and $u=u_{ij}=(s_n s_{n-1} \cdots s_2s_1)^i s_{n}s_{n-1}\cdots s_j$. Note that $u_{ij}$ is the subword of $u$ that stops on the $i^{\textrm th}$ iteration of $s_j$.

We have the following claims:
\begin{enumerate}
\item Recall that when $i=0$, we assign the function $\tcfr{2n-j}{}{j}$ to $x_{ij}$. Then 
$$\tcfr{2n-j}{}{j} = \Delta_{\omega_j, \omega_j}$$ 
In other words, the function assigned to $x_{ij}$ is precisely $\Delta_{\omega_j, \omega_j}$.

\item Recall that for $i \geq j$, we assign the function $\tcfr{n-i}{n+i-j}{j}$ to $x_{ij}$. Then 
$$\tcfr{n-i}{n+i-j}{j} = \Delta_{u_{ij}\omega_j, \omega_j}$$ 
In other words, the function assigned to $x_{ij}$ is precisely $\Delta_{u_{ij}\omega_j, \omega_j}$.

\item Recall that when $i < j$, we assign the function $\tcfr{n-i, 2n+i-j}{n}{j}$ to $x_{ij}$. Then 
$$\tcfr{n-i, 2n+i-j}{n}{j} = \Delta_{u_{ij}\omega_j, \omega_j}$$ 
In other words, the function assigned to $x_{ij}$ is precisely $\Delta_{u_{ij}\omega_j, \omega_j}$.
\end{enumerate}

The proof of these claims is a straightforward calculation. Let us carry out this calculation. The calculation is not central to this paper, so can be safely skipped.

It is convenient to choose an embedding of $Sp_{2n}$ into $SL_{2n}$. Moreover, choose the symplectic form so that $<e_i,e_{n+i}>=(-1)^i$ and all other pairings of basis elements are zero. The sign is chosen so that the representation $Sp_{2n} \hookrightarrow SL_{2n}$ preserves positive structure. Now choose a pinning such that under the embedding $Sp_{2n} \hookrightarrow SL_{2n}$,
$$E_{\alpha_n}=E_{n,2n}, F_{\alpha_n}=E_{n,2n},$$
and for $1\leq i<n$,
$$E_{\alpha_i}=E_{i,i+1}+E_{n+i+1,n+i},  F_{\alpha_i}=E_{i+1,i}+E_{n+i,n+i+1}.$$
Here $E_{i,j}$ is the $(i,j)$-elementary matrix, i.e., the matrix with a $1$ in the $(i,j)$ position and $0$ in all other positions.

With respect to this embedding, we can directly calculate $\Delta_{\omega_j, \omega_j}(x)$ where $x \in B^-$. When $x$ is embedded in $SL_{2n}$, $\Delta_{\omega_j, \omega_j}(x)$ is simply the determinant of the minor consisting of the first $j$ rows and the first $j$ columns. Similarly, one can calculate that $\Delta_{u_{ij}\omega_j, \omega_j}(x)$ is precisely the determinant of the minor of $x$ consisting of rows $2n+1-i, 2n+2-i, \dots, 2n+j-i$ (here the indices are taken modulo $2n$) and the first $j$ columns.

We then must calculate the functions $\tcfr{2n-j}{}{j}, \tcfr{n-i}{n+i-j}{j}$ and $\tcfr{n-i, 2n+i-j}{n}{j}$ on the triple of flags $(U^-, \overline{w_0}U^-, b\cdot \overline{w_0}U^-)$. Under the embedding  $Sp_{2n} \hookrightarrow SL_{2n}$, we should choose the flag $U^-$ to be be $-e_{n+1}, e_{n+2}, -e_{n+3}, dots, (-1)^n e_{2n}$, so that $\overline{w_0}U^-$ is given by the flag $e_1, e_2, \dots, e_n$. Direct calculation then shows that $\tcfr{2n-j}{}{j}, \tcfr{n-i}{n+i-j}{j}$ and $\tcfr{n-i, 2n+i-j}{n}{j}$ are given by determinants of the minors consisting of the first $j$ rows and the first $j$ columns and determinants of the minors consisting of rows $2n+1-i, 2n+2-i, \dots, 2n+j-i$ (the indices taken modulo $2n$) and the first $j$ columns.

\end{proof}

\subsection{Cactus transformation}

In this section, first we collect the necessary facts that we need about the ``tetrahedron recurrence'', which can be interpreted as a sequence of mutations on the cluster algebra for $\Conf_3 \A_{SL_{N}}$. Most of what is in this section can be found in \cite{Hen} or \cite{HK}, though our notation is somewhat different. We will relate this to our construction of the cluster structure on $\Conf_3 \A_{Sp_{2n}}$. This will be useful for computations that we do later in the paper.

The tetrahedron recurrence is really just a variation on the octahedron recurrence. It is a sequence of mutations on the cluster algebra for $\Conf_3 \A_{SL_{N}}$ that realizes the operation of replacing three principal flags by in the space $V=\C^n$ with the dual principal flags in $V^*$. Put in another way, there is an outer automorphism of $SL_N$ given by $g \rightarrow ^t g^{-1}$. This automorphism induces an automorphism $\phi$ on $\Conf_3 \A_{SL_{N}}$. It turns out (though this is not obvious), that this is an automorphism of the cluster structure on $\Conf_3 \A_{SL_{N}}$, i.e., if $x_i$ are cluster variables in one seed, then $\phi^*x_i$ will be cluster variables in another seed. The sequence of mutations we are interested in is a sequence of mutations transforming from the initial seed $x_i$ to the seed consisting of the functions $\phi^*x_i$.

The name ``tetrahedron recurrence''comes from the fact that all the cluster variables involved can be put at the integral lattice points of a tetrahedron (\cite{HK}). By performing this sequence of mutations on various triangles in the triangulation of an $m$-gon, we get the action of the {\em cactus group} on $\Conf_3 \A_{SL_{N}}$. For this reason we will call the sequence of mutations realizing the outer automorphism of $SL_N$ the ``cactus seqence.''

First we need to review the cluster algebra structure on $\Conf_3 \A_{SL_{N}}$. The quiver for this cluster algera has a vertex $x_{ijk}$ for all triples $1 \leq i, j, k \leq n$ such that $i+j+k=n$ and no more than one of $i, j, k$ is equal to $0$. The vertex $x_{ijk}$ can be placed at the point $(i,j,k)$ in the plane $i+j+k=n$. Then the quiver looks as in Figure 8 for $SL_4$.

\begin{center}
\begin{tikzpicture}[scale=2]
  \node (x310) at (-2,0) {\Large $\ontop{x_{310}}{\bullet}$};
  \node (x220) at (-2,-1) {\Large $\ontop{x_{220}}{\bullet}$};
  \node (x130) at (-2,-2) {\Large $\ontop{x_{130}}{\bullet}$};

  \node (x301) at (-1,0.5) {\Large $\ontop{x_{301}}{\bullet}$};
  \node (x211) at (-1,-0.5) {\Large $\ontop{x_{211}}{\bullet}$};
  \node (x121) at (-1,-1.5) {\Large $\ontop{x_{121}}{\bullet}$};
  \node (x031) at (-1,-2.5) {\Large $\ontop{x_{031}}{\bullet}$};

  \node (x202) at (0,0) {\Large $\ontop{x_{202}}{\bullet}$};
  \node (x112) at (0,-1) {\Large $\ontop{x_{112}}{\bullet}$};
  \node (x022) at (0,-2) {\Large $\ontop{x_{022}}{\bullet}$};

  \node (x103) at (1,-0.5) {\Large $\ontop{x_{103}}{\bullet}$};
  \node (x013) at (1,-1.5) {\Large $\ontop{x_{013}}{\bullet}$};
 
  \draw [->] (x301) to (x310);
  \draw [->] (x202) to (x211);
  \draw [->] (x211) to (x220);
  \draw [->] (x103) to (x112);
  \draw [->] (x112) to (x121);
  \draw [->] (x121) to (x130);
  \draw [->, dashed] (x013) to (x022);
  \draw [->, dashed] (x022) to (x031);

  \draw [->] (x013) to (x103);
  \draw [->] (x022) to (x112);
  \draw [->] (x112) to (x202);
  \draw [->] (x031) to (x121);
  \draw [->] (x121) to (x211);
  \draw [->] (x211) to (x301);
  \draw [->, dashed] (x130) to (x220);
  \draw [->, dashed] (x220) to (x310);

  \draw [->] (x130) to (x031);
  \draw [->] (x220) to (x121);
  \draw [->] (x121) to (x022);
  \draw [->] (x310) to (x211);
  \draw [->] (x211) to (x112);
  \draw [->] (x112) to (x013);
  \draw [->, dashed] (x301) to (x202);
  \draw [->, dashed] (x202) to (x103);

\draw[yshift=-3cm]
  node[below,text width=6cm] 
  {
  Figure 8. The functions and quiver for the cluster algebra on $\Conf_3 \A_{SL_{4}}$.
  };

\end{tikzpicture}
\end{center}

We have used dotted arrows in line with the conventions above for amalgamation. The function attached to $x_{ijk}$ will then be $\tcfr{i}{j}{k}$ in the notation of section 3.1. The frozen vertices are $x_{i,N-i,0}, x_{0,j,N-j}, x_{i,0,N-i}$.

Now we describe the sequence of mutations. First mutate $x_{N-2,1,1}$. Then mutate $x_{N-3,2,1}$ and $x_{N-3,1,2}$. Then mutate $x_{N-4,3,1}, x_{N-4,2,2}, x_{N-4,1,3}$. Continue in this manner, until we mutate $x_{1,1,N-2}$. Then we start the sequence over again by mutating $x_{N-2,1,1}$, but the second time through, we stop at $x_{2,1,N-3}$. The third time through, we stop at $x_{3,1,N-4}$.
We continue in this manner so that the whole sequence of mutations is
$$x_{N-2,1,1}, x_{N-3,2,1}, x_{N-3,1,2}, x_{N-4,3,1}, x_{N-4,2,2}, x_{N-4,1,3}, \dots x_{1,N-2,1}, \dots x_{1,1,N-2},$$
$$x_{N-2,1,1}, x_{N-3,2,1}, x_{N-3,1,2}, x_{N-4,3,1}, x_{N-4,2,2}, x_{N-4,1,3}, \dots x_{2,N-3,1}, \dots x_{2,1,N-3},$$
$$x_{N-2,1,1}, x_{N-3,2,1}, x_{N-3,1,2}, x_{N-4,3,1}, x_{N-4,2,2}, x_{N-4,1,3}, \dots x_{3,1,N-4},$$
$$\dots$$
$$x_{N-2,1,1},  x_{N-3,2,1}, x_{N-3,1,2},$$
$$x_{N-2,1,1}.$$

Perhaps it is more useful to describe the sequence in another way. Think of the quiver for $\Conf_3 \A_{SL_{N}}$ as consisting of rows, where row $r$ consists of all $x_{ijk}$ where $i=r$. Then for $r=N-2, N-3, \dots 2, 1$, there are $N-1-r$ vertices in that row. We start by mutating the one vertex in row $N-2$, then the two vertices in row $N-3$, then the three vertices in row $N-4$, etc. The sequence of rows that we mutate is

$$N-2, N-3, N-4, \dots, 2, 1, N-2, N-3, \dots, 3, 2, N-2, \dots, 3, \dots, \dots, N-2, N-3, N-2.$$
There are $(N-2)(N-1)/2$ terms in the above list. This gives a total of $N(N-1)(N_2)/6$ mutations. Let us think of the sequence of mutations as happening in $N-2$ stages, where at stage $r$ we mutate all the vertices in rows $N-2, N-3, \dots, r$, in that order. It turns out that because of how the quiver transforms, the mutations in any given row can be performed in any order.

There is actually even more freedom in the order in which we perform mutations. Suppose at some point in the sequence of mutations, we want to mutate row $i$ for the $j^{\textrm th}$ time. It turns out that any vertex in row $i$ can be mutated only after

\begin{itemize}
\item the two vertices in row $i-1$ directly below it have been mutated $j-1$ times and
\item the two vertices in row $i+1$ directly above it have been mutated $j$ times.
\end{itemize}

The combinatorics of this will become clear after we analyze the how the quiver transforms under mutations. Thus we have many equivalent sequences of mutations.

For example, we could mutate the rows
$$N-2, N-3, N-2, N-4, N-3, N-2, N-5, \dots, N-2, N-6, \dots, \dots, 1, 2, 3, \dots, N-2.$$
getting the mutation sequence
$$x_{N-2,1,1}, x_{N-3,2,1}, x_{N-3,1,2}, x_{N-2,1,1}, x_{N-4,3,1}, x_{N-4,2,2}, x_{N-4,1,3}, x_{N-3,2,1}, x_{N-3,1,2}, x_{N-2,1,1}$$
$$\dots, x_{1,N-2,1}, \dots, x_{1,1,N-2}, x_{2,N-3,1}, \dots, x_{2,1,N-3}, x_{3,N-4,1}, \dots, \dots, x_{N-2,1,1}.$$

Let us now analyze how the quiver transforms. In Figure 9 we picture the quiver for $SL_5$, the result after mutating row $3$, the result after mutating rows $3, 2$, the result after and the result after mutating rows $3, 2, 1$. The pattern should be clear for $SL_N$.

\begin{center}
\begin{tikzpicture}[scale=2]
  \node (x410) at (-2,0) {$\dud{4}{1}$};
  \node (x320) at (-2,-1) {$\dud{3}{2}$};
  \node (x230) at (-2,-2) {$\dud{2}{3}$};
  \node (x140) at (-2,-3) {$\dud{1}{4}$};

  \node (x401) at (-1,0.5) {$\tcfr{4}{}{1}$};
  \node (x311) at (-1,-0.5) {$\tcfr{3}{1}{1}$};
  \node (x221) at (-1,-1.5) {$\tcfr{2}{2}{1}$};
  \node (x131) at (-1,-2.5) {$\tcfr{1}{3}{1}$};
  \node (x041) at (-1,-3.5) {$\tcfr{}{4}{1}$};

  \node (x302) at (0,0) {$\tcfr{3}{}{2}$};
  \node (x212) at (0,-1) {$\tcfr{2}{1}{2}$};
  \node (x122) at (0,-2) {$\tcfr{1}{2}{2}$};
  \node (x032) at (0,-3) {$\tcfr{}{3}{2}$};

  \node (x203) at (1,-0.5) {$\tcfr{2}{}{3}$};
  \node (x113) at (1,-1.5) {$\tcfr{1}{1}{3}$};
  \node (x023) at (1,-2.5) {$\tcfr{}{2}{3}$};

  \node (x104) at (2,-1) {$\tcfr{1}{}{4}$};
  \node (x014) at (2,-2) {$\tcfr{}{1}{4}$};

  \draw [->] (x014) to (x104);
  \draw [->] (x023) to (x113);
  \draw [->] (x113) to (x203);
  \draw [->] (x032) to (x122);
  \draw [->] (x122) to (x212);
  \draw [->] (x212) to (x302);
  \draw [->] (x041) to (x131);
  \draw [->] (x131) to (x221);
  \draw [->] (x221) to (x311);
  \draw [->] (x311) to (x401);
  \draw [->, dashed] (x140) to (x230);
  \draw [->, dashed] (x230) to (x320);
  \draw [->, dashed] (x320) to (x410);

  \draw [->] (x401) to (x410);
  \draw [->] (x302) to (x311);
  \draw [->] (x311) to (x320);
  \draw [->] (x203) to (x212);
  \draw [->] (x212) to (x221);
  \draw [->] (x221) to (x230);
  \draw [->] (x104) to (x113);
  \draw [->] (x113) to (x122);
  \draw [->] (x122) to (x131);
  \draw [->] (x131) to (x140);
  \draw [->, dashed] (x014) to (x023);
  \draw [->, dashed] (x023) to (x032);
  \draw [->, dashed] (x032) to (x041);

  \draw [->] (x140) to (x041);
  \draw [->] (x230) to (x131);
  \draw [->] (x131) to (x032);
  \draw [->] (x320) to (x221);
  \draw [->] (x221) to (x122);
  \draw [->] (x122) to (x023);
  \draw [->] (x410) to (x311);
  \draw [->] (x311) to (x212);
  \draw [->] (x212) to (x113);
  \draw [->] (x113) to (x014);
  \draw [->, dashed] (x401) to (x302);
  \draw [->, dashed] (x302) to (x203);
  \draw [->, dashed] (x203) to (x104);

\begin{scope}[yshift=-5cm]
  \node (x410) at (-2,0) {$\dud{3}{2}$};
  \node (x320) at (-2,-1) {$\tcfr{4}{}{1}$};
  \node (x230) at (-2,-2) {$\dud{2}{3}$};
  \node (x140) at (-2,-3) {$\dud{1}{4}$};

  \node (x401) at (-1,0.5) {$\tcfr{3}{}{2}$};
  \node (x311) at (-1,-0.5) {$\boldsymbol{\tcfr{2,4}{2}{2}}$};
  \node (x221) at (-1,-1.5) {$\tcfr{2}{2}{1}$};
  \node (x131) at (-1,-2.5) {$\tcfr{1}{3}{1}$};
  \node (x041) at (-1,-3.5) {$\tcfr{}{4}{1}$};

  \node (x302) at (0,0) {$\dud{4}{1}$};
  \node (x212) at (0,-1) {$\tcfr{2}{1}{2}$};
  \node (x122) at (0,-2) {$\tcfr{1}{2}{2}$};
  \node (x032) at (0,-3) {$\tcfr{}{3}{2}$};

  \node (x203) at (1,-0.5) {$\tcfr{2}{}{3}$};
  \node (x113) at (1,-1.5) {$\tcfr{1}{1}{3}$};
  \node (x023) at (1,-2.5) {$\tcfr{}{2}{3}$};

  \node (x104) at (2,-1) {$\tcfr{1}{}{4}$};
  \node (x014) at (2,-2) {$\tcfr{}{1}{4}$};

  \draw [->] (x014) to (x104);
  \draw [->] (x023) to (x113);
  \draw [->] (x113) to (x203);
  \draw [->] (x032) to (x122);
  \draw [->] (x122) to (x212);
  \draw [->] (x302) to (x212);
  \draw [->] (x041) to (x131);
  \draw [->] (x131) to (x221);
  \draw [->] (x311) to (x221);
  \draw [->] (x311) to (x401);
  \draw [->, dashed] (x140) to (x230);

  \draw [->] (x401) to (x410);
  \draw [->] (x320) to (x311);
  \draw [->] (x311) to (x302);
  \draw [->] (x203) to (x212);
  \draw [->] (x221) to (x230);
  \draw [->] (x104) to (x113);
  \draw [->] (x113) to (x122);
  \draw [->] (x122) to (x131);
  \draw [->] (x131) to (x140);
  \draw [->, dashed] (x014) to (x023);
  \draw [->, dashed] (x023) to (x032);
  \draw [->, dashed] (x032) to (x041);

  \draw [->] (x140) to (x041);
  \draw [->] (x230) to (x131);
  \draw [->] (x131) to (x032);
  \draw [->] (x221) to (x320);
  \draw [->] (x221) to (x122);
  \draw [->] (x122) to (x023);
  \draw [->] (x410) to (x311);
  \draw [->] (x212) to (x311);
  \draw [->] (x212) to (x113);
  \draw [->] (x113) to (x014);
  \draw [->, dashed] (x203) to (x104);

  \draw [->, dashed] (x230) .. controls +(180:1.5) and +(180:1.5) .. (x410);
  \draw [->, dashed] (x401) .. controls +(150:1.5) and +(150:1.5) .. (x320);
  \draw [->, dashed] (x302) .. controls +(up:1.5) and +(up:1.5) .. (x410);
  \draw [->, dashed] (x203) .. controls +(60:1.5) and +(60:1.5) .. (x401);

\draw[yshift=-4cm]
  node[below,text width=6cm] 
  {
  Figure 9a. The $SL_5$ quiver before mutation and after mutating row $3$.
  };

\end{scope}

\end{tikzpicture}
\end{center}

\begin{center}
\begin{tikzpicture}[scale=2]
  \node (x410) at (-2,0) {$\dud{3}{2}$};
  \node (x320) at (-2,-1) {$\dud{2}{3}$};
  \node (x230) at (-2,-2) {$\tcfr{4}{}{1}$};
  \node (x140) at (-2,-3) {$\dud{1}{4}$};

  \node (x401) at (-1,0.5) {$\tcfr{3}{}{2}$};
  \node (x311) at (-1,-0.5) {$\tcfr{2,4}{2}{2}$};
  \node (x221) at (-1,-1.5) {$\boldsymbol{\tcfr{1,4}{3}{2}}$};
  \node (x131) at (-1,-2.5) {$\tcfr{1}{3}{1}$};
  \node (x041) at (-1,-3.5) {$\tcfr{}{4}{1}$};

  \node (x302) at (0,0) {$\tcfr{2}{}{3}$};
  \node (x212) at (0,-1) {$\boldsymbol{\tcfr{1,4}{2}{3}}$};
  \node (x122) at (0,-2) {$\tcfr{1}{2}{2}$};
  \node (x032) at (0,-3) {$\tcfr{}{3}{2}$};

  \node (x203) at (1,-0.5) {$\dud{4}{1}$};
  \node (x113) at (1,-1.5) {$\tcfr{1}{1}{3}$};
  \node (x023) at (1,-2.5) {$\tcfr{}{2}{3}$};

  \node (x104) at (2,-1) {$\tcfr{1}{}{4}$};
  \node (x014) at (2,-2) {$\tcfr{}{1}{4}$};

  \draw [->] (x014) to (x104);
  \draw [->] (x023) to (x113);
  \draw [->] (x203) to (x113);
  \draw [->] (x032) to (x122);
  \draw [->] (x212) to (x122);
  \draw [->] (x212) to (x302);
  \draw [->] (x041) to (x131);
  \draw [->] (x221) to (x131);
  \draw [->] (x221) to (x311);
  \draw [->] (x311) to (x401);
  \draw [->, dashed] (x320) to (x410);

  \draw [->] (x401) to (x410);
  \draw [->] (x302) to (x311);
  \draw [->] (x311) to (x320);
  \draw [->] (x212) to (x203);
  \draw [->] (x230) to (x221);
  \draw [->] (x104) to (x113);
  \draw [->] (x131) to (x140);
  \draw [->, dashed] (x014) to (x023);
  \draw [->, dashed] (x023) to (x032);
  \draw [->, dashed] (x032) to (x041);

  \draw [->] (x140) to (x041);
  \draw [->] (x131) to (x230);
  \draw [->] (x131) to (x032);
  \draw [->] (x320) to (x221);
  \draw [->] (x122) to (x221);
  \draw [->] (x122) to (x023);
  \draw [->] (x410) to (x311);
  \draw [->] (x311) to (x212);
  \draw [->] (x113) to (x212);
  \draw [->] (x113) to (x014);
  \draw [->, dashed] (x401) to (x302);

  \draw [->, dashed] (x140) .. controls +(180:1.5) and +(180:1.5) .. (x320);
  \draw [->, dashed] (x401) .. controls +(160:1.7) and +(160:1.7) .. (x230);
  \draw [->, dashed] (x203) .. controls +(80:1.7) and +(80:1.7) .. (x410);
  \draw [->, dashed] (x302) .. controls +(60:1.5) and +(60:1.5) .. (x104);

\begin{scope}[yshift=-5cm]
  \node (x410) at (-2,0) {$\dud{3}{2}$};
  \node (x320) at (-2,-1) {$\dud{2}{3}$};
  \node (x230) at (-2,-2) {$\dud{1}{4}$};
  \node (x140) at (-2,-3) {$\tcfr{4}{}{1}$};

  \node (x401) at (-1,0.5) {$\tcfr{3}{}{2}$};
  \node (x311) at (-1,-0.5) {$\tcfr{2,4}{2}{2}$};
  \node (x221) at (-1,-1.5) {$\tcfr{1,4}{3}{2}$};
  \node (x131) at (-1,-2.5) {$\boldsymbol{\tcfr{4}{4}{2}}$};
  \node (x041) at (-1,-3.5) {$\tcfr{}{4}{1}$};

  \node (x302) at (0,0) {$\tcfr{2}{}{3}$};
  \node (x212) at (0,-1) {$\tcfr{1,4}{2}{3}$};
  \node (x122) at (0,-2) {$\boldsymbol{\tcfr{4}{3}{3}}$};
  \node (x032) at (0,-3) {$\tcfr{}{3}{2}$};

  \node (x203) at (1,-0.5) {$\tcfr{1}{}{4}$};
  \node (x113) at (1,-1.5) {$\boldsymbol{\tcfr{4}{2}{4}}$};
  \node (x023) at (1,-2.5) {$\tcfr{}{2}{3}$};

  \node (x104) at (2,-1) {$\dud{4}{1}$};
  \node (x014) at (2,-2) {$\tcfr{}{1}{4}$};

  \draw [->] (x104) to (x014);
  \draw [->] (x113) to (x023);
  \draw [->] (x113) to (x203);
  \draw [->] (x122) to (x032);
  \draw [->] (x122) to (x212);
  \draw [->] (x212) to (x302);
  \draw [->] (x131) to (x041);
  \draw [->] (x131) to (x221);
  \draw [->] (x221) to (x311);
  \draw [->] (x311) to (x401);
  \draw [->, dashed] (x230) to (x320);
  \draw [->, dashed] (x320) to (x410);

  \draw [->] (x401) to (x410);
  \draw [->] (x302) to (x311);
  \draw [->] (x311) to (x320);
  \draw [->] (x203) to (x212);
  \draw [->] (x212) to (x221);
  \draw [->] (x221) to (x230);
  \draw [->] (x113) to (x104);
  \draw [->] (x140) to (x131);
  \draw [->, dashed] (x023) to (x014);
  \draw [->, dashed] (x032) to (x023);
  \draw [->, dashed] (x041) to (x032);

  \draw [->] (x041) to (x140);
  \draw [->] (x230) to (x131);
  \draw [->] (x032) to (x131);
  \draw [->] (x320) to (x221);
  \draw [->] (x221) to (x122);
  \draw [->] (x023) to (x122);
  \draw [->] (x410) to (x311);
  \draw [->] (x311) to (x212);
  \draw [->] (x212) to (x113);
  \draw [->] (x014) to (x113);
  \draw [->, dashed] (x401) to (x302);
  \draw [->, dashed] (x302) to (x203);

  \draw [->, dashed] (x401) .. controls +(170:1.9) and +(170:1.9) .. (x140);
  \draw [->, dashed] (x104) .. controls +(70:1.9) and +(70:1.9) .. (x410);

\draw[yshift=-4cm]
  node[below,text width=6cm] 
  {
  Figure 9b. The $SL_5$ quiver after mutating rows $3, 2$, and after mutating rows $3, 2, 1$. This gives the first stage of the cactus mutation sequence.
  };

\end{scope}

\end{tikzpicture}
\end{center}

Note that at each stage, we rearrange some of the frozen vertices to make the structure of the quiver more transparent. The non-frozen vertices, however, do not move.

We see that after the mutations in rows $N-2, N-3, \dots 1$ have been performed, we end up with the quiver for $SL_{N-1}$ in the top $N-1$ rows of the diagram. We may then inductively see that after each descending sequence of rows has been mutated, we get the quivers pictured in Figure 10

\begin{center}
\begin{tikzpicture}[scale=2]
  \node (x410) at (-2,0) {$\dud{2}{3}$};
  \node (x320) at (-2,-1) {$\dud{1}{4}$};
  \node (x230) at (-2,-2) {$\tcfr{3}{}{2}$};
  \node (x140) at (-2,-3) {$\tcfr{4}{}{1}$};

  \node (x401) at (-1,0.5) {$\tcfr{2}{}{3}$};
  \node (x311) at (-1,-0.5) {$\boldsymbol{\tcfr{1,3}{3}{3}}$};
  \node (x221) at (-1,-1.5) {$\boldsymbol{\tcfr{3}{4}{3}}$};
  \node (x131) at (-1,-2.5) {$\tcfr{4}{4}{2}$};
  \node (x041) at (-1,-3.5) {$\tcfr{}{4}{1}$};

  \node (x302) at (0,0) {$\tcfr{1}{}{4}$};
  \node (x212) at (0,-1) {$\boldsymbol{\tcfr{3}{3}{4}}$};
  \node (x122) at (0,-2) {$\tcfr{4}{3}{3}$};
  \node (x032) at (0,-3) {$\tcfr{}{3}{2}$};

  \node (x203) at (1,-0.5) {$\dud{3}{2}$};
  \node (x113) at (1,-1.5) {$\tcfr{4}{2}{4}$};
  \node (x023) at (1,-2.5) {$\tcfr{}{2}{3}$};

  \node (x104) at (2,-1) {$\dud{4}{1}$};
  \node (x014) at (2,-2) {$\tcfr{}{1}{4}$};

  \draw [->] (x104) to (x014);
  \draw [->] (x113) to (x023);
  \draw [->] (x203) to (x113);
  \draw [->] (x122) to (x032);
  \draw [->] (x212) to (x122);
  \draw [->] (x212) to (x302);
  \draw [->] (x131) to (x041);
  \draw [->] (x221) to (x131);
  \draw [->] (x221) to (x311);
  \draw [->] (x311) to (x401);
  \draw [->, dashed] (x230) to (x140);
  \draw [->, dashed] (x320) to (x410);

  \draw [->] (x401) to (x410);
  \draw [->] (x302) to (x311);
  \draw [->] (x311) to (x320);
  \draw [->] (x212) to (x203);
  \draw [->] (x230) to (x221);
  \draw [->] (x113) to (x104);
  \draw [->] (x122) to (x113);
  \draw [->] (x131) to (x122);
  \draw [->] (x140) to (x131);
  \draw [->, dashed] (x023) to (x014);
  \draw [->, dashed] (x032) to (x023);
  \draw [->, dashed] (x041) to (x032);

  \draw [->] (x041) to (x140);
  \draw [->] (x131) to (x230);
  \draw [->] (x032) to (x131);
  \draw [->] (x320) to (x221);
  \draw [->] (x122) to (x221);
  \draw [->] (x023) to (x122);
  \draw [->] (x410) to (x311);
  \draw [->] (x311) to (x212);
  \draw [->] (x113) to (x212);
  \draw [->] (x014) to (x113);
  \draw [->, dashed] (x401) to (x302);
  \draw [->, dashed] (x104) to (x203);

  \draw [->, dashed] (x401) .. controls +(160:1.9) and +(160:1.9) .. (x230);
  \draw [->, dashed] (x203) .. controls +(80:1.9) and +(80:1.9) .. (x410);

\begin{scope}[yshift=-5cm]
  \node (x410) at (-2,0) {$\tcfr{1}{}{4}$};
  \node (x320) at (-2,-1) {$\tcfr{2}{}{3}$};
  \node (x230) at (-2,-2) {$\tcfr{3}{}{2}$};
  \node (x140) at (-2,-3) {$\tcfr{4}{}{1}$};

  \node (x401) at (-1,0.5) {$\dud{1}{4}$};
  \node (x311) at (-1,-0.5) {$\boldsymbol{\tcfr{2}{4}{4}}$};
  \node (x221) at (-1,-1.5) {$\tcfr{3}{4}{3}$};
  \node (x131) at (-1,-2.5) {$\tcfr{4}{4}{2}$};
  \node (x041) at (-1,-3.5) {$\tcfr{}{4}{1}$};

  \node (x302) at (0,0) {$\dud{2}{3}$};
  \node (x212) at (0,-1) {$\tcfr{3}{3}{4}$};
  \node (x122) at (0,-2) {$\tcfr{4}{3}{3}$};
  \node (x032) at (0,-3) {$\tcfr{}{3}{2}$};

  \node (x203) at (1,-0.5) {$\dud{3}{2}$};
  \node (x113) at (1,-1.5) {$\tcfr{4}{2}{4}$};
  \node (x023) at (1,-2.5) {$\tcfr{}{2}{3}$};

  \node (x104) at (2,-1) {$\dud{4}{1}$};
  \node (x014) at (2,-2) {$\tcfr{}{1}{4}$};

  \draw [->] (x104) to (x014);
  \draw [->] (x113) to (x023);
  \draw [->] (x203) to (x113);
  \draw [->] (x122) to (x032);
  \draw [->] (x212) to (x122);
  \draw [->] (x302) to (x212);
  \draw [->] (x131) to (x041);
  \draw [->] (x221) to (x131);
  \draw [->] (x311) to (x221);
  \draw [->] (x401) to (x311);
  \draw [->, dashed] (x230) to (x140);
  \draw [->, dashed] (x320) to (x230);
  \draw [->, dashed] (x410) to (x320);

  \draw [->] (x410) to (x401);
  \draw [->] (x311) to (x302);
  \draw [->] (x320) to (x311);
  \draw [->] (x212) to (x203);
  \draw [->] (x221) to (x212);
  \draw [->] (x230) to (x221);
  \draw [->] (x113) to (x104);
  \draw [->] (x122) to (x113);
  \draw [->] (x131) to (x122);
  \draw [->] (x140) to (x131);
  \draw [->, dashed] (x023) to (x014);
  \draw [->, dashed] (x032) to (x023);
  \draw [->, dashed] (x041) to (x032);

  \draw [->] (x041) to (x140);
  \draw [->] (x131) to (x230);
  \draw [->] (x032) to (x131);
  \draw [->] (x221) to (x320);
  \draw [->] (x122) to (x221);
  \draw [->] (x023) to (x122);
  \draw [->] (x311) to (x410);
  \draw [->] (x212) to (x311);
  \draw [->] (x113) to (x212);
  \draw [->] (x014) to (x113);
  \draw [->, dashed] (x302) to (x401);
  \draw [->, dashed] (x203) to (x302);
  \draw [->, dashed] (x104) to (x203);

\draw[yshift=-4cm]
  node[below,text width=6cm] 
  {
  Figure 10 The $SL_5$ quiver after stages $2$ and $3$ of the cactus sequence
  };

\end{scope}

\end{tikzpicture}
\end{center}

The final quiver we arrive at is essentially the original quiver with arrows reversed (additionally, frozen vertices have moved and some arrows have changed between frozen vertices).

Now we must understand the functions attached to the vertices at the quiver at various stages. Note that $x_{ijk}$ is mutated $i$ times.

\begin{prop} The function associated to $x_{ijk}$ tranforms as  follows:

$$\tcfr{i}{j}{k} \rightarrow \tcfr{n-1, i-1}{j+1}{k+1} \rightarrow \tcfr{n-2, i-2}{j+2}{k+2} \rightarrow \cdots $$
$$\rightarrow \tcfr{n-i+1, 1}{j+i-1}{k+i-1} \rightarrow \tcfr{n-i}{j+i}{k+i}=\tcfr{n-i}{n-k}{n-j}.$$
If we take the convention that $\tcfr{2n, a}{b}{c}=\tcfr{a}{b}{c}=\tcfr{a, 0}{b}{c}$, the pattern above is somewhat clearer.
\end{prop}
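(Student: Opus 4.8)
Write $N$ for the size of the special linear group at hand — the integer called $n$ in the statement, equal to $2n$ in the set-up of this section — so that $\Conf_3\A_{SL_N}$ has a vertex $x_{ijk}$ for every triple with $i+j+k=N$, carrying the web invariant $\tcfr{i}{j}{k}$ of \S3.1. The plan is to prove the claim by induction on the position of a mutation within the cactus sequence, performed in the canonical order: stage by stage, and within each stage row by row from the top. Since $x_{ijk}$ is mutated exactly $i$ times — once during each of stages $1,\dots,i$ — it suffices to show that just before its $t$-th mutation ($1\le t\le i$, occurring in stage $t$) the cluster variable at $x_{ijk}$ is $\tcfr{N-t+1,\,i-t+1}{\,j+t-1}{\,k+t-1}$, and that this mutation replaces it by $\tcfr{N-t,\,i-t}{\,j+t}{\,k+t}$. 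The displayed chain of functions is then precisely the list $\tcfr{N-t,\,i-t}{\,j+t}{\,k+t}$ for $t=0,1,\dots,i$; its two endpoints use the degeneracy conventions of the statement: $\tcfr{N,i}{j}{k}=\tcfr{i}{j}{k}$ at $t=0$, and at $t=i$, since the second upper index becomes $0$, $\tcfr{N-i,0}{j+i}{k+i}=\tcfr{N-i}{j+i}{k+i}=\tcfr{N-i}{N-k}{N-j}$, the last equality being $j+i=N-k$, $k+i=N-j$ (equivalently $i+j+k=N$).

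The inductive step rests on two inputs. The first is the local shape of the quiver at the moment $x_{ijk}$ is mutated for the $t$-th time: by the quiver-evolution analysis carried out above (Figures 8--10 and the surrounding discussion), at that moment the non-frozen neighbours of $x_{ijk}$ lie among its two neighbours one row up — which have by then undergone $t$ mutations — and its two neighbours one row down — which have undergone $t-1$; any other neighbour is frozen and still carries its original cluster variable (frozen $\A$-coordinates being inert under mutation at non-frozen indices), and every entry $b_{\bullet\bullet}$ entering the exchange relation equals $\pm1$. The cleanest way I know to make all of this rigorous, and the organisation I would adopt, is a double induction: first show that stage $1$ (mutating rows $N-2,N-3,\dots,1$ in order) converts the $\Conf_3\A_{SL_N}$ seed into the $\Conf_3\A_{SL_{N-1}}$ seed occupying its top $N-1$ rows — with cluster variables shifted exactly as the $t=1$ formula predicts — while the bottom row has already reached its final $\tcfr{N-1}{N-k}{N-j}$-type values; stages $2,\dots,N-2$ are then literally the cactus sequence for $SL_{N-1}$, and one inducts on $N$, the base case $N=2$ being a single mutation.

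Granting this, the $t$-th mutation of $x_{ijk}$ is governed by the exchange relation \eqref{eq:Atrans}, which here reads
\[
\tcfr{N-t+1,\,i-t+1}{\,j+t-1}{\,k+t-1}\cdot\tcfr{N-t,\,i-t}{\,j+t}{\,k+t}\;=\;M_{+}+M_{-},
\]
the monomials $M_{\pm}$ being the products of neighbouring cluster variables prescribed by \eqref{eq:Atrans}. Substituting the (inductively known) functions of those neighbours, each of $M_{+}$, $M_{-}$ becomes an explicit product of web invariants of the forms $\tcfr{\,\cdot\,,\,\cdot\,}{\,\cdot\,}{\,\cdot\,}$ and $\tcfr{\,\cdot\,}{\,\cdot\,}{\,\cdot\,}$ and, on the boundary, $\dud{a}{b}$, so the whole proposition reduces to one uniform family of identities among $SL_N$-web invariants — a four-parameter generalisation of the three-term Pl\"ucker relation, equivalently of the octahedron recurrence and its tetrahedral variant. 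Each such identity can be checked either inside the $SL_N$ spider by resolving the relevant crossing with the square-switch skein move, or — to sidestep web sign conventions entirely — by a direct wedge-product computation from the concrete descriptions of $\tcfr{a,b}{c}{d}$, $\tcfr{a}{b}{c}$ and $\dud{a}{b}$ given in \S3.1.

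I expect the genuine difficulty to be neither any single web identity nor any single exchange relation, but the bookkeeping that ties them together across all $N(N-1)(N-2)/6$ mutations: keeping exact track of the quiver — in particular of the frozen vertices, which are permuted and whose mutual arrows change at every stage even though their cluster variables do not — so that the neighbours of $x_{ijk}$ at its $t$-th mutation really are the asserted ones with the asserted functions. Much of this is already contained in the analysis of the tetrahedron recurrence and the resulting cactus-group action in \cite{HK} and \cite{Hen} — where the intermediate cluster variables sit at the lattice points of a tetrahedron and encode the Sch\"utzenberger/evacuation involution — so a shorter alternative to the self-contained argument above is to match the quiver and functions of \S3.3 with theirs and transcribe their result, leaving only the translation between the crystal- or hive-theoretic description there and the web-theoretic description in the statement.
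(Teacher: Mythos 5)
Your proposal is correct and follows essentially the same route as the paper: rely on the already-established evolution of the quiver (including the observation that stage one reduces the $SL_N$ seed to the $SL_{N-1}$ seed in the top rows, so the whole sequence is controlled inductively), and reduce each exchange relation to the four-parameter octahedron-recurrence identity $\tcfr{a,b}{c}{d}\tcfr{a-1,b-1}{c+1}{d+1}=\tcfr{a-1,b}{c+1}{d}\tcfr{a,b-1}{c}{d+1}+\tcfr{a-1,b}{c}{d+1}\tcfr{a,b-1}{c+1}{d}$ together with its degenerations via $\tcfr{N,b}{c}{d}=\tcfr{b}{c}{d}$ and the two factorization identities, all verified by webs or direct wedge computations. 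The paper likewise organizes the local exchange relations into the four types of Figure 11 and notes the same readiness condition on the neighbours one row above and below, so your bookkeeping matches its argument.
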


\begin{proof}
We have already described the quivers at the various stages of mutation. We must then check that the functions above satisfy the identities of the associated cluster transformations.

We need the following facts:
\begin{itemize}
\item Let $1 \leq a, b, c, d \leq N$, and $a+b+c+d=2N$. 
$$\tcfr{a,b}{c}{d}\tcfr{a-1,b-1}{c+1}{d+1}=$$
$$\tcfr{a-1,b}{c+1}{d}\tcfr{a,b-1}{c}{d+1}+\tcfr{a-1,b}{c}{d+1}\tcfr{a,b-1}{c+1}{d}.$$
\item If $a+c=b+d=N$,
$$\tcfr{a,b}{c}{d}=\dud{a}{c}\tcfr{b}{}{d}.$$
\item If $a+d=b+c=N$,
$$\tcfr{a,b}{c}{d}=\tcfr{a}{}{d}\dud{b}{c}.$$
\item If $b+c+d=N$, then
$$\tcfr{N,b}{c}{d}=\tcfr{b}{c}{d}.$$
\end{itemize}

The first identity is a close relative of the octahedron recurrence. All the identities can be checked using webs and skein relations.

Most of the cluster mutations are precisely the first identity:
$$\tcfr{a,b}{c}{d}\tcfr{a-1,b-1}{c+1}{d+1}=\tcfr{a-1,b}{c+1}{d}\tcfr{a,b-1}{c}{d+1}+\tcfr{a-1,b}{c}{d+1}\tcfr{a,b-1}{c+1}{d}.$$

All other cluster mutations are degenerations of this identity, and are obtained from this one by applying the other three identities. For example, the first mutation is
$$\tcfr{N-2}{1}{1}\tcfr{N-1,N-3}{2}{2}=\tcfr{N,N-2}{1}{1}\tcfr{N-1,N-3}{2}{2}=$$
$$\tcfr{N-1,N-2}{2}{1}\tcfr{N,N-3}{1}{2}+\tcfr{N-1,N-2}{1}{2}\tcfr{N,N-3}{2}{1}=$$
$$\tcfr{N-1}{}{1}\dud{N-2}{2}\tcfr{N-3}{1}{2}+\dud{N-1}{1}\tcfr{N-2}{}{2}\tcfr{N-3}{2}{1}.$$

Locally, all cluster mutations come in the four types depicted in Figure 11.

\begin{center}
\begin{tikzpicture}[scale=2.5]

\begin{scope}[xshift=-1.7cm]
  \node (0) at (0,0) {$\tcfr{a,b}{c}{d}$};
  \node (2) at (60:1) {$\tcfr{a-1,b}{c}{d+1}$};
  \node (3) at (120:1) {$\tcfr{a-1,b}{c+1}{d}$};
  \node (5) at (240:1) {$\tcfr{b-1}{c+1}{d}$};
  \node (6) at (300:1) {$\tcfr{b-1}{c}{d+1}$};

  \draw [->] (2) to (0);
  \draw [->] (0) to (3);
  \draw [->] (5) to (0);
  \draw [->] (0) to (6);
\end{scope}

\begin{scope}[xshift=1.7cm]
  \node (0) at (0,0) {$\tcfr{a,b}{c}{d}$};
  \node (2) at (60:1) {$\tcfr{a-1,b}{c}{d+1}$};
  \node (3) at (120:1) {$\tcfr{a-1}{}{d}$};
  \node (4) at (180:1) {$\dud{b}{c+1}$};
  \node (5) at (240:1) {$\tcfr{b-1}{c+1}{d}$};
  \node (6) at (300:1) {$\tcfr{b-1}{c}{d+1}$};

  \draw [->] (2) to (0);
  \draw [->] (0) to (3);
  \draw [->] (0) to (4);
  \draw [->] (5) to (0);
  \draw [->] (0) to (6);

\end{scope}

\begin{scope}[xshift=-1.7cm,yshift=-2.5cm]
  \node (0) at (0,0) {$\tcfr{a,b}{c}{d}$};
  \node (1) at (0:1) {$\tcfr{b}{}{d+1}$};
  \node (2) at (60:1) {$\dud{a-1}{c}$};
  \node (3) at (120:1) {$\tcfr{a-1,b}{c+1}{d}$};
  \node (5) at (240:1) {$\tcfr{b-1}{c+1}{d}$};
  \node (6) at (300:1) {$\tcfr{b-1}{c}{d+1}$};

  \draw [->] (1) to (0);
  \draw [->] (2) to (0);
  \draw [->] (0) to (3);
  \draw [->] (5) to (0);
  \draw [->] (0) to (6);
\end{scope}

\begin{scope}[xshift=1.7cm,yshift=-2.5cm]
  \node (0) at (0,0) {$\tcfr{a,b}{c}{d}$};
  \node (1) at (0:1) {$\tcfr{b}{}{d+1}$};
  \node (2) at (60:1) {$\dud{a-1}{c}$};
  \node (3) at (120:1) {$\tcfr{a-1}{}{d}$};
  \node (4) at (180:1) {$\dud{b}{c+1}$};
  \node (5) at (240:1) {$\tcfr{b-1}{c+1}{d}$};
  \node (6) at (300:1) {$\tcfr{b-1}{c}{d+1}$};

  \draw [->] (1) to (0);
  \draw [->] (2) to (0);
  \draw [->] (0) to (3);
  \draw [->] (0) to (4);
  \draw [->] (5) to (0);
  \draw [->] (0) to (6);

\end{scope}

\draw[yshift=-3.8cm]
  node[below,text width=6cm] 
  {
  Figure 11. The four local cluster transformations in the cactus sequence.
  };

\end{tikzpicture}
\end{center}

Note that in line with what was said above, the terms $\tcfr{a-1,b}{c+1}{d}$ and $\tcfr{a-1,b}{c}{d+1}$ will lie directly above $\tcfr{a,b}{c}{d}$, while the terms $\tcfr{a,b-1}{c}{d+1}$ and $\tcfr{a,b-1}{c+1}{d}$ will lie directly below $\tcfr{a,b}{c}{d}$, so that vertex is ready for mutation exactly when the vertices directly above and below have been mutated the appropriate number of times.

\end{proof}

We now specialize to $N=2n$. The cactus sequence will allow us to give an alternative way to construct the cluster algebra structure for $\Conf_3 \A_{Sp_{2n}}$. We perform the following sequence of mutations on $\Conf_3 \A_{SL_{2n}}$:

$$x_{N-2,1,1}, x_{N-3,1,2}, \dots, x_{1,1,N-2}$$
$$x_{N-3,2,1}, x_{N-4,2,2}, \dots, x_{1,2,N-3}$$
$$x_{N-2,1,1}, x_{N-3,1,2}, \dots, x_{2,1,N-3}$$
$$x_{N-4,3,1}, x_{N-5,3,2}, \dots, x_{1,3,N-4}$$
$$x_{N-3,2,1}, x_{N-4,2,2}, \dots, x_{2,2,N-4}$$
$$x_{N-2,1,1}, x_{N-3,1,2}, \dots, x_{3,1,N-4}$$
$$\dots$$
$$x_{n,n-1,1}, x_{n-1,n-1,2}, \dots, x_{1,n-1,n}$$
$$\dots$$
$$x_{N-3,2,1}, x_{N-4,2,2}, \dots, x_{n-2,2,n}$$
$$x_{N-2,1,1}, x_{N-3,1,2}, \dots, x_{n-1,1,n}$$

One can picture the above sequence as consisting of $n-1$ stages, where each stage consists of mutating all vertices lying in a parallelogram. The result will be that we get the quiver pictured in Figure 12 (all frozen vertices have been deleted for simplicity)

\begin{center}
\begin{tikzpicture}[scale=2.4]

  \node (x-31) at (-3,0) {$\tcfr{3}{4}{1}$};
  \node (x-21) at (-2,0) {$\tcfr{3,7}{4}{2}$};
  \node (x-11) at (-1,0) {$\tcfr{3,6}{4}{3}$};
  \node (x01) at (0,0) {$\tcfr{3,5}{4}{4}$};
  \node (x11) at (1,0) {$\tcfr{2,5}{4}{5}$};
  \node (x21) at (2,0) {$\tcfr{1,5}{4}{6}$};
  \node (x31) at (3,0) {$\tcfr{5}{4}{7}$};
  \node (x-32) at (-3,-1) {$\tcfr{2}{5}{1}$};
  \node (x-22) at (-2,-1) {$\tcfr{2}{4}{2}$};
  \node (x-12) at (-1,-1) {$\tcfr{2,7}{4}{3}$};
  \node (x02) at (0,-1) {$\tcfr{2,6}{4}{4}$};
  \node (x12) at (1,-1) {$\tcfr{1,6}{4}{5}$};
  \node (x22) at (2,-1) {$\tcfr{6}{4}{6}$};
  \node (x32) at (3,-1) {$\tcfr{6}{3}{7}$};
  \node (x-33) at (-3,-2) {$\tcfr{1}{6}{1}$};
  \node (x-23) at (-2,-2) {$\tcfr{1}{5}{2}$};
  \node (x-13) at (-1,-2) {$\tcfr{1}{4}{3}$};
  \node (x03) at (0,-2) {$\tcfr{1,7}{4}{4}$};
  \node (x13) at (1,-2) {$\tcfr{7}{4}{5}$};
  \node (x23) at (2,-2) {$\tcfr{7}{3}{6}$};
  \node (x33) at (3,-2) {$\tcfr{7}{2}{7}$};

  \draw [->] (x01) to (x11);
  \draw [->] (x11) to (x21);
  \draw [->] (x21) to (x31);
  \draw [->] (x02) to (x12);
  \draw [->] (x12) to (x22);
  \draw [->] (x22) to (x32);
  \draw [->] (x03) to (x13);
  \draw [->] (x13) to (x23);
  \draw [->] (x23) to (x33);

  \draw [->, dashed] (x33) to (x32);
  \draw [->, dashed] (x32) to (x31);

  \draw [->] (x03) to (x02);
  \draw [->] (x02) to (x01);

  \draw [->] (x13) to (x12);
  \draw [->] (x12) to (x11);
  \draw [->] (x23) to (x22);
  \draw [->] (x22) to (x21);

  \draw [->] (x11) to (x02);
  \draw [->] (x12) to (x03);
 
  \draw [->] (x21) to (x12);
  \draw [->] (x22) to (x13);
  \draw [->] (x31) to (x22);
  \draw [->] (x32) to (x23);

  \draw [->] (x01) to (x-11);
  \draw [->] (x-11) to (x-21);
  \draw [->] (x-21) to (x-31);
  \draw [->] (x02) to (x-12);
  \draw [->] (x-12) to (x-22);
  \draw [->] (x-22) to (x-32);
  \draw [->] (x03) to (x-13);
  \draw [->] (x-13) to (x-23);
  \draw [->] (x-23) to (x-33);

  \draw [->] (x-13) to (x-12);
  \draw [->] (x-12) to (x-11);
  \draw [->] (x-23) to (x-22);
  \draw [->] (x-22) to (x-21);
 
  \draw [->, dashed] (x-33) to (x-32);
  \draw [->, dashed] (x-32) to (x-31);

  \draw [->] (x-11) to (x02);
  \draw [->] (x-12) to (x03);

  \draw [->] (x-21) to (x-12);
  \draw [->] (x-22) to (x-13);
  \draw [->] (x-31) to (x-22);
  \draw [->] (x-32) to (x-23);

\draw[yshift=-2.5cm]
  node[below,text width=6cm] 
  {
  Figure 12. The functions and quiver for the cluster $\Conf_3 \A_{SL_{2n}}$ which folds to give a cluster for $\Conf_3 \A_{Sp_{2n}}$.
  };

\end{tikzpicture}
\end{center}

The function attached to $x_{ijk}$ (here we take $i,j,k > 0$ to exclude frozen vertices) will be 
\begin{itemize}
\item $\tcfr{i}{j}{k}$ if $j \geq n$,
\item $\tcfr{n+j, i+j-n}{n}{k+n-j}$ if $j < n$ and $k < n$ (in fact for $j \leq n$ and $k \leq n$; equality cases agree with the cases above and below),
\item $(2n-i, i+j; k+i)$ if $k \geq n$.
\end{itemize}

Thus we see that the functions attached to $x_{ijk}$ and $x_{ikj}$ pull back to the same functions on $\Conf_3 \A_{Sp_{2n}}$ via the embedding $\Conf_3 \A_{Sp_{2n}} \hookrightarrow \Conf_3 \A_{SL_{2n}}$. Here we use the identities \eqref{dualities}. In fact, for $j \geq k$ and $j \geq n$, we have that the vertex $x_{ijk}$ corresponds to the vertex $x_{n-i,k}$ in the quiver for $\Conf_3 \A_{Sp_{2n}}$, while for $j \geq k$ and $j < n$, we have that the vertex $x_{ijk}$ corresponds to the vertex $x_{k,n+k-j}$.

The result of this is that we can obtain the cluster structure on $\Conf_3 \A_{Sp_{2n}}$ as follows: Start with the standard cluster algebra on $\Conf_3 \A_{SL_{2n}}$, which has functions attached to vertices $x_{ijk}$. Perform series of mutations given above, which is a subsequence of the cactus sequence. Then fold the resulting cluster structure by identifying the following pairs of vertices: $x_{i,2n-i,0}$ and $x_{2n-i,i,0}$; $x_{i,0,2n-i}$ and $x_{2n-i,0,i}$; $x_{0,j,2n-j}$ and $x_{0,2n-j,j}$; $x_{ijk}$ and $x_{ikj}$ for $i,j,k >0$.

Let us say a few words about folding. Suppose we have a cluster algebra $C$ such that its quiver is preserved under an involution $\sigma$. Note that this forces vertices in the same orbit to have no arrows between them. Then we may form a new cluster algebra $C'$. The $B$-matrix for $C'$ comes from quotienting the quiver for $C$ by the involution $\sigma$ and then coloring the vertices fixed under the involution white and all other vertices black. On the level of algebras, one sets the variables for vertices in the same orbit equal. It is not hard to check that mutating a vertex in $C'$ corresponds to unfolding, mutating all lifts of that vertex in $C$, and then folding again. Thus we constructed above a cluster for $\Conf_3 \A_{SL_{2n}}$ which folds to give a cluster for $\Conf_3 \A_{Sp_{2n}}$. For more details about the procedure of folding for cluster algebras, and for folding under more general automorphisms, see \cite{FZ2}.

More invariantly, there is an outer automorphism of $SL_{2n}$ that has $Sp_{2n}$ as its fixed locus. This gives an involution of $\Conf_3 \A_{SL_{2n}}$ (and more generally $\Conf_m \A_{SL_{2n}}$) that has $\Conf_3 \A_{Sp_{2n}}$ (respectively, $\Conf_m \A_{Sp_{2n}}$) as its fixed locus. It turns out that the cluster algebra structure on $\Conf_3 \A_{SL_{2n}}$ is preserved by this involution, and that, moreover, there is a particular seed, constructed above, that is preserved by this involution. Folding this seed gives the cluster algebra structure on $\Conf_3 \A_{Sp_{2n}}$.

Let us make some observations. Let $\sigma$ be the Dynkin diagram automorphism of $A_{2n-1}$ having quotient $C_{n}$. (We call this automorphism $\sigma$ by abuse of notation: we will see soon that it induces the automorphism of the quiver that we called $\sigma$ above.) Then $\sigma$ induces a map on the root system for $SL_{2n}$, and hence on the fundamental weights and the dominant weights. It also induces an outer automorphism of $SL_{2n}$ having fixed locus $Sp_{2n}$, and an involution on the spaces $\Conf_m \A_{SL_{2n}}$. Let $\pi$ be the map from the vertices of $A_{2n-1}$ to the vertices of $C_n$. This induces a map $\pi$ sending fundamental weights to corresponding fundamental weights, and therefore projects the weight space for $SL_{2n}$ to the weight space for $Sp_{2n}$.

\begin{observation} Let $f$ be a function on $\Conf_3 \A_{SL_{2n}}$ that lies in the invariant space
$$[V_{\lambda} \otimes V_{\mu} \otimes V_{\nu}]^{SL_{2n}}.$$
Then $\sigma^*(f)$ lies in the invariant space
$$[V_{\sigma(\lambda)} \otimes V_{\sigma(\mu)} \otimes V_{\sigma(\nu)}]^{SL_{2n}}.$$
\end{observation}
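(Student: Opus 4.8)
The plan is to unwind the definitions so that the statement becomes a formal consequence of two elementary facts: an automorphism of a reductive group permutes its irreducible representations according to its induced action on the weight lattice, and it carries invariant vectors to invariant vectors.

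First I would recall the description $\mathcal{O}(\A_{SL_{2n}}) \cong \bigoplus_{\lambda \in \Lambda_+} V_\lambda$ from earlier in the text, together with the fact that the $V_\lambda$-summand is exactly the space of functions on $\A = G/U$ that transform by the character $\lambda$ under the residual right action of the Cartan $H = B/U$. Since $\sigma$ is realized by a Dynkin diagram automorphism of $A_{2n-1}$, one may choose a pinning of $G = SL_{2n}$ that is $\sigma$-stable; then $\sigma(B) = B$, $\sigma(U) = U$, $\sigma(H) = H$, so $\sigma$ descends to an automorphism of $\A = G/U$ (still denoted $\sigma$) satisfying $\sigma(g \cdot x) = \sigma(g) \cdot \sigma(x)$ and intertwining the right $H$-action with itself through the automorphism $\sigma|_H$. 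The effect of $\sigma|_H$ on characters of $H$ is precisely the map $\sigma$ (equivalently $\pi$) on weights used in the rest of this section. Hence pullback $\sigma^*$ carries the $\lambda$-graded piece of $\mathcal{O}(\A)$ isomorphically onto the $\sigma(\lambda)$-graded piece; here one uses that a diagram automorphism preserves the dominant chamber, so $\sigma(\lambda)$ is again dominant, and that the folding automorphism is an involution, so there is no ambiguity between $\sigma$ and $\sigma^{-1}$.

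Next I would pass to configurations. By definition $\Conf_3 \A = G \backslash (\A \times \A \times \A)$, whence $\mathcal{O}(\Conf_3 \A) = \mathcal{O}(\A^{\times 3})^G = \bigoplus_{\lambda,\mu,\nu} [V_\lambda \otimes V_\mu \otimes V_\nu]^G$, invariants for the diagonal $G$-action. The automorphism $\sigma$ acts diagonally on $\A^{\times 3}$, and since it intertwines the $G$-action through the automorphism $\sigma$ of $G$ it descends to an automorphism of $\Conf_3 \A$; its pullback on functions is the threefold tensor product of the map constructed in the previous paragraph, composed with the identification of $G$-invariants with $\sigma(G) = G$-invariants. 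Therefore $\sigma^*$ sends $[V_\lambda \otimes V_\mu \otimes V_\nu]^G$ isomorphically onto $[V_{\sigma(\lambda)} \otimes V_{\sigma(\mu)} \otimes V_{\sigma(\nu)}]^G$, and taking $f$ in the first summand gives the claim.

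The only point requiring real care — and the one I expect to be the main obstacle — is the compatibility asserted in the middle of the second paragraph: that the permutation $\sigma|_H$ induces on the characters of the Cartan coincides with the combinatorial map $\sigma$ on fundamental weights fixed earlier. This amounts to checking that the chosen $\sigma$-stable pinning realizes, on the abstract based root datum, the very diagram automorphism one started with; once that identification is nailed down, every remaining step is formal, using only functoriality of $\mathcal{O}(-)$ under morphisms of varieties and the tautology that an automorphism of $G$ takes $G$-invariant functions to $G$-invariant functions.
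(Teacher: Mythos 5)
Your argument is correct and is precisely the standard justification the paper implicitly relies on: the paper states this as an unproved Observation, taking for granted that a (pinning-preserving representative of the) diagram automorphism permutes the weight-graded pieces of $\mathcal{O}(G/U)$ according to its action on dominant weights and carries diagonal $G$-invariants to $G$-invariants. Your added care about identifying $\sigma|_H$ on characters with the combinatorial diagram automorphism is the right point to pin down, and nothing in your outline would fail.
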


\begin{observation} Let $f$ be a function on $\Conf_3 \A_{SL_{2n}}$ that lies in the invariant space
$$[V_{\lambda} \otimes V_{\mu} \otimes V_{\nu}]^{SL_{2n}}.$$
Then as a function on $\Conf_3 \A_{Sp_{2n}}$, $f$ lies in the invariant space
$$[V_{\pi(\lambda)} \otimes V_{\pi(\mu)} \otimes V_{\pi(\nu)}]^{Sp_{2n}}.$$
\end{observation}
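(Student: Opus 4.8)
The plan is to deduce the statement from the analogous fact about a single copy of the principal affine space, where it reduces to comparing the gradings of $\C[\A_{SL_{2n}}]$ and $\C[\A_{Sp_{2n}}]$ by the right action of a maximal torus. Fix maximal tori $H_A\subset SL_{2n}$ and $H_C\subset Sp_{2n}$ compatible with the Borel subgroups used to form $\A_{SL_{2n}}$ and $\A_{Sp_{2n}}$, and recall the standard description $\C[\A_G]\cong\bigoplus_{\lambda\in\Lambda_+}V_\lambda$ as a module for $G\times H$, in which the summand $V_\lambda$ is exactly the subspace where the right $H$-action is by the character $\lambda$ (so $V_\lambda$ is a single $H$-weight space, and this weight space is an irreducible $G$-module). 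Consequently $\C[\Conf_3\A_G]=\bigl(\C[\A_G]^{\otimes 3}\bigr)^G=\bigoplus_{\lambda,\mu,\nu}[V_\lambda\otimes V_\mu\otimes V_\nu]^G$, which is the decomposition used throughout this section; the twist in the definition of $\Conf_3\A_G$ affects only the cyclic symmetries and not this $G\times H^{\times 3}$-module structure, so it is irrelevant here.

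The embedding $Sp_{2n}\hookrightarrow SL_{2n}$ fixed in the previous subsections carries $H_C$ into $H_A$ and the chosen unipotent subgroup into the chosen one, so it induces a closed embedding $\A_{Sp_{2n}}\hookrightarrow\A_{SL_{2n}}$ that is equivariant for the left $Sp_{2n}$-action and for the right $H_C$-action (the latter acting on $\A_{SL_{2n}}$ through $H_C\hookrightarrow H_A$). Write $r\colon\C[\A_{SL_{2n}}]\to\C[\A_{Sp_{2n}}]$ for restriction of functions. The key claim is
$$r(V_\lambda)\subseteq V_{\pi(\lambda)}\qquad\text{for all }\lambda\in\Lambda_+(SL_{2n}),$$
where $V_\lambda$ on the left is the $SL_{2n}$-summand of $\C[\A_{SL_{2n}}]$ and $V_{\pi(\lambda)}$ on the right is the $Sp_{2n}$-summand of $\C[\A_{Sp_{2n}}]$. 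To prove it, track the right torus grading: $r$ intertwines the right $H_A$-action with the right $H_C$-action via $H_C\hookrightarrow H_A$, so it sends the $H_A$-degree-$\lambda$ part into the $H_C$-degree-$\lambda|_{H_C}$ part. The character $\lambda|_{H_C}$ is the image of $\lambda$ under the restriction map $X^*(H_A)\twoheadrightarrow X^*(H_C)$, and because the embedding is the ``folded'' one attached to the diagram folding $A_{2n-1}\to C_n$, this map agrees on dominant weights with the map $\pi$ of the text (it sends $\omega_i$ to $\omega_i$ for $i\le n$ and to $\omega_{2n-i}$ for $i>n$, as one checks on the standard representation and its exterior powers). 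Since the $H_C$-degree-$\pi(\lambda)$ part of $\C[\A_{Sp_{2n}}]$ is precisely $V_{\pi(\lambda)}$, the claim follows; if $r$ happens to annihilate $V_\lambda$ the containment is trivial, and no surjectivity of $r$ is needed.

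Granting the claim, the observation follows at once. The restriction map $\C[\Conf_3\A_{SL_{2n}}]\to\C[\Conf_3\A_{Sp_{2n}}]$ induced by the embedding $\Conf_3\A_{Sp_{2n}}\hookrightarrow\Conf_3\A_{SL_{2n}}$ is the restriction of $r^{\otimes 3}$ to $SL_{2n}$-invariants; since the $Sp_{2n}$-action on all of these spaces is obtained by restricting the $SL_{2n}$-action along $Sp_{2n}\hookrightarrow SL_{2n}$, this map carries $SL_{2n}$-invariants to $Sp_{2n}$-invariants, and by the claim applied in each of the three tensor factors it sends $[V_\lambda\otimes V_\mu\otimes V_\nu]^{SL_{2n}}$ into $[V_{\pi(\lambda)}\otimes V_{\pi(\mu)}\otimes V_{\pi(\nu)}]^{Sp_{2n}}$.

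I expect the only genuinely delicate point to be the identification of $\pi$ with the restriction-of-characters map $X^*(H_A)\to X^*(H_C)$: this is where one must pin down conventions and use that the symplectic form and pinning were chosen (in the previous subsections) precisely so that $Sp_{2n}\hookrightarrow SL_{2n}$ is the folding embedding. Everything else is the Borel--Weil description of $\C[G/U]$ together with the formal fact that restricting functions along an equivariant closed embedding respects both the $G$-module structure and the torus gradings.
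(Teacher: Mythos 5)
Your proof is correct. The paper states this as an Observation without giving any proof, and your argument — identifying $V_\lambda\subset\C[\A_G]$ as a right-torus weight space, noting that the embedding $\A_{Sp_{2n}}\hookrightarrow\A_{SL_{2n}}$ constructed in the paper is equivariant for the right torus actions, and checking that restriction of characters along $H_{Sp_{2n}}\hookrightarrow H_{SL_{2n}}$ agrees with $\pi$ on dominant weights — is precisely the standard reasoning the paper implicitly relies on, so you have simply supplied the details it omits.
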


\begin{observation} Consider a cluster for $\Conf_3 \A_{SL_{2n}}$ which folds to give a cluster for $\Conf_3 \A_{Sp_{2n}}$. Suppose $v$ is a vertex in the quiver for this cluster. Let $f_v$ is the function attached to $v$. Then
$$f_v \in [V_{\lambda} \otimes V_{\mu} \otimes V_{\nu}]^{SL_{2n}},$$
$$f_{\sigma(v)} \in [V_{\sigma(\lambda)} \otimes V_{\sigma(\mu)} \otimes V_{\sigma(\nu)}]^{SL_{2n}}.$$
However, on $\Conf_3 \A_{Sp_{2n}}$, $f_v=f_{\sigma(v)}$. This means that we must have $\pi(\lambda)=\pi(\sigma(\lambda))$, $\pi(\mu)=\pi(\sigma(\mu))$, and $\pi(\nu)=\pi(\sigma(\nu))$.
\end{observation}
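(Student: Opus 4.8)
The plan is to deduce the statement directly from the two preceding observations together with the fact that the chosen seed is genuinely $\sigma$-stable. First I would record the structural point underlying everything: since $\sigma$ is realized by the outer automorphism of $SL_{2n}$ with fixed locus $Sp_{2n}$ and, by the construction recalled above, preserves the seed under consideration, the pullback $\sigma^*$ is an automorphism of the cluster structure on $\Conf_3\A_{SL_{2n}}$ that carries this seed to itself, permuting its cluster variables according to the quiver involution $\sigma$. Hence $f_{\sigma(v)}=\sigma^*(f_v)$ (here we use that $\sigma$ is an involution, so the distinction between $\sigma$ and $\sigma^{-1}$ is immaterial). Applying the first of the preceding observations to $f=f_v\in[V_\lambda\otimes V_\mu\otimes V_\nu]^{SL_{2n}}$ then gives $f_{\sigma(v)}=\sigma^*(f_v)\in[V_{\sigma(\lambda)}\otimes V_{\sigma(\mu)}\otimes V_{\sigma(\nu)}]^{SL_{2n}}$, which is the second displayed membership in the statement.

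Next I would restrict to $\Conf_3\A_{Sp_{2n}}$. By the second of the preceding observations, the restriction of $f_v$ lies in $[V_{\pi(\lambda)}\otimes V_{\pi(\mu)}\otimes V_{\pi(\nu)}]^{Sp_{2n}}$, and the same observation applied to $f_{\sigma(v)}$ shows that the restriction of $f_{\sigma(v)}$ lies in $[V_{\pi(\sigma(\lambda))}\otimes V_{\pi(\sigma(\mu))}\otimes V_{\pi(\sigma(\nu))}]^{Sp_{2n}}$. On the other hand, these two restrictions coincide: in the folded cluster structure the variables attached to $v$ and $\sigma(v)$ are identified, or equivalently $\Conf_3\A_{Sp_{2n}}$ is the fixed locus of $\sigma$, so $\sigma^*$ acts as the identity on its coordinate ring. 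Moreover this common restriction is nonzero, being a cluster variable of the folded seed.

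To conclude I would invoke the decomposition $\C[\Conf_3\A_{Sp_{2n}}]=\bigoplus_{(a,b,c)}[V_a\otimes V_b\otimes V_c]^{Sp_{2n}}$, the direct sum running over triples of dominant weights of $Sp_{2n}$; this is just the isomorphism $\C[\A_{Sp_{2n}}]=\bigoplus_\lambda V_\lambda$ applied to each of the three factors, followed by taking $Sp_{2n}$-invariants. A nonzero element of such a direct sum can lie in the graded piece indexed by $(a,b,c)$ for at most one triple; since the common restriction of $f_v$ and $f_{\sigma(v)}$ lies both in the piece for $(\pi(\lambda),\pi(\mu),\pi(\nu))$ and in the piece for $(\pi(\sigma(\lambda)),\pi(\sigma(\mu)),\pi(\sigma(\nu)))$, these triples must agree, i.e. $\pi(\lambda)=\pi(\sigma(\lambda))$, $\pi(\mu)=\pi(\sigma(\mu))$, and $\pi(\nu)=\pi(\sigma(\nu))$.

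The argument is essentially an assembly of facts already in hand, so there is no single hard step; the only points that need care are the identification $f_{\sigma(v)}=\sigma^*(f_v)$ (which rests on the seed being \emph{genuinely} $\sigma$-stable, not merely the cluster structure being $\sigma$-invariant) and the remark that the graded pieces $[V_a\otimes V_b\otimes V_c]^{Sp_{2n}}$ for distinct triples are linearly independent inside the coordinate ring, so that a single nonzero function cannot live in two of them.
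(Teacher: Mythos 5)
Your argument is correct and is exactly the intended one: the paper states this as an Observation without proof, and it is meant to follow by assembling the two preceding Observations with the $\sigma$-stability of the seed and the graded decomposition $\mathcal{O}(\Conf_3 \A_{Sp_{2n}})=\bigoplus [V_a\otimes V_b\otimes V_c]^{Sp_{2n}}$, just as you do. The two points you flag as needing care (that $f_{\sigma(v)}=\sigma^*(f_v)$ because the seed itself, not just the cluster structure, is $\sigma$-stable, and that a nonzero function lies in at most one graded piece) are indeed the only substantive steps, and you handle both correctly.
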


It is also clarifying to step back and motivate the sequence of mutations realizing the cactus sequence above. As explained in section 3.2, the cluster structure on $\Conf_3 \A$ comes from a reduced word for the longest element $w_0$ in the Weyl group of $G$. The initial seed for $\Conf_3 \A_{SL_{2n}}$ is built using the reduced word 
$$s_1 s_2 \dots s_{2n-1} s_1 s_2 \dots s_{2n-2} \dots s_1 s_2 s_3 s_1 s_2 s_1.$$
Here $s_1, \dots, s_{2n-1}$ are the generators of the Weyl group for $SL_{2n}$. It is known how to use cluster transformations to pass between the clusters that are associated to different reduced words (\cite{BFZ}). The cactus sequence transforms between the cluster above and the cluster associated to the reduced word
$$s_{2n-1} s_{2n-2} \dots s_{1} s_{2n-1} s_{2n-2} \dots s_{2} \dots s_{2n-1} s_{2n-2} s_{2n-3} s_{2n-1} s_{2n-2} s_{2n-1}.$$
Thus we have the following observation, which we do not believe has been made elsewhere:

\begin{observation} The cactus sequence may be viewed as the sequence of mutations relating the cluster structures on $\Conf_3 \A_{SL_{N}}$ related to the reduced words $$s_1 s_2 \dots s_{N-1} s_1 s_2 \dots s_{N-2} \dots s_1 s_2 s_3 s_1 s_2 s_1$$ and 
$$s_{N-1} s_{N-2} \dots s_{1} s_{N-1} s_{N-2} \dots s_{2} \dots s_{N-1} s_{N-2} s_{N-3} s_{N-1} s_{N-2} s_{N-1}.$$
\end{observation}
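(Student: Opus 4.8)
\medskip

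\noindent\textbf{Proof proposal.} The plan is to recognize both endpoints of the cactus sequence as Berenstein--Fomin--Zelevinsky seeds attached to the two stated words, and to exploit the fact that the cactus sequence is, by its very construction, the mutation sequence that realizes the outer automorphism $\phi\colon g\mapsto{}^tg^{-1}$ of $SL_N$ on $\Conf_3\A_{SL_N}$.

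\emph{Identifying the endpoints.} First I would run the computation proving the Proposition of Section 3.2 with $Sp_{2n}$ replaced by $SL_N$: under $b\mapsto(U^-,\overline{w_0}U^-,b\cdot\overline{w_0}U^-)$ the initial seed of $\Conf_3\A_{SL_N}$ from Figure 8 — with the invariant $\tcfr{i}{j}{k}$ attached to the lattice point $x_{ijk}$, $i+j+k=N$ — is the BFZ seed $\Sigma_{\mathbf i}$ for the word $\mathbf i=s_1s_2\cdots s_{N-1}\,s_1s_2\cdots s_{N-2}\,\cdots\,s_1s_2s_3\,s_1s_2\,s_1$; this is the assertion already recorded at the end of Section 3.3 and the $SL_N$ case of \cite{BFZ}. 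Writing $\sigma$ for the Dynkin automorphism $s_i\mapsto s_{N-i}$ of $A_{N-1}$, the second word of the statement is literally $\sigma(\mathbf i)$, so it suffices to show the cactus sequence carries $\Sigma_{\mathbf i}$ to $\Sigma_{\sigma(\mathbf i)}$.

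\emph{The automorphism picture.} The outer automorphism $\phi$ induces $\sigma$ on the root datum and an automorphism of $\Conf_3\A_{SL_N}$ that dualizes the three principal flags and reverses their cyclic order (so, up to cyclic rotation, it transposes the second and third flag). Hence $\phi$ sends $[V_{\omega_a}\otimes V_{\omega_b}\otimes V_{\omega_c}]^{SL_N}$ to $[V_{\omega_{N-a}}\otimes V_{\omega_{N-c}}\otimes V_{\omega_{N-b}}]^{SL_N}$, and it acts on BFZ generalized minors by the relabelling induced by $\sigma$ on $W$ and on fundamental weights together with this flag reversal; tracking the BFZ recipe that builds a seed from a word through $\sigma$ then gives $\phi(\Sigma_{\mathbf i})=\Sigma_{\sigma(\mathbf i)}$, with the same multipliers $d_i$, the same $B$-matrix (the quiver of $\Sigma_{\mathbf i}$ with all arrows reversed and frozen vertices relabelled), and the same cluster and frozen variables. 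On the other hand, Section 3.3 computes the cactus sequence explicitly: the Proposition there shows it sends $\tcfr{i}{j}{k}$ to $\tcfr{N-i}{N-k}{N-j}$ (with the convention $\tcfr{N,a}{b}{c}=\tcfr{a}{b}{c}$), which by the previous sentence and $\sigma(\omega_a)=\omega_{N-a}$ is exactly $\phi^*\tcfr{i}{j}{k}$, and Figures 9--10 show the terminal quiver is the initial quiver with arrows reversed and frozen vertices relabelled. Therefore the cactus sequence carries $\Sigma_{\mathbf i}$ to $\phi(\Sigma_{\mathbf i})=\Sigma_{\sigma(\mathbf i)}$, which is the claim.

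\emph{Main obstacle.} The delicate point is the second step: one must pin down the flag relabelling coming from transpose-inverse — the transposition of the second and third flag that is visible in the fact that the cactus sequence produces $\tcfr{N-i}{N-k}{N-j}$ rather than $\tcfr{N-i}{N-j}{N-k}$ — and make it consistent with the conventions for the twisted cyclic shift used to define the configuration invariants and for the lifts $\overline{w}$ entering the BFZ minors, so that the identification of index sets is coherent. Once that is fixed, matching the $B$-matrices is a finite combinatorial check on triangular-grid quivers, matching the cluster variables is the identity of Section 3.3, and the remaining work is the frozen-variable bookkeeping (which frozen vertices get relabelled at each stage, as in the remarks accompanying Figures 9--10).
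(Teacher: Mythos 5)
Your proposal is correct and follows essentially the route the paper itself takes: the Observation is presented there without a separate proof, as a direct consequence of (i) the identification of the initial seed of $\Conf_3 \A_{SL_N}$ with the BFZ seed for the first reduced word (the $SL_N$ analogue of the Section 3.2 computation) and (ii) the explicit computation in Section 3.4 showing that the cactus sequence carries each cluster variable $\tcfr{i}{j}{k}$ to $\tcfr{N-i}{N-k}{N-j}=\phi^*\tcfr{i}{j}{k}$ and reverses the quiver. Your one variation — identifying the terminal seed with $\Sigma_{\sigma(\mathbf i)}$ via the outer automorphism, i.e. $\phi(\Sigma_{\mathbf i})=\Sigma_{\sigma(\mathbf i)}$, rather than by recomputing generalized minors for the second word directly — is a harmless repackaging entirely within the paper's framework, and you correctly isolate the only delicate point, namely matching the flag-transposition and lift conventions so that the index-set identification is coherent.
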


The subsequence of the cactus sequence given above transforms the initial cluster into the cluster associated with the reduced word
$$ (s_n s_{n-1} s_{n+1} s_{n-2} s_{n+2} \dots s_{1} s_{2n-1})^n.$$
This reduced word is in some sense ``intermediate'' between the two other reduced words discussed above.

Now let $s'_1, s'_2, \dots s'_n$ be the generators of the Weyl group of $Sp_{2n}$. There is an injection from the Weyl group of $Sp_{2n}$ to the Weyl group of $SL_{2n}$ that takes
$$s'_n \rightarrow s_n,$$
$$s'_i \rightarrow s_i s_{2n-i}.$$
Under this map, 
$$(s'_n s'_{n-1} \dots s'_1)^n \rightarrow (s_n s_{n-1} s_{n+1} s_{n-2} s_{n+2} \dots s_{1} s_{2n-1})^n$$
Therefore the reduced word for the longest element of the Weyl group of $SL_{2n}$ folds to give the reduced word for the longest element of the Weyl group of $Sp_{2n}$. So the folding that gives the cluster structure on $\Conf_3 \A_{Sp_{2n}}$ from the cluster structure on $\Conf_3 \A_{SL_{2n}}$ really takes place on the level of Weyl groups.

\subsection{The sequence of mutations realizing $S_3$ symmetries}

We consider the cluster structure on $\Conf_3 \A_{Sp_{2n}}$ described above. In fact, because the cluster structure we gave was not symmetric, we have described six different cluster structures on $\Conf_3 \A_{Sp_{2n}}$. We would now like to give sequences of mutations relating these six clusters to show that they are actually all clusters in the same cluster algebra.

From the folding construction of the previous section, it is clear that we could unfold the cluster structure of $\Conf_3 \A_{Sp_{2n}}$ to get the cluster structure on $\Conf_3 \A_{SL_{2n}}$, perform a sequence of mutations on $\Conf_3 \A_{SL_{2n}}$, and then refold to obtain one of the other cluster structures on $\Conf_3 \A_{Sp_{2n}}$. The question then becomes whether we can realize this sequence of mutations ``upstairs'' on $\Conf_3 \A_{SL_{2n}}$ ``downstairs''on $\Conf_3 \A_{Sp_{2n}}$. It is by no means obvious that this is possible. Let us give two reasons for this. First, mutating one vertex downstairs generally corresponds to mutating two vertices upstairs (the two that are folded together to give the vertex downstairs), so that this severely restricts the possible sequences of mutations upstairs. Second, note that if we perform the naive procedure of passing to $\Conf_3 \A_{SL_{2n}}$ and performing a sequence of mutations there, when we refold, we are generally no longer folding together the same pairs of vertices.

We will realize the $S_3$ symmetries on $\Conf_3 \A_{Sp_{2n}}$ by exhibiting sequences of mutations that realize two different transpositions in the group $S_3$. As we shall see, one of these transpositions can actually be realized by going going ``upstairs'' and using the calculations of the previous section with some care. The other transposition requires a completely different analysis.

\subsubsection{The first transposition}

Let $(A,B,C) \in \Conf_3 \A_{Sp_{2n}}$ be a triple of flags. Let us first give the sequence of mutations that realizes that $S_3$ symmetry $(A,B,C) \rightarrow (A,C,B).$

The sequence of mutations is as follows:
\begin{equation} \label{23Sp}
\begin{gathered}
x_{11}, x_{21}, x_{22}, x_{12}, x_{13}, x_{23}, x_{33}, x_{32}, x_{33,} \dots, x_{1,n-1}, \dots, x_{n-1, n-1}, \dots, x_{n-1, 1}, \\
x_{11}, x_{21}, x_{22}, x_{12},  \dots x_{1,n-2}, \dots, x_{n-2, n-2}, \dots, x_{n-2, 1}, \\
\dots, \\
x_{11}, x_{21}, x_{22}, x_{12}, \\
x_{11} \\
\end{gathered}
\end{equation}

The sequence can be thought of as follows: At any step of the process, we mutate all $x_{ij}$ such that $\max(i,j)$ is constant. It will not matter in which order we mutate these $x_{ij}$ because the vertices we mutate have no arrows between them. So we first mutate the $x_{ij}$ such that $\max(i,j)=1$, then the $x_{ij}$ such that $\max(i,j)=2$, then the $x_{ij}$ such that $\max(i,j)=3$, etc. The sequence of maximums that we use is 
$$1, 2, 3, \dots n-1, 1, 2, \dots n-2 \dots, 1, 2, 3, 1, 2, 1.$$

In Figure 13, we depict how the quiver and functions for $\Conf_3 \A_{Sp_{8}}$ changes after performing the sequences of mutations of $x_{ij}$ having maximums $1$; $1, 2$; and $1, 2, 3$. The initial quiver is in Figure 5.

\begin{center}
\begin{tikzpicture}[scale=2]
\begin{scope}[xshift=-1.5cm]

  \node (x01) at (1,0) {$\dud{3}{5}$};
  \node (x02) at (2,0) {$\tcfr{6}{}{2}$};
  \node (x03) at (3,0) {$\tcfr{5}{}{3}$};
  \node (x04) at (4,0) {$\tcfr{4}{}{4}$};
  \node (x11) at (1,-1) {$\boldsymbol{\tcfr{2,7}{5}{2}}$};
  \node (x12) at (2,-1) {$\tcfr{3, 7}{4}{2}$};
  \node (x13) at (3,-1) {$\tcfr{3, 6}{4}{3}$};
  \node (x14) at (4,-1) {$\tcfr{3, 5}{4}{4}$};
  \node (x21) at (1,-2) {$\tcfr{2}{5}{1}$};
  \node (x22) at (2,-2) {$\tcfr{2}{4}{2}$};
  \node (x23) at (3,-2) {$\tcfr{2, 7}{4}{3}$};
  \node (x24) at (4,-2) {$\tcfr{2, 6}{4}{4}$};
  \node (x31) at (1,-3) {$\tcfr{1}{6}{1}$};
  \node (x32) at (2,-3) {$\tcfr{1}{5}{2}$};
  \node (x33) at (3,-3) {$\tcfr{1}{4}{3}$};
  \node (x34) at (4,-3) {$\tcfr{1, 7}{4}{4}$};
  \node (x41) at (1,-4) {$\tcfr{}{7}{1}$};
  \node (x42) at (2,-4) {$\tcfr{}{6}{2}$};
  \node (x43) at (3,-4) {$\tcfr{}{5}{3}$};
  \node (x44) at (4,-4) {$\tcfr{}{4}{4}$};
  \node (y1) at (0,-3) {$\dud{1}{7}$};
  \node (y2) at (0,-2) {$\dud{2}{6}$};
  \node (y3) at (0,-1) {$\tcfr{7}{}{1}$};
  \node (y4) at (5,-1) {$\dud{4}{4}$};

  \draw [->] (x41) to (x31);
  \draw [->] (x31) to (x21);
  \draw [->] (x11) to (x21);
  \draw [->] (x01) to (x11);
  \draw [->] (x42) to (x32);
  \draw [->] (x32) to (x22);
  \draw [->] (x12) to (x02);
  \draw [->] (x43) to (x33);
  \draw [->] (x33) to (x23);
  \draw [->] (x23) to (x13);
  \draw [->] (x13) to (x03);
  \draw [->] (x44) to (x34);
  \draw [->] (x34) to (x24);
  \draw [->] (x24) to (x14);
  \draw [->] (x14) to (x04);
  \draw [->, dashed] (x01) .. controls +(45:1) and +(up:2) .. (y4);
  \draw [->, dashed] (y2) .. controls +(135:1) and +(left:2) .. (x01);
  \draw [->, dashed] (y1) to (y2);

  \draw [->, dashed] (x04) to (x03);
  \draw [->, dashed] (x03) to (x02);
  \draw [->, dashed] (x02) .. controls +(135:1) and +(up:1.5) .. (y3);
  \draw [->] (y4) to (x14);
  \draw [->] (x14) to (x13);
  \draw [->] (x13) to (x12);
  \draw [->] (x11) to (x12);
  \draw [->] (y3) to (x11);
  \draw [->] (x24) to (x23);
  \draw [->] (x23) to (x22);
  \draw [->] (x21) to (y2);
  \draw [->] (x34) to (x33);
  \draw [->] (x33) to (x32);
  \draw [->] (x32) to (x31);
  \draw [->] (x31) to (y1);
  \draw [->, dashed] (x44) to (x43);
  \draw [->, dashed] (x43) to (x42);
  \draw [->, dashed] (x42) to (x41);

  \draw [->] (x12) to (x01);
  \draw [->] (x02) to (x13);
  \draw [->] (x03) to (x14);
  \draw [->] (x04) to (y4);
  \draw [->] (x21) to (y3);
  \draw [->] (x22) to (x11);
  \draw [->] (x12) to (x23);
  \draw [->] (x13) to (x24);
  \draw [->] (y2) to (x31);
  \draw [->] (x21) to (x32);
  \draw [->] (x22) to (x33);
  \draw [->] (x23) to (x34);
  \draw [->] (y1) to (x41);
  \draw [->] (x31) to (x42);
  \draw [->] (x32) to (x43);
  \draw [->] (x33) to (x44);

\end{scope}

\draw[yshift=-5cm,xshift=1cm]
  node[below,text width=6cm] 
  {
  Figure 13a The quiver and the functions for $\Conf_3 \A_{Sp_{8}}$ after performing the sequence of mutations of $x_{ij}$ having maximums $1$.
  };

\end{tikzpicture}
\end{center}

\begin{center}
\begin{tikzpicture}[scale=2]

\begin{scope}[xshift=-1.5cm]

  \node (x01) at (1,0) {$\tcfr{6}{}{2}$};
  \node (x02) at (2,0) {$\dud{3}{5}$};
  \node (x03) at (3,0) {$\tcfr{5}{}{3}$};
  \node (x04) at (4,0) {$\tcfr{4}{}{4}$};
  \node (x11) at (1,-1) {$\tcfr{2,7}{5}{2}$};
  \node (x12) at (2,-1) {$\boldsymbol{\tcfr{2, 6}{5}{3}}$};
  \node (x13) at (3,-1) {$\tcfr{3, 6}{4}{3}$};
  \node (x14) at (4,-1) {$\tcfr{3, 5}{4}{4}$};
  \node (x21) at (1,-2) {$\boldsymbol{\tcfr{1,7}{6}{2}}$};
  \node (x22) at (2,-2) {$\boldsymbol{\tcfr{1,7}{5}{3}}$};
  \node (x23) at (3,-2) {$\tcfr{2, 7}{4}{3}$};
  \node (x24) at (4,-2) {$\tcfr{2, 6}{4}{4}$};
  \node (x31) at (1,-3) {$\tcfr{1}{6}{1}$};
  \node (x32) at (2,-3) {$\tcfr{1}{5}{2}$};
  \node (x33) at (3,-3) {$\tcfr{1}{4}{3}$};
  \node (x34) at (4,-3) {$\tcfr{1, 7}{4}{4}$};
  \node (x41) at (1,-4) {$\tcfr{}{7}{1}$};
  \node (x42) at (2,-4) {$\tcfr{}{6}{2}$};
  \node (x43) at (3,-4) {$\tcfr{}{5}{3}$};
  \node (x44) at (4,-4) {$\tcfr{}{4}{4}$};
  \node (y1) at (0,-3) {$\dud{1}{7}$};
  \node (y2) at (0,-2) {$\tcfr{7}{}{1}$};
  \node (y3) at (0,-1) {$\dud{2}{6}$};
  \node (y4) at (5,-1) {$\dud{4}{4}$};

  \draw [->] (x41) to (x31);
  \draw [->] (x21) to (x31);
  \draw [->] (x21) to (x11);
  \draw [->] (x11) to (x01);
  \draw [->] (x42) to (x32);
  \draw [->] (x22) to (x32);
  \draw [->] (x02) to (x12);
  \draw [->] (x43) to (x33);
  \draw [->] (x13) to (x03);
  \draw [->] (x44) to (x34);
  \draw [->] (x34) to (x24);
  \draw [->] (x24) to (x14);
  \draw [->] (x14) to (x04);
  \draw [->, dashed] (x02) .. controls +(up:1) and +(up:2) .. (y4);
  \draw [->, dashed] (y3) .. controls +(up:2) and +(up:1) .. (x02);
  \draw [->, dashed] (y1) .. controls +(left:1) and +(left:1) .. (y3);

  \draw [->, dashed] (x04) to (x03);
  \draw [->, dashed] (x03) .. controls +(up:1) and +(up:1) .. (x01);
  \draw [->, dashed] (x01)  .. controls +(left:2) and +(left:1) .. (y2);
  \draw [->] (y4) to (x14);
  \draw [->] (x14) to (x13);
  \draw [->] (x12) to (x13);
  \draw [->] (x12) to (x11);
  \draw [->] (x11) to (y3);
  \draw [->] (x24) to (x23);
  \draw [->] (x22) to (x23);
  \draw [->] (y2) to (x21);
  \draw [->] (x34) to (x33);
  \draw [->] (x31) to (y1);
  \draw [->, dashed] (x44) to (x43);
  \draw [->, dashed] (x43) to (x42);
  \draw [->, dashed] (x42) to (x41);

  \draw [->] (x01) to (x12);
  \draw [->] (x13) to (x02);
  \draw [->] (x03) to (x14);
  \draw [->] (x04) to (y4);
  \draw [->] (y3) to (x21);
  \draw [->] (x11) to (x22);
  \draw [->] (x23) to (x12);
  \draw [->] (x13) to (x24);
  \draw [->] (x31) to (y2);
  \draw [->] (x32) to (x21);
  \draw [->] (x33) to (x22);
  \draw [->] (x23) to (x34);
  \draw [->] (y1) to (x41);
  \draw [->] (x31) to (x42);
  \draw [->] (x32) to (x43);
  \draw [->] (x33) to (x44);

\end{scope}

\draw[yshift=-5cm,xshift=1cm]
  node[below,text width=6cm] 
  {
  Figure 13b The quiver and the functions for $\Conf_3 \A_{Sp_{8}}$ after performing the sequence of mutations of $x_{ij}$ having maximums $1, 2$.
  };

\end{tikzpicture}
\end{center}

\begin{center}
\begin{tikzpicture}[scale=2]

\begin{scope}[xshift=-1.5cm]

  \node (x01) at (1,0) {$\tcfr{6}{}{2}$};
  \node (x02) at (2,0) {$\tcfr{5}{}{3}$};
  \node (x03) at (3,0) {$\dud{3}{5}$};
  \node (x04) at (4,0) {$\dud{4}{4}$};
  \node (x11) at (1,-1) {$\tcfr{2,7}{5}{2}$};
  \node (x12) at (2,-1) {$\tcfr{2, 6}{5}{3}$};
  \node (x13) at (3,-1) {$\boldsymbol{\tcfr{2, 5}{5}{4}}$};
  \node (x14) at (4,-1) {$\tcfr{3, 5}{4}{4}$};
  \node (x21) at (1,-2) {$\tcfr{1,7}{6}{2}$};
  \node (x22) at (2,-2) {$\tcfr{1,7}{5}{3}$};
  \node (x23) at (3,-2) {$\boldsymbol{\tcfr{1, 6}{5}{4}}$};
  \node (x24) at (4,-2) {$\tcfr{2, 6}{4}{4}$};
  \node (x31) at (1,-3) {$\boldsymbol{\tcfr{7}{7}{2}}$};
  \node (x32) at (2,-3) {$\boldsymbol{\tcfr{7}{6}{3}}$};
  \node (x33) at (3,-3) {$\boldsymbol{\tcfr{7}{5}{4}}$};
  \node (x34) at (4,-3) {$\tcfr{1, 7}{4}{4}$};
  \node (x41) at (1,-4) {$\tcfr{}{7}{1}$};
  \node (x42) at (2,-4) {$\tcfr{}{6}{2}$};
  \node (x43) at (3,-4) {$\tcfr{}{5}{3}$};
  \node (x44) at (4,-4) {$\tcfr{}{4}{4}$};
  \node (y1) at (0,-3) {$\tcfr{7}{}{1}$};
  \node (y2) at (0,-2) {$\dud{1}{7}$};
  \node (y3) at (0,-1) {$\dud{2}{6}$};
  \node (y4) at (5,-1) {$\tcfr{4}{}{4}$};

  \draw [->] (x31) to (x41);
  \draw [->] (x31) to (x21);
  \draw [->] (x21) to (x11);
  \draw [->] (x11) to (x01);
  \draw [->] (x32) to (x42);
  \draw [->] (x32) to (x22);
  \draw [->] (x22) to (x12);
  \draw [->] (x12) to (x02);
  \draw [->] (x33) to (x43);
  \draw [->] (x03) to (x13);
  \draw [->] (x34) to (x44);
  \draw [->] (x24) to (x34);
  \draw [->] (x14) to (x24);
  \draw [->] (x04) to (x14);
  \draw [->, dashed] (x03) to (x04);
  \draw [->, dashed] (y3) .. controls +(up:2) and +(up:1) .. (x03);
  \draw [->, dashed] (y2) to (y3);

  \draw [->, dashed] (y4) .. controls +(up:2) and +(up:1) .. (x02);
  \draw [->, dashed] (x02) to (x01);
  \draw [->, dashed] (x01) .. controls +(left:2) and +(left:1) .. (y1);
  \draw [->] (x14) to (y4);
  \draw [->] (x13) to (x14);
  \draw [->] (x13) to (x12);
  \draw [->] (x12) to (x11);
  \draw [->] (x11) to (y3);
  \draw [->] (x23) to (x24);
  \draw [->] (x23) to (x22);
  \draw [->] (x22) to (x21);
  \draw [->] (x21) to (y2);
  \draw [->] (x33) to (x34);
  \draw [->] (y1) to (x31);
  \draw [->, dashed] (x43) to (x44);
  \draw [->, dashed] (x42) to (x43);
  \draw [->, dashed] (x41) to (x42);

  \draw [->] (x01) to (x12);
  \draw [->] (x02) to (x13);
  \draw [->] (x14) to (x03);
  \draw [->] (x04) to (y4);
  \draw [->] (y3) to (x21);
  \draw [->] (x11) to (x22);
  \draw [->] (x12) to (x23);
  \draw [->] (x24) to (x13);
  \draw [->] (y2) to (x31);
  \draw [->] (x21) to (x32);
  \draw [->] (x22) to (x33);
  \draw [->] (x34) to (x23);
  \draw [->] (x41) to (y1);
  \draw [->] (x42) to (x31);
  \draw [->] (x43) to (x32);
  \draw [->] (x44) to (x33);

\end{scope}

\draw[yshift=-5cm,xshift=1cm]
  node[below,text width=6cm] 
  {
  Figure 13c The quiver and the functions for $\Conf_3 \A_{Sp_{8}}$ after performing the sequence of mutations of $x_{ij}$ having maximums $1,  2, 3$.
  };

\end{tikzpicture}
\end{center}

In Figure 14, we depict the state of the quiver after performing the sequence of mutations of $x_{ij}$ having maximums $1, 2, 3$; $1, 2, 3, 1, 2$; and $1, 2, 3, 1, 2, 1$.

\begin{center}
\begin{tikzpicture}[scale=2]

\begin{scope}[xshift=-1.5cm]

  \node (x01) at (1,0) {$\tcfr{5}{}{3}$};
  \node (x02) at (2,0) {$\dud{2}{6}$};
  \node (x03) at (3,0) {$\dud{3}{5}$};
  \node (x04) at (4,0) {$\dud{4}{4}$};
  \node (x11) at (1,-1) {$\boldsymbol{\tcfr{1,6}{6}{3}}$};
  \node (x12) at (2,-1) {$\boldsymbol{\tcfr{1,5}{6}{4}}$};
  \node (x13) at (3,-1) {$\tcfr{2, 5}{5}{4}$};
  \node (x14) at (4,-1) {$\tcfr{3, 5}{4}{4}$};
  \node (x21) at (1,-2) {$\boldsymbol{\tcfr{6}{7}{3}}$};
  \node (x22) at (2,-2) {$\boldsymbol{\tcfr{6}{6}{4}}$};
  \node (x23) at (3,-2) {$\tcfr{1, 6}{5}{4}$};
  \node (x24) at (4,-2) {$\tcfr{2, 6}{4}{4}$};
  \node (x31) at (1,-3) {$\tcfr{7}{7}{2}$};
  \node (x32) at (2,-3) {$\tcfr{7}{6}{3}$};
  \node (x33) at (3,-3) {$\tcfr{7}{5}{4}$};
  \node (x34) at (4,-3) {$\tcfr{1, 7}{4}{4}$};
  \node (x41) at (1,-4) {$\tcfr{}{7}{1}$};
  \node (x42) at (2,-4) {$\tcfr{}{6}{2}$};
  \node (x43) at (3,-4) {$\tcfr{}{5}{3}$};
  \node (x44) at (4,-4) {$\tcfr{}{4}{4}$};
  \node (y1) at (0,-3) {$\tcfr{7}{}{1}$};
  \node (y2) at (0,-2) {$\tcfr{6}{}{2}$};
  \node (y3) at (0,-1) {$\dud{1}{7}$};
  \node (y4) at (5,-1) {$\tcfr{4}{}{4}$};

  \draw [->] (x11) to (x01);
  \draw [->] (x21) to (x11);
  \draw [->] (x21) to (x31);
  \draw [->] (x31) to (x41);
  \draw [->] (x02) to (x12);
  \draw [->] (x22) to (x32);
  \draw [->] (x32) to (x42);
  \draw [->] (x03) to (x13);
  \draw [->] (x13) to (x23);
  \draw [->] (x23) to (x33);
  \draw [->] (x33) to (x43);
  \draw [->] (x04) to (x14);
  \draw [->] (x14) to (x24);
  \draw [->] (x24) to (x34);
  \draw [->] (x34) to (x44);
  \draw [->, dashed] (y4) .. controls +(up:2) and +(up:1) .. (x01);
  \draw [->, dashed] (x01) .. controls +(left:2) and +(left:1) .. (y2);
  \draw [->, dashed] (y2) to (y1);

  \draw [->, dashed] (y3) .. controls +(up:2) and +(up:1) .. (x02);
  \draw [->, dashed] (x02) to (x03);
  \draw [->, dashed] (x03) to (x04);
  \draw [->] (x11) to (y3);
  \draw [->] (x12) to (x11);
  \draw [->] (x12) to (x13);
  \draw [->] (x13) to (x14);
  \draw [->] (x14) to (y4);
  \draw [->] (y2) to (x21);
  \draw [->] (x22) to (x23);
  \draw [->] (x23) to (x24);
  \draw [->] (y1) to (x31);
  \draw [->] (x31) to (x32);
  \draw [->] (x32) to (x33);
  \draw [->] (x33) to (x34);
  \draw [->, dashed] (x41) to (x42);
  \draw [->, dashed] (x42) to (x43);
  \draw [->, dashed] (x43) to (x44);

  \draw [->] (x01) to (x12);
  \draw [->] (x13) to (x02);
  \draw [->] (x14) to (x03);
  \draw [->] (y4) to (x04);
  \draw [->] (y3) to (x21);
  \draw [->] (x11) to (x22);
  \draw [->] (x23) to (x12);
  \draw [->] (x24) to (x13);
  \draw [->] (x31) to (y2);
  \draw [->] (x32) to (x21);
  \draw [->] (x33) to (x22);
  \draw [->] (x34) to (x23);
  \draw [->] (x41) to (y1);
  \draw [->] (x42) to (x31);
  \draw [->] (x43) to (x32);
  \draw [->] (x44) to (x33);

\end{scope}

\draw[yshift=-5cm,xshift=1cm]
  node[below,text width=6cm] 
  {
  Figure 14a The quiver and the functions for $\Conf_3 \A_{Sp_{8}}$ after performing the sequence of mutations of $x_{ij}$ having maximums $1,  2, 3, 1, 2$.
  };

\end{tikzpicture}
\end{center}

\begin{center}
\begin{tikzpicture}[scale=2]

\begin{scope}[xshift=-1.5cm]

  \node (x01) at (1,0) {$\dud{1}{7}$};
  \node (x02) at (2,0) {$\dud{2}{6}$};
  \node (x03) at (3,0) {$\dud{3}{5}$};
  \node (x04) at (4,0) {$\dud{4}{4}$};
  \node (x11) at (1,-1) {$\boldsymbol{\tcfr{5}{7}{4}}$};
  \node (x12) at (2,-1) {$\tcfr{1,5}{6}{4}$};
  \node (x13) at (3,-1) {$\tcfr{2, 5}{5}{4}$};
  \node (x14) at (4,-1) {$\tcfr{3, 5}{4}{4}$};left
  \node (x21) at (1,-2) {$\tcfr{6}{7}{3}$};
  \node (x22) at (2,-2) {$\tcfr{6}{6}{4}$};
  \node (x23) at (3,-2) {$\tcfr{1, 6}{5}{4}$};
  \node (x24) at (4,-2) {$\tcfr{2, 6}{4}{4}$};
  \node (x31) at (1,-3) {$\tcfr{7}{7}{2}$};
  \node (x32) at (2,-3) {$\tcfr{7}{6}{3}$};
  \node (x33) at (3,-3) {$\tcfr{7}{5}{4}$};
  \node (x34) at (4,-3) {$\tcfr{1, 7}{4}{4}$};
  \node (x41) at (1,-4) {$\tcfr{}{7}{1}$};
  \node (x42) at (2,-4) {$\tcfr{}{6}{2}$};
  \node (x43) at (3,-4) {$\tcfr{}{5}{3}$};
  \node (x44) at (4,-4) {$\tcfr{}{4}{4}$};
  \node (y1) at (0,-3) {$\tcfr{7}{}{1}$};
  \node (y2) at (0,-2) {$\tcfr{6}{}{2}$};
  \node (y3) at (0,-1) {$\tcfr{5}{}{3}$};
  \node (y4) at (5,-1) {$\tcfr{4}{}{4}$};

  \draw [->] (x01) to (x11);
  \draw [->] (x11) to (x21);
  \draw [->] (x21) to (x31);
  \draw [->] (x31) to (x41);
  \draw [->] (x02) to (x12);
  \draw [->] (x12) to (x22);
  \draw [->] (x22) to (x32);
  \draw [->] (x32) to (x42);
  \draw [->] (x03) to (x13);
  \draw [->] (x13) to (x23);
  \draw [->] (x23) to (x33);
  \draw [->] (x33) to (x43);
  \draw [->] (x04) to (x14);
  \draw [->] (x14) to (x24);
  \draw [->] (x24) to (x34);
  \draw [->] (x34) to (x44);
  \draw [->, dashed] (y4) .. controls +(up:2) and +(up:2) .. (y3);
  \draw [->, dashed] (y3) to (y2);
  \draw [->, dashed] (y2) to (y1);

  \draw [->, dashed] (x01) to (x02);
  \draw [->, dashed] (x02) to (x03);
  \draw [->, dashed] (x03) to (x04);
  \draw [->] (y3) to (x11);
  \draw [->] (x11) to (x12);
  \draw [->] (x12) to (x13);
  \draw [->] (x13) to (x14);
  \draw [->] (x14) to (y4);
  \draw [->] (y2) to (x21);
  \draw [->] (x21) to (x22);
  \draw [->] (x22) to (x23);
  \draw [->] (x23) to (x24);
  \draw [->] (y1) to (x31);
  \draw [->] (x31) to (x32);
  \draw [->] (x32) to (x33);
  \draw [->] (x33) to (x34);
  \draw [->, dashed] (x41) to (x42);
  \draw [->, dashed] (x42) to (x43);
  \draw [->, dashed] (x43) to (x44);

  \draw [->] (x12) to (x01);
  \draw [->] (x13) to (x02);
  \draw [->] (x14) to (x03);
  \draw [->] (y4) to (x04);
  \draw [->] (x21) to (y3);
  \draw [->] (x22) to (x11);
  \draw [->] (x23) to (x12);
  \draw [->] (x24) to (x13);
  \draw [->] (x31) to (y2);
  \draw [->] (x32) to (x21);
  \draw [->] (x33) to (x22);
  \draw [->] (x34) to (x23);
  \draw [->] (x41) to (y1);
  \draw [->] (x42) to (x31);
  \draw [->] (x43) to (x32);
  \draw [->] (x44) to (x33);

\end{scope}

\draw[yshift=-5cm,xshift=1cm]
  node[below,text width=6cm] 
  {
  Figure 14b The quiver and the functions for $\Conf_3 \A_{Sp_{8}}$ after performing the sequence of mutations of $x_{ij}$ having maximums $1, 2, 3, 1, 2, 1$.
  };

\end{tikzpicture}
\end{center}

From these diagrams the various quivers in the general case of $\Conf_3 \A_{Sp_{2n}}$ should be clear.

\begin{theorem}
If $\max(i,j)=k$, then $x_{ij}$ is mutated a total of $n-k$ times. Recall that when $i \geq j$, we assign the function $\tcfr{n-i}{n+i-j}{j}$ to $x_{ij}$. Thus the function attached to $x_{ij}$ transforms as follows:
$$\tcfr{n-i}{n+i-j}{j} \rightarrow \tcfr{2n-1, n-i-1}{n+i-j+1}{j+1} \rightarrow \tcfr{2n-2, n-i-2}{n+i-j+2}{j+2} \rightarrow \dots$$ 
$$\rightarrow \tcfr{n+i+1, 1}{2n-j-1}{n-i+j-1} \rightarrow \tcfr{n+i}{2n-j}{n-i+j}=\tcfr{n-i}{j}{n+i-j}$$

When $i < j$ and $i \neq 0$, we assign the function $\tcfr{n-i, 2n+i-j}{n}{j}$ to $x_{ij}$. Thus the function attached to $x_{ij}$ transforms as follows:
$$\tcfr{n-i, 2n+i-j}{n}{j} \rightarrow \tcfr{n-i-1, 2n+i-j-1}{n+1}{j+1} \rightarrow \tcfr{n-i-2, 2n+i-j-2}{n+2}{j+2} \rightarrow \dots$$
$$\rightarrow \tcfr{j-i+1, n+i+1}{2n-j-1}{n-1} \rightarrow \tcfr{j-i, n+i}{2n-j}{n}=\tcfr{n-i, 2n+i-j}{j}{n}$$

\end{theorem}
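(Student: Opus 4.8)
The plan is to deduce the statement from the folding $\Conf_3\A_{SL_{2n}}\twoheadrightarrow\Conf_3\A_{Sp_{2n}}$ together with the analysis of the cactus (tetrahedron) sequence of Section~3.3; this is the route --- realizing the transposition ``upstairs'' with some care --- that was announced above. So the first thing I would do is lift \eqref{23Sp}. Every vertex occurring in \eqref{23Sp} is black, hence is the image of a two-element $\sigma$-orbit $\{x_{ijk},x_{ikj}\}$ in the quiver for $\Conf_3\A_{SL_{2n}}$, where $\sigma$ is the involution induced by the outer automorphism of $SL_{2n}$ with fixed locus $Sp_{2n}$ (on non-frozen vertices it interchanges the last two indices). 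Replacing each $\mu_{x_{ij}}$ in \eqref{23Sp} by the commuting pair $\mu_{x_{ijk}}\mu_{x_{ikj}}$ produces a sequence $\widetilde{\mathcal M}$ of mutations on $\Conf_3\A_{SL_{2n}}$ which starts from the seed of Figure~12 (the one that folds to the Section~3.1 seed) and whose intermediate quivers fold to those drawn in Figures~13 and~14. One has to check this is legal, i.e.\ that at the moment each orbit is mutated there is no arrow between its two vertices, so that the two mutations commute and descend to $\mu_{x_{ij}}$ after refolding; I would verify this by induction on the shape of the quiver, organised by the ``parallelogram stages'' used earlier to describe the Figure~12 sequence.

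The heart of the argument is then to recognise that, orbit by orbit, $\widetilde{\mathcal M}$ runs a (possibly truncated) segment of the cactus mutation chain. By the dictionary at the end of Section~3.3 the vertex $x_{ij}$ corresponds to the $SL_{2n}$-vertex $x_{n-i,\,n+i-j,\,j}$ when $i\ge j$, and to the $\sigma$-orbit of $x_{n-2i+j,\,n+i-j,\,i}$ when $i<j$; in Figure~12 the attached functions $\tcfr{n-i}{n+i-j}{j}$ and $\tcfr{n-i, 2n+i-j}{n}{j}$ are precisely the $0$-th, respectively the $(j-i)$-th, term of the cactus chain of that vertex. I would then show that \eqref{23Sp} mutates $x_{ij}$ exactly $n-\max(i,j)$ times, so that upstairs each orbit advances that many further steps along its chain: for $i\ge j$ the whole chain is traversed and lands at $\tcfr{n+i}{2n-j}{n-i+j}$, while for $i<j$ the chain is halted $j-i$ steps before its natural end and lands at $\tcfr{j-i, n+i}{2n-j}{n}$. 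Plugging these into the Proposition of Section~3.3 describing the cactus action on the functions $\tcfr{i}{j}{k}$ produces exactly the two chains of functions in the statement; that the two orbit representatives carry functions agreeing on $\Conf_3\A_{Sp_{2n}}$ at every stage, and that $\tcfr{n+i}{2n-j}{n-i+j}=\tcfr{n-i}{j}{n+i-j}$ and $\tcfr{j-i, n+i}{2n-j}{n}=\tcfr{n-i, 2n+i-j}{j}{n}$, then follows from the duality identities \eqref{dualities} coming from $\bigwedge\nolimits^{\bullet}V\cong\bigwedge\nolimits^{2n-\bullet}V$. These equalities also identify the endpoints as exactly the functions Section~3.1 attaches to $x_{ij}$ for the triple $(A,C,B)$, confirming that \eqref{23Sp} realises $(A,B,C)\mapsto(A,C,B)$.

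The step I expect to be the main obstacle is the combinatorial bookkeeping joining the two previous paragraphs: one must check that the rigid order imposed by \eqref{23Sp} --- mutate all $x_{ij}$ with $\max(i,j)$ equal to a fixed value, then decrease that value --- lifts to a \emph{legal} $\sigma$-equivariant order upstairs in which every $\sigma$-orbit is internally arrow-free at the instant it is mutated, and in which the $i<j$ chains are stopped exactly $j-i$ steps early without obstructing their neighbours; equivalently, that the intermediate quivers really are the truncated foldings of the cactus-stage quivers of Figures~9 and~10. This is a finite induction on quiver shape, but fiddly, and the boundary cases (the column $i=0$, the white vertices $x_{in}$, the frozen $y_k$) have to be handled separately --- here the relabelling attached to $(A,B,C)\mapsto(A,C,B)$ must be seen to interchange the $\{x_{0j}\}$ and $\{y_k\}$ positions and fix the $\{x_{nj}\}$ positions, consistently with the frozen variables being inert under mutation (cf.\ Figure~14b). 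As a self-contained alternative that avoids folding entirely, one can instead take ``$x_{ij}$ is mutated $n-\max(i,j)$ times'' as a bare combinatorial fact about \eqref{23Sp} and verify each exchange relation directly against the vertex neighbourhoods read off from Figures~13 and~14, using the octahedron-type identity and its three degenerations recorded in Section~3.3; those identities are already proved, so the only real work is arranging the case analysis.
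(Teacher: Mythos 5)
Your proposal is correct and matches the paper's own proof, which likewise offers exactly these two routes: lifting \eqref{23Sp} to $\Conf_3\A_{SL_{2n}}$ via the folding/cactus-sequence dictionary, or, "more simply," checking that every mutation in \eqref{23Sp} is one of the four local transformations of Figure 11 whose identities were already established for the cactus sequence. Your version spells out the index bookkeeping and the $\sigma$-equivariance check in more detail than the paper does, but the underlying argument is the same.
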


\begin{proof}

We will give two approaches. One is conceptually transparent, but hard to carry out. The second it more direct.

The first approach would be to use what we know about the cactus sequence. Recall that if we start with the standard cluster structure on $\Conf_3 \A_{SL_{2n}}$ and perform the mutations

$$x_{N-2,1,1}, x_{N-3,1,2}, \dots, x_{1,1,N-2}$$
$$x_{N-3,2,1}, x_{N-4,2,2}, \dots, x_{1,2,N-3}$$
$$x_{N-2,1,1}, x_{N-3,1,2}, \dots, x_{2,1,N-3}$$
$$x_{N-4,3,1}, x_{N-5,3,2}, \dots, x_{1,3,N-4}$$
$$x_{N-3,2,1}, x_{N-4,2,2}, \dots, x_{2,2,N-4}$$
$$x_{N-2,1,1}, x_{N-3,1,2}, \dots, x_{3,1,N-4}$$
$$\dots$$
$$x_{n,n-1,1}, x_{n-1,n-1,2}, \dots, x_{1,n-1,n}$$
$$\dots$$
$$x_{N-3,2,1}, x_{N-4,2,2}, \dots, x_{n-2,2,n}$$
$$x_{N-2,1,1}, x_{N-3,1,2}, \dots, x_{n-1,1,n}$$
then if we identify the vertices $x_{ijk}$ and $x_{ikj}$, we obtain the cluster algebra structure on $\Conf_3 \A_{Sp_{2n}}$. Recall that for $j \leq k$ and $j \geq n$, we have that the vertex $x_{ijk}$ corresponds to the vertex $x_{n-i,k}$ in the quiver for $\Conf_3 \A_{Sp_{2n}}$, while for $j \leq k$ and $j < n$, we have that the vertex $x_{ijk}$ corresponds to the vertex $x_{n-k,n+k-j}$.

If we then apply the sequence of mutations \eqref{23Sp}
$$x_{11}, x_{21}, x_{22}, x_{12}, x_{13}, x_{23}, x_{33}, x_{32}, x_{33,} \dots, x_{1,n-1}, \dots, x_{n-1, n-1}, \dots, x_{n-1, 1},$$
$$x_{11}, x_{21}, x_{22}, x_{12},  \dots x_{1,n-2}, \dots, x_{n-2, n-2}, \dots, x_{n-2, 1},$$
$$\dots,$$
$$x_{11}, x_{21}, x_{22}, x_{12},$$
$$x_{11}$$
on the cluster algebra for $\Conf_3 \A_{Sp_{2n}}$, we can lift this sequence of mutations to the unfolded cluster algebra for $\Conf_3 \A_{SL_{2n}}$. There, once we keep track of indices, we find that the cluster we end up with is exactly the result of applying the sequence of mutations 

$$x_{N-2,1,1}, x_{N-3,2,1}, \dots, x_{1,N-2,1}$$
$$x_{N-3,1,2}, x_{N-4,2,2}, \dots, x_{1,N-3,2}$$
$$x_{N-2,1,1}, x_{N-3,2,1}, \dots, x_{1,N-2,1}$$
$$x_{N-4,1,3}, x_{N-5,2,3}, \dots, x_{1,N-4,3}$$
$$x_{N-3,1,2}, x_{N-4,2,2}, \dots, x_{1,N-3,2}$$
$$x_{N-2,1,1}, x_{N-3,2,1}, \dots, x_{1,N-2,1}$$
$$\dots$$
$$x_{n,1,n-1}, x_{n-1,2,n-1}, \dots, x_{1,n,n-1}$$
$$\dots$$
$$x_{N-3,1,2}, x_{N-4,2,2}, \dots, x_{1,N-3,2}$$
$$x_{N-2,1,1}, x_{N-3,2,1}, \dots, x_{1,N-2,1}$$
to the original seed for $\Conf_3 \A_{SL_{2n}}$. 

Again, we then identify the vertices $x_{ijk}$ and $x_{ikj}$ and look downstairs on $\Conf_3 \A_{Sp_{2n}}$. Then the roles of $j$ and $k$ are exactly reversed: For $k \leq j$ and $k \geq n$, we have that the vertex $x_{ijk}$ corresponds to the vertex $x_{n-i,j}$ in the transposed quiver for $\Conf_3 \A_{Sp_{2n}}$, while for $k \leq j$ and $l < n$, we have that the vertex $x_{ijk}$ corresponds to the vertex $x_{n-j,n+j-k}$ in the transposed quiver.

Alternatively, and perhaps more simply, it is enough to observe that all that mutations in the sequence \eqref{23Sp} reduce to one of the cluster transformations depicted in Figure 11, and that all the identities involved are exactly those listed in our identification of the functions involved in the cactus sequence of mutations.

\end{proof}

Now consider the original seed for the cluster algebra structure on $\Conf_3 \A_{Sp_{2n}}$, where the quiver had vertices $x_{ij}, y_k$ for $0 \leq i \leq n$, $1 \leq j, k \leq n$. Now consider the seed that has quiver with vertices $x'_{ij}, y'_k$ that comes from exchanging the roles of the second and third principal flags. Then the sequence of mutations explained mutates the functions associated to $x_{ij}$ into those associated to $x'_{ij}$ for $i > 0$. The functions $x_{0j}$ are equal to the functions $y'_j$, and the functions $y_{j}$ are equal to the functions $x'_{0j}$. Moreover, the associated quiver on the vertices $x_{ij}, y_k$ mutates into the corresponding quiver on the vertices $x'_{ij}, y'_k$. Thus, this sequence of mutations relates the two seeds that come from the transposition of the second and third flags.

\subsubsection{The second transposition}

Let us now give the sequence of mutations that realizes that $S_3$ symmetry $(A,B,C) \rightarrow (C,B,A).$

The sequence of mutations is as follows:
\begin{equation} \label{13Sp}
\begin{gathered}
x_{n-1,n}, x_{n-2,n-1}, x_{n-2,n}, x_{n-3,n-2}, x_{n-3,n-1}, x_{n-3,n}, \dots, x_{1,2}, \dots, x_{1,n}, \\
x_{n-1,n}, x_{n-2,n-1}, x_{n-2,n}, x_{n-3,n-2}, x_{n-3,n-1}, x_{n-3,n}, \dots, x_{2,3}, \dots, x_{2,n}, \\
x_{n-1,n}, x_{n-2,n-1}, x_{n-2,n}, x_{n-3,n-2}, x_{n-3,n-1}, x_{n-3,n}, \dots, x_{3,4}, \dots, x_{3,n}, \\
\dots
x_{n-1,n}, x_{n-2,n-1}, x_{n-2,n}, x_{n-3,n-2}, x_{n-3,n-1}, x_{n-3,n}, \\
x_{n-1,n}, x_{n-2,n-1}, x_{n-2,n} \\
x_{n-1,n} \\
\end{gathered}
\end{equation}

The sequence can be thought of as follows: We only mutate those $x_{ij}$ with $i < j$. At any step of the process, we mutate all $x_{ij}$ in the $k^{\textrm th}$ row (the $k^{\textrm th}$ row consists of $x_{ij}$ such that $i=k$) such that $i < j$. It will not matter in which order we mutate these $x_{ij}$. The sequence of rows that we mutate is
$$n-1, n-2 \dots, 2,1 n-1, n-2, \dots, 2, n, \dots, 3, \dots, n-1, n-2, n-1.$$ 

In Figure 15, we depict how the quiver for $\Conf_3 \A_{Sp_{10}}$ changes after performing the sequence of mutations of $x_{ij}$ in rows $4$; $4,3$; $4,3,2$; and $4,3,2,1$. We only depice those functions that are affected by this mutation sequence.

\begin{center}

\end{center}

The circle on several of the arrows depicts how the quiver for $\Conf_3 \A_{Sp_{10}}$ lifts to $\Conf_3 \A_{SL_{10}}$. We will elaborate upon this further when we analyze how the functions transform. From these diagrams the various quivers in the general case of $\Conf_3 \A_{Sp_{2n}}$ should be clear. 

Now we need to understand the various functions attached to the vertices of the quiver at different stages in the sequence of mutations. In order to do this, we need to define some new functions.

Let $N=2n$. Let $0 \leq a, b, c, d \leq N$ such that $a, c \leq n$, $b,d \geq n$, and $a+b+c+d=4n=2N$. Then we would like to define a function that we will call 
$$\tcfr{a,b}{n, n}{c,d}.$$
This is a function on $\Conf_3 \A_{Sp_{2n}}$ that is pulled back from a function on $\Conf_3 \A_{SL_{2n}}$. The function on $\Conf_3 \A_{SL_{N}}$ is given by an invariant in the space

$$[V_{\omega_a+\omega_b}  \otimes V_{2\omega_n}  \otimes V_{\omega_c+ \omega_d}]^{SL_N}.$$
It turns out that this is a one-dimensional vector space. We pick out the function given by the web in Figure 17:

\begin{center}
\begin{tikzpicture}[scale=0.6]
\begin{scope}[decoration={
    markings,
    mark=at position 0.5 with {\arrow{>}}}
    ] 
\draw [postaction={decorate}] (-8,6) -- (-6,3) node [midway,below left] {$a$}; 
\draw [postaction={decorate}] (-4,0) -- (-6,3) node [midway,below left] {$N-a$}; 
\draw [postaction={decorate}] (-2,9) -- (0,6) node [midway,below left] {$b$};
\draw [postaction={decorate}] (2,3) -- (0,6) node [midway,below left] {$N-b$};
\draw [postaction={decorate}] (-8,-6) -- (-4,0) node [midway,below right] {$n$};
\draw [postaction={decorate}] (-2,-9) -- (2,-3) node [midway,below right] {$n$};
\draw [postaction={decorate}] (0,0) -- (-4,0) node [midway,below] {$n-a$};
\draw [postaction={decorate}] (14,-3) -- (2,-3) node [midway,above] {$c$};
\draw [postaction={decorate}] (2,-3) -- (0,0) node [midway,below left] {$n+c$};
\draw [postaction={decorate}] (0,0) -- (2,3) node [midway,below right] {$a+c$};
\draw [postaction={decorate}] (14,3) -- (8,3) node [midway,above] {$d$};
\draw [postaction={decorate}] (2,3) -- (8,3) node [midway,above] {$N-d$};

\draw (8,2.5) -- (8,3) ;
\draw (-6.3, 2.8) -- (-6,3) ;
\draw (-0.3, 5.8) -- (0,6) ;

\end{scope}

\draw[yshift=-10cm]
  node[below,text width=6cm] 
  {
  Figure 17. Web for the function $\tcfr{a,b}{n,n}{c,d}$ where $a+b+c+d=2N$.
  };

\end{tikzpicture}
\end{center}

Let us give a concrete description of this function.

Given three flags 
$$u_1, \dots, u_N;$$
$$v_1, \dots, v_N;$$
$$w_1, \dots, w_N;$$
first consider the forms
$$U_a := u_1 \wedge \cdots \wedge u_a,$$
$$U_b := u_1 \wedge \cdots \wedge u_b,$$
$$V_n := v_1 \wedge \cdots \wedge v_n,$$ 
$$W_c := w_1 \wedge \cdots \wedge w_c,$$
$$W_d := w_1 \wedge \cdots \wedge w_d.$$

Because $n+c=2N-b-d+n-a$, there is a natural map 
$$\phi_{N-d, N-b,n-a}: \bigwedge\nolimits^{n+c} V \rightarrow \bigwedge\nolimits^{N-d} V \otimes \bigwedge\nolimits^{N-b} V \otimes \bigwedge\nolimits^{n-a} V.$$
Note that $\phi_{N-d, N-b,n-a}$ can be thought of as the composite of $\phi_{2N-d-b,n-a}$ and $\phi_{N-d, N-b}$. There are also natural maps 
$$- \wedge W_d : \bigwedge\nolimits^{N-d} V \rightarrow \bigwedge\nolimits^{N} V \simeq F,$$
$$U_b \wedge - : \bigwedge\nolimits^{N-b} V \rightarrow \bigwedge\nolimits^{N} V \simeq F,$$ and
$$U_a \wedge V_n \wedge - :  \bigwedge\nolimits^{n-a} V \rightarrow \bigwedge\nolimits^{N} V \simeq F.$$ 
Applying these maps to the first, second, and third factors of $\phi_{N-d, N-b,n-a}(V_n \wedge W_c)$, respectively, and then multiplying, we get get the value of our function. This is a function on $\Conf_3 \A_{SL_{N}}$. Pulling back gives a function on $\Conf_3 \A_{Sp_{2n}}$. This defines the function $\tcfr{a,b}{n, n}{c,d}$ on $\Conf_3 \A_{SL_{N}}$ and $\Conf_3 \A_{Sp_{2n}}$.

We now define the function $\tcfr{a}{b}{c, d}$, which is the twisted cyclic shift of the function $\tcfr{c, d}{a}{b}$ defined earlier. Recall the map 
$$T: \Conf_3 \A  \rightarrow \Conf_3 \A$$
which acts on three principal flags $A, B, C$ as follows:
$$(A,B,C) \rightarrow (s_G \cdot C, A, B).$$
Then 
$$\tcfr{a}{b}{c, d} := T^*\tcfr{c, d}{a}{b}.$$

We are now ready to analyze for the functions in our cluster algebra transform under the sequence of mutations \eqref{13Sp}. Note that in the above sequence of mutations \eqref{13Sp}, $x_{ij}$ is mutated $i$ times if $i<j$.

\begin{theorem}
If $i < j$, then $x_{ij}$ is mutated a total of $i$ times. Recall that when $i < j$, we assign the function $\tcfr{n-i, 2n+i-j}{n}{j}$ to $x_{ij}$. The function attached to $x_{ij}$ transforms as follows:
$$\tcfr{n-i, 2n+i-j}{n}{j} \rightarrow \tcfr{n-i+1, 2n+i-j-1}{n, n}{j-1, n+1} \rightarrow $$
$$\tcfr{n-i+2, 2n+i-j-2}{n, n}{j-2, n+2} \rightarrow \dots \rightarrow \tcfr{n-1, 2n-j+1}{n, n}{j-i+1, n+i-1} $$
$$\rightarrow \tcfr{2n-j}{n}{j-i, n+i}=\tcfr{j}{n}{n-i, 2n+i-j}$$

The first transformation can be seen as the composite of two steps,
$$\tcfr{n-i, 2n+i-j}{n}{j} \rightarrow \tcfr{n-i, 2n+i-j}{n, n}{j, n} \rightarrow \tcfr{n-i+1, 2n+i-j-1}{n, n}{j-1, n+1},$$
while the last transformation can also be seen as the composite of two steps, 
$$\tcfr{n-1, 2n-j+1}{n, n}{j-i+1, n+i-1} \rightarrow \tcfr{n, 2n-j}{n, n}{j-i, n+i} \rightarrow \tcfr{2n-j}{n}{j-i, n+i}.$$
Then with each transformation, two of the parameters increase by one, and two decrease by one.

\end{theorem}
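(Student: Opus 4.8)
The plan is to mimic the structure of the proof of the $S_3$-symmetry theorem for the first transposition (Theorem~\ref{thm} just proved), namely to verify directly that the functions listed in the statement are exactly the ones produced by the cluster mutations along the sequence \eqref{13Sp}, using a small library of web/skein identities for the functions $\tcfr{a,b}{n}{c}$, $\tcfr{a,b}{n,n}{c,d}$, $\tcfr{a}{b}{c}$, and the degenerate versions $\dud{k}{2n-k}$. First I would record the relevant exchange relations. The generic one will be an analogue of the octahedron-type identity
\begin{equation*}
\tcfr{a,b}{n,n}{c,d}\,\tcfr{a+1,b-1}{n,n}{c-1,d+1}=\tcfr{a+1,b-1}{n,n}{c,d}\,\tcfr{a,b}{n,n}{c-1,d+1}+\tcfr{a,b}{n,n}{c-1,d}\,\tcfr{a+1,b-1}{n,n}{c,d+1},
\end{equation*}
together with the two ``boundary'' degenerations which re-express $\tcfr{a,b}{n,n}{c,d}$ as a product of a $\tcfr{\cdot}{n}{\cdot}$-type function and a $\dud{\cdot}{\cdot}$-type function when one of $a,b,c,d$ hits $n$ or $0$; these are exactly what produces the ``composite of two steps'' phenomenon noted at the end of the statement. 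All of these are provable by the same skein-relation manipulations referenced earlier (``All the identities can be checked using webs and skein relations''), so I would prove them once in a preliminary lemma and then quote them.

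Second I would set up the bookkeeping: show that along \eqref{13Sp} the vertex $x_{ij}$ with $i<j$ is mutated exactly $i$ times (this is the row-counting argument already sketched in the paragraph introducing \eqref{13Sp}, identical in spirit to the ``$x_{ijk}$ is mutated $i$ times'' count in the cactus-sequence proof), and that at the moment $x_{ij}$ is mutated for the $r$-th time its quiver neighbourhood looks like one of a short list of local pictures — the analogue of Figure~11. Then the content of the theorem is: at stage $0$ the function is $\tcfr{n-i,2n+i-j}{n}{j}$; applying the exchange relation at the $r$-th mutation sends the current function to $\tcfr{n-i+r,2n+i-j-r}{n,n}{j-r,n+r}$ for $1\le r\le i-1$; and the $i$-th mutation, being a boundary degeneration (the second parameter of the top pair reaches $2n-j$, or equivalently $a=n-i+i=n$ so the ``doubled'' middle weight collapses), yields $\tcfr{2n-j}{n}{j-i,n+i}$, which equals $\tcfr{j}{n}{n-i,2n+i-j}$ by the dualities \eqref{dualities} and the definition of $\tcfr{a}{b}{c,d}$ via the twisted cyclic shift $T$.

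Alternatively — and this is the route I'd actually prefer to write up because it is less computational — I would use the folding/unfolding picture of Section~3.4: lift \eqref{13Sp} to a sequence of mutations on $\Conf_3\A_{SL_{2n}}$ (each downstairs mutation of $x_{ij}$, $i<j$, lifting to the simultaneous mutation of the two vertices $x_{i,j-i,?}$-type pair that fold to it), identify that lifted sequence with an honest subsequence of the cactus sequence on $\Conf_3\A_{SL_{2n}}$, read off the transformed functions upstairs from Proposition~\ref{cactusprop} (the ``$\tcfr{i}{j}{k}\to\cdots\to\tcfr{n-i}{n-k}{n-j}$'' chain), and then fold back, using the identities \eqref{dualities} to see that the two upstairs functions folding to a given downstairs vertex agree on $\Conf_3\A_{Sp_{2n}}$. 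The circled arrows drawn in Figures~15--16 are precisely the record of this lift, so the main work is checking that the lifted sequence is legal (i.e.\ that the two lifts of each vertex are mutable at the right times and have no arrow between them at those times — guaranteed because $\sigma$ preserves the quiver) and that it is a subsequence of the cactus sequence.

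The main obstacle, exactly as in the first-transposition case, is that the naive unfolding does \emph{not} stay compatible with the folding: after an arbitrary sequence of mutations upstairs one is generally no longer folding together $\sigma$-related vertices. So the crux is to prove that \eqref{13Sp} is one of the special sequences that \emph{does} lift $\sigma$-equivariantly — equivalently, that every intermediate quiver in Figures~15--16 is $\sigma$-invariant — which forces the very particular choice of order (row by row, only the $i<j$ vertices). I expect to establish this by induction on the stages, showing each stage mutates a $\sigma$-orbit-closed set of downstairs vertices whose lifts form a commuting set, so that the lift is well-defined stage by stage; once that is in hand, the function computation is a mechanical transcription from Proposition~\ref{cactusprop} via \eqref{dualities}, and the ``two-step'' composites at the two ends are exactly the boundary degenerations where a lifted pair of $SL_{2n}$-vertices that were previously distinct become identified (or split) under the refolding.
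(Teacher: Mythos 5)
Your first route is essentially the paper's own proof: a direct verification that each mutation along \eqref{13Sp} is an instance of an octahedron-type exchange relation for the doubled functions $\tcfr{a,b}{n,n}{c,d}$, with degenerate factorizations (a parameter hitting $0$, $n$, or $N$) accounting for the two-step composites at the ends, and with the circled arrows of Figures 15--16 as the bookkeeping for how each $Sp_{2n}$-function is computed upstairs on $\Conf_3\A_{SL_{2n}}$. Two corrections, however. The generic exchange relation you wrote is not degree-consistent: in your second summand the factors $\tcfr{a,b}{n,n}{c-1,d}$ and $\tcfr{a+1,b-1}{n,n}{c,d+1}$ have parameter sums $2N-1$ and $2N+1$, so they are not functions of the required type. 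The correct identity is
$$\tcfr{a,b}{n,n}{c,d}\,\tcfr{a+1,b-1}{n,n}{c-1,d+1}=\tcfr{a,b}{n,n}{c-1,d+1}\,\tcfr{a+1,b-1}{n,n}{c,d}+\tcfr{a+1,b}{n,n}{c-1,d}\,\tcfr{a,b-1}{n,n}{c,d+1},$$
each factor keeping parameter sum $2N$; the paper proves it by noting that $\tcfr{a,b}{n,n}{c,d}$ depends only on $U_a\wedge V_n$, $V_n\wedge W_c$, $W_d$, $U_b$ and specializing the dualized octahedron recurrence for the four-flag functions $\qcf{a'}{b'}{c'}{d'}$, rather than by a fresh skein computation. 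You also need to say how the white vertices $x_{in}$ are mutated: since $d_{in}=\tfrac12$, arrows from black to white vertices enter the exchange relation with exponent $2$, and it is exactly this (together with the circled arrows prescribing when to use the dual of a neighboring function) that makes the displayed identities match the cluster transformations.

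The alternative route you say you would prefer does not work for this transposition, and the paper is explicit about this: only the first transposition $(A,B,C)\mapsto(A,C,B)$ is realized by lifting to a reordered subsequence of the cactus sequence on $\Conf_3\A_{SL_{2n}}$; the second one ``requires a completely different analysis.'' Concretely, the intermediate functions here lie in $[V_{\omega_a+\omega_b}\otimes V_{2\omega_n}\otimes V_{\omega_c+\omega_d}]^{SL_{2n}}$, carrying the doubled weight $2\omega_n$ in the middle slot, whereas every function produced by the cactus sequence has a single fundamental weight there. So the lifted sequence is not a subsequence of the cactus sequence, and the proposition describing the cactus-sequence functions cannot be quoted to read off the answer; the $\sigma$-equivariance of the lift is true and needed for the bookkeeping, but it does not by itself reduce the function computation to anything already proved. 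The direct verification of your first route, with the corrected identity and its degenerations, is the argument that actually closes the proof.
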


\begin{proof}

We have already described the quivers at the various stages of mutation. We must then check that the functions above satisfy the identities of the associated cluster transformations.

This is the first sequence of mutations where we have to mutate the white vertices $x_{in}$ ($y_n$ is also a white vertex, but it does not get mutated). For a black vertex $x_{ij}$, recall that the formula for mutation of says that

$$(\textrm{old function attached to } x_{ij})(\textrm{new function attached to } x_{ij})=$$
$$\prod (\textrm{functions attached to incoming arrows}) + \prod (\textrm{functions attached to outgoing arrows}).$$
However, for black vertices, we have
$$(\textrm{old function attached to } x_{ij})(\textrm{new function attached to } x_{ij})=$$
$$\prod (\textrm{incoming arrows from white vertices})(\textrm{incoming arrows from black vertices})^2 + $$
$$\prod (\textrm{outgoing arrows from white vertices})(\textrm{outgoing arrows from black vertices})^2.$$
In other words, arrows going from black to white vertices are weighted by $2$ when calculating the mutation of a white vertex.

In verifying the cluster identities that we need, we will actually be computing functions on $\Conf_3 \A_{SL_N}$. Thus we will use the arrows with a circle on them as a bookkeeping device. Circled arrows only occur between two black vertices. If a black vertex $x$ has an incoming or outgoing circled arrow from another black vertex $x'$, this means that in computing the mutation of $x$, we should use the dual of the function attached to $x'$.

The reason for this is as follows. We can lift the quiver $Q$ for $\Conf_3 \A_{Sp_N}$ to a quiver $\tilde{Q}$ for $\Conf_3 \A_{SL_N}$ using the following rule. For each vertex black vertex $x$, there are two lifts of this vertex in $\tilde{Q}$, call them $x^*$ and $x^{**}$. Then for every regular arrow between two black vertices $x$ and $x'$ in the quiver $Q$, we lift it to the correponding arrow between $x^*$ and $x'^*$ and between $x^{**}$ and $x'^{**}$. On the other hand, for every arrow with a circle between two black vertices $x$ and $x'$ in the quiver $Q$, we lift it to the correponding arrow between $x^{**}$ and $x'^*$ and between $x^{*}$ and $x'^{**}$.

Now suppose $x$ is a black vertex and $x'$ is a white vertex. $x$ has lifts $x^*$ and $x^{**}$, while $x'$ has just one lift $x'^*$. Then an arrow between $x$ and $x'$ in the quiver $Q$ lifts to the corresponding arrows between $x^*$ and $x'^*$ and between $x^{**}$ and $x'^*$. Hence, when computing the mutation of $x'$, we use the product of the function attached to $x$ and the dual of this function.

With these rules in mind, we are ready to compute mutations.

We need the following facts:
\begin{itemize}
\item Let $1 \leq a, b, c, d \leq N$, and $a+b+c+d=2N$. 
$$\tcfr{a, b}{n, n}{c, d}\tcfr{a+1, b-1}{n, n}{c-1, d+1}=$$
$$\tcfr{a, b}{n, n}{c-1, d+1}\tcfr{a+1, b-1}{n, n}{c, d}+\tcfr{a+1,b}{n, n}{c-1,d}\tcfr{a, b-1}{n, n}{c, d+1}.$$
\item If $a+c=n$ and $b+d=3n$,
$$\tcfr{a, b}{n, n}{c, d}=\tcfr{a}{n}{c}\tcfr{b}{n}{d}.$$
\item If $a=n$,
$$\tcfr{n, b}{n, n}{c, d}=\dud{n}{n}\tcfr{b}{n}{c, d}.$$
Similarly, we have
$$\tcfr{a, n}{n, n}{c, d}=\dud{n}{n}\tcfr{a}{n}{c, d},$$
$$\tcfr{a, b}{n, n}{n, d}=\tcfr{}{n}{n}\tcfr{a, b}{n}{d},$$
$$\tcfr{a, b}{n, n}{c, n}=\tcfr{}{n}{n}\tcfr{a, b}{n}{c}.$$
\item We will need the duality identities of \eqref{dualities}, and also the following duality identity:
$$\tcfr{a, b}{n, n}{c, d}=\tcfr{N-b, N-a}{n, n}{N-d, N-c}$$

\end{itemize}

All these identities can be checked by computing with webs. All identities but the first follow directly from the definitions. Let us give an alternative proof the first (and most important) identity.

Looking carefully at how $\tcfr{a, b}{n, n}{c, d}$ is defined on three flags $U, V, W$, one notices it really depends on four inputs: $U_a \wedge V_n, V_n \wedge W_c, W_d,$ and $U_b$.

The identity we are seeking actually comes from a more general identity on $\Conf_4 \A_{SL_N}$. Let $1 \leq a', b', c', d' \leq N$ such that $a'+b'+c'+d'=3N$. Consider four flags:

$$u_1, \dots, u_N;$$
$$v_1, \dots, v_N;$$
$$w_1, \dots, w_N;$$
$$x_1, \dots, x_N.$$
Also consider the forms
$$U_{a'} := u_1 \wedge \cdots \wedge u_{a'},$$
$$V_{b'} := v_1 \wedge \cdots \wedge v_{b'},$$ 
$$W_{c'} := w_1 \wedge \cdots \wedge w_{c'},$$
$$X_{d'} := x_1 \wedge \cdots \wedge x_{d'}.$$

We would like to define the function $\qcf{a'}{b'}{c'}{d'}$. There is a natural map 
$$\phi_{N-d', N-b',N-a'}: \bigwedge\nolimits^{c'} V \rightarrow \bigwedge\nolimits^{N-d'} V \otimes \bigwedge\nolimits^{N-b'} V \otimes \bigwedge\nolimits^{N-a'} V.$$
There are also natural maps 
$$- \wedge X_{d'} : \bigwedge\nolimits^{N-d'} V \rightarrow \bigwedge\nolimits^{N} V \simeq F,$$
$$V_{b'} \wedge - : \bigwedge\nolimits^{N-b'} V \rightarrow \bigwedge\nolimits^{N} V \simeq F,$$ and
$$U_{a'} \wedge - :  \bigwedge\nolimits^{N-a'} V \rightarrow \bigwedge\nolimits^{N} V \simeq F.$$ 
Applying these maps to the first, second, and third factors of $\phi_{N-d', N-b',N-a'}(W_{c'})$, respectively, and then multiplying, we get get the value of our function $\qcf{a'}{b'}{c'}{d'}$. 

The identity 
$$\qcf{a'}{b'}{c'}{d'}\qcf{a'-1}{b'+1}{c'-1}{d'+1}=$$
$$\qcf{a'}{b'}{c'-1}{d'+1}\tcfr{a'-1, b'+1}{c'}{d'}+\qcf{a'}{b'+1}{c'-1}{d'}\qcf{a'-1}{b'}{c'}{d'+1}$$
is simply the dualization of the octahedron recurrence identity.

If we specialize the above identity to the four flags
$$u_1, \dots, u_N;$$
$$u_1, \dots, u_n, v_1, \dots, v_n;$$
$$v_1, \dots, v_n, w_1, \dots w_n;$$
$$w_1, \dots, w_N;$$
with $a'=b, b'=a+n, c'=n+c, d'=d$, we get exactly the identity that we seek:
$$\tcfr{a, b}{n, n}{c, d}\tcfr{a+1, b-1}{n, n}{c-1, d+1}=$$
$$\tcfr{a, b}{n, n}{c-1, d+1}\tcfr{a+1, b-1}{n, n}{c, d}+$$
$$\tcfr{a+1,b}{n, n}{c-1,d}\tcfr{a, b-1}{n, n}{c, d+1}.$$

Most of the cluster mutations are given by precisely this identity.

All other cluster mutations are degenerations of this identity, and are obtained from this one by applying the other three identities. 

For example, the first three mutations for $n \geq 4$ are
\begin{itemize}
\item $$\tcfr{1, N-1}{n}{n}\tcfr{2, N-2}{n, n}{n-1,n+1}=$$
$$\tcfr{2, N-2}{n}{n}\tcfr{1}{n}{n-1}\tcfr{N-1}{n}{n+1}+\tcfr{}{n}{n}\tcfr{2, N-1}{n}{n-1}\tcfr{1, N-2}{n}{n+1},$$ 
which comes from
$$\tcfr{}{n}{n}\tcfr{1, N-1}{n}{n}\tcfr{2, N-2}{n, n}{n-1,n+1}=$$
$$\tcfr{1, N-1}{n, n}{n, n}\tcfr{2, N-2}{n, n}{n-1,n+1}=$$
$$\tcfr{2, N-2}{n, n}{n, n}\tcfr{1, N-1}{n, n}{n-1, n+1}+\tcfr{2, N-1}{n, n}{n-1, n}\tcfr{1, N-2}{n, n}{n, n+1}=$$
$$\tcfr{}{n}{n}\tcfr{2, N-2}{n}{n}\tcfr{1}{n}{n-1}\tcfr{N-1}{n}{n+1}+$$
$$\tcfr{}{n}{n}^2\tcfr{2, N-1}{n}{n-1}\tcfr{1, N-2}{n}{n+1};$$
\item $$\tcfr{2, N-2}{n}{n}\tcfr{3, N-3}{n, n}{n-1,n+1}=$$
$$\tcfr{3, N-3}{n}{n}\tcfr{2, N-2}{n, n}{n-1,n+1}+\tcfr{}{n}{n}\tcfr{3, N-2}{n}{n-1}\tcfr{2, N-3}{n}{n+1},$$ 
which comes from
$$\tcfr{}{n}{n}\tcfr{2, N-2}{n}{n}\tcfr{3, N-3}{n, n}{n-1,n+1}=$$
$$\tcfr{2, N-2}{n, n}{n, n}\tcfr{3, N-3}{n, n}{n-1,n+1}=$$
$$\tcfr{3, N-3}{n, n}{n, n}\tcfr{2, N-2}{n, n}{n-1,n+1}+\tcfr{3, N-2}{n, n}{n-1,n}\tcfr{2, N-3}{n, n}{n, n+1}=$$
$$\tcfr{}{n}{n}\tcfr{3, N-3}{n}{n}\tcfr{2, N-2}{n, n}{n-1,n+1}+\tcfr{}{n}{n}^2\tcfr{3, N-2}{n}{n-1}\tcfr{2, N-3}{n}{n+1};$$
\item $$\tcfr{2, N-1}{n}{n-1}\tcfr{3, N-2}{n, n}{n-2, n+1}=$$
$$\tcfr{3, N-2}{n}{n-1}\tcfr{N-1}{n}{n+1}(2, n, n-2)+\tcfr{3, N-1}{n}{n-2}\tcfr{2, N-2}{n, n}{n-1,n+1},$$ 
which comes from
$$\tcfr{}{n}{n}\tcfr{2, N-1}{n}{n-1}\tcfr{3, N-2}{n, n}{n-2, n+1}=$$
$$\tcfr{2, N-1}{n, n}{n-1, n}\tcfr{3, N-2}{n, n}{n-2, n+1}=$$
$$\tcfr{3, N-2}{n, n}{n-1, n}\tcfr{2, N-1}{n, n}{n-2, n+1}+\tcfr{3, N-1}{n, n}{n-2, n}\tcfr{2, N-2}{n, n}{n-1,n+1}=$$
$$\tcfr{}{n}{n}\tcfr{3, N-2}{n}{n-1}\tcfr{N-1}{n}{n+1}\tcfr{2}{n}{n-2}+$$
$$\tcfr{}{n}{n}\tcfr{3, N-1}{n}{n-2}\tcfr{2, N-2}{n, n}{n-1,n+1}.$$

\end{itemize}

\end{proof}

\subsection{The sequence of mutations for a flip}

Recall that to construct the cluster structure (a quiver plus the set of functions attached to the vertices) on $\Conf_4 \A_{Sp_{2n}}$, we choose a triangulation of the $4$-gon, as well as one of the six cluster structures on $\Conf_3 \A_{Sp_{2n}}$ for each of the resulting triangles. The previous section showed how to mutate between the six cluster structures on each triangle. In this section, we will give a sequence of mutations that relates two of the clusters coming from different triangulations of the $4$-gon. Combined with the previous section, this allows us to connect by mutations all $72$ different clusters we have constructed for $\Conf_4 \A_{Sp_{2n}}$ (for each of the two triangulations, we have $6 \cdot 6$ different clusters). This will allow us to connect by mutations all the clusters that we have constructed on $\Conf_m \A_{Sp_{2n}}$.

Given a configuration $(A,B,C,D) \in \Conf_4 \A_{Sp_{2n}}$, we will give a sequence of mutations that relates a cluster coming from the triangulation $ABC, ACD$ to a cluster coming from the triangulation $ABD, BCD$.

We will need to relabel the quiver with vertices $x_{ij}$, $y_k$, with $-n \leq i \leq n$, $1 \leq j \leq n$ and $1 \leq |k| \leq n$. The quiver we will start with is as in Figure 18, pictured for $Sp_6$.

\begin{center}
\begin{tikzpicture}[scale=2.4]
  \foreach \x in {-3,-2,-1,0,1,2,3}
    \foreach \y in {1, 2}
      \node[] (x\x\y) at (\x,-\y) {\Large $\ontop{x_{\x\y}}{\bullet}$};
  \foreach \x in {-3,-2,-1,0,1,2,3}
    \foreach \y in {3}
      \node[] (x\x\y) at (\x,-\y) {\Large $\ontop{x_{\x\y}}{\circ}$};
  \node (y-1) at (-0.5,0) {\Large $\ontop{y_{-1}}{\bullet}$};
  \node (y-2) at (-1.5,0) {\Large $\ontop{y_{-2}}{\bullet}$};
  \node (y-3) at (-2.5,-4) {\Large $\ontop{y_{-3}}{\circ}$};
  \node (y1) at (0.5,0) {\Large $\ontop{y_1}{\bullet}$};
  \node (y2) at (1.5,0) {\Large $\ontop{y_2}{\bullet}$};
  \node (y3) at (2.5,-4) {\Large $\ontop{y_{3}}{\circ}$};

  \draw [->] (x01) to (x11);
  \draw [->] (x11) to (x21);
  \draw [->] (x21) to (x31);
  \draw [->] (x02) to (x12);
  \draw [->] (x12) to (x22);
  \draw [->] (x22) to (x32);
  \draw [->] (x03) to (x13);
  \draw [->] (x13) to (x23);
  \draw [->] (x23) to (x33);
  \draw [->, dashed] (y1) to (y2);
  \draw [->, dashed] (y2) .. controls +(right:1) and +(up:1) .. (y3);

  \draw [->] (x03) to (x02);
  \draw [->] (x02) to (x01);

  \draw [->] (x13) to (x12);
  \draw [->] (x12) to (x11);
  \draw [->] (x11) to (y1);
  \draw [->] (y3) to (x23);
  \draw [->] (x23) to (x22);
  \draw [->] (x22) to (x21);
  \draw [->] (x21) to (y2);
  \draw [->, dashed] (x33) to (x32);
  \draw [->, dashed] (x32) to (x31);

  \draw [->] (y1) to (x01);
  \draw [->] (x11) to (x02);
  \draw [->] (x12) to (x03);
 \draw [->] (y2) to (x11);
  \draw [->] (x21) to (x12);
  \draw [->] (x22) to (x13);
  \draw [->] (x31) to (x22);
  \draw [->] (x32) to (x23);
  \draw [->] (x33) to (y3);

  \draw [->] (x01) to (x-11);
  \draw [->] (x-11) to (x-21);
  \draw [->] (x-21) to (x-31);
  \draw [->] (x02) to (x-12);
  \draw [->] (x-12) to (x-22);
  \draw [->] (x-22) to (x-32);
  \draw [->] (x03) to (x-13);
  \draw [->] (x-13) to (x-23);
  \draw [->] (x-23) to (x-33);
  \draw [->, dashed] (y-1) to (y-2);
  \draw [->, dashed] (y-2) .. controls +(left:1) and +(up:1) .. (y-3);

  \draw [->] (x-13) to (x-12);
  \draw [->] (x-12) to (x-11);
  \draw [->] (x-11) to (y-1);
  \draw [->] (y-3) to (x-23);
  \draw [->] (x-23) to (x-22);
  \draw [->] (x-22) to (x-21);
  \draw [->] (x-21) to (y-2);
  \draw [->, dashed] (x-33) to (x-32);
  \draw [->, dashed] (x-32) to (x-31);

  \draw [->] (y-1) to (x01);
  \draw [->] (x-11) to (x02);
  \draw [->] (x-12) to (x03);
  \draw [->] (y-2) to (x-11);
  \draw [->] (x-21) to (x-12);
  \draw [->] (x-22) to (x-13);
  \draw [->] (x-31) to (x-22);
  \draw [->] (x-32) to (x-23);
  \draw [->] (x-33) to (y-3);

\draw[yshift=-3.85cm]
  node[below,text width=6cm] 
  {
  Figure 18. The quiver for the cluster algebra on $\Conf_4 \A_{Sp_{6}}$. The associated functions are pictured in Figure 7.
  };

\end{tikzpicture}
\end{center}

The functions attached to these vertices are as follows. Let $N=2n$. The edge functions are
\begin{alignat*}{1}
\dur{k}{N-k} &\longleftrightarrow y_k, \textrm{ for } k >0; \\
\dld{|k|}{N-|k|} &\longleftrightarrow y_k, \textrm{ for } k <0; \\
\dul{j}{N-j} &\longleftrightarrow x_{-n,j}; \\
\ddr{N-j}{j} &\longleftrightarrow x_{nj}; \\
\dud{j}{N-j} &\longleftrightarrow x_{0j}. \\
\end{alignat*}
The face functions in the triangle where $i>0$ are
\begin{alignat*}{1}
\tcfr{i+j}{N-j}{N-i} &\longleftrightarrow x_{ij}, \textrm{ for } 0<i<n, i+j \leq n; \\
\tcfr{n}{N-j}{j+i-n, N-i} &\longleftrightarrow x_{ij}, \textrm{ for } 0<i<n, i+j > n; \\
\end{alignat*}
while the face functions in the triangle where $i<0$ are
\begin{alignat*}{1}
\tcfl{j}{|i|}{N-|i|-j} &\longleftrightarrow x_{ij}, \textrm{ for } -n<i<0, |i|+j \leq n; \\
\tcfl{j}{|i|, N+n-|i|-j}{n} &\longleftrightarrow x_{ij}, \textrm{ for } -n<i<0, |i|+j > n.
\end{alignat*}

Figure 7 displays these functions embedded within the quiver for $\Conf_4 \A_{Sp_{6}}$

\begin{rmk} Note that our labelling of the vertices is somewhat different from before. The vertices labelled $x_{ij}$ correspond to the vertices labelled $x_{n-|i|, j}$ in  $\Conf_3 \A_{Sp_{2n}}$.
\end{rmk}

First, let us define the functions above. Note that there are natural maps 
$$p_1, p_2, p_3, p_4: \Conf_4 \A_{Sp_{2n}} \rightarrow \Conf_3 \A_{Sp_{2n}}$$
that map a configuration $(A,B,C,D)$ to $(B, C, D)$, $(A, C, D)$, $(A, B, D)$, $(A,B,C)$, respectively . Pulling back functions from $\Conf_3 \A_{Sp_{2n}}$ allows us to define functions on $\Conf_4 \A_{Sp_{2n}}$. For example,
$$p_3^*\tcfr{n}{j}{n-i, 2n+i-j} =: \tcfu{n}{j}{n-i, 2n+i-j}.$$
Similarly, we can pull back functions from various maps 
$$\Conf_4 \A_{Sp_{2n}} \rightarrow \Conf_2 \A_{Sp_{2n}}$$
to define functions such as 
$$\dld{j}{N-j}.$$

There is also a map
$$T: \Conf_4 \A  \rightarrow \Conf_4 \A$$
which sends
$$(A,B,C,D) \rightarrow (s_G \cdot D, A, B, C)$$
which allows us to define, for example
$$T^*\tcfu{n}{j}{n-i, 2n+i-j} =: \tcfr{j}{n-i, 2n+i-j}{n}.$$ The forgetful maps and twist maps, combined with the functions described below, will furnish all the functions necessary for the computation of the flip mutation sequence.

More interestingly, we will have to use some functions which depend on all four flags. Let $N=2n$. Let $0 \leq a, b, c, d \leq N$ such that $a+b+c+d=4n=2N$ and $b+c \leq N$. Then we would like to define a function that we will call 
$$\qcfs{a}{b}{c}{d}.$$
Note that our notation uses a new symbol, ``:''. This is because the construction does not exhibit cyclic symmetry, i.e.
$$T^*\qcfs{a}{b}{c}{d} \neq \qcfs{b}{c}{d}{a}.$$
Instead, we use the notation
$$T^*\qcfs{a}{b}{c}{d} =: \qcfb{b}{c}{d}{a}.$$
We can also define
$$(T^2)^*\qcfs{a}{b}{c}{d}=:\qcfs{c}{d}{a}{b}.$$

The function $\qcfs{a}{b}{c}{d}$ on $\Conf_4 \A_{Sp_{2n}}$ is pulled back from a function on $\Conf_4 \A_{SL_{N}}$. The function on $\Conf_4 \A_{SL_{N}}$ is given by an invariant in the space
$$[V_{\omega_a}  \otimes V_{\omega_b}  \otimes V_{\omega_c} \otimes V_{\omega_d}]^{SL_N}.$$
It turns out that this is not always a one-dimensional vector space, so we will have to proceed with some care. We pick out the function given by the web in Figure 19:

\begin{center}
\begin{tikzpicture}[scale=1.4]
\begin{scope}[decoration={
    markings,
    mark=at position 0.5 with {\arrow{>}}},
    ] 
\draw [postaction={decorate}] (-4,-1) -- (-1,-1) node [midway,below] {$b$}; 
\draw [postaction={decorate}] (-1,-4) -- (-1,-1) node [midway,right] {$c$}; 
\draw [postaction={decorate}] (-1,-1) -- (0,0) node [midway,below right] {$b+c$}; 
\draw [postaction={decorate}] (1,1) -- (0,0) node [midway,above left] {$N-b-c$}; 
\draw [postaction={decorate}] (1,4) -- (1,1) node [midway,left] {$a$}; 
\draw [postaction={decorate}] (4,1) -- (2.5,1) node [midway,above] {$d$}; 
\draw [postaction={decorate}] (1,1) -- (2.5,1) node [midway,below] {$N-d$};

\draw (-0.1,0.1) -- (0,0);
\draw (2.5,0.9) -- (2.5,1);

\end{scope}

\draw[yshift=-4cm]
  node[below,text width=6cm] 
  {
  Figure 19 Web for the function $\qcfs{a}{b}{c}{d}$, where $a+b+c+d=2N$.
  };

\end{tikzpicture}
\end{center}

Let us give a concrete description of this function.

Given four flags 
$$u_1, \dots, u_N;$$
$$v_1, \dots, v_N;$$
$$w_1, \dots, w_N;$$
$$x_1, \dots, x_N;$$

first consider the forms
$$U_a := u_1 \wedge \cdots \wedge u_a,$$
$$V_b := v_1 \wedge \cdots \wedge v_b,$$
$$W_c := w_1 \wedge \cdots \wedge w_c,$$
$$X_d := x_1 \wedge \cdots \wedge x_d.$$

Because $a=2N-b-c-d$, there is a natural map 
$$\phi_{N-b-c, N-d}: \bigwedge\nolimits^{a} V \rightarrow \bigwedge\nolimits^{N-b-c} V \otimes \bigwedge\nolimits^{N-d} V.$$
There are also natural maps 
$$- \wedge V_b \wedge W_c  :  \bigwedge\nolimits^{N-b-c} V \rightarrow \bigwedge\nolimits^{N} V \simeq F.$$ 
$$- \wedge W_d : \bigwedge\nolimits^{N-d} V \rightarrow \bigwedge\nolimits^{N} V \simeq F,$$
Applying these maps to the first and second factors of $\phi_{N-b-c, N-d}(U_a)$, respectively, and then multiplying, we get get the value of our function. This is a function on $\Conf_4 \A_{SL_{N}}$. Pulling back gives a function on $\Conf_4 \A_{Sp_{2n}}$.

We now define a second type of function on  $\Conf_4 \A_{Sp_{2n}}$. Let $0 \leq a, b, c, d \leq N$ such that $a+b+c+d=5n$, and $c \leq d$. Then we would like to define a function that we will call 
$$\qcfs{n}{a}{b}{c, d}.$$

The function $\qcfs{n}{a}{b}{c, d}$ on $\Conf_4 \A_{Sp_{2n}}$ is pulled back from a function on $\Conf_4 \A_{SL_{2n}}$. It is given by an invariant in the space
$$[V_{\omega_n}  \otimes V_{\omega_a}  \otimes V_{\omega_b} \otimes V_{\omega_c+\omega_d}]^{SL_N}$$
picked out by the web in Figure 20a:

\begin{center}
\begin{tikzpicture}[scale=1.4]
\begin{scope}[decoration={
    markings,
    mark=at position 0.5 with {\arrow{>}}},
    ] 
\draw [postaction={decorate}] (-4,-1) -- (-2.5,-1) node [midway,below] {$a$}; 
\draw [postaction={decorate}] (-1,-1) -- (-2.5,-1) node [midway,below] {$N-a$}; 
\draw [postaction={decorate}] (-1,-4) -- (-1,-1) node [midway,right] {$b$}; 
\draw [postaction={decorate}] (-1,-1) -- (1,1) node [midway,below right] {$a+b-N$}; 

\draw [postaction={decorate}] (4,1) -- (2.5,1) node [midway,above] {$c$}; 
\draw [postaction={decorate}] (1,1) -- (2.5,1) node [midway,below] {$N-c$}; 
\draw [postaction={decorate}] (1,1) -- (1,2) node [midway,left] {$a+b+c-2N$}; 
\draw [postaction={decorate}] (4,2) -- (1,2) node [midway,below] {$d$}; 
\draw [postaction={decorate}] (1,2) -- (1,3) node [midway,left] {$a+b+c+d-2N$}; 
\draw [postaction={decorate}] (1,4) -- (1,3) node [midway,left] {$n$};

\draw (2.5,0.9) -- (2.5,1);
\draw (0.9,3) -- (1,3);
\draw (-2.5,-1.1) -- (-2.5,-1);

\end{scope}

\draw[yshift=-4cm]
  node[below,text width=8cm] 
  {
  Figure 20a Web for the function $\qcfs{n}{a}{b}{c, d}$, where $a+b+c+d=2N+n$.
  };

\end{tikzpicture}
\end{center}

Let us give a concrete description of this function.

Given four flags 
$$u_1, \dots, u_N;$$
$$v_1, \dots, v_N;$$
$$w_1, \dots, w_N;$$
$$x_1, \dots, x_N;$$

first consider the forms
$$U_n := u_1 \wedge \cdots \wedge u_n,$$
$$V_a := v_1 \wedge \cdots \wedge v_a,$$
$$W_b := w_1 \wedge \cdots \wedge w_b,$$
$$X_c := x_1 \wedge \cdots \wedge x_c.$$
$$X_d := x_1 \wedge \cdots \wedge x_d.$$

Because $b=2N+n-a-c-d$, there is a natural map 
$$\phi_{N-c, n-d, N-a}: \bigwedge\nolimits^{b} V \rightarrow \bigwedge\nolimits^{N-c} V \otimes \bigwedge\nolimits^{n-d} V \otimes \bigwedge\nolimits^{N-a} V.$$
There are also natural maps 
$$- \wedge X_{c} : \bigwedge\nolimits^{N-c} V \rightarrow \bigwedge\nolimits^{N} V \simeq F,$$
$$U_n \wedge - \wedge X_d: \bigwedge\nolimits^{n-d} V \rightarrow \bigwedge\nolimits^{N} V \simeq F,$$ and
$$V_a \wedge - :  \bigwedge\nolimits^{N-a} V \rightarrow \bigwedge\nolimits^{N} V \simeq F.$$ 
Applying these maps to the first, second, and third factors of $\phi_{N-c, n-d, N-a}(W_b)$, respectively, and then multiplying, we get get the value of our function $\qcfs{n}{a}{b}{c, d}$. Using the twist map $T$, we can also define the functions $\qcfb{a}{b}{c, d}{n}$, $\qcfs{b}{c, d}{n}{a}$, and $\qcfb{c, d}{n}{a}{b}$.

Using duality, there is also a function $\qcfs{n}{a}{b}{c, d}$ on $\Conf_4 \A_{Sp_{2n}}$ for $0 \leq a, b, c, d \leq N$, $a+b+c+d=N+n$, and $c \leq d$. 

The function $\qcfs{n}{a}{b}{c, d}$ on $\Conf_4 \A_{Sp_{2n}}$ is pulled back from a function on $\Conf_4 \A_{SL_{2n}}$. The function on $\Conf_4 \A_{SL_{N}}$ is given by an invariant in the space
$$[V_{\omega_n}  \otimes V_{\omega_a}  \otimes V_{\omega_b} \otimes V_{\omega_c+\omega_d}]^{SL_N}.$$
This vector space is generally multi-dimensional. To pick out the correct invariant, we use the web in Figure 20b:

\begin{center}
\begin{tikzpicture}[scale=1.4]
\begin{scope}[decoration={
    markings,
    mark=at position 0.5 with {\arrow{>}}},
    ] 
\draw [postaction={decorate}] (-4,-1) -- (-1,-1) node [midway,below] {$a$}; 
\draw [postaction={decorate}] (-1,-4) -- (-1,-1) node [midway,right] {$b$}; 
\draw [postaction={decorate}] (-1,-1) -- (1,1) node [midway,below right] {$a+b$}; 

\draw [postaction={decorate}] (4,1) -- (2.5,1) node [midway,above] {$c$}; 
\draw [postaction={decorate}] (1,1) -- (2.5,1) node [midway,below] {$N-c$}; 
\draw [postaction={decorate}] (1,1) -- (1,2) node [midway,left] {$a+b+c-N$}; 
\draw [postaction={decorate}] (4,2) -- (1,2) node [midway,below] {$d$}; 
\draw [postaction={decorate}] (1,2) -- (1,3) node [midway,left] {$a+b+c+d-N$}; 
\draw [postaction={decorate}] (1,4) -- (1,3) node [midway,left] {$n$};

\draw (2.5,0.9) -- (2.5,1);
\draw (0.9,3) -- (1,3);

\end{scope}

\draw[yshift=-4cm]
  node[below,text width=8cm] 
  {
  Figure 20b Web for the function $\qcfs{n}{a}{b}{c, d}$, where $a+b+c+d=N+n$.
  };

\end{tikzpicture}
\end{center}

Let us give a concrete description of this function. Again take

$$u_1, \dots, u_N;$$
$$v_1, \dots, v_N;$$
$$w_1, \dots, w_N;$$
$$x_1, \dots, x_N;$$
$$U_n := u_1 \wedge \cdots \wedge u_n,$$
$$V_a := v_1 \wedge \cdots \wedge v_a,$$
$$W_b := w_1 \wedge \cdots \wedge w_b,$$
$$X_c := x_1 \wedge \cdots \wedge x_c.$$
$$X_d := x_1 \wedge \cdots \wedge x_d.$$

Because $a+b=N+n-c-d$, there is a natural map 
$$\phi_{N-c, n-d}: \bigwedge\nolimits^{a+b} V \rightarrow \bigwedge\nolimits^{N-c} V \otimes \bigwedge\nolimits^{n-d} V.$$
There are also natural maps 
$$- \wedge X_{c} : \bigwedge\nolimits^{N-c} V \rightarrow \bigwedge\nolimits^{N} V \simeq F,$$ and
$$U_n \wedge - \wedge X_d: \bigwedge\nolimits^{n-d} V \rightarrow \bigwedge\nolimits^{N} V \simeq F.$$
Applying these maps to the first and second factors of $\phi_{N-c, n-d}(V_a \wedge W_b)$, respectively, and then multiplying, we get get the value of our function $\qcfs{n}{a}{b}{c, d}$. 

Using the twist map $T$, we can also define the functions $\qcfb{a}{b}{c, d}{n}$, $\qcfs{b}{c, d}{n}{a}$, and $\qcfb{c, d}{n}{a}{b}$.

We will need to define one more type of function to do our calculations. Let $N=2n$. Let $0 \leq a, b, c, d \leq N$ such that $a+b+c+d=4n=2N$, $a \leq n \leq b$ and $c \leq n \leq d$. Then we would like to define a function that we will call 
$$\qcfs{n}{a, b}{n}{c, d}.$$

The function $\qcfs{n}{a, b}{n}{c, d}$ on $\Conf_4 \A_{Sp_{2n}}$ is pulled back from a function on $\Conf_4 \A_{SL_{2n}}$. The function on $\Conf_4 \A_{SL_{N}}$ is given by an invariant in the space
$$[V_{\omega_n}  \otimes V_{\omega_a+\omega_b}  \otimes V_{\omega_n} \otimes V_{\omega_c+\omega_d}]^{SL_N}.$$
This vector space is generally multi-dimensional. To pick out the correct invariant, we use the web in Figure 21:

\begin{center}
\begin{tikzpicture}[scale=1.4]
\begin{scope}[decoration={
    markings,
    mark=at position 0.5 with {\arrow{>}}},
    ] 
\draw [postaction={decorate}] (-4,-1) -- (-2.5,-1) node [midway,below] {$b$}; 
\draw [postaction={decorate}] (-1,-1) -- (-2.5,-1) node [midway,below] {$N-b$}; 
\draw [postaction={decorate}] (-1,-4) -- (-1,-2) node [midway,right] {$n$}; 
\draw [postaction={decorate}] (-4,-2) -- (-1,-2) node [midway,below] {$a$}; 
\draw [postaction={decorate}] (-1,-2) -- (-1,-1) node [midway,right] {$a+n$}; 
\draw [postaction={decorate}] (-1,-1) -- (1,1) node [midway,below right] {$a+b+n-N$}; 

\draw [postaction={decorate}] (4,1) -- (2.5,1) node [midway,above] {$c$}; 
\draw [postaction={decorate}] (1,1) -- (2.5,1) node [midway,below] {$N-c$}; 
\draw [postaction={decorate}] (1,1) -- (1,2) node [midway,left] {$a+b+c+n-2N=n-d$}; 
\draw [postaction={decorate}] (4,2) -- (1,2) node [midway,below] {$d$}; 
\draw [postaction={decorate}] (1,2) -- (1,3) node [midway,left] {$n$}; 
\draw [postaction={decorate}] (1,4) -- (1,3) node [midway,left] {$n$};

\draw (2.5,0.9) -- (2.5,1);
\draw (0.9,3) -- (1,3);
\draw (-2.5,-1.1) -- (-2.5,-1);

\end{scope}

\draw[yshift=-4cm]
  node[below,text width=8cm] 
  {
  Figure 21 Web for the function $\qcfs{n}{a,b}{n}{c, d}$, where $a+b+c+d=2N$.
  };

\end{tikzpicture}
\end{center}

Let us give a concrete description of this function.

Given four flags 
$$u_1, \dots, u_N;$$
$$v_1, \dots, v_N;$$
$$w_1, \dots, w_N;$$
$$x_1, \dots, x_N;$$

first consider the forms
$$U_n := u_1 \wedge \cdots \wedge u_n,$$
$$V_a := v_1 \wedge \cdots \wedge v_a,$$
$$V_b := v_1 \wedge \cdots \wedge v_b,$$
$$W_n := w_1 \wedge \cdots \wedge w_n,$$
$$X_c := x_1 \wedge \cdots \wedge x_c.$$
$$X_d := x_1 \wedge \cdots \wedge x_d.$$

Because $n+a=2N+n-b-c-d$, there is a natural map 
$$\phi_{N-c, n-d,N-b}: \bigwedge\nolimits^{n+a} V \rightarrow \bigwedge\nolimits^{N-c} V \otimes \bigwedge\nolimits^{n-d} V \otimes \bigwedge\nolimits^{N-b} V.$$
There are also natural maps 
$$- \wedge X_{c} : \bigwedge\nolimits^{N-c} V \rightarrow \bigwedge\nolimits^{N} V \simeq F,$$
$$U_n \wedge - \wedge X_d: \bigwedge\nolimits^{n-d} V \rightarrow \bigwedge\nolimits^{N} V \simeq F,$$ and
$$V_b \wedge - :  \bigwedge\nolimits^{N-b} V \rightarrow \bigwedge\nolimits^{N} V \simeq F.$$ 
Applying these maps to the first, second, and third factors of $\phi_{N-c, n-d,N-a}(V_a \wedge W_n)$, respectively, and then multiplying, we get get the value of our function $\qcfs{n}{a, b}{n}{c, d}$. 

Note that when $a=0$, $b=N$, $c=0$ or $d=N$, we have
$$\qcfs{n}{0, b}{n}{c, d}=:\qcfs{n}{b}{n}{c, d},$$
$$\qcfs{n}{a, N}{n}{c, d}=:\qcfs{n}{a}{n}{c, d},$$
$$\qcfs{n}{a, b}{n}{0, d}=:\qcfs{n}{a, b}{n}{d},$$
$$\qcfs{n}{a, b}{n}{c, N}=:\qcfs{n}{a, b}{n}{c}.$$
If $a=0$ and $d=N$, we will have
$$\qcfs{n}{0, b}{n}{c, N}=\qcfs{n}{b}{n}{c},$$
where $\qcfs{n}{b}{n}{c} $ is as defined above. A similar equality holds when $b=N, c=0$. If $a=0$, $b=N$, $c=0$ and $d=N$, we will have that 
$$\qcfs{n}{0, N}{n}{0, N}=\qcfs{n}{0}{n}{0}.$$

Now we give the sequence of mutations realizing the flip of a triangulation. The sequence of mutations leaves $x_{-n,j}, x_{nj}, y_k$ untouched as they are frozen variables. Hence we only mutate $x_{ij}$ for $-n \leq i \leq n$. We now describe the sequence of mutations. The sequence of mutations will have $3n-2$ stages. At the $r^{\textrm{th}}$ step, we mutate all vertices such that 
$$|i|+j \leq r,$$
$$j-|i|+2n -2 \geq r,$$
$$|i|+j \equiv r \mod 2.$$

Note that the first inequality is empty for $r \geq 2n-1$, while the second inequality is empty for $r \leq n$. For example, for $Sp_6$, the sequence of mutations is

\begin{equation}
\begin{gathered}
x_{01}, \\
x_{-1,1}, x_{02}, x_{11},  \\
x_{-2,1}, x_{-1,2}, x_{01}, x_{03}, x_{12}, x_{21},  \\
x_{-2,2}, x_{-1,1}, x_{-1,3}, x_{02}, x_{11}, x_{13}, x_{22},  \\
x_{-2,3}, x_{-1,2}, x_{01}, x_{03}, x_{12}, x_{23},  \\
x_{-1,3}, x_{02}, x_{13},  \\
x_{03}.
\end{gathered}
\end{equation}

In Figure 22, we depict how the quiver for $\Conf_4 \A_{Sp_{6}}$ changes after each of the seven stages of mutation.

\begin{center}

\end{center}

The analogue $\Conf_4 \A_{Sp_{2n}}$ should be clear. Note that there are no circled arrows in the diagram, so that lifting from this sequence of mutations from $\Conf_4 \A_{Sp_{2n}}$ to $\Conf_4 \A_{SL_{2n}}$ is straightforward.

We now have the main theorem of this section:

\begin{theorem}
We first analyze the situation when $i > 0$. The vertex $x_{ij}$ is mutated a total of $n-i$ times. There are four cases.
\begin{itemize}
\item When $i+j < n$ and $i < j$, the function attached to $x_{ij}$ mutates in three stages, consisting of $n-i-j, i,$ and $j-i$ mutations, respectively:
\begin{enumerate}
\item $$\tcfr{i+j}{N-j}{N-i} \rightarrow \qcfs{i+j+1}{1}{N-j-1}{N-i-1} \rightarrow $$
$$\qcfs{i+j+2}{2}{N-j-2}{N-i-2} \rightarrow $$
$$\dots \rightarrow \qcfs{n}{n-i-j}{n+i}{n+j}$$
\item $$\qcfs{n}{n-i-j}{n+i}{n+j}=\qcfs{n}{n-i-j}{n+i}{0, n+j} \rightarrow $$
$$\qcfs{n}{n-i-j+1}{n+i-1}{1, n+j-1} \rightarrow$$
$$ \qcfs{n}{n-i-j+2}{n+i-2}{2, n+j-2} \rightarrow $$
$$\dots \rightarrow \qcfs{n}{n-j}{n}{i, n+j-i}$$
\item $$\qcfs{n}{n-j}{n}{i, n+j-i}=\qcfs{n}{n-j, N}{n}{i, n+j-i}$$
$$\rightarrow \qcfs{n}{n-j+1, N-1}{n}{i+1, n+j-i-1} \rightarrow $$
$$\qcfs{n}{n-j+2, N-2}{n}{i+2, n+j-i-2} \rightarrow $$
$$\dots \rightarrow [\qcfs{n}{n-i, N-j+i}{n}{j, n}] \textrm{ } \tcfd{n-i, N-j+i}{n}{j}$$
\end{enumerate}

\item When $i+j \geq n$ and $i < j$, the function attached to $x_{ij}$ mutates  in two stages, consisting of $n-j,$ and $j-i$ mutations, respectively:
\begin{enumerate}
\item $$\tcfr{n}{N-j}{j+i-n, N-i} \rightarrow \qcfs{n}{1}{N-j-1}{j+i-n+1, N-i-1} \rightarrow $$
$$\qcfs{n}{2}{N-j-2}{j+i-n+2, N-i-2} \rightarrow$$
$$\dots  \rightarrow \qcfs{n}{n-j}{n}{i, n-i+j}$$
\item $$\qcfs{n}{n-j}{n}{i, n-i+j} [\qcfs{n}{n-j, N}{n}{i, n-i+j}] \rightarrow $$
$$\qcfs{n}{n-j+1, N-1}{n}{i+1, n-i+j-1} \rightarrow $$
$$\qcfs{n}{n-j+2, N-2}{n}{i+2, n-i+j-2} \rightarrow$$
$$\dots  \rightarrow [\qcfs{n}{n-i, N-j+i}{n}{j, n}]  \textrm{ } \tcfd{n-i, N-j+i}{n}{j} $$
\end{enumerate}

\item When $i+j < n$ and $i \geq j$, the function attached to $x_{ij}$ mutates in two stages, consisting of $n-i-j,$ and $j$ mutations, respectively:
\begin{enumerate}
\item $$\tcfr{i+j}{N-j}{N-i} \rightarrow \qcf{i+j+1}{1}{N-j-1}{N-i-1} \rightarrow $$
$$\qcf{i+j+2}{2}{N-j-2}{N-i-2} \rightarrow $$
$$\dots \rightarrow \qcf{n}{n-i-j}{n+i}{n+j}$$
\item $$\qcf{n}{n-i-j}{n+i}{n+j} \rightarrow \qcfs{n}{n-i-j+1}{n+i-1}{1, n+j-1} \rightarrow $$
$$\qcfs{n}{n-i-j+2}{n+i-2}{2, n+j-2} \rightarrow $$
$$\dots \rightarrow [\qcfs{n}{n-i}{n+i-j}{j, n}]  \textrm{ } \tcfd{n-i}{n+i-j}{j}$$
\end{enumerate}

\item When $i+j \geq n$ and $i \geq j$, the function attached to $x_{ij}$ mutates in one stage consisting of $n-i$ mutations:
$$\tcfr{n}{N-j}{j+i-n, N-i} \rightarrow \qcfs{n}{1}{N-j-1}{j+i-n+1, N-i-1} \rightarrow $$
$$\qcfs{n}{2}{N-j-2}{j+i-n+2, N-i-2} \rightarrow$$
$$\dots  \rightarrow [\qcfs{n}{n-i}{n+i-j}{j, n}] \textrm{ } \tcfd{n-i}{n+i-j}{j}$$

\end{itemize}

The mutation sequence when $i \leq 0$ is completely parallel. We include it in an appendix.

In all these sequences, for each mutation, two parameters increase, and two decrease. Within a stage, the same parameters increase or decrease. The only exception is that sometimes after the last mutation, one removes the factor $\dur{n}{n}$ (or $\dld{n}{n}$ when $i \leq 0$). The expressions in square brackets indicate the functions before removing factors of $\dur{n}{n}$ (or $\dld{n}{n}$ when $i \leq 0$).

\end{theorem}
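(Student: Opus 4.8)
The plan is to follow the template already used for the cactus sequence and for the two $S_3$-transpositions. The quivers occurring at each of the $3n-2$ stages of the flip have been displayed explicitly (Figure 22 for $Sp_6$, and for general $n$ they are determined by the stage rule $|i|+j\le r$, $\,j-|i|+2n-2\ge r$, $\,|i|+j\equiv r\bmod 2$), so it suffices to check that the functions listed in the statement are \emph{compatible} with those quivers: at each mutated vertex the claimed ``before'' and ``after'' functions must obey the exchange relation read off from the quiver at that moment. A preliminary bookkeeping step records that, for $i>0$ and a fixed non-frozen vertex $x_{ij}$ (so $1\le i\le n-1$), the stages at which $x_{ij}$ is mutated are exactly $r\in\{\,i+j,\ i+j+2,\ \dots,\ j-i+2n-2\,\}$; both endpoints automatically have parity $\equiv i+j$, and $\tfrac12\big((j-i+2n-2)-(i+j)\big)+1=n-i$, so $x_{ij}$ is mutated $n-i$ times. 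The three/two/one ``stages'' appearing in the four cases of the statement correspond precisely to the sub-ranges of $r$ in which $x_{ij}$ lies in the ``growing'', ``stable'', and ``shrinking'' parts of the region being mutated, and the thresholds $i+j<n$ vs.\ $i+j\ge n$ and $i<j$ vs.\ $i\ge j$ are exactly the conditions that select which of these sub-ranges are nonempty.

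For the exchange identities themselves I would offer, as in the earlier sections, two routes. The conceptual route: since the diagrams of Figure 22 contain no circled arrows, the flip sequence on $\Conf_4\A_{Sp_{2n}}$ lifts without ambiguity to a sequence of mutations on $\Conf_4\A_{SL_{2n}}$, where the flip of a quadrilateral is the classical octahedron recurrence; one checks that this lifted sequence is compatible with the folding $\Conf_4\A_{Sp_{2n}}\hookrightarrow\Conf_4\A_{SL_{2n}}$ (this is where the absence of circled arrows is used) and deduces the $Sp_{2n}$ statement by folding the known type $A$ transformation. The direct route is to reduce all the exchange relations to a short list of algebraic identities among the web functions $\qcf{\,\cdot\,}{\,\cdot\,}{\,\cdot\,}{\,\cdot\,}$, $\qcfs{\,\cdot\,}{\,\cdot\,}{\,\cdot\,}{\,\cdot\,}$, $\qcfs{n}{\,\cdot\,,\,\cdot\,}{n}{\,\cdot\,,\,\cdot\,}$, $\qcfs{n}{\,\cdot\,}{\,\cdot\,}{\,\cdot\,,\,\cdot\,}$ and the triangle functions: one establishes a single ``master'' three-term Plücker-type relation among the most general quadrilateral functions (proved once by a skein computation on an auxiliary configuration of five flags in $\Conf_5\A_{SL_N}$, in the spirit of the $\Conf_4\A_{SL_N}$ identity used for the second transposition), checks that it specializes, under the substitutions dictated by the quiver at each stage, to every ``generic'' mutation, and treats the ``boundary'' mutations (a parameter equal to $0$, $n$ or $N$) as degenerations obtained by applying factorization identities of the type $\qcfs{n}{a,b}{n}{c,n}=\dur{n}{n}\,\qcfs{a,b}{n}{c}{n}$ (and its cyclic and dual relatives), which follow directly from the definitions. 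These factorization identities are exactly what accounts for the final clause of the theorem: at the end of certain stages one of the parameters reaches $n$, the quadrilateral function factors off an edge function, and one divides out $\dur{n}{n}$ (or $\dld{n}{n}$ when $i\le 0$), which is why the square-bracketed expressions appear in the statement.

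The main obstacle I anticipate is the multidimensionality of the relevant invariant spaces. Unlike the functions used in the cactus section, several of the spaces $[V_{\omega_n}\otimes V_{\omega_a+\omega_b}\otimes V_{\omega_n}\otimes V_{\omega_c+\omega_d}]^{SL_N}$ and $[V_{\omega_n}\otimes V_{\omega_a}\otimes V_{\omega_b}\otimes V_{\omega_c+\omega_d}]^{SL_N}$ have dimension greater than one, so one must verify both that the prescribed web of Figures 19--21 picks out the intended invariant and that the master relation holds \emph{on the nose} for that invariant, not merely modulo other invariants in the space; the cleanest way to do this is to evaluate everything on a fixed generic configuration of flags built from a standard basis, where each function becomes an explicit minor and the three-term relation becomes a determinant identity. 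A secondary, more mechanical, obstacle is verifying for general $n$ that the quivers evolve exactly as in Figure 22: this is an induction on the stage number in which, at each mutated vertex, the local picture is one of finitely many shapes (the analogue of the four local transformations of Figure 11, now enlarged to include the quadrilateral vertices), and one must check that mutation reproduces the next picture and, in particular, that no arrow ever appears between two vertices in the same folding-orbit. Once the master identity and the quiver induction are in hand, the four cases for $i>0$ follow by direct substitution; the parallel cases for $i\le 0$, recorded in the appendix, are obtained in the same way (indeed by applying the twist map $T^2$ together with the duality identities \eqref{dualities}).
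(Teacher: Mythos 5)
Your proposal is correct and follows essentially the same route as the paper: the proof there also consists of checking the displayed quiver evolution and reducing every exchange relation to a short list of octahedron-type three-term identities --- one for each of the three stage types ($\qcfs{a}{b}{c}{d}$, $\qcfs{n}{a}{b}{c,d}$, and $\qcfs{n}{a,b}{n}{c,d}$) --- with the boundary mutations handled exactly as you describe, by factorization identities such as $\qcfs{n}{a,b}{n}{c,n}=\tcfd{a,b}{n}{c}\dur{n}{n}$ that produce the square-bracketed expressions. The only cosmetic difference is that you package the three main identities as specializations of a single master relation on an auxiliary configuration, whereas the paper states them separately (deriving the middle one from a $\Conf_4\A_{SL_N}$ identity by specializing flags, in the same spirit as your $\Conf_5$ suggestion).
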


\begin{proof} The proof comes down to a handful of identities used in conjunction, as in previous proofs of this type. Here are the identities we use:

\begin{itemize}
\item Let $0 \leq a, b, c, d \leq N$, and $a+b+c+d=2N$. 
$$\qcfs{a}{b}{c}{d}\qcfs{a+1}{b+1}{c-1}{d-1}=$$
$$\qcfs{a}{b+1}{c-1}{d}\qcfs{a+1}{b}{c}{d-1}+\qcfs{a+1}{b}{c-1}{d}\qcfs{a}{b+1}{c}{d-1}.$$

\item Let $0 \leq a, b, c, d \leq N$, and $a+b+c+d=3n$. 
$$\qcfs{n}{a}{b}{c, d}\qcfs{n}{a+1}{b-1}{c+1, d-1}=$$
$$\qcfs{n}{a+1}{b-1}{c, d}\qcfs{n}{a}{b}{c+1, d-1}+\qcfs{n}{a+1}{b}{c, d-1}\qcfs{n}{a}{b-1}{c+1, d}.$$
There is also a dual identity when $a+b+c+d=5n$ that we use when $i < 0$:
$$\qcfs{n}{a}{b}{c, d}\qcfs{n}{a-1}{b+1}{c+1, d-1}=$$
$$\qcfs{n}{a}{b}{c+1, d-1}\qcfs{n}{a-1}{b+1}{c, d}+\qcfs{n}{a-1}{b}{c+1, d}\qcfs{n}{a}{b+1}{c, d-1}.$$

\item Let $0 \leq a, b, c, d \leq N$, and $a+b+c+d=4n$. 
$$\qcfs{n}{a, b}{n}{c, d}\qcfs{n}{a+1, b-1}{n}{c+1, d-1}=$$
$$\qcfs{n}{a+1, b-1}{n}{c, d}\qcfs{n}{a, b}{n}{c+1, d-1}+\qcfs{n}{a+1, b}{n}{c, d-1}\qcfs{n}{a, b-1}{n}{c+1, d}.$$

\item Let $0 \leq a, b, c, d \leq N$ such that $a+b+c+d=4n=2N$, $a \leq b$ and $c \leq d$. If $a+d=b+c=N$.,
$$\qcfs{a}{b}{c}{d}=\dld{b}{c}\dur{a}{d}$$

\item Let $0 \leq a, b, c, d \leq N$ such that $a+b+c+d=4n=2N$, $a \leq b$ and $c \leq d$. If $a, b, c,$ or $d=n$,
$$\qcfs{n}{n, b}{n}{c, d}=\tcfu{n}{b}{c, d}\dld{n}{n},$$
$$\qcfs{n}{a, n}{n}{c, d}=\tcfu{n}{a}{c, d}\dld{n}{n},$$
$$\qcfs{n}{a, b}{n}{n, d}=\tcfd{a, b}{n}{d}\dur{n}{n},$$
$$\qcfs{n}{a, b}{n}{c, n}=\tcfd{a, b}{n}{c}\dur{n}{n}.$$

\item The final set of identities was mentioned previously. Let $0 \leq a, b, c, d \leq N$ such that $a+b+c+d=4n=2N$, $a \leq b$ and $c \leq d$. When $a=0$, $b=N$, $c=0$ or $d=N$, we have
$$\qcfs{n}{0, b}{n}{c, d}=:\qcfs{n}{b}{n}{c, d},$$
$$\qcfs{n}{a, N}{n}{c, d}=:\qcfs{n}{a}{n}{c, d},$$
$$\qcfs{n}{a, b}{n}{0, d}=:\qcfs{n}{a, b}{n}{d},$$
$$\qcfs{n}{a, b}{n}{c, N}=:\qcfs{n}{a, b}{n}{c}.$$
If $a=0$ and $d=N$, we will have
$$\qcfs{n}{0, b}{n}{c, N}=\qcfs{n}{b}{n}{c}.$$
A similar equality holds when $b=N, c=0$. If $a=0$, $b=N$, $c=0$ and $d=N$, we will have that 
$$\qcfs{n}{0, N}{n}{0, N}=\qcfs{n}{0}{n}{0}.$$

\end{itemize}

The first three sets of identities are the most important. They are variations on the octahedron recurrence. When $i+j < n$ and $i < j$, the three stages use the first, second and third set of identities, respectively. When $i+j \geq n$ and $i < j$, the two stages use the second and third set of identities, respectively. When $i+j < n$ and $i \geq j$, the two stages use the first and second set of identities, respectively. When $i+j \geq n$ and $i \geq j$, the one stage uses only the second set of identities.

The last three sets of identities are used to give degenerate versions of the previous three sets of identities.

\end{proof}

\section{The cluster algebra structure on $\Conf_m G/U$ for $G=Spin_{2n+1}$}

We now define the cluster algebra structure on $\Conf_m G/U$ when $G=Spin_{2n+1}$. The story will be parallel to the previous case when $G=Sp_{2n}$. The similarities are quite striking, and reflect the Langlands duality between the seeds as predicted in \cite{FG2}. As in that case we will utilize what we understand about functions on $\Conf_m \A_{SL_{2n+1}}$ in order to study $\Conf_m \A_{Spin_{2n+1}}$.

Recall that $Spin_{2n+1}$ is the double cover of the group $SO_{2n+1}$, which is the subgroup of $SL_{2n+1}$ preserving a symmetric quadratic form. We take the quadratic form given in the basis standard basis $e_1, \dots, e_{2n+1}$ by
$$<e_i, e_{2n+2-i}>=(-1)^{i-1}$$
and $<e_i,e_j>=0$ otherwise.

\begin{rmk} Here the signs are chosen so that the embedding is compatible with the positive structures on $Spin_{2n+1}$ and $SL_{2n+1}$. Note that the signature of the quadratic form is $(n+1,n)$, so that taking real points gives the split real form of $SO_{2n+1}$. The cluster algebra structure on $\Conf_m \A_{Spin_{2n+1}}$ gives another way of defining the positive structure on $\A_{Spin_{2n+1}, S}$, which gives a parameterization of the Hitchin component for the group $Spin_{2n+1}$ and the surface $S$. This Hitchin component is a component of the character variety for a split real group $G$ and a surface $S$.
\end{rmk}

The maps
$$Spin_{2n+1} \twoheadrightarrow SO_{2n+1} \hookrightarrow SL_{2n+1}$$
induce maps
$$\Conf_m \A_{Spin_{2n+1}} \rightarrow \Conf_m \A_{SO_{2n+1}} \rightarrow \Conf_m \A_{SL_{2n+1}}.$$

Let us describe these maps concretely. The variety $\A_{SO_{2n+1}}$ parameterizes chains of isotropic vector spaces 
$$V_1 \subset V_2 \subset \cdots \subset V_n \subset V$$ inside the $2n+1$-dimensional standard representation $V$, where $\operatorname{dim} V_i= i$, and where each $V_i$ is equipped with a volume form.

Equivalently, a point of $\A_{SO_{2n+1}}$ is given by a sequence of vectors 
$$v_1, v_2, \dots, v_n,$$
where $$V_i:=<v_1, \dots, v_i>$$ is isotropic, and where $v_i$ is only determined up to adding linear combinations of $v_j$ for $j < i$.

The volume form on $V_i$ is then $v_1 \wedge \cdots \wedge v_i$.

From the sequence of vectors $v_1, \dots, v_n$, we can complete to a basis $v_1, v_2, \dots v_{2n+1}$, where $<v_i, v_{2n+2-i}>=(-1)^{i-1}$, and $<v_i,v_j>=0$ otherwise. Equivalently, the quadratic form induces an isomorphism $<-,-> : V \rightarrow V^*$. At the same time, there are perfect pairings
$$\bigwedge\nolimits^k V \times \bigwedge\nolimits^k V^* \rightarrow F$$
$$\bigwedge\nolimits^{2n+1-k} V \times \bigwedge\nolimits^k V \rightarrow F$$
that induce an isomorphism
$$\bigwedge\nolimits^{2n+1-k} V \simeq \bigwedge\nolimits^k V^*.$$ Composing this with the inverse of the isomorphism 
$$<-,-> : \bigwedge\nolimits^k V \rightarrow \bigwedge\nolimits^k V^*$$
gives an isomorphism 
$$\bigwedge\nolimits^{2n+1-k} V \simeq \bigwedge\nolimits^k V^* \simeq \bigwedge\nolimits^k V.$$
Then $v_{n+1}, \dots, v_{2n+1}$ are chosen so that this isomorphism takes $v_1 \wedge \cdots v_{i}$ to $v_1 \wedge \cdots v_{2n+1-i}$.

Then $v_1, v_2, \dots v_{2n+1}$ determines a point of $\A_{SL_{2n+1}}$, as $\A_{SL_{2n+1}}$ parameterizes chains of vector subspaces  
$$V_1 \subset V_2 \subset \cdots \subset V_{2n+1} = V$$ along with volume forms $v_1 \wedge \cdots v_{i}$, $1 \leq i \leq 2n+1$.

From the embedding 
$$\A_{SO_{2n+1}} \hookrightarrow \A_{SL_{2n+1}},$$
 one naturally gets an embedding $\Conf_m \A_{SO_{2n+1}} \hookrightarrow \Conf_m \A_{SL_{2n+1}}$. We can then pull back functions from $\Conf_m \A_{SL_{2n+1}}$ to get functions on $\Conf_m \A_{SO_{2n+1}}$. However, we are ultimately interested in functions on $\Conf_m \A_{Spin_{2n+1}}$.

The functions on $\Conf_m \A_{Spin_{2n+1}}$ that we will use to define the cluster structure on $\Conf_m \A_{Spin_{2n+1}}$ will be invariants of tensor products of representations of $Spin_{2n+1}$. For $m=3$, they will lie inside 
$$[V_{\lambda} \otimes V_{\mu} \otimes V_{\nu}]^G$$
where $\lambda, \mu, \nu$ are elements of the dominant cone inside the weight lattice. In general, not all such functions will come from pulling back functions on $\Conf_m \A_{SL_{2n+1}}$.

However, suppose that 
$$f \in [V_{\lambda} \otimes V_{\mu} \otimes V_{\nu}]^G \subset \mathcal{O}(\Conf_m \A_{Spin_{2n+1}}).$$
Then
$$f^2 \in [V_{2\lambda} \otimes V_{2\mu} \otimes V_{2\nu}]^G \subset \mathcal{O}(\Conf_m \A_{Spin_{2n+1}}).$$
However, because $2\lambda, 2\mu, 2\nu$ are dominant weights for $SO_{2n+1}$, $f^2$ may be viewed as a function on $\Conf_m \A_{SO_{2n+1}}.$ This function is then a pull-back of a function on $\Conf_m \A_{SL_{2n+1}}.$ Therefore functions on $\Conf_m \A_{Spin_{2n+1}}$ are either the pull-backs of functions on $\Conf_m \A_{SL_{2n+1}}$ or square-roots of such functions. The square-root here corresponds to the fact that $Spin_{2n+1}$ is a double cover of $SO_{2n+1}$. The choice of the branch of the square-root that we take is determined by the positive structure on $\Conf_m \A_{Spin_{2n+1}}$: if $f$ is a positive function on $\Conf_m \A_{SL_{2n+1}}$ such that its square-root is a function on $\Conf_m \A_{Spin_{2n+1}}$, there is a unique choice of $\sqrt{f}$ that is positive on $\Conf_m \A_{Spin_{2n+1}}$. That is the square-root that we will always take.

\subsection{Construction of the seed}

We are now ready to construct the seed for the cluster structure on $\Conf_m \A$ when $G=Spin_{2n+1}$. Throughout this section, $G=Spin_{2n+1}$ unless otherwise noted.

Recall that $Spin_{2n+1}$ is associated to the type $B$ Dynkin diagram:

\begin{multicols}{2}
\begin{center}\begin{tikzpicture}
    \draw (-1,0) node[anchor=east]  {$B_n$};

    \node[dnode,label=below:$1$] (1) at (0,0) {};
    \node[dnode,label=below:$2$] (2) at (1,0) {};
    \node[dnode,label=below:$n-2$] (3) at (3,0) {};
    \node[dnode,label=below:$n-1$] (4) at (4,0) {};
    \node[dnode,label=below:$n$] (5) at (5,0) {};

    \path (1) edge[sedge] (2)
          (2) edge[sedge,dashed] (3)
          (3) edge[sedge] (4)
          (4) edge[dedge] (5)
          ;

\draw[yshift=-2cm]
  node[below,text width=6cm] 
  {
  Figure 23 $B_n$ Dynkin diagram
  };

\end{tikzpicture}\end{center}
\end{multicols}

The nodes of the diagram correspond to $n-1$ long roots, numbered $1, 2, \dots, n-1$, and one short root, which is numbered $n$. To describe the cluster structure on $\Conf_3 \A$, we need to give the following data: the set $I$ parameterizing vertices, the functions on $\Conf_3 \A$ corresponding to each vertex, and the $B$-matrix for this seed.

The $B$-matrix is encoded via a quiver which  consists of $n^2+2n$ vertices, of which $n+2$ have $d_i=1$, while the remaining vertices have $d_i=\frac{1}{2}$. We color the vertices with $d_i=1$ black, while the remaining vertices are white. There are $n$ edge functions for each edge of the triangle, and $n^2-n$ face functions. There is one black vertex for each edge. The $B$-matrix is read off from the quiver by the following rules:

\begin{itemize}
\item An arrow from $i$ to $j$ means that $b_{ij}>0$.
\item $|b_{ij}|=2$ if $d_i=1$ and $d_j=\frac{1}{2}$.
\item $|b_{ij}|=1$ otherwise.
\end{itemize}

In Figure 24, we see the quiver for $Spin_7$. The generalization for other values of $n$ should be clear.

\begin{center}
\begin{tikzpicture}[scale=2]
\begin{scope}[xshift=-1cm]
  \foreach \x in {1, 2}
    \foreach \y in {0, 1, 2, 3}
      \node[] (x\y\x) at (\x,-\y) {\Large $\ontop{x_{\y\x}}{\circ}$};
  \foreach \x in {3}
    \foreach \y in {0, 1, 2, 3}
      \node[] (x\y\x) at (\x,-\y) {\Large $\ontop{x_{\y\x}}{\bullet}$};
  \node[] (y1) at (0,-2)  {\Large $\ontop{y_1}{\circ}$};
  \node[] (y2) at (0,-1)  {\Large $\ontop{y_2}{\circ}$};
  \node[] (y3) at (4,-1)  {\Large $\ontop{y_3}{\bullet}$};
  \draw [->] (x31) to (x21);
  \draw [->] (x21) to (x11);
  \draw [->] (x11) to (x01);
  \draw [->] (x32) to (x22);
  \draw [->] (x22) to (x12);
  \draw [->] (x12) to (x02);
  \draw [->] (x33) to (x23);
  \draw [->] (x23) to (x13);
  \draw [->] (x13) to (x03);
  \draw [->, dashed] (y2) .. controls +(up:2) and +(up:2) .. (y3);
  \draw [->, dashed] (y1) to (y2);

  \draw [->, dashed] (x03) to (x02);
  \draw [->, dashed] (x02) to (x01);
  \draw [->] (y3) to (x13);
  \draw [->] (x13) to (x12);
  \draw [->] (x12) to (x11);
  \draw [->] (x11) to (y2);
  \draw [->] (x23) to (x22);
  \draw [->] (x22) to (x21);
  \draw [->] (x21) to (y1);
  \draw [->, dashed] (x33) to (x32);
  \draw [->, dashed] (x32) to (x31);

  \draw [->] (x01) to (x12);
  \draw [->] (x02) to (x13);
  \draw [->] (x03) to (y3);
  \draw [->] (y2) to (x21);
  \draw [->] (x11) to (x22);
  \draw [->] (x12) to (x23);
  \draw [->] (y1) to (x31);
  \draw [->] (x21) to (x32);
  \draw [->] (x22) to (x33);

\end{scope}

\draw[yshift=-3.5cm,xshift=1cm]
  node[below,text width=6cm] 
  {
  Figure 24. Quiver encoding the cluster structure for $\Conf_3 \A_{Spin_{7}}$
  };

\end{tikzpicture}
\end{center}

A dotted arrow means that $b_{ij}$ is half the value it would be if the arrow were solid. In other words, $|b_{ij}|=1$ if $d_i=1$ and $d_j=\frac{1}{2}$.
and $|b_{ij}|=\frac{1}{2}$ otherwise.

We will no longer use single letters like $i, j$ to denote vertices of the quiver, because it will be convenient for us to use the pairs $(i, j)$ to parameterize the vertices of the quiver. In the formulas in the remainder of this section, we will not refer to the particular entries of the $B$-matrix, $b_{ij}$. Instead, the values of the entries of the $B$-matrix will be encoded in quivers. This will hopefully avoid any notational confusion.

Label the vertices of the quiver $x_{ij}$ and $y_k$, where $0 \leq i \leq n+1$, $1 \leq j \leq n$, $1 \leq k \leq n$. The white vertices correspond to $x_{in}$ and $y_n$. The vertices $y_k$ and $x_{ij}$ for $i= 0$ or $n$ are frozen. We will sometimes write $x_{i,j}$ for $x_{ij}$ for orthographic reasons. Note that the dotted arrows only go between frozen vertices, thus the entries $b_{ij}$ of the $B$-matrix are integral unless $i$ and $j$ are both frozen, and thus the $B$-matrix defines a cluster algebra.

Let us now recall some facts about the representation theory of $Spin_{2n+1}$. The fundamental representations of $Spin_{2n+1}$ are labelled by the fundamental weights $\omega_1, \dots, \omega_n$. $Spin_{2n+1}$ has a standard $2n+1$-dimensional representation $V$. Let $<-,->$ be the orthogonal pairing. Then for $i < n$ the representation $V_{\omega_i}$ corresponding to $\omega_i$ is precisely $\bigwedge\nolimits^i V$. The representation $V_{\omega_n}$ is the \emph{spin representation} of $Spin_{2n+1}$. The representation $V_{2\omega_n}$ is isomorphic to $\bigwedge\nolimits^n V$.

We now say which functions are attached to the vertices of the quiver. Recall the functions defined via the webs from Figures 3, 4, and 6. It turns out to be easier to describe the functions attached to the white vertices and the square of the functions attached to the black vertices. In other words, if the function $f_{ij}$ is attached to vertex $x_{ij}$, it is sometimes more convenient to consider the function $f_{ij}^{2d_{ij}}$. Thus we break down the description of the functions attached to $x_{ij}$ into steps:

\begin{enumerate}
\item For $k<n$, assign the function $\dud{k}{2n+1-k}$ to $y_k$. 
\item When $i \geq j$, assign the function $\tcfr{n-i}{n+1+i-j}{j}$ to $x_{ij}$.
\item When $i < j$ and $i \neq 0$, we assign the function $\tcfr{n-i, 2n+1+i-j}{n+1}{j}$ to $x_{ij}$. 
\item When $i=0$, we assign the function $\tcfr{2n+1-j}{}{j}$ to $x_{ij}$.
\item Now take the square root of the functions assigned to $x_{in}$ and $y_n$.
\end{enumerate}

This completely describes the cluster structure on $\Conf_3 \A_{Spin_{2n+1}}$. The fact that we can take the square-roots of the functions assigned to $x_{in}$ and $y_n$ and get functions that are well-defined on $\Conf_3 \A_{Spin_{2n+1}}$ follows from the computations of the next section. Note that the cluster structure is not symmetric with respect to the three flags. Performing various $S_3$ symmetries, we obtain six different possible cluster structures on  $\Conf_3 \A_{Spin_{2n+1}}$. These six structures are related by sequences of mutations that we describe in the next section. Below, in Figure 25, we depict two of the six cluster structures for $\Conf_3 \A_{Spin_{9}}$ that are obtained from the original cluster structure by an $S_3$ symmetry.

\begin{center}
\begin{tikzpicture}[scale=2.4]
  \node (x01) at (-1,0) {$\tcfr{8}{}{1}$};
  \node (x02) at (0,0) {$\tcfr{7}{}{2}$};
  \node (x03) at (1,0) {$\tcfr{6}{}{3}$};
  \node (x04) at (2,0) {$\sqrt{\tcfr{5}{}{4}}$};
  \node (x11) at (-1,-1) {$\tcfr{3}{5}{1}$};
  \node (x12) at (0,-1) {$\tcfr{3, 8}{5}{2}$};
  \node (x13) at (1,-1) {$\tcfr{3, 7}{5}{3}$};
  \node (x14) at (2,-1) {$\sqrt{\tcfr{3, 6}{5}{4}}$};
  \node (x21) at (-1,-2) {$\tcfr{2}{6}{1}$};
  \node (x22) at (0,-2) {$\tcfr{2}{5}{2}$};
  \node (x23) at (1,-2) {$\tcfr{2, 8}{5}{3}$};
  \node (x24) at (2,-2) {$\sqrt{\tcfr{2, 7}{5}{4}}$};
  \node (x31) at (-1,-3) {$\tcfr{1}{7}{1}$};
  \node (x32) at (0,-3) {$\tcfr{1}{6}{2}$};
  \node (x33) at (1,-3) {$\tcfr{1}{5}{3}$};
  \node (x34) at (2,-3) {$\sqrt{\tcfr{1, 8}{5}{4}}$};
  \node (x41) at (-1,-4) {$\tcfr{}{8}{1}$};
  \node (x42) at (0,-4) {$\tcfr{}{7}{2}$};
  \node (x43) at (1,-4) {$\tcfr{}{6}{3}$};
  \node (x44) at (2,-4) {$\sqrt{\tcfr{}{5}{4}}$};
  \node (y1) at (-2,-3) {$\dud{1}{8}$};
  \node (y2) at (-2,-2) {$\dud{2}{7}$};
  \node (y3) at (-2,-1) {$\dud{3}{6}$};
  \node (y4) at (3,-1) {$\sqrt{\dud{4}{5}}$};

  \draw [->] (x41) to (x31);
  \draw [->] (x31) to (x21);
  \draw [->] (x21) to (x11);
  \draw [->] (x11) to (x01);
  \draw [->] (x42) to (x32);
  \draw [->] (x32) to (x22);
  \draw [->] (x22) to (x12);
  \draw [->] (x12) to (x02);
  \draw [->] (x43) to (x33);
  \draw [->] (x33) to (x23);
  \draw [->] (x23) to (x13);
  \draw [->] (x13) to (x03);
  \draw [->] (x44) to (x34);
  \draw [->] (x34) to (x24);
  \draw [->] (x24) to (x14);
  \draw [->] (x14) to (x04);
  \draw [->, dashed] (y3) .. controls +(up:2) and +(up:2) .. (y4);
  \draw [->, dashed] (y2) to (y3);
  \draw [->, dashed] (y1) to (y2);

  \draw [->, dashed] (x04) to (x03);
  \draw [->, dashed] (x03) to (x02);
  \draw [->, dashed] (x02) to (x01);
  \draw [->] (y4) to (x14);
  \draw [->] (x14) to (x13);
  \draw [->] (x13) to (x12);
  \draw [->] (x12) to (x11);
  \draw [->] (x11) to (y3);
  \draw [->] (x24) to (x23);
  \draw [->] (x23) to (x22);
  \draw [->] (x22) to (x21);
  \draw [->] (x21) to (y2);
  \draw [->] (x34) to (x33);
  \draw [->] (x33) to (x32);
  \draw [->] (x32) to (x31);
  \draw [->] (x31) to (y1);
  \draw [->, dashed] (x44) to (x43);
  \draw [->, dashed] (x43) to (x42);
  \draw [->, dashed] (x42) to (x41);

  \draw [->] (x01) to (x12);
  \draw [->] (x02) to (x13);
  \draw [->] (x03) to (x14);
  \draw [->] (x04) to (y4);
  \draw [->] (y3) to (x21);
  \draw [->] (x11) to (x22);
  \draw [->] (x12) to (x23);
  \draw [->] (x13) to (x24);
  \draw [->] (y2) to (x31);
  \draw [->] (x21) to (x32);
  \draw [->] (x22) to (x33);
  \draw [->] (x23) to (x34);
  \draw [->] (y1) to (x41);
  \draw [->] (x31) to (x42);
  \draw [->] (x32) to (x43);
  \draw [->] (x33) to (x44);
xshift=1cm
\draw[yshift=-5.5cm,xshift=1cm]
  node[below,text width=6cm] 
  {
  Figure 25a. One cluster structure for $\Conf_3 \A_{Spin_{9}}$.
  };

\end{tikzpicture}
\end{center}

\begin{center}
\begin{tikzpicture}[scale=2.4]

  \node (x01) at (-2,-1) {$\dud{8}{1}$};
  \node (x02) at (-2,-2) {$\dud{7}{2}$};
  \node (x03) at (-2,-3) {$\dud{6}{3}$};
  \node (x04) at (-2,-4) {$\sqrt{\dud{5}{4}}$};
  \node (x11) at (-1,-1) {$\tcfr{3}{1}{5}$};
  \node (x12) at (-1,-2) {$\tcfr{3, 8}{2}{5}$};
  \node (x13) at (-1,-3) {$\tcfr{3, 7}{3}{5}$};
  \node (x14) at (-1,-4) {$\sqrt{\tcfr{3, 6}{4}{5}}$};
  \node (x21) at (0,-1) {$\tcfr{2}{1}{6}$};
  \node (x22) at (0,-2) {$\tcfr{2}{2}{5}$};
  \node (x23) at (0,-3) {$\tcfr{2, 8}{3}{5}$};
  \node (x24) at (0,-4) {$\sqrt{\tcfr{2, 7}{4}{5}}$};
  \node (x31) at (1,-1) {$\tcfr{1}{1}{7}$};
  \node (x32) at (1,-2) {$\tcfr{1}{2}{6}$};
  \node (x33) at (1,-3) {$\tcfr{1}{3}{5}$};
  \node (x34) at (1,-4) {$\sqrt{\tcfr{1, 8}{4}{5}}$};
  \node (x41) at (2,-1) {$\tcfr{}{1}{8}$};
  \node (x42) at (2,-2) {$\tcfr{}{2}{7}$};
  \node (x43) at (2,-3) {$\tcfr{}{3}{6}$};
  \node (x44) at (2,-4) {$\sqrt{\tcfr{}{4}{5}}$};
  \node (y1) at (1,0) {$\dud{8}{1}$};
  \node (y2) at (0,0) {$\dud{7}{2}$};
  \node (y3) at (-1,0) {$\dud{6}{3}$};
  \node (y4) at (-1,-5) {$\sqrt{\dud{5}{4}}$};

  \draw [->] (x01) to (x11);
  \draw [->] (x11) to (x21);
  \draw [->] (x21) to (x31);
  \draw [->] (x31) to (x41);
  \draw [->] (x02) to (x12);
  \draw [->] (x12) to (x22);
  \draw [->] (x22) to (x32);
  \draw [->] (x32) to (x42);
  \draw [->] (x03) to (x13);
  \draw [->] (x13) to (x23);
  \draw [->] (x23) to (x33);
  \draw [->] (x33) to (x43);
  \draw [->] (x04) to (x14);
  \draw [->] (x14) to (x24);
  \draw [->] (x24) to (x34);
  \draw [->] (x34) to (x44);
  \draw [->, dashed] (y4) .. controls +(left:2) and +(left:2) .. (y3);
  \draw [->, dashed] (y3) to (y2);
  \draw [->, dashed] (y2) to (y1);

  \draw [->, dashed] (x01) to (x02);
  \draw [->, dashed] (x02) to (x03);
  \draw [->, dashed] (x03) to (x04);
  \draw [->] (y3) to (x11);
  \draw [->] (x11) to (x12);
  \draw [->] (x12) to (x13);
  \draw [->] (x13) to (x14);
  \draw [->] (x14) to (y4);
  \draw [->] (y2) to (x21);
  \draw [->] (x21) to (x22);
  \draw [->] (x22) to (x23);
  \draw [->] (x23) to (x24);
  \draw [->] (y1) to (x31);
  \draw [->] (x31) to (x32);
  \draw [->] (x32) to (x33);
  \draw [->] (x33) to (x34);
  \draw [->, dashed] (x41) to (x42);
  \draw [->, dashed] (x42) to (x43);
  \draw [->, dashed] (x43) to (x44);

  \draw [->] (x12) to (x01);
  \draw [->] (x13) to (x02);
  \draw [->] (x14) to (x03);
  \draw [->] (y4) to (x04);
  \draw [->] (x21) to (y3);
  \draw [->] (x22) to (x11);
  \draw [->] (x23) to (x12);
  \draw [->] (x24) to (x13);
  \draw [->] (x31) to (y2);
  \draw [->] (x32) to (x21);
  \draw [->] (x33) to (x22);
  \draw [->] (x34) to (x23);
  \draw [->] (x41) to (y1);
  \draw [->] (x42) to (x31);
  \draw [->] (x43) to (x32);
  \draw [->] (x44) to (x33);

\draw[yshift=-5.5cm]
  node[below,text width=6cm] 
  {
  Figure 25b. Another cluster structures for $\Conf_3 \A_{Spin_{9}}$ related by to the first by an $S_3$ symmetry.
  };

\end{tikzpicture}
\end{center}

Let us describe in more detail how to obtain these other cluster structures. As before, if the $S_3$ symmetry is a rotation, we just rotate the quiver. If the $S_3$ symmetry is a transposition, we transpose the quiver and also reverse the arrows.

The functions attached to the vertices come from permuting the arguments in our notation for the function. For example, rotating the function $\tcfr{n-i, 2n+1+i-j}{n+1}{j}$ gives the function $\tcfr{j}{n-i, 2n+1+i-j}{n+1}$, while transposing the first two arguments gives $\tcfr{n+1}{n-i, 2n+1+i-j}{j}$. A function of the form $\tcfr{d}{a, b}{c}$ or ${c; d; a, b}$ is defined by applying the cyclic shift map to the function $\tcfr{a, b}{c}{d}$.

The cluster structure for $\Conf_m \A$ comes from triangulating an $m$-gon and then attaching the cluster structure on $\Conf_3 \A$ to each triangle and then using the procedure of amalgamation. As before, the vertices $y_k$, $x_{0j}$ and $x_{nj}$ will lie on the edges of the triangle, and the functions attached to them are {\em edge functions}. On $\Conf_3 \A$ the edge vertices are frozen, and the functions attached to these edges depend on only two of the three flags. All other functions are {\em face functions}.

To form the quiver for $\Conf_m \A_{Spin_{2n+1}}$, we first take a triangulation of an $m$-gon. On each of the $m-2$ triangles, attach any one of the six quivers formed from performing $S_3$ symmetries on the quiver for $\Conf_3 \A_{Spin_{2n+1}}$. Each edge of each of these triangles has $n$ frozen vertices. For any two triangles sharing an edge, the corresponding vertices on those edges are identified. Those vertices become unfrozen.

If vertices $i$ and $j$ are glued with $i'$ and $j'$ to get new vertices $i''$ and $j''$, then we declare that $$b_{i''j''}=b_{ij}+b_{i'j'}.$$
In other words, two dotted arrows in the same direction glue to give us a solid arrow, whereas two dotted arrows in the opposite direction cancel to give us no arrow. One can easily check that, again, any gluing will result in no dotted arrows using the unfrozen vertices. The arrows involving vertices that were not previously frozen remain the same. Figure 26 below shows one gluing between two triangles for $Spin_7$.

\begin{center}
\begin{tikzpicture}[scale=2.4]
  \node (x-31) at (-3,0) {$\dul{1}{6}$};
  \node (x-21) at (-2,0) {$\tcfl{1}{2}{4}$};
  \node (x-11) at (-1,0) {$\tcfl{1}{1}{5}$};
  \node (x01) at (0,0) {$\dud{1}{6}$};
  \node (x11) at (1,0) {$\tcfr{2}{6}{6}$};
  \node (x21) at (2,0) {$\tcfr{3}{6}{5}$};
  \node (x31) at (3,0) {$\ddr{6}{1}$};
  \node (x-32) at (-3,-1) {$\dul{2}{5}$};
  \node (x-22) at (-2,-1) {$\tcfl{2}{2, 6}{4}$};
  \node (x-12) at (-1,-1) {$\tcfl{2}{1}{4}$};
  \node (x02) at (0,-1) {$\dud{2}{5}$};
  \node (x12) at (1,-1) {$\tcfr{3}{5}{6}$};
  \node (x22) at (2,-1) {$\tcfr{3}{5}{1, 5}$};
  \node (x32) at (3,-1) {$\ddr{5}{2}$};
  \node (x-33) at (-3,-2) {$\sqrt{\dul{3}{4}}$};
  \node (x-23) at (-2,-2) {$\sqrt{\tcfl{3}{2, 5}{4}}$};
  \node (x-13) at (-1,-2) {$\sqrt{\tcfl{3}{1, 6}{4}}$};
  \node (x03) at (0,-2) {$\sqrt{\dud{3}{4}}$};
  \node (x13) at (1,-2) {$\sqrt{\tcfr{3}{4}{1, 6}}$};
  \node (x23) at (2,-2) {$\sqrt{\tcfr{3}{4}{2, 5}}$};
  \node (x33) at (3,-2) {$\sqrt{\ddr{4}{3}}$};
  \node (y-1) at (-0.5,1) {$\dld{1}{6}$};
  \node (y-2) at (-1.5,1) {$\dld{2}{5}$};
  \node (y-3) at (-2.5,-3) {$\sqrt{\dld{3}{4}}$};
  \node (y1) at (0.5,1) {$\tcfr{1}{}{6}$};
  \node (y2) at (1.5,1) {$\tcfr{2}{}{5}$};
  \node (y3) at (2.5,-3) {$\sqrt{\tcfr{3}{}{4}}$};

  \draw [->] (x01) to (x11);
  \draw [->] (x11) to (x21);
  \draw [->] (x21) to (x31);
  \draw [->] (x02) to (x12);
  \draw [->] (x12) to (x22);
  \draw [->] (x22) to (x32);
  \draw [->] (x03) to (x13);
  \draw [->] (x13) to (x23);
  \draw [->] (x23) to (x33);
  \draw [->, dashed] (y1) to (y2);
  \draw [->, dashed] (y2) .. controls +(right:1) and +(up:1) .. (y3);

  \draw [->] (x03) to (x02);
  \draw [->] (x02) to (x01);

  \draw [->] (x13) to (x12);
  \draw [->] (x12) to (x11);
  \draw [->] (x11) to (y1);
  \draw [->] (y3) to (x23);
  \draw [->] (x23) to (x22);
  \draw [->] (x22) to (x21);
  \draw [->] (x21) to (y2);
  \draw [->, dashed] (x33) to (x32);
  \draw [->, dashed] (x32) to (x31);

  \draw [->] (y1) to (x01);
  \draw [->] (x11) to (x02);
  \draw [->] (x12) to (x03);
 \draw [->] (y2) to (x11);
  \draw [->] (x21) to (x12);
  \draw [->] (x22) to (x13);
  \draw [->] (x31) to (x22);
  \draw [->] (x32) to (x23);
  \draw [->] (x33) to (y3);

  \draw [->] (x01) to (x-11);
  \draw [->] (x-11) to (x-21);
  \draw [->] (x-21) to (x-31);
  \draw [->] (x02) to (x-12);
  \draw [->] (x-12) to (x-22);
  \draw [->] (x-22) to (x-32);
  \draw [->] (x03) to (x-13);
  \draw [->] (x-13) to (x-23);
  \draw [->] (x-23) to (x-33);
  \draw [->, dashed] (y-1) to (y-2);
  \draw [->, dashed] (y-2) .. controls +(left:1) and +(up:1) .. (y-3);

  \draw [->] (x-13) to (x-12);
  \draw [->] (x-12) to (x-11);
  \draw [->] (x-11) to (y-1);
  \draw [->] (y-3) to (x-23);
  \draw [->] (x-23) to (x-22);
  \draw [->] (x-22) to (x-21);
  \draw [->] (x-21) to (y-2);
  \draw [->, dashed] (x-33) to (x-32);
  \draw [->, dashed] (x-32) to (x-31);

  \draw [->] (y-1) to (x01);
  \draw [->] (x-11) to (x02);
  \draw [->] (x-12) to (x03);
  \draw [->] (y-2) to (x-11);
  \draw [->] (x-21) to (x-12);
  \draw [->] (x-22) to (x-13);
  \draw [->] (x-31) to (x-22);
  \draw [->] (x-32) to (x-23);
  \draw [->] (x-33) to (y-3);

\draw[yshift=-3.85cm]
  node[below,text width=6cm] 
  {
  Figure 26. The functions and quiver for the cluster algebra on $\Conf_4 \A_{Spin_{7}}$.
  };

\end{tikzpicture}
\end{center}

In the next section, we discuss Langlands duality, which will give us a framework for relating, and explaining the similarities between, the cluster algebra structures on $\Conf_m \A_{G}$ for $G=Sp_{2n}$ and $Spin_{2n+1}$.

\subsection{Langlands duality}

In this section, we make some remarks related to Langlands duality. Note that the quivers for $\Conf_3 \A_{Spin_{2n+1}}$ and $\Conf_3 \A_{Sp_{2n}}$ are the same, except that white and black vertices switch colors. Moreover, if we mutate corresponding vertices in the quivers for $\Conf_3 \A_{Spin_{2n+1}}$ and $\Conf_3 \A_{Sp_{2n}}$, the relationship persists: we still get the same quivers, but with colors reversed. This is because the seeds we have constructed for the cluster algebras on $\Conf_3 \A_{Spin_{2n+1}}$ and $\Conf_3 \A_{Sp_{2n}}$ are Langlands dual seeds:

\begin{definition} \cite{FG2} Two seeds that have the same set of vertices $I$ and the same set of frozen vertices $I_0$ are said to be {\em Langlands dual} if they have $B$-matrices $b_{ij}$ and $b^{\vee}_{ij}$ and multipliers $d_i$ and $d^{\vee}_i$ where
$$d_i=(d^{\vee}_i)^{-1}D,$$
$$b_{ij}d_j=-b^{\vee}_{ij}d^{\vee}_j,$$
for some rational number $D$.
\end{definition}

\begin{rmk} Note that the multipliers $d_i$ for a cluster algebra are determined only up to simultaneous scaling by a rational number. Conventions sometimes differ on how to specify the values for the $d_i$. This is one reason the rational number $D$ appears in the above definition.
\end{rmk}

\begin{rmk} The cluster algebras for $\Conf_3 \A_{Spin_{2n+1}}$ and $\Conf_3 \A_{Sp_{2n}}$ as we have defined them satisfy
$$b_{ij}d_j=b^{\vee}_{ij}d^{\vee}_j,$$
without the negative sign. Negating the $B$-matrix does note change the cluster algebra, and for that reason a $B$-matrix and its negative are often considered to be equivalent. The reason that \cite{FG2} include a negative sign in their definition is to make the quantization of the ``modular double'' cleaner. We will ignore this issue for most of this paper.
\end{rmk}

In fact, the seeds for $\Conf_m \A_{Spin_{2n+1}}$ and $\Conf_m \A_{Sp_{2n}}$ are Langlands dual; the property is preserved under amalgamation.

When two seeds are Langlands dual, there is a close relationship between the resulting cluster algebras. Suppose that $(I, I_0, b_{ij}, d_i)$ and $(I, I_0, b^{\vee}_{ij}, d^{\vee}_i)$ are Langlands dual seeds. Let the cluster variables for the initial seeds be $x_1, \dots, x_n$ and $x^{\vee}_1, \dots, x^{\vee}_n$, respectively. These cluster variables are naturally in bijection. Then if we mutate $x_k$ to obtain the new cluster variable $x_k'$, we can do the same to $x^{\vee}_k$ to get $(x'_k)^{\vee}$ and then match $x_k'$ and $(x'_k)^{\vee}$. Continuing in this manner, one conjecturally gets a bijection between all the cluster variables for the Langlands dual seeds. Let us make an observation:

\begin{observation} Suppose that we have a cluster variable
$$f \in [V_{\lambda} \otimes V_{\mu} \otimes V_{\nu}]^{Sp_{2n}} \subset \mathcal{O}(\Conf_3 \A_{Sp_{2n}}).$$
Then if $f^{\vee}$ is the dual cluster variable to $f$, then
$$f^{\vee} \in [V_{\lambda} \otimes V_{\mu} \otimes V_{\nu}]^{Spin_{2n+1}} \subset \mathcal{O}(\Conf_3 \A_{Spin_{2n+1}})$$
if $f$ is associated to a black vertex and $f^{\vee}$ is associated to a white vertex, while
$$f^{\vee} \in [V_{\lambda/2} \otimes V_{\mu/2} \otimes V_{\nu/2}]^{Spin_{2n+1}} \subset \mathcal{O}(\Conf_3 \A_{Spin_{2n+1}})$$
if $f$ is associated to a white vertex and $f^{\vee}$ is associated to a black vertex.
\end{observation}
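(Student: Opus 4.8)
The plan is to trace the claimed highest-weight assignment through the mutation process, using the Langlands-duality structure already established. First I would set up the dictionary between the two cluster algebras: by the discussion preceding the Observation, the seeds for $\Conf_3 \A_{Sp_{2n}}$ and $\Conf_3 \A_{Spin_{2n+1}}$ have identical quivers with black and white vertices interchanged, and this identification is preserved under mutation of corresponding vertices. So the first step is to verify the Observation for the \emph{initial} seed: for each vertex $v$ of the quiver, compare the function $f_v$ on $\Conf_3 \A_{Sp_{2n}}$ (given in Section 3.1 rules (1)--(4)) with the function $f_v^\vee$ on $\Conf_3 \A_{Spin_{2n+1}}$ (given in the present section's rules (1)--(5)). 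For a \textbf{black} vertex of $\A_{Sp_{2n}}$ (which is a \textbf{white} vertex of $\A_{Spin_{2n+1}}$), one has, e.g., $f_v = \tcfr{n-i}{n+i-j}{j}$ lying in $[V_{\omega_{n-i}+\omega_j}\otimes\cdots]^{Sp_{2n}}$, while $f_v^\vee = \tcfr{n-i}{n+1+i-j}{j}$ lies in the corresponding invariant space for $SL_{2n+1}$, hence by Observation (the second one in Section 3.3, applied in type $B$) in $[V_{\pi(\lambda)}\otimes\cdots]^{Spin_{2n+1}}$; the point is that the projection $\pi$ from $SL_{2n+1}$ weights to $Spin_{2n+1}$ weights sends $\omega_k\mapsto\omega_k$ for $k<n$, $\omega_n\mapsto 2\omega_n$, so the fundamental-weight labels $n-i, n+1+i-j, j$ (all $<n$ unless they equal $n$, in which case one takes the square root and halves) match $\lambda,\mu,\nu$ on the nose when $f$ sits at a black (for $Sp$) vertex. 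For a \textbf{white} vertex of $\A_{Sp_{2n}}$ (a \textbf{black} vertex of $\A_{Spin_{2n+1}}$), the function on the $Spin$ side is a square root, so its highest weights are halved — this is exactly rule (5) and the square-root discussion at the start of this section — giving $f^\vee\in[V_{\lambda/2}\otimes V_{\mu/2}\otimes V_{\nu/2}]^{Spin_{2n+1}}$.

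Second, I would argue that the weight-type assignment propagates correctly under mutation. Mutation of a non-frozen vertex $k$ replaces $A_k$ by $A_k^{-1}(M_+ + M_-)$, a sum of two monomials in the neighbouring cluster variables. Since the quivers for the two cluster algebras are identical (up to recoloring) at every stage, the combinatorial data $b_{ik}$ entering the exchange relation is the same on both sides, so the monomials $M_\pm$ and $M_\pm^\vee$ involve corresponding variables with corresponding exponents. The representation-theoretic content is that the exchange relation is an identity of $SL$-invariants of the form given in the various ``octahedron-type'' lemmas (Figure 11, and the identities in Section 3.3, Section 3.4); each such identity is homogeneous with respect to the weight grading, so if the two input functions on a side lie in matching invariant spaces then so do the outputs. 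One then invokes, on each side, the appropriate Observation ($\sigma$-equivariance / $\pi$-projection in type $A$, and its type-$B$ analogue) to read off the $Spin_{2n+1}$ weight labels from the $SL_{2n+1}$ ones: they are unchanged for vertices that stay black (for $Sp$) and halved for vertices that stay white. The key bookkeeping fact is that mutation preserves the color of every unfrozen vertex (colors are determined by the $d_i$, which are mutation-invariant), so ``black vs.\ white'' is a well-defined attribute of each cluster variable throughout, and the dichotomy in the Observation is stable.

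Third, I would handle the subtlety of square roots carefully: on the $Spin_{2n+1}$ side, a cluster variable attached to a black vertex is an honest $SL_{2n+1}$-invariant pulled back, whereas one attached to a white vertex is a square root of such. In the exchange relation at a \emph{white} vertex $k$ of $\A_{Spin_{2n+1}}$, the incoming/outgoing arrows from black neighbours are weighted by $2$ (this is spelled out in the proof of the second transposition in Section 3.4: ``arrows going from black to white vertices are weighted by $2$''), which is precisely what makes the square-roots combine into a single square-root on the other side of the relation — i.e.\ the identity one actually needs is the $Sp$-side identity \emph{squared}, restricted appropriately. This is where I expect the main obstacle to lie: making the square-root choices consistent and checking that the weighted exchange relation at white vertices really is the square root of a genuine $SL_{2n+1}$ identity. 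Fortunately, the positivity criterion quoted at the start of this section (``there is a unique choice of $\sqrt{f}$ that is positive on $\Conf_3 \A_{Spin_{2n+1}}$'') pins down the branch uniquely and is manifestly preserved by subtraction-free cluster transformations, so consistency is automatic once one knows the relevant function is a square. The remaining work is then purely the weight-grading homogeneity of the octahedron-type identities, which is routine. I would therefore present the proof as: (i) base case by direct inspection of rules (1)--(5); (ii) inductive step via homogeneity of the exchange identities plus the $\pi$-projection Observations; (iii) a remark that color-preservation under mutation makes the black/white dichotomy well-defined and that positivity fixes the square-root branches.
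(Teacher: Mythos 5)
Your proposal is correct and follows essentially the same route as the paper: verify the claim directly on the initial seed by comparing the two lists of assigned functions, then propagate it through mutations using the fact that the exchange identities are homogeneous for the weight grading and that the black/white dichotomy is mutation-invariant. The paper records this argument in a single sentence, adding only one caveat that you assume implicitly — namely that the mutated cluster variables are genuine regular functions (honest tensor-product invariants) rather than merely rational expressions, a property the paper admits it cannot prove in general but verifies for every cluster actually used.
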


This is clearly true in the initial cluster, and as long as all the cluster variables are functions on $\Conf_3 \A_{Sp_{2n}}$ and $\Conf_3 \A_{Spin_{2n+1}}$, and not just rational functions on those spaces (as we expect, but do not know how to prove), it is easy to check that the above observation remains true under mutation. Certainly, in all the clusters we consider in this paper this will be the case, as is borne out in the computations that follow.

This gives us the following principle which will underlie the computations of the $S_3$ symmetries on $\Conf_3 \A_{Spin_{2n+1}}$ and the flip on $\Conf_4 \A_{Spin_{2n+1}}$:

\begin{observation} We can compute the formulas for the cluster variables on $\Conf_3 \A_{Spin_{2n+1}}$ and $\Conf_4 \A_{Spin_{2n+1}}$ that appear at various stages of mutation in the following way: Start with the formula for the corresponding cluster variable on $\Conf_m \A_{Sp_{2n}}$. Replace every instance of ``$a$'' where $1 \leq a \leq n-1$ by ``$a$,'' and replace every instance of ``$2n-a$'' where $1 \leq a \leq n-1$ by ``$2n+1-a$.'' Every instance of ``$n$'' should be replaced by either ``$n$'' or ``$n+1$,'' depending on the context. Take a square-root if the cluster variable corresponds to a black vertex for $\Conf_m \A_{Spin_{2n+1}}$. One then obtains the formula for the cluster variable on $\Conf_m \A_{Spin_{2n+1}}$.
\end{observation}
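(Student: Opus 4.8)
The plan is to make the heuristic in the final observation rigorous by anchoring it on two honest pillars: the explicit matching of initial seeds, and compatibility of that matching with the mutation rule. First I would set up the bijection precisely. The quiver for $\Conf_3 \A_{Spin_{2n+1}}$ has the same underlying directed graph as the one for $\Conf_3 \A_{Sp_{2n}}$, with black and white vertices interchanged; the matching $\phi$ on vertices is the identity on labels $x_{ij}, y_k$. On the level of functions, I would record the dictionary on the initial seed directly from the definitions in Sections 3.1 and 4.1: the function $\dud{k}{2n-k}$ at $y_k$ for $Sp_{2n}$ corresponds to $\dud{k}{2n+1-k}$ at $y_k$ for $Spin_{2n+1}$; $\tcfr{n-i}{n+i-j}{j}$ corresponds to $\tcfr{n-i}{n+1+i-j}{j}$; $\tcfr{n-i, 2n+i-j}{n}{j}$ corresponds to $\tcfr{n-i, 2n+1+i-j}{n+1}{j}$; and $\tcfr{2n-j}{}{j}$ corresponds to $\tcfr{2n+1-j}{}{j}$, with a square root taken for the white vertices $x_{in}, y_n$ of $\Conf_3 \A_{Spin_{2n+1}}$ (equivalently, at the black vertices $x_{in}, y_n$ of $\Conf_3 \A_{Sp_{2n}}$). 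This is exactly the substitution rule: $a \mapsto a$ for $1 \le a \le n-1$, $2n - a \mapsto 2n+1-a$, and $n \mapsto n$ or $n+1$ according to context. So the observation holds verbatim in the initial cluster, and the content of the statement is that it propagates.

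The second step is the propagation argument. Here I would invoke the Langlands duality of the seeds (already established in the excerpt: the $B$-matrices and multipliers satisfy the relation of the definition, up to the sign that is immaterial). The key structural fact is that the two families of mutation identities run in lockstep: every cluster transformation appearing in the cactus sequence, the $S_3$ sequences, and the flip sequence for $\Conf_m \A_{Sp_{2n}}$ is one of the web/skein identities listed in the proofs of the theorems in Sections 3.3--3.5, and each such identity is homogeneous with respect to the grading by triples of dominant weights. I would argue that under the substitution rule, each identity for $Sp_{2n}$ maps to the corresponding identity for $Spin_{2n+1}$ (with the same web, now read in $SL_{2n+1}$ instead of $SL_{2n}$, and with square roots inserted at the vertices dictated by the coloring swap). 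Concretely: because mutation at a white vertex of $\Conf_3 \A_{Spin_{2n+1}}$ corresponds via folding/Langlands duality to mutation at a black vertex of $\Conf_3 \A_{Sp_{2n}}$ — where the exchange relation has the black-vertex weighting of incoming and outgoing arrows by $2$ (as spelled out in the proof of the second-transposition theorem) — the square of the new white function is governed by exactly the identity that, for $Sp_{2n}$, governs the new black function, and the positive branch of the square root is the one forced by the positive structure (discussed at the start of Section 4). Thus if the observation holds before a mutation, it holds after it; since every cluster we consider is reached from the initial one by the explicitly described sequences, the observation holds throughout.

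To organize the write-up I would state and prove one clean lemma: \emph{if $\Sigma$ is a seed in the mutation class under consideration for $\Conf_m \A_{Sp_{2n}}$ whose cluster variable $f_v$ at a vertex $v$ lies in $[V_\lambda \otimes V_\mu \otimes V_\nu]^{Sp_{2n}}$ and is obtained by a web of the standard form, and $\Sigma^\vee$ is the Langlands-dual seed with the matched variable $f_v^\vee$ obtained from $f_v$ by the substitution rule, then $f_v^\vee \in [V_\lambda \otimes V_\mu \otimes V_\nu]^{Spin_{2n+1}}$ when $v$ is black (in the $Sp$ quiver) and $f_v^\vee \in [V_{\lambda/2} \otimes V_{\mu/2} \otimes V_{\nu/2}]^{Spin_{2n+1}}$ when $v$ is white; moreover this property is preserved by mutation}. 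The base case is the dictionary of Step~1; the inductive step is the lockstep identity-matching of Step~2, combined with Observation~3.38 (the weight-tracking observation for folding) to identify the weights. The main obstacle, and the place where I would be most careful, is precisely the white-vertex mutation: one must check that taking square roots interacts correctly with the $Sp$ black-vertex exchange relation, i.e.\ that $\bigl(\text{new white function}\bigr)^2$ really equals the $Sp$-side right-hand side after substitution, and that the resulting function is again a \emph{regular} function (not merely rational) on $\Conf_m \A_{Spin_{2n+1}}$ — this last point is exactly the ``as long as all cluster variables are functions, not just rational functions'' caveat noted in the excerpt, and for the specific clusters in this paper it is verified by the explicit formulas of the theorems in Sections 5.x, so I would phrase the conclusion as holding for all clusters appearing in these computations rather than claiming it unconditionally for the whole cluster algebra.
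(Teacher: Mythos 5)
Your proposal is correct and follows essentially the same route as the paper: the paper justifies this Observation by noting that the preceding Langlands-duality/weight-tracking observation holds in the initial seed by the explicit dictionary, is preserved under mutation modulo the caveat that cluster variables remain regular (not merely rational) functions, and is then borne out by the explicit identity-by-identity computations for the $S_3$ symmetries and the flip. Your added care about the white-vertex/square-root exchange relation and your decision to phrase the conclusion as holding for the clusters actually appearing in the computations match the paper's own caveats exactly.
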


All the formulas we derive for $Spin_{2n+1}$ will follow this principle.

\subsection{Reduced words}

We now relate the cluster structure on $\Conf_3 \A_{Spin_{2n+1}}$ given in the previous section to Berenstein, Fomin and Zelevinsky's cluster structure on $B$, the Borel of the group $G$ (\cite{BFZ}). This will imply that the cluster we constructed induces the same positive structure on $\A_{G,S}$ as given in \cite{FG1}. For a reader not interested in the positive structure on $\A_{G,S}$, and more interested in just understanding the cluster structure on $\A_{G,S}$, this section is not logically necessary.

Let us now restrict our attention to triples of principal flags of the form $(U^-, \overline{w_0}U^-, b\cdot \overline{w_0}U^-).$ We can consider the map

$$i: b \in B^- \rightarrow (U^-, \overline{w_0}U^-, b\cdot \overline{w_0}U^-) \in \Conf_3 \A_{Spin_{2n+1}}.$$

For $u, v$ elements of the Weyl group $W$ of $G$, we have the double Bruhat cell
$$G^{u,v}=B^+ \cdot u \cdot B^+ \cap B^- \cdot v \cdot B^v.$$
The cell $G^{w_0,e}$ is the on open part of $B^-$.

\begin{prop} The cluster algebra constructed above on $\Conf_3 \A_{Spin_{2n+1}}$, when restricted to the image of $i$, coincides with the cluster algebra structure given in \cite{BFZ} on $B^-=G^{w_0,e}.$
\end{prop}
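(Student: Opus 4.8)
The plan is to reduce this proposition to the one already proved for $Sp_{2n}$, exploiting the Langlands duality between the two seeds together with the explicit relationship between $\Conf_3 \A_{Spin_{2n+1}}$ and $\Conf_3 \A_{SL_{2n+1}}$ established at the start of this section. First I would recall, exactly as in the $Sp_{2n}$ case, the reduced word expression $w_0 = (s_n s_{n-1} \cdots s_2 s_1)^n$ for the longest element of the Weyl group of $Spin_{2n+1}$ in the numbering of the $B_n$ Dynkin diagram, and the associated family of generalized minors $\Delta_{u_{ij}\omega_j, \omega_j}$ and $\Delta_{\omega_j,\omega_j}$ on $B^- = G^{w_0, e}$ coming from \cite{BFZ}. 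The content of the proposition is then the identification of the functions $\tcfr{2n+1-j}{}{j}$, $\tcfr{n-i}{n+1+i-j}{j}$, $\tcfr{n-i, 2n+1+i-j}{n+1}{j}$ (and their square-roots at the white vertices $x_{in}$) with the corresponding $\Delta_{u_{ij}\omega_j, \omega_j}$, together with the matching of the quiver with the BFZ quiver for this reduced word.

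The key steps, in order, are: (1) fix the embedding $Spin_{2n+1} \hookrightarrow SO_{2n+1} \hookrightarrow SL_{2n+1}$ and a compatible pinning with $E_{\alpha_n} = E_{n,n+1}+E_{n+1,n+2}$ (up to the signs dictated by the quadratic form $\langle e_i, e_{2n+2-i}\rangle = (-1)^{i-1}$) and $E_{\alpha_i} = E_{i,i+1}+E_{2n+2-i, 2n+1-i}$ for $i<n$; (2) for $i<n$, where the fundamental representation $V_{\omega_i}$ is literally $\bigwedge^i V$, carry out the same direct minor computation as in the $Sp_{2n}$ proof: $\Delta_{\omega_j,\omega_j}(x)$ is the leading principal $j\times j$ minor of $x \in B^-$, and $\Delta_{u_{ij}\omega_j,\omega_j}(x)$ is the minor on rows $2n+2-i, \dots, 2n+1-i+j$ (indices mod $2n+1$) and the first $j$ columns, and these agree with the evaluations of our web functions on the triple $(U^-, \overline{w_0}U^-, b\cdot\overline{w_0}U^-)$ with $U^-$ chosen as the flag $\pm e_{n+2}, \pm e_{n+3}, \dots$ so that $\overline{w_0}U^-$ is $e_1, \dots, e_n$; (3) handle the spin node $n$ separately: here the fundamental weight $\omega_n$ is not realized inside $\bigwedge^\bullet V$, but $V_{2\omega_n} \cong \bigwedge^n V$, so $\Delta_{u_{in}\omega_n, \omega_n}^2$ is a minor computation of the previous type, and one checks that the positive branch of its square-root is exactly the function the paper attaches to $x_{in}$ — this is precisely where the square-roots in the construction come from; (4) match the $B$-matrix by invoking the Langlands duality observation: the quiver for $\Conf_3 \A_{Spin_{2n+1}}$ is the quiver for $\Conf_3 \A_{Sp_{2n}}$ with black and white vertices swapped, and the $Sp_{2n}$ quiver was already identified with the BFZ quiver for this reduced word, so one only needs to verify that the color-swap is consistent with the BFZ convention for multipliers $d_i$ along the $B_n$ Dynkin diagram (short root at node $n$, versus long root at node $n$ for $C_n$), which is exactly the Langlands-dual relation $b_{ij}d_j = b^\vee_{ij}d^\vee_j$.

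I expect the main obstacle to be step (3): the spin representation is genuinely not a sub-quotient of the tensor algebra on $V$, so one cannot compute $\Delta_{u_{in}\omega_n,\omega_n}$ by a determinant directly. The argument has to go through $\Delta^2$, land in $\bigwedge^n V = V_{2\omega_n}$, and then one must argue (i) that the resulting minor is a perfect square as a regular function on $B^-$ (so that the square-root is regular, not merely rational) and (ii) that the positive branch of the square-root coincides with the BFZ generalized minor for $\omega_n$ rather than its negative. Point (ii) is a sign/normalization check that will rely on the positivity conventions fixed in the pinning; I would phrase it as: both $\sqrt{\Delta_{u_{in}\omega_n,\omega_n}^2}$ and the BFZ minor $\Delta_{u_{in}\omega_n,\omega_n}$ are positive on the totally positive part of $B^-$ by construction, and a positive regular function with a prescribed square has a unique positive square-root, so they agree. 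The remaining steps are routine linear algebra parallel to the $Sp_{2n}$ case, and I would simply remark that they proceed identically (or defer the full minor bookkeeping, as the $Sp_{2n}$ proof did, on the grounds that it is not central to the paper).
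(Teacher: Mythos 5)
Your proposal follows essentially the same route as the paper's proof: restrict to the slice $(U^-, \overline{w_0}U^-, b\cdot\overline{w_0}U^-)$, identify the web functions with the BFZ generalized minors for the chosen reduced word of $w_0$ by computing through $Spin_{2n+1}\twoheadrightarrow SO_{2n+1}\hookrightarrow SL_{2n+1}$ with a fixed pinning, and handle the spin node by using $V_{2\omega_n}\cong \bigwedge\nolimits^n V$ so that $\Delta_{u_{in}\omega_n,\omega_n}$ is the positive square root of an ordinary minor --- which is exactly how the paper proceeds (and is indeed where the square roots in the seed come from). One correction to the bookkeeping you defer: the rows of the minor computing $\Delta_{u_{ij}\omega_j,\omega_j}$ are \emph{not} the $Sp_{2n}$ rows transplanted mod $2n+1$; because the $B_n$ simple reflections act differently on the weight lattice, the relevant row set is $n+i+1, n+i, \dots, n+2, 1, 2, \dots, n-i$ (taking the first $j$ of these rows and the first $j$ columns), not $2n+2-i,\dots,2n+1-i+j$ reduced mod $2n+1$.
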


\begin{proof} $G^{w_0,e}$ is the on open part of $B^-$. Following \cite{BFZ}, to get a cluster structure on this subset, we must choose a reduced-word for $w_0$. In the numbering of the nodes of the Dynkin diagram given above for $Spin_{2n+2}$, we choose the reduced word expression $$w_0=(s_n s_{n^*} s_{n-1} \cdots s_2 s_1)^n.$$
Here our convention is that the above word corresponds to the string $i_1, i_2, \dots, i_{n-1}, i_{n^*} i_n$ repeated $n$ times. 

Now let $G_0 = U^- H U^+ \subset G$ be the open subset of elements of $G$ having Gaussian decomposition $x=[x]_- [x]_0 [x]_+$. Then for any two elements $u, v \in W$, and any fundamental weight $\omega_i$, we have the {\em generalized minor} $\Delta_{u\omega_i, v\omega_i}(x)$ defined by

$$\Delta_{u\omega_i, v\omega_i}(x) := ([\overline{u}^{-1}x \overline{v}]_0)^{\omega_i}.$$

In our situation, we are interested in such minors when $u, v=e$, or when $v=e$ and $u=u_{ij}=(s_n s_{n^*} s_{n-1} \cdots s_2s_1)^{i-1} s_{n} s_{n^*} s_{n-1}\cdots s_j$ for $1 \leq i \leq n$ and $n \geq j \geq 1$.

Then the cluster functions on $B^-$ given in \cite{BFZ} are $\Delta_{\omega_i, \omega_i}$ for $1 \leq i \leq n$ (these are the functions associated to $u, v=e$), and, for $1 \leq i \leq n$ and $j=1, 2, \dots, n-1, n^*, n$,
$$\Delta_{u_{ij}\omega_j, \omega_j},$$
which are the functions associated to $v=e$ and $u=u_{ij}=(s_n s_{n^*} s_{n-1} \cdots s_2s_1)^i s_{n}s_{n^*}s_{n-1}\cdots s_j$. Note that $u_{ij}$ is the subword of $u$ that stops on the $i^{\textrm th}$ iteration of $s_j$.

We have the following claims:

\begin{enumerate}
\item Recall that when $i=0$, we assign the function $\tcfr{2n+1-j}{}{j}$ to $x_{ij}$ for $j<n$ and $\sqrt{\tcfr{n+1}{}{n}}$ to $x_{in}$. Then 
$$\tcfr{2n+1-j}{}{j} = \Delta_{\omega_j, \omega_j},$$
for $j < n$ and
$$\sqrt{\tcfr{n+1}{}{n}} = \Delta_{\omega_n, \omega_n}.$$

\item Recall that for $i \geq j \neq n$, we assign the function $\tcfr{n-i}{n+1+i-j}{j}$ to $x_{ij}$. When $i=j=n$, we assign the function $\sqrt{\tcfr{}{n+1}{n}}$ to $x_{nn}$. Then 
$$\tcfr{n-i}{n+1+i-j}{j} = \Delta_{u_{ij}\omega_j, \omega_j}$$ 
for $j \neq n$ and
$$\sqrt{\tcfr{}{n+1}{n}} = \Delta_{u_{nn}\omega_n, \omega_n}$$

\item Recall that when $i < j <n$, we assign the function $\tcfr{n-i, 2n+1+i-j}{n+1}{j}$ to $x_{ij}$. Then 
$$\tcfr{n-i, 2n+1+i-j}{n+1}{j} = \Delta_{u_{ij}\omega_j, \omega_j}.$$
When $i < j = n$, we assign the function $\sqrt{\tcfr{n-i, n+1+i}{n+1}{n}}$ to $x_{in}$. Then 
$$\sqrt{\tcfr{n-i, n+1+i}{n+1}{n}} = \Delta_{u_{in}\omega_n, \omega_n}.$$

\end{enumerate}

Thus, in all cases the function assigned to $x_{ij}$ is precisely $\Delta_{u_{ij}\omega_j, \omega_j}$.

The proof of these claims is a straightforward calculation.

It is convenient to fix maps 
$Spin_{2n+1} \twoheadrightarrow SO_{2n+1} \hookrightarrow SL_{2n+1}.$
Choose the quadratic form so that $<e_i,e_{2n+2-i}>=(-1)^{i-1}$ and all other pairings of basis elements are zero. The sign is chosen so that the representation $Spin_{2n+1} \hookrightarrow SL_{2n+1}$ preserves positive structure. Now choose a pinning such that under the map $Spin_{2n+1} \rightarrow SL_{2n+1}$,

$$E_{\alpha_n}=\sqrt{2}(E_{n,n+1}+E_{n+1,n+2}), F_{\alpha_n}=\sqrt{2}(E_{n+1,n}+E_{n+2,n+1}),$$
and for $1\leq i<n$,
$$E_{\alpha_i}=E_{i,i+1}+E_{2n+1-i,2n+2-i},  F_{\alpha_i}=E_{i+1,i}+E_{2n+2-i,2n+1-i}.$$
Here $E_{i,j}$ is the $(i,j)$-elementary matrix, i.e., the matrix with a $1$ in the $(i,j)$ position and $0$ in all other positions.

With respect to this embedding, we can directly calculate $\Delta_{\omega_j, \omega_j}(x)$ where $x \in B^-$. When $x$ is embedded in $SL_{2n+1}$, $\Delta_{\omega_j, \omega_j}(x)$ is simply the determinant of the minor consisting of the first $j$ rows and the first $j$ columns for $j<n$, while $\Delta_{\omega_n, \omega_n}(x)$ is the square-root of the determinant of the minor consisting of the first $n$ rows and columns.

Similarly, one can calculate that $\Delta_{u_{ij}\omega_j, \omega_j}(x)$ only depends on the entries in rows $n+i+1, n+i, \dots, n+2, 1, 2, \dots n-i$ and the first $n$ columns. In particular, for $j< n$, $\Delta_{u_{ij}\omega_j, \omega_j}(x)$ is the determinant of the minor consisting of the first $j$ of those rows (in the order listed above) and the first $j$ columns, while for $j=n$, it is the square-root of the determinant of the minor consisting of all $n$ rows listed and the first $n$ columns.

We then must calculate the functions 
$$\tcfr{2n+1-j}{}{j}, \tcfr{n-i}{n+1+i-j}{j}, \tcfr{n-i, 2n+1+i-j}{n+1}{j}$$ on the triple of flags $(U^-, \overline{w_0}U^-, b\cdot \overline{w_0}U^-)$. Under the embedding  $Sp_{2n+1} \hookrightarrow SL_{2n+1}$, we should choose the flag $U^-$ to be be $e_{2n+1}, -e_{2n}, e_{2n-1}, dots, (-1)^n e_{n+1}$, so that $\overline{w_0}U^-$ is given by the flag $e_1, e_2, \dots, e_n$. Direct calculation then shows that $\tcfr{2n-j}{}{j}, \tcfr{n-i}{n+i-j}{j}$ and $\tcfr{n-i, 2n+i-j}{n}{j}$ are given by the appropriate determinants of minors or square-roots of these determinants of minors.

\end{proof}

Finally, note that we have the following equalities of functions:

\begin{equation} \label{dualities2}
\begin{split}
\dud{k}{2n+1-k}&=\dud{2n+1-k}{k} \\
\tcfr{n-i}{n+1+i-j}{j}&=\tcfr{n+1+i}{n-i+j}{2n+1-j} \\
\tcfr{n-i, 2n+1+i-j}{n+1}{j}&=\tcfr{n+1+i, j-i}{n}{2n+1- j}
\end{split}
\end{equation}
These equalities arise because the quadratic form induces an isomorphism between $\bigwedge\nolimits^i V$ and $\bigwedge\nolimits^{2n+1-i} V$.

\subsection{Relationship with cactus transformations}

As in the case of $Sp_{2n}$, the cactus sequence can be used to construct the cluster algebra structure on $\Conf_3 \A_{Spin_{2n+1}}$. The material of this section is not relevant to the remainder of the paper. We include this material for the following reasons: for the sake of completeness; to emphasize the parallels between $Sp_{2n}$ and $Spin_{2n+1}$; to motivate some computations that we perform later; and to point out a new phenomenon similar to folding of cluster seeds that deserves study in its own right.

Recall the cactus sequence of mutations on $\Conf_3 \A_{SL_{N}}$. We now specialize to $N=2n+1$. The cactus sequence will allow us to give an alternative way to construct the cluster algebra structure for $\Conf_3 \A_{Spin_{2n+1}}$. We perform the following sequence of mutations on $\Conf_3 \A_{SL_{2n+1}}$:

$$x_{N-2,1,1}, x_{N-3,1,2}, \dots, x_{1,1,N-2}$$
$$x_{N-3,2,1}, x_{N-4,2,2}, \dots, x_{1,2,N-3}$$
$$x_{N-2,1,1}, x_{N-3,1,2}, \dots, x_{2,1,N-3}$$
$$x_{N-4,3,1}, x_{N-5,3,2}, \dots, x_{1,3,N-4}$$
$$x_{N-3,2,1}, x_{N-4,2,2}, \dots, x_{2,2,N-4}$$
$$x_{N-2,1,1}, x_{N-3,1,2}, \dots, x_{3,1,N-4}$$
$$\dots$$
$$x_{n+1,n-1,1}, x_{n,n-1,2}, \dots, x_{1,n-1,n+1}$$
$$x_{n+2,n-2,1}, x_{n,n-1,2}, \dots, x_{2,n-2,n+1}$$
$$\dots$$
$$x_{N-2,1,1}, x_{N-3,1,2}, \dots, x_{n-1,1,n+1}$$

One can picture the above sequence as consisting of $n-1$ stages, where each stage consists of mutating all vertices lying in a parallelogram.

Additionally, we perform the following sequence of mutations, which are ``half'' of the mutations in the next stage.
$$x_{n,n,1}, x_{n-1,n,2}, \dots, x_{2,n,n-1}$$
$$x_{n+1,n-1,1}, x_{n,n-1,2}, \dots, x_{4,n-1,n-2}$$
$$\dots$$
$$x_{N-4,3,1}, x_{N-5,3,2}$$
$$x_{N-3,2,1}$$

The result will be that we get the quiver pictured in Figure 27 for $Spin_7$.

\begin{center}
\begin{tikzpicture}[scale=2.4,rotate=90]

  \node (x-31) at (-4,0) [rotate=90] {$\tcfr{3}{5}{1}$};
  \node (x-21) at (-3,0) [rotate=90] {$\tcfr{3,8}{5}{2}$};
  \node (x-11) at (-2,0) [rotate=90] {$\tcfr{3,7}{5}{3}$};
  \node (x-01) at (-1,0) [rotate=90] {$\tcfr{3,6}{5}{4}$};
  \node (x01) at (1,0) [rotate=90] {$\tcfr{3,6}{4}{5}$};
  \node (x11) at (2,0) [rotate=90] {$\tcfr{2,6}{4}{6}$};
  \node (x21) at (3,0) [rotate=90] {$\tcfr{1,6}{4}{7}$};
  \node (x31) at (4,0) [rotate=90] {$\tcfr{6}{4}{8}$};
  \node (x-32) at (-4,-1) [rotate=90] {$\tcfr{2}{6}{1}$};
  \node (x-22) at (-3,-1) [rotate=90] {$\tcfr{2}{5}{2}$};
  \node (x-12) at (-2,-1) [rotate=90] {$\tcfr{2,8}{5}{3}$};
  \node (x-02) at (-1,-1) [rotate=90] {$\tcfr{2,7}{5}{4}$};
  \node (x02) at (1,-1) [rotate=90] {$\tcfr{2,7}{4}{5}$};
  \node (x12) at (2,-1) [rotate=90] {$\tcfr{1,7}{4}{6}$};
  \node (x22) at (3,-1) [rotate=90] {$\tcfr{7}{4}{7}$};
  \node (x32) at (4,-1) [rotate=90] {$\tcfr{7}{3}{8}$};
  \node (x-33) at (-4,-2) [rotate=90] {$\tcfr{1}{7}{1}$};
  \node (x-23) at (-3,-2) [rotate=90] {$\tcfr{1}{6}{2}$};
  \node (x-13) at (-2,-2) [rotate=90] {$\tcfr{1}{5}{3}$};
  \node (x-03) at (-1,-2) [rotate=90] {$\tcfr{1,8}{5}{4}$};
  \node (x03) at (1,-2) [rotate=90] {$\tcfr{1,8}{4}{5}$};
  \node (x13) at (2,-2) [rotate=90] {$\tcfr{8}{4}{6}$};
  \node (x23) at (3,-2) [rotate=90] {$\tcfr{8}{3}{7}$};
  \node (x33) at (4,-2) [rotate=90] {$\tcfr{8}{2}{8}$};
  \node (x1) at (0,0.5) [rotate=90] {$\tcfr{4,6}{4}{4}$};
  \node (x2) at (0,-0.5) [rotate=90] {$\tcfr{3,7}{4}{4}$};
  \node (x3) at (0,-1.5) [rotate=90] {$\tcfr{2,8}{4}{4}$};
  \node (x4) at (0,-2.5) [rotate=90] {$\tcfr{1}{4}{4}$};

  \draw [->] (x4) to (x-03);
  \draw [->] (x-03) to (x3);
  \draw [->] (x3) to (x-02);
  \draw [->] (x-02) to (x2);  
  \draw [->] (x2) to (x-01);
  \draw [->] (x-01) to (x1);

  \draw [->] (x1) to (x01);
  \draw [->] (x01) to (x2);
  \draw [->] (x2) to (x02);
  \draw [->] (x02) to (x3);  
  \draw [->] (x3) to (x03);
  \draw [->] (x03) to (x4);

  \draw [->] (x01) to (x11);
  \draw [->] (x11) to (x21);
  \draw [->] (x21) to (x31);
  \draw [->] (x02) to (x12);
  \draw [->] (x12) to (x22);
  \draw [->] (x22) to (x32);
  \draw [->] (x03) to (x13);
  \draw [->] (x13) to (x23);
  \draw [->] (x23) to (x33);

  \draw [->, dashed] (x33) to (x32);
  \draw [->, dashed] (x32) to (x31);

  \draw [->] (x03) to (x02);
  \draw [->] (x02) to (x01);
  \draw [->] (x13) to (x12);
  \draw [->] (x12) to (x11);
  \draw [->] (x23) to (x22);
  \draw [->] (x22) to (x21);

  \draw [->] (x11) to (x02);
  \draw [->] (x12) to (x03); 
  \draw [->] (x21) to (x12);
  \draw [->] (x22) to (x13);
  \draw [->] (x31) to (x22);
  \draw [->] (x32) to (x23);

  \draw [->] (x-01) to (x-11);
  \draw [->] (x-11) to (x-21);
  \draw [->] (x-21) to (x-31);
  \draw [->] (x-02) to (x-12);
  \draw [->] (x-12) to (x-22);
  \draw [->] (x-22) to (x-32);
  \draw [->] (x-03) to (x-13);
  \draw [->] (x-13) to (x-23);
  \draw [->] (x-23) to (x-33);

  \draw [->, dashed] (x-33) to (x-32);
  \draw [->, dashed] (x-32) to (x-31);

  \draw [->] (x-13) to (x-12);
  \draw [->] (x-12) to (x-11);
  \draw [->] (x-23) to (x-22);
  \draw [->] (x-22) to (x-21);

  \draw [->] (x-11) to (x-02);
  \draw [->] (x-12) to (x-03); 
  \draw [->] (x-21) to (x-12);
  \draw [->] (x-22) to (x-13);
  \draw [->] (x-31) to (x-22);
  \draw [->] (x-32) to (x-23);

\draw[yshift=-3.5cm]
  node[below,text width=10cm,rotate=90] 
  {
  Figure 27. The functions and quiver for the cluster $\Conf_3 \A_{SL_{9}}$ which folds to give a cluster for $\Conf_3 \A_{Spin_{9}}$.
  };

\end{tikzpicture}
\end{center}

The function attached to $x_{ijk}$ (here we take $i,j,k > 0$ to exclude frozen vertices) will be 
\begin{itemize}
\item $\tcfr{i}{j}{k}$ if $j \geq n+1$,
\item $\tcfr{n+j, i+j-n-1}{n+1}{k+n+1-j}$ if $k <j < n+1$ and $k < n+1$,
\item $\tcfr{n+1+j, i+j-n}{n}{k+n-j}$ if $k \leq j< n+1$,
\item $(j+k, i+j; k+i)$ if $k \geq n$.
\end{itemize}

Thus we see that the functions attached to $x_{ijk}$ and $x_{ikj}$ pull back to the same function on $\Conf_3 \A_{Spin_{2n+1}}$ via the map $\Conf_3 \A_{Spin_{2n+1}} \rightarrow \Conf_3 \A_{SL_{2n+1}}$. Here we use the identities \eqref{dualities}. In fact, for $j > k$ and $j \geq n+1$, we have that the vertex $x_{ijk}$ corresponds to the vertex $x_{n-i,k}$ in the quiver for $\Conf_3 \A_{Spin_{2n+1}}$, while for $j > k$ and $j \leq n$, we have that the vertex $x_{ijk}$ corresponds to the vertex $x_{k,n+1+k-j}$.

The vertices where $j=k$ do not correspond to vertices for the quiver for the cluster algebra on $\Conf_3 \A_{Spin_{2n+1}}$. However, something interesting happens. Note that the function attached to $x_{2n+1-2j, j, j}$ is $\tcfr{n+1+j, n+1-j}{n}{n}$. Mutating this vertex gives the function $\tcfr{n+j, n-j}{n+1}{n+1}$. However, note that

$$\tcfr{n+1+j, n+1-j}{n}{n}=\tcfr{n+j, n-j}{n+1}{n+1}$$
using the identities \eqref{dualities2}.
Using the identity for the cluster transformation we get that 
$$\tcfr{n+1+j, n+1-j}{n}{n}^2=\tcfr{n+1+j, n+1-j}{n}{n}\tcfr{n+j, n-j}{n+1}{n+1}$$
$$=\tcfr{n+1+j, n-j}{n+1}{n}\tcfr{n+j, n+1-j}{n}{n+1}+\tcfr{n+1+j, n-j}{n}{n+1}\tcfr{n+j, n+1-j}{n+1}{n}$$
$$=2\tcfr{n+1+j, n-j}{n+1}{n}\tcfr{n+j, n+1-j}{n}{n+1}$$
(In some cases we will get a degenerate version of this identity.)

Thus, although the functions $\tcfr{n+1+j, n+1-j}{n}{n}$ attached to $x_{2n+1-2j, j, j}$ do not get used as cluster functions on  $\Conf_3 \A_{Spin_{2n+1}}$, we can use the above identity to express them in terms cluster functions. This identity will become useful to us later.

The result of this is that we can find the cluster functions on $\Conf_3 \A_{Spin_{2n+1}}$ as follows: Start with the standard cluster algebra on $\Conf_3 \A_{SL_{2n+1}}$, which has functions attached to vertices $x_{ijk}$. Perform series of mutations given above, which is a subsequence of the cactus sequence. Discard the functions $\tcfr{n+1+j, n+1-j}{n}{n}$ attached to $x_{2n+1-2j, j, j}$. Identify the following pairs of vertices, which have functions which are equal when restricted to the image of $\Conf_3 \A_{Spin_{2n+1}}$: $x_{i,2n-i,0}$ and $x_{2n-i,i,0}$; $x_{i,0,2n-i}$ and $x_{2n-i,0,i}$; $x_{0,j,2n-j}$ and $x_{0,2n-j,j}$; $x_{ijk}$ and $x_{ikj}$ for $i,j,k >0$ and $j \neq k$. Now take the square-roots of the appropriate functions (those that correspond to $x_{i,n}$ or $y_n$). The only problem is that it is unclear to us how to read off the correct $B$-matrix/quiver. This phenomenon, which is reminiscent of folding of cluster algebras, deserves to be investigated more generally.

More invariantly, there is an outer automorphism of $SL_{2n+1}$ that has $SO_{2n+1}$ as its fixed locus. This gives an involution of $\Conf_3 \A_{SL_{2n+1}}$ (and more generally $\Conf_m \A_{SL_{2n+1}}$) that has $\Conf_3 \A_{SO_{2n+1}}$ (respectively, $\Conf_m \A_{SO_{2n+1}}$) as its fixed locus. It turns out that the cluster algebra structure on $\Conf_3 \A_{SL_{2n}}$ is preserved by this involution. Moreover, there is a particular seed, constructed above, that is {\em almost} preserved by this involution: only the functions attached to $x_{2n+1-2j, j, j}$ change.

It is also clarifying to step back and motivate the sequence of mutations realizing the cactus sequence above. As explained in section 3.2, the cluster structure on $\Conf_3 \A$ comes from a reduced word for the longest element $w_0$ in the Weyl group of $G$. The initial seed for $\Conf_3 \A_{SL_{2n+1}}$ is built using the reduced word 
$$s_1 s_2 \dots s_{2n} s_1 s_2 \dots s_{2n-1} \dots s_1 s_2 s_3 s_1 s_2 s_1.$$
Here $s_1, \dots, s_{2n}$ are the generators of the Weyl group for $\Conf_3 \A_{SL_{2n+1}}$. It is known how to use cluster transformations to pass between the clusters that are associated to different reduced words (\cite{BFZ}). The cactus sequence transforms between the cluster above and the cluster associated to the reduced word
$$s_{2n} s_{2n-1} \dots s_{1} s_{2n} s_{2n-1} \dots s_{2} \dots s_{2n} s_{2n-1} s_{2n-2} s_{2n} s_{2n-1} s_{2n}.$$

The subsequence given above transforms the initial cluster into the cluster associated with the reduced word
$$ (s_n s_{n+1} s_n s_{n-1} s_{n+2} s_{n-2} s_{n+3} \dots s_{1} s_{2n})^n.$$

Now let $s'_1, s'_2, \dots s'_n$ be the generators of the Weyl group of $\Conf_3 \A_{Sp_{2n}}$. There is an injection from the Weyl group of $\Conf_3 \A_{Spin_{2n+1}}$ to the Weyl group of $\Conf_3 \A_{SL_{2n+1}}$ that takes
$$s'_n \rightarrow s_n s_{n+1} s_n,$$
$$s'_i \rightarrow s_i s_{2n+1-i}.$$
Thus we see that the folding-like phenomenon that relates the cluster structures on $\Conf_3 \A_{Spin_{2n+1}}$ and $\Conf_3 \A_{SL_{2n+1}}$ has an incarnation on the level of Weyl groups.

\subsection{The sequence of mutations realizing $S_3$ symmetries and flips}

We have described six different cluster structures on $\Conf_3 \A_{Spin_{2n+1}}$. We would now like to give sequences of mutations relating these six clusters to show that they are actually all clusters in the same cluster algebra.

As in the case of $Sp_{2n}$, we will realize the $S_3$ symmetries on $\Conf_3 \A_{Spin_{2n+1}}$ by exhibiting sequences of mutations that realize two different transpositions in the group $S_3$. In fact, the sequences of mutations used to realize these transpositions is the same as for $Sp_{2n}$. This is a reflection of the Langlands duality between the seeds for $\Conf_3 \A_{Spin_{2n+1}}$ and $\Conf_3 \A_{Sp_{2n}}$. As before, the identities used in the mutation sequence of one of these transpositions are exactly those of the cactus sequence. The other transposition requires a different analysis.

\subsubsection{The first transposition}

Let $(A,B,C) \in \Conf_3 \A_{Spin_{2n+1}}$ be a triple of flags. The sequence of mutations that realizes that $S_3$ symmetry $(A,B,C) \rightarrow (A,C,B)$ is the same as \eqref{23Sp}:

\begin{equation}
\begin{gathered}
x_{11}, x_{21}, x_{22}, x_{12}, x_{13}, x_{23}, x_{33}, x_{32}, x_{33,} \dots, x_{1,n-1}, \dots, x_{n-1, n-1}, \dots, x_{n-1, 1}, \\
x_{11}, x_{21}, x_{22}, x_{12},  \dots x_{1,n-2}, \dots, x_{n-2, n-2}, \dots, x_{n-2, 1}, \\
\dots, \\
x_{11}, x_{21}, x_{22}, x_{12}, \\
x_{11} \\
\end{gathered}
\end{equation}

The sequence can be thought of as follows: At any step of the process, we mutate all $x_{ij}$ such that $\max(i,j)$ is constant. It will not matter in which order we mutate these $x_{ij}$ because the vertices we mutate have no arrows between them. So we first mutate the $x_{ij}$ such that $\max(i,j)=1$, then the $x_{ij}$ such that $\max(i,j)=2$, then the $x_{ij}$ such that $\max(i,j)=3$, etc. The sequence of maximums that we use is 
$$1, 2, 3, \dots n-1, 1, 2, \dots n-2 \dots, 1, 2, 3, 1, 2, 1.$$

The evolution of the quiver for $\Conf_3 \A_{Spin_{2n+1}}$ is just as in the case for $\Conf_3 \A_{Sp_{2n}}$, as pictured in Figures 13 and 14, with the only difference being that black and white vertices switch colors. 

In Figure 28, we depict how the quiver for $\Conf_3 \A_{Spin_9}$ changes after performing the sequence of mutations of $x_{ij}$ having maximums $1$; $1, 2$; and $1, 2, 3$.

\begin{center}
\begin{tikzpicture}[scale=2.2]
\begin{scope}[xshift=-1.5cm]

  \node (x01) at (1,0) {$\dud{3}{6}$};
  \node (x02) at (2,0) {$\tcfr{7}{}{2}$};
  \node (x03) at (3,0) {$\tcfr{6}{}{3}$};
  \node (x04) at (4,0) {$\sqrt{\tcfr{5}{}{4}}$};
  \node (x11) at (1,-1) {$\boldsymbol{\tcfr{2,8}{6}{2}}$};
  \node (x12) at (2,-1) {$\tcfr{3, 8}{5}{2}$};
  \node (x13) at (3,-1) {$\tcfr{3, 7}{5}{3}$};
  \node (x14) at (4,-1) {$\sqrt{\tcfr{3, 6}{5}{4}}$};
  \node (x21) at (1,-2) {$\tcfr{2}{6}{1}$};
  \node (x22) at (2,-2) {$\tcfr{2}{5}{2}$};
  \node (x23) at (3,-2) {$\tcfr{2, 8}{5}{3}$};
  \node (x24) at (4,-2) {$\sqrt{\tcfr{2, 7}{5}{4}}$};
  \node (x31) at (1,-3) {$\tcfr{1}{7}{1}$};
  \node (x32) at (2,-3) {$\tcfr{1}{6}{2}$};
  \node (x33) at (3,-3) {$\tcfr{1}{5}{3}$};
  \node (x34) at (4,-3) {$\sqrt{\tcfr{1, 8}{5}{4}}$};
  \node (x41) at (1,-4) {$\tcfr{}{8}{1}$};
  \node (x42) at (2,-4) {$\tcfr{}{7}{2}$};
  \node (x43) at (3,-4) {$\tcfr{}{6}{3}$};
  \node (x44) at (4,-4) {$\sqrt{\tcfr{}{5}{4}}$};
  \node (y1) at (0,-3) {$\dud{1}{8}$};
  \node (y2) at (0,-2) {$\dud{2}{7}$};
  \node (y3) at (0,-1) {$\tcfr{8}{}{1}$};
  \node (y4) at (5,-1) {$\sqrt{\dud{4}{5}}$};

  \draw [->] (x41) to (x31);
  \draw [->] (x31) to (x21);
  \draw [->] (x11) to (x21);
  \draw [->] (x01) to (x11);
  \draw [->] (x42) to (x32);
  \draw [->] (x32) to (x22);
  \draw [->] (x12) to (x02);
  \draw [->] (x43) to (x33);
  \draw [->] (x33) to (x23);
  \draw [->] (x23) to (x13);
  \draw [->] (x13) to (x03);
  \draw [->] (x44) to (x34);
  \draw [->] (x34) to (x24);
  \draw [->] (x24) to (x14);
  \draw [->] (x14) to (x04);
  \draw [->, dashed] (x01) .. controls +(45:1) and +(up:2) .. (y4);
  \draw [->, dashed] (y2) .. controls +(135:1) and +(left:2) .. (x01);
  \draw [->, dashed] (y1) to (y2);

  \draw [->, dashed] (x04) to (x03);
  \draw [->, dashed] (x03) to (x02);
  \draw [->, dashed] (x02) .. controls +(135:1) and +(up:1.5) .. (y3);
  \draw [->] (y4) to (x14);
  \draw [->] (x14) to (x13);
  \draw [->] (x13) to (x12);
  \draw [->] (x11) to (x12);
  \draw [->] (y3) to (x11);
  \draw [->] (x24) to (x23);
  \draw [->] (x23) to (x22);
  \draw [->] (x21) to (y2);
  \draw [->] (x34) to (x33);
  \draw [->] (x33) to (x32);
  \draw [->] (x32) to (x31);
  \draw [->] (x31) to (y1);
  \draw [->, dashed] (x44) to (x43);
  \draw [->, dashed] (x43) to (x42);
  \draw [->, dashed] (x42) to (x41);

  \draw [->] (x12) to (x01);
  \draw [->] (x02) to (x13);
  \draw [->] (x03) to (x14);
  \draw [->] (x04) to (y4);
  \draw [->] (x21) to (y3);
  \draw [->] (x22) to (x11);
  \draw [->] (x12) to (x23);
  \draw [->] (x13) to (x24);
  \draw [->] (y2) to (x31);
  \draw [->] (x21) to (x32);
  \draw [->] (x22) to (x33);
  \draw [->] (x23) to (x34);
  \draw [->] (y1) to (x41);
  \draw [->] (x31) to (x42);
  \draw [->] (x32) to (x43);
  \draw [->] (x33) to (x44);

\end{scope}

\draw[yshift=-5cm,xshift=1cm]
  node[below,text width=6cm] 
  {
  Figure 28a The quiver and the functions for $\Conf_3 \A_{Spin_{9}}$ after performing the sequence of mutations of $x_{ij}$ having maximums $1$.
  };

\end{tikzpicture}
\end{center}

\begin{center}
\begin{tikzpicture}[scale=2.2]

\begin{scope}[xshift=-1.5cm]

  \node (x01) at (1,0) {$\tcfr{7}{}{2}$};
  \node (x02) at (2,0) {$\dud{3}{6}$};
  \node (x03) at (3,0) {$\tcfr{6}{}{3}$};
  \node (x04) at (4,0) {$\sqrt{\tcfr{5}{}{4}}$};
  \node (x11) at (1,-1) {${\tcfr{2,8}{6}{2}}$};
  \node (x12) at (2,-1) {$\boldsymbol{\tcfr{2, 7}{6}{3}}$};
  \node (x13) at (3,-1) {$\tcfr{3, 7}{5}{3}$};
  \node (x14) at (4,-1) {$\sqrt{\tcfr{3, 6}{5}{4}}$};
  \node (x21) at (1,-2) {$\boldsymbol{\tcfr{1,8}{7}{2}}$};
  \node (x22) at (2,-2) {$\boldsymbol{\tcfr{1,8}{6}{3}}$};
  \node (x23) at (3,-2) {$\tcfr{2, 8}{5}{3}$};
  \node (x24) at (4,-2) {$\sqrt{\tcfr{2, 7}{5}{4}}$};
  \node (x31) at (1,-3) {$\tcfr{1}{7}{1}$};
  \node (x32) at (2,-3) {$\tcfr{1}{6}{2}$};
  \node (x33) at (3,-3) {$\tcfr{1}{5}{3}$};
  \node (x34) at (4,-3) {$\sqrt{\tcfr{1, 8}{5}{4}}$};
  \node (x41) at (1,-4) {$\tcfr{}{8}{1}$};
  \node (x42) at (2,-4) {$\tcfr{}{7}{2}$};
  \node (x43) at (3,-4) {$\tcfr{}{6}{3}$};
  \node (x44) at (4,-4) {$\sqrt{\tcfr{}{5}{4}}$};
  \node (y1) at (0,-3) {$\dud{1}{8}$};
  \node (y2) at (0,-2) {$\tcfr{8}{}{1}$};
  \node (y3) at (0,-1) {$\dud{2}{7}$};
  \node (y4) at (5,-1) {$\sqrt{\dud{4}{5}}$};

  \draw [->] (x41) to (x31);
  \draw [->] (x21) to (x31);
  \draw [->] (x21) to (x11);
  \draw [->] (x11) to (x01);
  \draw [->] (x42) to (x32);
  \draw [->] (x22) to (x32);
  \draw [->] (x02) to (x12);
  \draw [->] (x43) to (x33);
  \draw [->] (x13) to (x03);
  \draw [->] (x44) to (x34);
  \draw [->] (x34) to (x24);
  \draw [->] (x24) to (x14);
  \draw [->] (x14) to (x04);
  \draw [->, dashed] (x02) .. controls +(up:1) and +(up:2) .. (y4);
  \draw [->, dashed] (y3) .. controls +(up:2) and +(up:1) .. (x02);
  \draw [->, dashed] (y1) .. controls +(left:1) and +(left:1) .. (y3);

  \draw [->, dashed] (x04) to (x03);
  \draw [->, dashed] (x03) .. controls +(up:1) and +(up:1) .. (x01);
  \draw [->, dashed] (x01)  .. controls +(left:2) and +(left:1) .. (y2);
  \draw [->] (y4) to (x14);
  \draw [->] (x14) to (x13);
  \draw [->] (x12) to (x13);
  \draw [->] (x12) to (x11);
  \draw [->] (x11) to (y3);
  \draw [->] (x24) to (x23);
  \draw [->] (x22) to (x23);
  \draw [->] (y2) to (x21);
  \draw [->] (x34) to (x33);
  \draw [->] (x31) to (y1);
  \draw [->, dashed] (x44) to (x43);
  \draw [->, dashed] (x43) to (x42);
  \draw [->, dashed] (x42) to (x41);

  \draw [->] (x01) to (x12);
  \draw [->] (x13) to (x02);
  \draw [->] (x03) to (x14);
  \draw [->] (x04) to (y4);
  \draw [->] (y3) to (x21);
  \draw [->] (x11) to (x22);
  \draw [->] (x23) to (x12);
  \draw [->] (x13) to (x24);
  \draw [->] (x31) to (y2);
  \draw [->] (x32) to (x21);
  \draw [->] (x33) to (x22);
  \draw [->] (x23) to (x34);
  \draw [->] (y1) to (x41);
  \draw [->] (x31) to (x42);
  \draw [->] (x32) to (x43);
  \draw [->] (x33) to (x44);

\end{scope}

\draw[yshift=-5cm,xshift=1cm]
  node[below,text width=6cm] 
  {
  Figure 28b The quiver and the functions for $\Conf_3 \A_{Spin_{9}}$ after performing the sequence of mutations of $x_{ij}$ having maximums $1, 2$.
  };

\end{tikzpicture}
\end{center}

\begin{center}
\begin{tikzpicture}[scale=2.2]

\begin{scope}[xshift=-1.5cm]

  \node (x01) at (1,0) {$\tcfr{7}{}{2}$};
  \node (x02) at (2,0) {$\tcfr{6}{}{3}$};
  \node (x03) at (3,0) {$\dud{3}{6}$};
  \node (x04) at (4,0) {$\sqrt{\dud{4}{5}}$};
  \node (x11) at (1,-1) {${\tcfr{2,8}{6}{2}}$};
  \node (x12) at (2,-1) {${\tcfr{2, 7}{6}{3}}$};
  \node (x13) at (3,-1) {$\boldsymbol{\tcfr{2, 6}{6}{4}}$};
  \node (x14) at (4,-1) {$\sqrt{\tcfr{3, 6}{5}{4}}$};
  \node (x21) at (1,-2) {${\tcfr{1,8}{7}{2}}$};
  \node (x22) at (2,-2) {${\tcfr{1,8}{6}{3}}$};
  \node (x23) at (3,-2) {$\boldsymbol{\tcfr{1,7}{6}{4}}$};
  \node (x24) at (4,-2) {$\sqrt{\tcfr{2, 7}{5}{4}}$};
  \node (x31) at (1,-3) {$\boldsymbol{\tcfr{8}{8}{2}}$};
  \node (x32) at (2,-3) {$\boldsymbol{\tcfr{8}{7}{3}}$};
  \node (x33) at (3,-3) {$\boldsymbol{\tcfr{8}{6}{4}}$};
  \node (x34) at (4,-3) {$\sqrt{\tcfr{1, 8}{5}{4}}$};
  \node (x41) at (1,-4) {$\tcfr{}{8}{1}$};
  \node (x42) at (2,-4) {$\tcfr{}{7}{2}$};
  \node (x43) at (3,-4) {$\tcfr{}{6}{3}$};
  \node (x44) at (4,-4) {$\sqrt{\tcfr{}{5}{4}}$};
  \node (y1) at (0,-3) {$\tcfr{8}{}{1}$};
  \node (y2) at (0,-2) {$\dud{1}{8}$};
  \node (y3) at (0,-1) {$\dud{2}{7}$};
  \node (y4) at (5,-1) {$\sqrt{\tcfr{5}{}{4}}$};

  \draw [->] (x31) to (x41);
  \draw [->] (x31) to (x21);
  \draw [->] (x21) to (x11);
  \draw [->] (x11) to (x01);
  \draw [->] (x32) to (x42);
  \draw [->] (x32) to (x22);
  \draw [->] (x22) to (x12);
  \draw [->] (x12) to (x02);
  \draw [->] (x33) to (x43);
  \draw [->] (x03) to (x13);
  \draw [->] (x34) to (x44);
  \draw [->] (x24) to (x34);
  \draw [->] (x14) to (x24);
  \draw [->] (x04) to (x14);
  \draw [->, dashed] (x03) to (x04);
  \draw [->, dashed] (y3) .. controls +(up:2) and +(up:1) .. (x03);
  \draw [->, dashed] (y2) to (y3);

  \draw [->, dashed] (y4) .. controls +(up:2) and +(up:1) .. (x02);
  \draw [->, dashed] (x02) to (x01);
  \draw [->, dashed] (x01) .. controls +(left:2) and +(left:1) .. (y1);
  \draw [->] (x14) to (y4);
  \draw [->] (x13) to (x14);
  \draw [->] (x13) to (x12);
  \draw [->] (x12) to (x11);
  \draw [->] (x11) to (y3);
  \draw [->] (x23) to (x24);
  \draw [->] (x23) to (x22);
  \draw [->] (x22) to (x21);
  \draw [->] (x21) to (y2);
  \draw [->] (x33) to (x34);
  \draw [->] (y1) to (x31);
  \draw [->, dashed] (x43) to (x44);
  \draw [->, dashed] (x42) to (x43);
  \draw [->, dashed] (x41) to (x42);

  \draw [->] (x01) to (x12);
  \draw [->] (x02) to (x13);
  \draw [->] (x14) to (x03);
  \draw [->] (x04) to (y4);
  \draw [->] (y3) to (x21);
  \draw [->] (x11) to (x22);
  \draw [->] (x12) to (x23);
  \draw [->] (x24) to (x13);
  \draw [->] (y2) to (x31);
  \draw [->] (x21) to (x32);
  \draw [->] (x22) to (x33);
  \draw [->] (x34) to (x23);
  \draw [->] (x41) to (y1);
  \draw [->] (x42) to (x31);
  \draw [->] (x43) to (x32);
  \draw [->] (x44) to (x33);

\end{scope}

\draw[yshift=-5cm,xshift=1cm]
  node[below,text width=6cm] 
  {
  Figure 28c The quiver and the functions for $\Conf_3 \A_{Spin_{9}}$ after performing the sequence of mutations of $x_{ij}$ having maximums $1,  2, 3$.
  };

\end{tikzpicture}
\end{center}

In Figure 29, we depict the state of the quiver after performing the sequence of mutations of $x_{ij}$ having maximums $1, 2, 3$; $1, 2, 3, 1, 2$; and $1, 2, 3, 1, 2, 1$.

\begin{center}
\begin{tikzpicture}[scale=2.2]

\begin{scope}[xshift=-1.5cm]

  \node (x01) at (1,0) {$\tcfr{6}{}{3}$};
  \node (x02) at (2,0) {$\dud{2}{7}$};
  \node (x03) at (3,0) {$\dud{3}{6}$};
  \node (x04) at (4,0) {$\sqrt{\dud{4}{5}}$};
  \node (x11) at (1,-1) {$\boldsymbol{\tcfr{1,7}{7}{3}}$};
  \node (x12) at (2,-1) {$\boldsymbol{\tcfr{1,6}{7}{4}}$};
  \node (x13) at (3,-1) {${\tcfr{2, 6}{6}{4}}$};
  \node (x14) at (4,-1) {$\sqrt{\tcfr{3, 6}{5}{4}}$};
  \node (x21) at (1,-2) {$\boldsymbol{\tcfr{7}{8}{3}}$};
  \node (x22) at (2,-2) {$\boldsymbol{\tcfr{7}{7}{4}}$};
  \node (x23) at (3,-2) {${\tcfr{1,7}{6}{4}}$};
  \node (x24) at (4,-2) {$\sqrt{\tcfr{2, 7}{5}{4}}$};
  \node (x31) at (1,-3) {${\tcfr{8}{8}{2}}$};
  \node (x32) at (2,-3) {${\tcfr{8}{7}{3}}$};
  \node (x33) at (3,-3) {${\tcfr{8}{6}{4}}$};
  \node (x34) at (4,-3) {$\sqrt{\tcfr{1, 8}{5}{4}}$};
  \node (x41) at (1,-4) {$\tcfr{}{8}{1}$};
  \node (x42) at (2,-4) {$\tcfr{}{7}{2}$};
  \node (x43) at (3,-4) {$\tcfr{}{6}{3}$};
  \node (x44) at (4,-4) {$\sqrt{\tcfr{}{5}{4}}$};
  \node (y1) at (0,-3) {$\tcfr{8}{}{1}$};
  \node (y2) at (0,-2) {$\tcfr{7}{}{2}$};
  \node (y3) at (0,-1) {$\dud{1}{8}$};
  \node (y4) at (5,-1) {$\sqrt{\tcfr{5}{}{4}}$};

  \draw [->] (x11) to (x01);
  \draw [->] (x21) to (x11);
  \draw [->] (x21) to (x31);
  \draw [->] (x31) to (x41);
  \draw [->] (x02) to (x12);
  \draw [->] (x22) to (x32);
  \draw [->] (x32) to (x42);
  \draw [->] (x03) to (x13);
  \draw [->] (x13) to (x23);
  \draw [->] (x23) to (x33);
  \draw [->] (x33) to (x43);
  \draw [->] (x04) to (x14);
  \draw [->] (x14) to (x24);
  \draw [->] (x24) to (x34);
  \draw [->] (x34) to (x44);
  \draw [->, dashed] (y4) .. controls +(up:2) and +(up:1) .. (x01);
  \draw [->, dashed] (x01) .. controls +(left:2) and +(left:1) .. (y2);
  \draw [->, dashed] (y2) to (y1);

  \draw [->, dashed] (y3) .. controls +(up:2) and +(up:1) .. (x02);
  \draw [->, dashed] (x02) to (x03);
  \draw [->, dashed] (x03) to (x04);
  \draw [->] (x11) to (y3);
  \draw [->] (x12) to (x11);
  \draw [->] (x12) to (x13);
  \draw [->] (x13) to (x14);
  \draw [->] (x14) to (y4);
  \draw [->] (y2) to (x21);
  \draw [->] (x22) to (x23);
  \draw [->] (x23) to (x24);
  \draw [->] (y1) to (x31);
  \draw [->] (x31) to (x32);
  \draw [->] (x32) to (x33);
  \draw [->] (x33) to (x34);
  \draw [->, dashed] (x41) to (x42);
  \draw [->, dashed] (x42) to (x43);
  \draw [->, dashed] (x43) to (x44);

  \draw [->] (x01) to (x12);
  \draw [->] (x13) to (x02);
  \draw [->] (x14) to (x03);
  \draw [->] (y4) to (x04);
  \draw [->] (y3) to (x21);
  \draw [->] (x11) to (x22);
  \draw [->] (x23) to (x12);
  \draw [->] (x24) to (x13);
  \draw [->] (x31) to (y2);
  \draw [->] (x32) to (x21);
  \draw [->] (x33) to (x22);
  \draw [->] (x34) to (x23);
  \draw [->] (x41) to (y1);
  \draw [->] (x42) to (x31);
  \draw [->] (x43) to (x32);
  \draw [->] (x44) to (x33);

\end{scope}

\draw[yshift=-5cm,xshift=1cm]
  node[below,text width=6cm] 
  {
  Figure 29a The quiver and the functions for $\Conf_3 \A_{Spin_{9}}$ after performing the sequence of mutations of $x_{ij}$ having maximums $1,  2, 3, 1, 2$.
  };

\end{tikzpicture}
\end{center}

\begin{center}
\begin{tikzpicture}[scale=2.2]

\begin{scope}[xshift=-1.5cm]

  \node (x01) at (1,0) {$\dud{1}{8}$};
  \node (x02) at (2,0) {$\dud{2}{7}$};
  \node (x03) at (3,0) {$\dud{3}{6}$};
  \node (x04) at (4,0) {$\sqrt{\dud{4}{5}}$};
  \node (x11) at (1,-1) {$\boldsymbol{\tcfr{6}{8}{4}}$};
  \node (x12) at (2,-1) {${\tcfr{1,6}{7}{4}}$};
  \node (x13) at (3,-1) {${\tcfr{2, 6}{6}{4}}$};
  \node (x14) at (4,-1) {$\sqrt{\tcfr{3, 6}{5}{4}}$};
  \node (x21) at (1,-2) {${\tcfr{7}{8}{3}}$};
  \node (x22) at (2,-2) {${\tcfr{7}{7}{4}}$};
  \node (x23) at (3,-2) {${\tcfr{1,7}{6}{4}}$};
  \node (x24) at (4,-2) {$\sqrt{\tcfr{2, 7}{5}{4}}$};
  \node (x31) at (1,-3) {${\tcfr{8}{8}{2}}$};
  \node (x32) at (2,-3) {${\tcfr{8}{7}{3}}$};
  \node (x33) at (3,-3) {${\tcfr{8}{6}{4}}$};
  \node (x34) at (4,-3) {$\sqrt{\tcfr{1, 8}{5}{4}}$};
  \node (x41) at (1,-4) {$\tcfr{}{8}{1}$};
  \node (x42) at (2,-4) {$\tcfr{}{7}{2}$};
  \node (x43) at (3,-4) {$\tcfr{}{6}{3}$};
  \node (x44) at (4,-4) {$\sqrt{\tcfr{}{5}{4}}$};
  \node (y1) at (0,-3) {$\tcfr{8}{}{1}$};
  \node (y2) at (0,-2) {$\tcfr{7}{}{2}$};
  \node (y3) at (0,-1) {$\tcfr{6}{}{3}$};
  \node (y4) at (5,-1) {$\sqrt{\tcfr{5}{}{4}}$};

  \draw [->] (x01) to (x11);
  \draw [->] (x11) to (x21);
  \draw [->] (x21) to (x31);
  \draw [->] (x31) to (x41);
  \draw [->] (x02) to (x12);
  \draw [->] (x12) to (x22);
  \draw [->] (x22) to (x32);
  \draw [->] (x32) to (x42);
  \draw [->] (x03) to (x13);
  \draw [->] (x13) to (x23);
  \draw [->] (x23) to (x33);
  \draw [->] (x33) to (x43);
  \draw [->] (x04) to (x14);
  \draw [->] (x14) to (x24);
  \draw [->] (x24) to (x34);
  \draw [->] (x34) to (x44);
  \draw [->, dashed] (y4) .. controls +(up:2) and +(up:2) .. (y3);
  \draw [->, dashed] (y3) to (y2);
  \draw [->, dashed] (y2) to (y1);

  \draw [->, dashed] (x01) to (x02);
  \draw [->, dashed] (x02) to (x03);
  \draw [->, dashed] (x03) to (x04);
  \draw [->] (y3) to (x11);
  \draw [->] (x11) to (x12);
  \draw [->] (x12) to (x13);
  \draw [->] (x13) to (x14);
  \draw [->] (x14) to (y4);
  \draw [->] (y2) to (x21);
  \draw [->] (x21) to (x22);
  \draw [->] (x22) to (x23);
  \draw [->] (x23) to (x24);
  \draw [->] (y1) to (x31);
  \draw [->] (x31) to (x32);
  \draw [->] (x32) to (x33);
  \draw [->] (x33) to (x34);
  \draw [->, dashed] (x41) to (x42);
  \draw [->, dashed] (x42) to (x43);
  \draw [->, dashed] (x43) to (x44);

  \draw [->] (x12) to (x01);
  \draw [->] (x13) to (x02);
  \draw [->] (x14) to (x03);
  \draw [->] (y4) to (x04);
  \draw [->] (x21) to (y3);
  \draw [->] (x22) to (x11);
  \draw [->] (x23) to (x12);
  \draw [->] (x24) to (x13);
  \draw [->] (x31) to (y2);
  \draw [->] (x32) to (x21);
  \draw [->] (x33) to (x22);
  \draw [->] (x34) to (x23);
  \draw [->] (x41) to (y1);
  \draw [->] (x42) to (x31);
  \draw [->] (x43) to (x32);
  \draw [->] (x44) to (x33);

\end{scope}

\draw[yshift=-5cm,xshift=1cm]
  node[below,text width=6cm] 
  {
  Figure 29b The quiver and the functions for $\Conf_3 \A_{Spin_{9}}$ after performing the sequence of mutations of $x_{ij}$ having maximums $1, 2, 3, 1, 2, 1$.
  };

\end{tikzpicture}
\end{center}

From these diagrams the various quivers in the general case of $\Conf_3 \A_{Spin_{2n+1}}$ should be clear.

\begin{theorem}
If $\max(i,j)=k$, then $x_{ij}$ is mutated a total of $n-k$ times. Recall that when $i \geq j$, we assign the function $\tcfr{n-i}{n+1+i-j}{j}$ to $x_{ij}$. Thus the function attached to $x_{ij}$ transforms as follows:
$$\tcfr{n-i}{n+1+i-j}{j} \rightarrow \tcfr{2n, n-i-1}{n+i-j+2}{j+1} \rightarrow \tcfr{2n-1, n-i-2}{n+i-j+3}{j+2} \rightarrow \dots$$ 
$$\rightarrow \tcfr{n+i+2, 1}{2n-j}{n-i+j-1} \rightarrow \tcfr{n+i+1}{2n+1-j}{n-i+j}=\tcfr{n-i}{j}{n+1+i-j}$$

When $i < j$ and $i \neq 0$, we assign the function $\tcfr{n-i, 2n+1+i-j}{n+1}{j}$ to $x_{ij}$. Thus the function attached to $x_{ij}$ transforms as follows:
$$\tcfr{n-i, 2n+1+i-j}{n+1}{j} \rightarrow \tcfr{n-i-1, 2n+i-j}{n+2}{j+1} \rightarrow \tcfr{n-i-2, 2n+i-j-1}{n+3}{j+2} \rightarrow \dots$$
$$\rightarrow \tcfr{j-i+1, n+i+2}{2n-j}{n-1} \rightarrow \tcfr{j-i, n+i+1}{2n+1-j}{n}=\tcfr{n-i, 2n+1+i-j}{j}{n+1}$$

\end{theorem}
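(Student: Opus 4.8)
The statement to be proved is the $Spin_{2n+1}$ analogue of Theorem stated for $Sp_{2n}$ earlier (the ``first transposition'' theorem), namely that the sequence of mutations \eqref{23Sp} realizes the $S_3$ symmetry $(A,B,C)\mapsto(A,C,B)$ on $\Conf_3\A_{Spin_{2n+1}}$, with the cluster variables transforming as claimed. The approach mirrors the one used in the $Sp_{2n}$ case, and the key new input is the Langlands-duality principle already recorded in the last observation of Section 4.2. The plan is to first verify the combinatorial claim about the quiver---that after performing the mutations at all $x_{ij}$ with $\max(i,j)=k$ in the prescribed order, the quiver evolves exactly as in the $Sp_{2n}$ case but with black and white vertices exchanged---and then to verify that the functions attached to the vertices transform as stated by checking the exchange relations at each mutation.

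For the quiver combinatorics: the quivers for $\Conf_3\A_{Spin_{2n+1}}$ and $\Conf_3\A_{Sp_{2n}}$ are identical as directed multigraphs except for the coloring of the vertices (this was observed in Section 4.2, and the multipliers $d_i$ are swapped). Since the mutation rule for the underlying exchange matrix depends only on the $b_{ij}$ and not on the $d_i$, and since the sequence \eqref{23Sp} is literally the same sequence of vertices in both cases, the evolution of the quiver is forced to be identical to that computed in the $Sp_{2n}$ case (Figures 13, 14), with colors flipped throughout. So this step reduces to the corresponding step in the earlier theorem, which we may assume; I would state this and refer to Figures 28, 29 for the explicit small-rank pictures.

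For the functions: I would verify the exchange relations one mutation at a time, stratified by $\max(i,j)$. The key point is that, by the Langlands-duality observation, the function attached to each vertex at each stage is obtained from the corresponding $Sp_{2n}$ function by the explicit substitution recipe (replace $a\mapsto a$ and $2n-a\mapsto 2n+1-a$ for $1\le a\le n-1$, replace $n$ by $n$ or $n+1$ according to context, and take a square root at black vertices). Therefore the identities to be checked are the ``square roots'' of, or minor variants of, the identities already established for the cactus sequence and the first $Sp_{2n}$ transposition (Figure 11 and the list of identities in the proof of the $Sp_{2n}$ first-transposition theorem). Concretely: the generic mutation is again the octahedron-type relation $$\tcfr{a,b}{c}{d}\tcfr{a-1,b-1}{c+1}{d+1}=\tcfr{a-1,b}{c+1}{d}\tcfr{a,b-1}{c}{d+1}+\tcfr{a-1,b}{c}{d+1}\tcfr{a,b-1}{c+1}{d},$$ now read with $N=2n+1$ and with the understanding that the functions at white vertices $x_{in}$, $y_n$ are square roots; the degenerate versions use the factorization identities $\tcfr{a,b}{c}{d}=\dud{a}{c}\tcfr{b}{}{d}$ etc. The white vertices carry an extra subtlety: when mutating at a black vertex adjacent to white vertices the incoming/outgoing arrows to white vertices enter the exchange relation, and one must check that the square-root normalizations are consistent---this uses the same ``$f^2$ is a pullback from $\Conf_3\A_{SL_{2n+1}}$'' device from the start of Section 4, squaring the relevant exchange relation to reduce it to an already-verified $SL_{2n+1}$ identity, then taking the positive square root.

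\textbf{Main obstacle.} The genuinely new difficulty, compared to $Sp_{2n}$, is bookkeeping the square roots: I need to confirm that at every stage the functions at the white vertices really are honest functions on $\Conf_3\A_{Spin_{2n+1}}$ (i.e. that the relevant pulled-back $SL_{2n+1}$ functions are perfect squares there), and that the branch chosen by the positive structure is compatible throughout the mutation sequence. The cleanest way to handle this is to carry out the entire computation for the squares of the white-vertex functions and the first powers of the black-vertex functions simultaneously---exactly the convention introduced in Section 4.1---so that every identity invoked is one of the already-proven $SL_N$ or $Sp_{2n}$ identities, with $N=2n+1$; the positivity of the square roots then follows from the fact (also from Section 4) that a positive function with a positive square root has a unique positive square root, which is automatically preserved under subtraction-free cluster mutation. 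Apart from this, the proof is ``completely parallel'' to the $Sp_{2n}$ case, and I would say so explicitly and not reproduce the routine identity-chasing.
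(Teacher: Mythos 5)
Your proposal is correct and follows essentially the same route as the paper's proof: the quiver evolution is forced by the $Sp_{2n}$ computation because mutation of the exchange matrix does not involve the $d_i$, the square-root vertices $x_{in},y_n$ are never mutated by the sequence \eqref{23Sp}, and every exchange relation then reduces to one of the $SL_{N}$ identities ($N=2n+1$) already established for the cactus sequence. One caveat on your handling of the square roots: ``squaring the relevant exchange relation'' is not a valid mechanism, since squaring $AA'=M_1+M_2$ produces a cross term; what actually makes the reduction work --- and what the paper invokes --- is that the exchange-matrix entries joining a mutated vertex to a square-root vertex are $\pm 2$ (this is the role of the multipliers $d_i$), so the square-root functions enter every exchange monomial with even exponent and only their squares, which are honest pullbacks from $\Conf_3\A_{SL_{2n+1}}$, ever appear. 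Your alternative device of carrying the squares of the square-root functions through the whole computation is exactly the paper's convention once this exponent-$2$ fact is recorded, and with it your argument closes up.
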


\begin{proof}

The proof is identical to the case where $G=Sp_{2n}$. Note that we do note mutate any of the vertices that have square-roots. The functions with square-roots are involved in the mutations, as we mutate vertices adjacent to them; however, because of the values of the multipliers $d_i$, the mutation of a white vertex involves only squares of the functions attached to black vertices. Therefore, we see that the identities we need to use are exactly those appearing in the cactus sequence.

\end{proof}

The above sequence of mutations takes us from one seed for the cluster algebra structure on $\Conf_3 \A_{Spin_{2n+1}}$ to another seed where the roles of the second and third principal flags have been reversed. Thus, we have realized the first of the transpositions necessary to construct all the $S_3$ symmetries of $\Conf_3 \A_{Spin_{2n+1}}$.

\subsubsection{The second transposition}

Let us now give the sequence of mutations that realizes that $S_3$ symmetry $(A,B,C) \rightarrow (C,B,A).$

The sequence of mutations is as in the case of $Sp_{2n}$, \eqref{13Sp}:
\begin{equation}
\begin{gathered}
x_{n-1,n}, x_{n-2,n-1}, x_{n-2,n}, x_{n-3,n-2}, x_{n-3,n-1}, x_{n-3,n}, \dots, x_{1,2}, \dots, x_{1,n}, \\
x_{n-1,n}, x_{n-2,n-1}, x_{n-2,n}, x_{n-3,n-2}, x_{n-3,n-1}, x_{n-3,n}, \dots, x_{2,3}, \dots, x_{2,n}, \\
x_{n-1,n}, x_{n-2,n-1}, x_{n-2,n}, x_{n-3,n-2}, x_{n-3,n-1}, x_{n-3,n}, \dots, x_{3,4}, \dots, x_{3,n}, \\
\dots
x_{n-1,n}, x_{n-2,n-1}, x_{n-2,n}, x_{n-3,n-2}, x_{n-3,n-1}, x_{n-3,n}, \\
x_{n-1,n}, x_{n-2,n-1}, x_{n-2,n} \\
x_{n-1,n} \\
\end{gathered}
\end{equation}

The sequence can be thought of as follows: We only mutate those $x_{ij}$ with $i < j$. At any step of the process, we mutate all $x_{ij}$ in the $k^{\textrm th}$ row (the $k^{\textrm th}$ row consists of $x_{ij}$ such that $i=k$) such that $i < j$. It will not matter in which order we mutate these $x_{ij}$. The sequence of rows that we mutate is
$$n-1, n-2 \dots, 2,1 n-1, n-2, \dots, 2, n, \dots, 3, \dots, n-1, n-2, n-1.$$ 

As in the previous transposition, the evolution of the quiver for $\Conf_3 \A_{Spin_{2n+1}}$ is just as in the case for $\Conf_3 \A_{Sp_{2n}}$, as pictured in Figures 15 and 16, with the only difference being that black and white vertices switch colors. 

In Figure 30, we depict how the quiver for $\Conf_3 \A_{Spin_{11}}$ changes after performing the sequence of mutations of $x_{ij}$ in rows $4$; $4,3$; $4,3,2$; and $4,3,2,1$.

\begin{center}

\end{center}

The circles on several of the arrows are a bookkeeping device that tell us how to lift a function from $\Conf_3 \A_{Spin_{11}}$ to $\Conf_3 \A_{SL_{11}}$ in the way that is most convenient for our computations. From these diagrams the various quivers in the general case of $\Conf_3 \A_{Spin_{2n+1}}$ should be clear. 

Recall the functions defined via Figures 17. They will appear when we perform the sequence of mutations above. In particular, we make use of functions of the form
$$\tcfr{n-i, n+1+i}{n, n+1}{n-j, n+1+j}.$$
These functions are invariants (unique up to scale) of the tensor product
$$[V_{2\omega_{n-i}} \otimes V_{4\omega_{n}} \otimes V_{2\omega_{n-j}}]^{Spin_{2n+1}}.$$
These functions will have square-roots which are invariants (again, unique up to scale) of the tensor product
$$[V_{\omega_{n-i}} \otimes V_{2\omega_{n}} \otimes V_{\omega_{n-j}}]^{Spin_{2n+1}}.$$
Thus
$$\sqrt{\tcfr{n-i, n+1+i}{n, n+1}{n-j, n+1+j}}$$
is a well-defined function on $\Conf_3 \A_{Spin_{2n+1}}$.

Note that in the above sequence of mutations, $x_{ij}$ is mutated $i$ times if $i<j$. We can now state the main theorem of this section.

\begin{theorem}
If $i < j$, then $x_{ij}$ is mutated a total of $i$ times. Recall that when $i < j$, we assign the either the function $\tcfr{n-i, 2n+1+i-j}{n}{j}$ or its square-root to $x_{ij}$ depending on whether $j < n$ or $j=n$. For $j < n$, the function attached to $x_{ij}$ transforms as follows:
$$\tcfr{n-i, 2n+1+i-j}{n+1}{j} \rightarrow \tcfr{n-i+1, 2n+i-j}{n, n+1}{j-1, n+2} \rightarrow $$
$$\tcfr{n-i+2, 2n+i-j-1}{n, n+1}{j-2, n+3} \rightarrow \dots \rightarrow \tcfr{n-1, 2n+2-j}{n, n+1}{j-i+1, n+i} $$
$$\rightarrow \tcfr{2n+1-j}{n}{j-i, n+i+1}=\tcfr{j}{n+1}{n-i, 2n+1+i-j}$$

The first transformation can be seen as the composite of two steps,
$$\tcfr{n-i, 2n+1+i-j}{n+1}{j} \rightarrow \tcfr{n-i, 2n+1+i-j}{n, n+1}{j, n+1} $$
$$\rightarrow \tcfr{n-i+1, 2n+i-j}{n, n+1}{j-1, n+2},$$
while the last transformation can also be seen as the composite of two steps, 
$$\tcfr{n-1, 2n+2-j}{n, n+1}{j-i+1, n+i} \rightarrow \tcfr{n, 2n+1-j}{n, n+1}{j-i, n+i+1} $$
$$\rightarrow \tcfr{2n+1-j}{n}{j-i, n+i+1}.$$
Then with each transformation, two of the parameters increase by one, and two decrease by one.

For $j=n$, we use the same formulas, but take the square roots of all the resulting functions.

\end{theorem}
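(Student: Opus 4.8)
The plan is to follow exactly the template established in the proof of the corresponding $Sp_{2n}$ theorem (the ``second transposition'' for $\Conf_3\A_{Sp_{2n}}$), invoking the Langlands-duality transfer principle articulated in the Observation of Section 4.2. That principle says that the formulas for cluster variables appearing at each stage of the $\Conf_3\A_{Spin_{2n+1}}$ mutation sequence are obtained from the $\Conf_3\A_{Sp_{2n}}$ formulas by the substitution $a\mapsto a$, $2n-a\mapsto 2n+1-a$ for $1\le a\le n-1$, with each occurrence of ``$n$'' replaced by ``$n$'' or ``$n+1$'' according to context, and with a square root taken at black (now white, after the color swap) vertices. Concretely, I would first recall that the quiver evolution for $\Conf_3\A_{Spin_{2n+1}}$ under the mutation sequence \eqref{13Sp} is \emph{identical} to that of $\Conf_3\A_{Sp_{2n}}$ (Figures 15, 16 versus Figures 30, 31), with only the black/white coloring reversed; this is immediate from the Langlands-duality of the seeds, which persists under mutation. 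Hence the combinatorial bookkeeping — which vertex gets mutated how many times, in which order, and what the circled arrows mean for lifting to $\Conf_3\A_{SL_{2n+1}}$ — is already done.

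The substantive content is then to verify that the claimed sequence of functions satisfies the exchange relations dictated by that quiver at each step. Here I would proceed as in the $Sp_{2n}$ case: most mutations are instances of a single octahedron-type identity, now for the functions $\tcfr{a,b}{n,n+1}{c,d}$ on $\Conf_3\A_{SL_{2n+1}}$ defined via the webs of Figure 17 (adapted to $N=2n+1$), namely an identity of the shape
$$\tcfr{a,b}{n,n+1}{c,d}\,\tcfr{a+1,b-1}{n,n+1}{c-1,d+1}=\tcfr{a,b}{n,n+1}{c-1,d+1}\,\tcfr{a+1,b-1}{n,n+1}{c,d}+\tcfr{a+1,b}{n,n+1}{c-1,d}\,\tcfr{a,b-1}{n,n+1}{c,d+1},$$
derived (exactly as in the $Sp_{2n}$ proof) by specializing the dualized octahedron recurrence on $\Conf_4\A_{SL_{2n+1}}$ to four flags built from the vectors $u_\bullet$, $\{u_1,\dots,u_n,v_1,\dots,v_{n+1}\}$-type concatenations, and $w_\bullet$. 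The remaining (boundary) mutations are degenerations of this identity obtained by the factorization lemmas $\tcfr{n,b}{n,n+1}{c,d}=\dud{n}{n+1}\tcfr{b}{n}{c,d}$ and its relatives, together with the duality identities \eqref{dualities2} and the mixed-duality identity $\tcfr{a,b}{n,n+1}{c,d}=\tcfr{N-b,N-a}{n,n+1}{N-d,N-c}$. One must also handle the white-vertex mutation rule (arrows from white-adjacent black vertices weighted by $2$), which, because of the $d_i$-values, forces us to work with squares of the black-vertex functions — this is precisely where the square roots enter and where the circled-arrow lifting device is used to identify which lift of a black vertex in $\Conf_3\A_{SL_{2n+1}}$ is relevant. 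The $j<n$ case is then a term-by-term transcription of the $Sp_{2n}$ computation under the substitution above; the $j=n$ case follows by taking square roots throughout, the consistency of which is exactly the assertion that $\sqrt{\tcfr{n-i,n+1+i}{n,n+1}{n-k,n+1+k}}$ is a well-defined function on $\Conf_3\A_{Spin_{2n+1}}$, established by the Section-4 discussion of square-roots and borne out by the positivity of the chosen branch.

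The main obstacle I anticipate is not conceptual but bookkeeping-heavy: carefully matching the parameter substitution in every one of the four local cluster-transformation types (the $Spin$-analogues of Figure 11) so that the shifts ``two parameters up by one, two down by one'' are respected, and in particular tracking where ``$n$'' becomes ``$n+1$'' in the spin-node slot versus the ordinary slots. The subtlety is that the function $\tcfr{a,b}{n,n+1}{c,d}$ is genuinely asymmetric in the two middle entries $n$ and $n+1$ (unlike the symmetric $\tcfr{a,b}{n,n}{c,d}$ of $Sp_{2n}$), so the two-step decompositions of the first and last transformations must be checked to land in the correct slot, and the degenerate identities $\tcfr{a,n}{n,n+1}{c,d}=\dud{n}{n+1}\tcfr{a}{n}{c,d}$ versus $\tcfr{a,b}{n,n+1}{n,d}=\tcfr{}{n}{n+1}\tcfr{a,b}{n}{d}$ pick out different factors. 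Once these placements are pinned down — most easily by checking the small case $Spin_{11}$ displayed in Figures 30 and 31 against the general formulas — the verification is routine and parallels the $Sp_{2n}$ argument verbatim, so I would present it by stating the needed identities, exhibiting the first two or three mutations explicitly (as is done for $Sp_{2n}$), and remarking that all remaining mutations are instances or degenerations of the octahedron-type identity, with the $j=n$ line following by extracting square roots.
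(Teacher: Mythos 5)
Your plan for the $j<n$ mutations coincides with the paper's: the quiver evolution is inherited from the $Sp_{2n}$ case by Langlands duality of the seeds, and each white-vertex exchange relation is either the octahedron-type identity for the functions $\tcfr{a,b}{n+1,n}{c,d}$ or a degeneration of it obtained from the factorization identities (such as $\tcfr{n,b}{n+1,n}{c,d}=\dud{n}{n+1}\tcfr{b}{n}{c,d}$) and the dualities \eqref{dualities2}. Note, though, that the square roots cause no difficulty at all for $j<n$: when a white vertex adjacent to $x_{i\pm 1,n}$ is mutated, the black-vertex functions enter the exchange relation squared, so the square roots simply disappear there; the circled-arrow device is bookkeeping for duals, not for square roots.

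The genuine gap is the $j=n$ case, which you dispose of as ``taking square roots throughout,'' reducing it to the well-definedness of the square roots. This is the first mutation sequence in which the black vertices $x_{in}$ are themselves mutated, and the paper singles this out as the hardest step of the whole argument. At a black vertex the white neighbours enter the exchange relation with exponent one, so the relation has the mixed shape
$$\sqrt{A}\,\sqrt{A'}=\sqrt{C}\cdot D+\sqrt{E}\cdot F$$
with $D$, $F$ not perfect squares; squaring it produces a cross term $2DF\sqrt{CE}$, so it is not the square root of any polynomial identity you already possess, but a genuinely new relation among square-rooted functions. The paper proves the general such relation,
$$\sqrt{\tcfr{n-i, n+1+i}{n+1, n}{n-j, n+1+j}}\,\sqrt{\tcfr{n+1-i, n+i}{n+1, n}{n-1-j, n+2+j}}=$$
$$\sqrt{\tcfr{n-i, n+1+i}{n+1, n}{n-1-j, n+2+j}}\,\sqrt{\tcfr{n+1-i, n+i}{n+1, n}{n-j, n+1+j}}+\tcfr{n+1-i, n+1+i}{n+1, n}{n-1-j, n+1-j},$$
by combining an ordinary octahedron identity with three auxiliary factorizations of the form
$$\tcfr{a, N-a}{n+1, n+1}{b-1, N-b}=\sqrt{2\,\tcfr{a, N-a}{n+1, n}{b-1, N-b+1}\,\tcfr{a, N-a}{n+1, n}{b, N-b}},$$
each of which is itself proved by a degenerate octahedron recurrence in which the duality \eqref{dualities2} identifies the two factors on the left and the two summands on the right (so $P\cdot Q=RS+R'S'$ collapses to $P^2=2RS$). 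These identities, and the substitution-and-cancellation that assembles them into the black-vertex exchange relation, are absent from your proposal; without them the $j=n$ line of the theorem — precisely the new phenomenon in type $B$ — is unverified.
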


\begin{proof}

We have already described the quivers at the various stages of mutation. We must then check that the functions above satisfy the identities of the associated cluster transformations.

This is the first sequence of mutations where we have to mutate the black vertices $x_{in}$ ($y_n$ is also black, but it does not get mutated). Proving that the black vertices mutate as expected will be our most difficult task.

In verifying the cluster identities that we need, we will actually be computing functions on $\Conf_3 \A_{SL_N}$. Thus we will use the arrows with a circle on them as a bookkeeping device. Circled arrows only occur between two white vertices. If a white vertex $x$ has an incoming or outgoing circled arrow from another white vertex $x'$, this means that in computing the mutation of $x$, we should use the dual of the function attached to $x'$.

With these rules in mind, we are ready to compute mutations.

We will need the following facts, which are analogous to the previous facts used for the case of $Sp_{2n}$. Let $N=2n+1$:
\begin{itemize}
\item Let $1 \leq a, b, c, d \leq N$, and $a+b+c+d=2N$. 
$$\tcfr{a, b}{n+1, n}{c, d}\tcfr{a+1, b-1}{n+1, n}{c-1, d+1}=$$
$$\tcfr{a, b}{n+1, n}{c-1, d+1}\tcfr{a+1, b-1}{n+1, n}{c, d}+\tcfr{a+1, b}{n+1, n}{c-1,d}\tcfr{a, b-1}{n+1, n}{c, d+1}.$$
\item If $a+c=n+1$ and $b+d=3n+1$,
$$\tcfr{a, b}{n+1, n}{c, d}=\tcfr{a}{n}{c}\tcfr{b}{n+1}{d}.$$
\item If $a+c=n$ and $b+d=3n+2$,
$$\tcfr{a, b}{n+1, n}{c, d}=\tcfr{a}{n+1}{c}\tcfr{b}{n}{d}.$$
\item If $a=n$,
$$\tcfr{n, b}{n+1, n}{c, d}=\dud{n}{n+1}\tcfr{b}{n}{c, d}.$$
Similarly, we have
$$\tcfr{a, b}{n+1, n}{c, n+1}=\tcfr{}{n}{n+1}\tcfr{a, b}{n+1}{c}.$$
\item We will need the duality identities of \eqref{dualities2}, and also the following duality identity:
$$\tcfr{a, b}{n+1, n}{c, d}=\tcfr{N-b, N-a}{n+1, n}{N-d, N-c}$$

\end{itemize}

All these identities can be proved by the same method as we used for $G=Sp_{2n}$ and $N=2n$. 

All cluster mutations of white vertices are obtained as before: they are either the first identity in the above list, or they are degenerations of this identity, and are obtained from the first one by applying the other three identities.

The mutation of black vertices requires some more work. We will also make use of functions of the form
$$\tcfr{a, b}{x, y}{c, d}$$
where $x=n$ or $n+1$, $y=n$ or $n+1$, and $a+b+x+y+c+d=3N$. These functions are defined just as in Figure 17. We depict $\tcfr{a, b}{n+1, n}{c, d}$ in Figure 17b.

\begin{center}
\begin{tikzpicture}[scale=0.6]
\begin{scope}[decoration={
    markings,
    mark=at position 0.5 with {\arrow{>}}}
    ] 
\draw [postaction={decorate}] (-8,6) -- (-6,3) node [midway,below left] {$a$}; 
\draw [postaction={decorate}] (-4,0) -- (-6,3) node [midway,below left] {$N-a$}; 
\draw [postaction={decorate}] (-2,9) -- (0,6) node [midway,below left] {$b$};
\draw [postaction={decorate}] (2,3) -- (0,6) node [midway,below left] {$N-b$};
\draw [postaction={decorate}] (-8,-6) -- (-4,0) node [midway,below right] {$n+1$};
\draw [postaction={decorate}] (-2,-9) -- (2,-3) node [midway,below right] {$n$};
\draw [postaction={decorate}] (0,0) -- (-4,0) node [midway,below] {$n-a$};
\draw [postaction={decorate}] (14,-3) -- (2,-3) node [midway,above] {$c$};
\draw [postaction={decorate}] (2,-3) -- (0,0) node [midway,below left] {$n+c$};
\draw [postaction={decorate}] (0,0) -- (2,3) node [midway,below right] {$a+c$};
\draw [postaction={decorate}] (14,3) -- (8,3) node [midway,above] {$d$};
\draw [postaction={decorate}] (2,3) -- (8,3) node [midway,above] {$N-d$};

\draw (8,2.5) -- (8,3) ;
\draw (-6.3, 2.8) -- (-6,3) ;
\draw (-0.3, 5.8) -- (0,6) ;

\end{scope}

\draw[yshift=-10cm]
  node[below,text width=6cm] 
  {
  Figure 17b. Web for the function $\tcfr{a,b}{n+1,n}{c,d}$ where $a+b+c+d=2N$.
  };

\end{tikzpicture}
\end{center}

The most general identity we will need to show has the form

$$\sqrt{\tcfr{n-i, n+1+i}{n+1, n}{n-j, n+1+j}} \sqrt{\tcfr{n+1-i, n+i}{n+1, n}{n-1-j, n+2+j}} = $$
$$\sqrt{\tcfr{n-i, n+1+i}{n+1, n}{n-1-j, n+2+j}}\sqrt{\tcfr{n+1-i, n+i}{n+1, n}{n-j, n+1+j}}+$$
$$\tcfr{n+1-i, n+1+i}{n+1, n}{n-1-j, n+1-j}.$$
Note that by duality \eqref{dualities2},
$$\tcfr{n-i, n+i}{n+1, n}{n-j, n+2-j}=\tcfr{n+1-i, n+1+i}{n, n+1}{n-1-j, n+1-j},$$
which explains the seeming asymmetry of the last term.

The general identity above follows directly from the following identities. Let $i, j < n$. Then 

\begin{itemize}

\item 
$$\tcfr{n-i, n+1+i}{n+1, n}{n-j, n+1+j}\tcfr{n+1-i, n+i}{n+1, n+1}{n-1-j, n+1+j}=$$
$$\tcfr{n+1-i, n+i}{n+1, n}{n-j, n+1+j}\tcfr{n-i, n+1+i}{n+1, n+1}{n-1-j, n+1+j}+$$
$$\tcfr{n+1-i, n+1+i}{n+1, n}{n-1-j, n+1-j}\tcfr{n-i, n+i}{n+1, n+1}{n-j, n+1+j},$$

\item 
$$\tcfr{n+1-i, n+i}{n+1, n+1}{n-1-j, n+1+j}=$$
$$\sqrt{2\tcfr{n+1-i, n+i}{n+1, n}{n-1-j, n+2+j}\tcfr{n+1-i, n+i}{n+1, n}{n-j, n+1+j}},$$

\item 
$$\tcfr{n-i, n+1+i}{n+1, n+1}{n-1-j, n+1+j}=$$
$$\sqrt{2\tcfr{n-i, n+1+i}{n+1, n}{n-j, n+1+j}\tcfr{n-i, n+1+i}{n+1, n}{n-1-j, n+2+j}},$$

\item 
$$\tcfr{n-i, n+i}{n+1, n+1}{n-j, n+1+j}=$$
$$\sqrt{2\tcfr{n-i, n+1+i}{n+1, n}{n-j, n+1+j}\tcfr{n+1-i, n+i}{n+1, n}{n-j, n+1+j}}.$$

\end{itemize}

The first identity is a relative of the octahedron recurrence, which we treated previously. The last three identities are all of the form
$$\tcfr{a, N-a}{n+1, n+1}{b-1, N-b}=$$
$$\sqrt{2\tcfr{a, N-a}{n+1, n}{b-1, N-b+1}\tcfr{a, N-a}{n+1, n}{b, N-b}},$$

Let us prove this identity. We know by a variant of the octahedron recurrence that

$$\tcfr{a, N-a}{n+1, n+1}{b-1, N-b}\tcfr{a, N-a}{n, n}{b, N-b+1}=$$
$$\tcfr{a, N-a}{n, n+1}{b, N-b}\tcfr{a, N-a}{n+1, n}{b-1, N-b+1}+$$
$$\tcfr{a, N-a}{n+1, n}{b, N-b}\tcfr{a, N-a}{n, n+1}{b-1, N-b+1}$$

$$\tcfr{n+1-i, n+i}{n+1, n+1}{n-1-j, n+1+j}\tcfr{n+1-i, n+i}{n, n}{n-j, n+2+j}=$$
$$\tcfr{n+1-i, n+i}{n, n+1}{n-j, n+1+j}\tcfr{n+1-i, n+i}{n+1, n}{n-1-j, n+2+j}+$$
$$\tcfr{n+1-i, n+i}{n+1, n}{n-j, n+1+j}\tcfr{n+1-i, n+i}{n, n+1}{n-1-j, n+2+j}.$$

However, by \eqref{dualities2} we have
$$\tcfr{a, N-a}{n, n}{b, N-b+1}=\tcfr{a, N-a}{n+1, n+1}{b-1, N-b},$$
while we also have
$$\tcfr{a, N-a}{n, n+1}{b, N-b}=\tcfr{a, N-a}{n+1, n}{b, N-b}$$
and
$$\tcfr{a, N-a}{n+1, n}{b-1, N-b+1}=\tcfr{a, N-a}{n, n+1}{b-1, N-b+1}$$
so that putting these together, we get the second identity above.

The other mutations of black vertices will be degenerate specializations of the above general identity. For example, for $n \geq 3$, the first mutation is
$$\sqrt{\tcfr{1, N-1}{n}{n+1}\tcfr{2, N-2}{n, n}{n-1,n+2}}=$$
$$\sqrt{\tcfr{2, N-2}{n}{n+1}}\tcfr{1}{n+1}{n-1}+\sqrt{\tcfr{}{n}{n+1}}\tcfr{2, N-1}{n+1}{n-1}.$$ 
To derive this, we use the general identity
$$\sqrt{\tcfr{1, N-1}{n+1, n}{n, n+1}} \sqrt{\tcfr{2, N-2}{n+1, n}{n-1, n+2}} = $$
$$\sqrt{\tcfr{1, N-1}{n+1, n}{n-1, n+2}}\sqrt{\tcfr{2, N-2}{n+1, n}{n, n+1}}+$$
$$\tcfr{2, N-1}{n+1, n}{n-1, n+1}.$$
plus the facts
$$\tcfr{1, N-1}{n+1, n}{n, n+1}=\tcfr{1, N-1}{n+1}{n}\tcfr{}{n}{n+1},$$
$$\tcfr{1, N-1}{n+1, n}{n-1, n+2}=\tcfr{1}{n+1}{n-1}\tcfr{N-1}{n}{n+2}=\tcfr{1}{n+1}{n-1}^2,$$
$$\tcfr{2, N-2}{n+1, n}{n, n+1}=\tcfr{2, N-2}{n+1}{n}\tcfr{}{n}{n+1},$$
$$\tcfr{2, N-1}{n, n+1}{n-1, n+1}=\tcfr{2, N-1}{n+1}{n-1}\tcfr{}{n}{n+1},$$
$$\tcfr{}{n}{n+1}=\tcfr{}{n+1}{n}.$$

\end{proof}

\subsection{The sequence of mutations for a flip}

In this section, we will give a sequence of mutations that relates two of the clusters coming from different triangulations of the $4$-gon. Combined with the previous section, this allows us to connect by mutations all $72$ different clusters we have constructed for $\Conf_4 \A_{Spin_{2n+1}}$.

Given a configuration $(A,B,C,D) \in \Conf_4 \A_{Spin_{2n+1}}$, we will give a sequence of mutations that relates a cluster coming from the triangulation $ABC, ACD$ to a cluster coming from the triangulation $ABD, BCD$.

We will need to relabel the quiver with vertices $x_{ij}$, $y_k$, with $-n \leq i \leq n$, $1 \leq j \leq n$ and $1 \leq |k| \leq n$. The quiver we will start with is as in Figure 32, pictured for $Spin_7$.

\begin{center}
\begin{tikzpicture}[scale=2.4]
  \foreach \x in {-3,-2,-1,0,1,2,3}
    \foreach \y in {1, 2}
      \node[] (x\x\y) at (\x,-\y) {\Large $\ontop{x_{\x\y}}{\circ}$};
  \foreach \x in {-3,-2,-1,0,1,2,3}
    \foreach \y in {3}
      \node[] (x\x\y) at (\x,-\y) {\Large $\ontop{x_{\x\y}}{\bullet}$};
  \node (y-1) at (-0.5,0) {\Large $\ontop{y_{-1}}{\circ}$};
  \node (y-2) at (-1.5,0) {\Large $\ontop{y_{-2}}{\circ}$};
  \node (y-3) at (-2.5,-4) {\Large $\ontop{y_{-3}}{\bullet}$};
  \node (y1) at (0.5,0) {\Large $\ontop{y_1}{\circ}$};
  \node (y2) at (1.5,0) {\Large $\ontop{y_2}{\circ}$};
  \node (y3) at (2.5,-4) {\Large $\ontop{y_{3}}{\bullet}$};

  \draw [->] (x01) to (x11);
  \draw [->] (x11) to (x21);
  \draw [->] (x21) to (x31);
  \draw [->] (x02) to (x12);
  \draw [->] (x12) to (x22);
  \draw [->] (x22) to (x32);
  \draw [->] (x03) to (x13);
  \draw [->] (x13) to (x23);
  \draw [->] (x23) to (x33);
  \draw [->, dashed] (y1) to (y2);
  \draw [->, dashed] (y2) .. controls +(right:1) and +(up:1) .. (y3);

  \draw [->] (x03) to (x02);
  \draw [->] (x02) to (x01);

  \draw [->] (x13) to (x12);
  \draw [->] (x12) to (x11);
  \draw [->] (x11) to (y1);
  \draw [->] (y3) to (x23);
  \draw [->] (x23) to (x22);
  \draw [->] (x22) to (x21);
  \draw [->] (x21) to (y2);
  \draw [->, dashed] (x33) to (x32);
  \draw [->, dashed] (x32) to (x31);

  \draw [->] (y1) to (x01);
  \draw [->] (x11) to (x02);
  \draw [->] (x12) to (x03);
 \draw [->] (y2) to (x11);
  \draw [->] (x21) to (x12);
  \draw [->] (x22) to (x13);
  \draw [->] (x31) to (x22);
  \draw [->] (x32) to (x23);
  \draw [->] (x33) to (y3);

  \draw [->] (x01) to (x-11);
  \draw [->] (x-11) to (x-21);
  \draw [->] (x-21) to (x-31);
  \draw [->] (x02) to (x-12);
  \draw [->] (x-12) to (x-22);
  \draw [->] (x-22) to (x-32);
  \draw [->] (x03) to (x-13);
  \draw [->] (x-13) to (x-23);
  \draw [->] (x-23) to (x-33);
  \draw [->, dashed] (y-1) to (y-2);
  \draw [->, dashed] (y-2) .. controls +(left:1) and +(up:1) .. (y-3);

  \draw [->] (x-13) to (x-12);
  \draw [->] (x-12) to (x-11);
  \draw [->] (x-11) to (y-1);
  \draw [->] (y-3) to (x-23);
  \draw [->] (x-23) to (x-22);
  \draw [->] (x-22) to (x-21);
  \draw [->] (x-21) to (y-2);
  \draw [->, dashed] (x-33) to (x-32);
  \draw [->, dashed] (x-32) to (x-31);

  \draw [->] (y-1) to (x01);
  \draw [->] (x-11) to (x02);
  \draw [->] (x-12) to (x03);
  \draw [->] (y-2) to (x-11);
  \draw [->] (x-21) to (x-12);
  \draw [->] (x-22) to (x-13);
  \draw [->] (x-31) to (x-22);
  \draw [->] (x-32) to (x-23);
  \draw [->] (x-33) to (y-3);

\draw[yshift=-3.85cm]
  node[below,text width=6cm] 
  {
  Figure 32. The quiver for the cluster algebra on $\Conf_4 \A_{Spin_{7}}$. The associated functions are pictured in Figure 26.
  };

\end{tikzpicture}
\end{center}

We will describe the functions attached to these vertices in two steps. First we will assign some functions to each vertex. Then we will take the square-root of the functions assigned to the black vertices. 

Let $N=2n+1$. First make an assigment of functions to vertices as follows:

\begin{alignat*}{1}
\dur{k}{N-k} &\longleftrightarrow y_k, \textrm{ for } k >0; \\
\dld{|k|}{N-|k|} &\longleftrightarrow y_k, \textrm{ for } k <0; \\
\dul{j}{N-j} &\longleftrightarrow x_{-n,j}; \\
\ddr{N-j}{j} &\longleftrightarrow x_{nj}; \\
\dud{j}{N-j} &\longleftrightarrow x_{0j}. \\
\end{alignat*}
The face functions in the triangle where $i>0$ are
\begin{alignat*}{1}
\tcfr{i+j}{N-j}{N-i} &\longleftrightarrow x_{ij}, \textrm{ for } 0<i<n, i+j \leq n; \\
\tcfr{n}{N-j}{j+i-n, N-i} &\longleftrightarrow x_{ij}, \textrm{ for } 0<i<n, i+j > n; \\
\end{alignat*}
while the face functions in the triangle where $i<0$ are
\begin{alignat*}{1}
\tcfl{j}{|i|}{N-|i|-j} &\longleftrightarrow x_{ij}, \textrm{ for } -n<i<0, |i|+j \leq n; \\
\tcfl{j}{|i|, N+n-|i|-j}{n+1} &\longleftrightarrow x_{ij}, \textrm{ for } -n<i<0, |i|+j > n.
\end{alignat*}

\begin{rmk} Note that our labelling of the vertices is somewhat different from before. The vertices labelled $x_{ij}$ correspond to the vertices labelled $x_{n-|i|, j}$ in  $\Conf_3 \A_{Spin_{2n+1}}$.
\end{rmk}

Now, take the square-root of the functions assigned to $x_{in}, y_n$. For example, when $i > 0$ we assign
$$\sqrt{\tcfr{n}{n+1}{i, N-i}} \longleftrightarrow x_{in}$$
and for $i<0$, we assign
$$\sqrt{\tcfl{n}{|i|, N-|i|}{n+1}} \longleftrightarrow x_{in}.$$

The functions above are defined by pulling back via the various natural maps 
$$p_1, p_2, p_3, p_4: \Conf_4 \A_{Spin_{2n+1}} \rightarrow \Conf_3 \A_{Spin_{2n+1}}$$
that map a configuration $(A,B,C,D)$ to $(B, C, D)$, $(A, C, D)$, $(A, B, D)$, $(A,B,C)$, respectively. Pulling back functions from $\Conf_3 \A_{Spin_{2n+1}}$ allows us to define functions on $\Conf_4 \A_{Spin_{2n+1}}$. For example,
$$p_2^*\tcfr{n}{N-j}{j+i-n, N-i} =: \tcfr{n}{N-j}{j+i-n, N-i}.$$
Similarly, we can pull back functions from various maps 
$$\Conf_4 \A_{Spin_{2n+1}} \rightarrow \Conf_2 \A_{Spin_{2n+1}}$$
to define functions such as 
$$\dld{j}{N-j}.$$

There is also a map
$$T: \Conf_4 \A  \rightarrow \Conf_4 \A$$
which sends
$$(A,B,C,D) \rightarrow (s_G \cdot D, A, B, C)$$
which allows us to define, for example
$$T^*\tcfu{n}{j}{n+1-i, N+i-j} =: \tcfr{j}{n+1-i, N+i-j}{n}.$$ The forgetful maps and twist maps, combined with the constructions below, will furnish all the functions necessary for the computation of the flip mutation sequence.

As in the case of $G=Sp_{2n}$, we will have to use some functions which depend on all four flags. Let $N=2n+1$. Let $0 \leq a, b, c, d \leq N$ such that $a+b+c+d=4n+2=2N$ and $b+c \leq N$. Then we would like to define a function that we will call 
$$\qcfs{a}{b}{c}{d}.$$

We again use the symbol ``:'' because the function does not have cyclic symmetry. In other words,
$$T^*\qcfs{a}{b}{c}{d} \neq \qcfs{b}{c}{d}{a}.$$
Instead, we use the notation
$$T^*\qcfs{a}{b}{c}{d} =: \qcfb{b}{c}{d}{a}.$$
We can also define
$$(T^2)^*\qcfs{a}{b}{c}{d}=:\qcfs{c}{d}{a}{b}.$$

The function $\qcfs{a}{b}{c}{d}$ on $\Conf_4 \A_{Spin_{2n+1}}$ is pulled back from a function on $\Conf_4 \A_{SL_{N}}$. The function on $\Conf_4 \A_{SL_{N}}$ is given by an invariant in the space
$$[V_{\omega_a}  \otimes V_{\omega_b}  \otimes V_{\omega_c} \otimes V_{\omega_d}]^{SL_N}.$$
The function is given, as before, by the web in Figure 19.

$$\qcfs{n}{a}{b}{c, d}.$$



We now define a second type of function on  $\Conf_4 \A_{Spin_{2n+1}}$. If $a+b+c+d=2N+n$, we define the function
$$\qcfs{n+1}{a}{b}{c, d}.$$
It is given by the invariant in the space
$$[V_{\omega_n+1}  \otimes V_{\omega_a}  \otimes V_{\omega_b} \otimes V_{\omega_c+\omega_d}]^{SL_N}$$
picked out by the web in Figure 33.

\begin{center}
\begin{tikzpicture}[scale=1.4]
\begin{scope}[decoration={
    markings,
    mark=at position 0.5 with {\arrow{>}}},
    ] 
\draw [postaction={decorate}] (-4,-1) -- (-2.5,-1) node [midway,below] {$a$}; 
\draw [postaction={decorate}] (-1,-1) -- (-2.5,-1) node [midway,below] {$N-a$}; 
\draw [postaction={decorate}] (-1,-4) -- (-1,-1) node [midway,right] {$b$}; 
\draw [postaction={decorate}] (-1,-1) -- (1,1) node [midway,below right] {$a+b-N$}; 

\draw [postaction={decorate}] (4,1) -- (2.5,1) node [midway,above] {$c$}; 
\draw [postaction={decorate}] (1,1) -- (2.5,1) node [midway,below] {$N-c$}; 
\draw [postaction={decorate}] (1,1) -- (1,2) node [midway,left] {$a+b+c-2N$}; 
\draw [postaction={decorate}] (4,2) -- (1,2) node [midway,below] {$d$}; 
\draw [postaction={decorate}] (1,2) -- (1,3) node [midway,left] {$a+b+c+d-2N$}; 
\draw [postaction={decorate}] (1,4) -- (1,3) node [midway,left] {$n+1$};

\draw (2.5,0.9) -- (2.5,1);
\draw (0.9,3) -- (1,3);
\draw (-2.5,-1.1) -- (-2.5,-1);

\end{scope}

\draw[yshift=-4cm]
  node[below,text width=8cm] 
  {
  Figure 33 Web for the function $\qcfs{n+1}{a}{b}{c, d}$, where $a+b+c+d=2N+n$.
  };

\end{tikzpicture}
\end{center}

Let us give a concrete description of this functions.

Given four flags 
$$u_1, \dots, u_N;$$
$$v_1, \dots, v_N;$$
$$w_1, \dots, w_N;$$
$$x_1, \dots, x_N;$$

first consider the forms
$$U_{n+1} := u_1 \wedge \cdots \wedge u_{n+1},$$
$$V_a := v_1 \wedge \cdots \wedge v_a,$$
$$W_b := w_1 \wedge \cdots \wedge w_b,$$
$$X_c := x_1 \wedge \cdots \wedge x_c.$$
$$X_d := x_1 \wedge \cdots \wedge x_d.$$

Because $b=2N+n-a-c-d$, there is a natural map 
$$\phi_{N-c, n-d, N-a}: \bigwedge\nolimits^{b} V \rightarrow \bigwedge\nolimits^{N-c} V \otimes \bigwedge\nolimits^{n-d} V \otimes \bigwedge\nolimits^{N-a} V.$$
There are also natural maps 
$$- \wedge X_{c} : \bigwedge\nolimits^{N-c} V \rightarrow \bigwedge\nolimits^{N} V \simeq F,$$
$$U_{n+1} \wedge - \wedge X_d: \bigwedge\nolimits^{n-d} V \rightarrow \bigwedge\nolimits^{N} V \simeq F,$$ and
$$V_a \wedge - :  \bigwedge\nolimits^{N-a} V \rightarrow \bigwedge\nolimits^{N} V \simeq F.$$ 
Applying these maps to the first, second, and third factors of $\phi_{N-c, n-d, N-a}(W_b)$, respectively, and then multiplying, we get get the value of our function $\qcfs{n+1}{a}{b}{c, d}$. Using the twist map $T$, we can also define the functions $\qcfb{a}{b}{c, d}{n+1}$, $\qcfs{b}{c, d}{n+1}{a}$, and $\qcfb{c, d}{n+1}{a}{b}$.

Using duality, there is also a function $\qcfs{n}{a}{b}{c, d}$ on $\Conf_4 \A_{Spin_{2n+1}}$ for $0 \leq a, b, c, d \leq N$, $a+b+c+d=3n+2=N+n+1$, and $c \leq d$. 

This function is pulled back from the function on $\Conf_4 \A_{SL_{2n+1}}$ given by an invariant in the space
$$[V_{\omega_n}  \otimes V_{\omega_a}  \otimes V_{\omega_b} \otimes V_{\omega_c+\omega_d}]^{SL_N}.$$
This vector space is generally multi-dimensional. To pick out the correct invariant, we use the same web as in Figure 20b.

Let us give a concrete description of this function. We use the same notation as before.

Because $a+b=N+n+1-c-d$, there is a natural map 
$$\phi_{N-c, n+1-d}: \bigwedge\nolimits^{a+b} V \rightarrow \bigwedge\nolimits^{N-c} V \otimes \bigwedge\nolimits^{n-d} V.$$
There are also natural maps 
$$- \wedge X_{c} : \bigwedge\nolimits^{N-c} V \rightarrow \bigwedge\nolimits^{N} V \simeq F,$$ and
$$U_n \wedge - \wedge X_d: \bigwedge\nolimits^{n+1-d} V \rightarrow \bigwedge\nolimits^{N} V \simeq F.$$
Applying these maps to the first and second factors of $\phi_{N-c, n+1-d}(V_a \wedge W_b)$, respectively, and then multiplying, we get get the value of our function $\qcfs{n}{a}{b}{c, d}$. 

Using the twist map $T$, we can also define the functions $\qcfb{a}{b}{c, d}{n}$, $\qcfs{b}{c, d}{n}{a}$, and $\qcfb{c, d}{n}{a}{b}$.

We will need to define one more type of function to do our calculations. Let $0 \leq a, b, c, d \leq N$ such that $a+b+c+d=4n+2=2N$, $a \leq n \leq b$ and $c \leq n \leq d$. Then we would like to define a function that we will call 
$$\qcfs{n}{a, b}{n+1}{c, d}.$$
We will make less frequent use of the functions
$$\qcfs{n+1}{a, b}{n}{c, d}, \qcfs{n}{a, b}{n}{c, d}, \qcfs{n+1}{a, b}{n+1}{c, d}.$$
These functions are defined by similar formulas. Here $a+b+c+d=2N$, $2N+1$ or $2N-1$, respectively.

The function $\qcfs{n}{a, b}{n+1}{c, d}$ on $\Conf_4 \A_{Spin_{2n+1}}$ is pulled back from a function on $\Conf_4 \A_{SL_{N}}$. The function on $\Conf_4 \A_{SL_{N}}$ is given by an invariant in the space
$$[V_{\omega_n}  \otimes V_{\omega_a+\omega_b}  \otimes V_{\omega_{n+1}} \otimes V_{\omega_c+\omega_d}]^{SL_N}.$$
This vector space is generally multi-dimensional. To pick out the correct invariant, we use the web in Figure 34:

\begin{center}
\begin{tikzpicture}[scale=1.4]
\begin{scope}[decoration={
    markings,
    mark=at position 0.5 with {\arrow{>}}},
    ] 
\draw [postaction={decorate}] (-4,-1) -- (-2.5,-1) node [midway,below] {$b$}; 
\draw [postaction={decorate}] (-1,-1) -- (-2.5,-1) node [midway,below] {$N-b$}; 
\draw [postaction={decorate}] (-1,-4) -- (-1,-2) node [midway,right] {$n+1$}; 
\draw [postaction={decorate}] (-4,-2) -- (-1,-2) node [midway,below] {$a$}; 
\draw [postaction={decorate}] (-1,-2) -- (-1,-1) node [midway,right] {$a+n+1$}; 
\draw [postaction={decorate}] (-1,-1) -- (1,1) node [midway,below right] {$a+b+n+1-N$}; 

\draw [postaction={decorate}] (4,1) -- (2.5,1) node [midway,above] {$c$}; 
\draw [postaction={decorate}] (1,1) -- (2.5,1) node [midway,below] {$N-c$}; 
\draw [postaction={decorate}] (1,1) -- (1,2) node [midway,left] {$a+b+c+n+1-2N=n+1-d$}; 
\draw [postaction={decorate}] (4,2) -- (1,2) node [midway,below] {$d$}; 
\draw [postaction={decorate}] (1,2) -- (1,3) node [midway,left] {$n+1$}; 
\draw [postaction={decorate}] (1,4) -- (1,3) node [midway,left] {$n$};

\draw (2.5,0.9) -- (2.5,1);
\draw (0.9,3) -- (1,3);
\draw (-2.5,-1.1) -- (-2.5,-1);

\end{scope}

\draw[yshift=-4cm]
  node[below,text width=8cm] 
  {
  Figure 34 Web for the function $\qcfs{n}{a,b}{n+1}{c, d}$, where $a+b+c+d=2N$.
  };

\end{tikzpicture}
\end{center}

Let us give a concrete description of this function.

We use the forms $U_n, V_a, V_b, W_n, X_c, X_d.$

Because $n+a=2N+n+1-b-c-d$, there is a natural map 
$$\phi_{N-c, n+1-d, N-b}: \bigwedge\nolimits^{n+a} V \rightarrow \bigwedge\nolimits^{N-c} V \otimes \bigwedge\nolimits^{n+1-d} V \otimes \bigwedge\nolimits^{N-b} V.$$
There are also natural maps 
$$- \wedge X_{c} : \bigwedge\nolimits^{N-c} V \rightarrow \bigwedge\nolimits^{N} V \simeq F,$$
$$U_n \wedge - \wedge X_d: \bigwedge\nolimits^{n+1-d} V \rightarrow \bigwedge\nolimits^{N} V \simeq F,$$ and
$$V_b \wedge - :  \bigwedge\nolimits^{N-b} V \rightarrow \bigwedge\nolimits^{N} V \simeq F.$$ 
Applying these maps to the first, second, and third factors of $\phi_{N-c, n-d+1, N-a}(V_a \wedge W_n)$, respectively, and then multiplying, we get get the value of our function $\qcfs{n}{a, b}{n+1}{c, d}$. 
Note that
$$\qcfs{n+1}{a, b}{n}{c, d}=(T^2)^*\qcfs{n}{c, d}{n+1}{a, b}.$$

Note that when $a=0$, $b=N$, $c=0$ or $d=N$, we have
$$\qcfs{n}{0, b}{n+1}{c, d}=:\qcfs{n}{b}{n+1}{c, d},$$
$$\qcfs{n}{a, N}{n+1}{c, d}=:\qcfs{n}{a}{n+1}{c, d},$$
$$\qcfs{n}{a, b}{n+1}{0, d}=:\qcfs{n}{a, b}{n+1}{d},$$
$$\qcfs{n}{a, b}{n+1}{c, N}=:\qcfs{n}{a, b}{n+1}{c}.$$
If $a=0$ and $d=N$, we will have
$$\qcfs{n}{0, b}{n+1}{c, N}=\qcfs{n}{b}{n+1}{c},$$
where $\qcfs{n}{b}{n+1}{c} $ is as defined above. A similar equality holds when $b=N, c=0$. If $a=0$, $b=N$, $c=0$ and $d=N$, we will have that 
$$\qcfs{n}{0, N}{n+1}{0, N}=\qcfs{n}{0}{n+1}{0}.$$

Finally, note that if $a+b=c+d=N$, then 
$$\sqrt{\qcfs{n}{a, b}{n+1}{c, d}}$$
is a well-defined function on $\Conf_4 \A_{Spin_{2n+1}}$. This is because the representations $V_{\omega_a}$ and $V_{\omega_b}$ of $SL_N$ give the same representations of $Spin_N$, and $V_{\omega_n}$ and $V_{\omega_n+1}$, as representations of $Spin_N$, have twice the weight of the spin representation.

Now we give the sequence of mutations realizing the flip of a triangulation. The sequence of mutations is exactly as in the case for $Sp_{2n}$. The sequence of mutations leaves $x_{-n,j}, x_{nj}, y_k$ untouched as they are frozen variables. Hence we only mutate $x_{ij}$ for $-n \leq i \leq n$. We now describe the sequence of mutations. The sequence of mutations will have $3n-2$ stages. At the $r^{\textrm{th}}$ step, we mutate all vertices such that 
$$|i|+j \leq r,$$
$$j-|i| + 2n -2 \geq r,$$
$$|i|+j \equiv r \mod 2.$$

Note that the first inequality is empty for $r \geq 2n-1$, while the second inequality is empty for $r \leq n$. For example, for $Spin_7$, the sequence of mutations is

\begin{equation}
\begin{gathered}
x_{01}, \\
x_{-1,1}, x_{02}, x_{11},  \\
x_{-2,1}, x_{-1,2}, x_{01}, x_{03}, x_{12}, x_{21},  \\
x_{-2,2}, x_{-1,1}, x_{-1,3}, x_{02}, x_{11}, x_{13}, x_{22},  \\
x_{-2,3}, x_{-1,2}, x_{01}, x_{03}, x_{12}, x_{23},  \\
x_{-1,3}, x_{02}, x_{13},  \\
x_{03}.
\end{gathered}
\end{equation}

In Figure 35, we depict how the quiver for $\Conf_4 \A_{Spin_{7}}$ changes after each of the seven stages of mutation.

\begin{center}

\end{center}

The analogue $\Conf_4 \A_{Spin_{2n+1}}$ should be clear.

We now have the main theorem of this section:

\begin{theorem}
We first analyze the situation when $i > 0$. The vertex $x_{ij}$ is mutated a total of $n-i$ times. There are four cases.
\begin{itemize}
\item When $i+j < n$ and $i < j$, the function attached to $x_{ij}$ mutates in three stages, consisting of $n-i-j, i,$ and $j-i$ mutations, respectively:
\begin{enumerate}
\item $$\tcfr{i+j}{N-j}{N-i} \rightarrow \qcfs{i+j+1}{1}{N-j-1}{N-i-1} \rightarrow $$
$$\qcfs{i+j+2}{2}{N-j-2}{N-i-2} \rightarrow $$
$$\dots \rightarrow \qcfs{n}{n-i-j}{n+1+i}{n+1+j}$$
\item $$\qcfs{n}{n-i-j}{n+1+i}{n+1+j}=\qcfs{n}{n-i-j}{n+1+i}{0, n+1+j} \rightarrow $$
$$\qcfs{n}{n-i-j+1}{n+i}{1, n+j} \rightarrow$$
$$ \qcfs{n}{n-i-j+2}{n+i-1}{2, n+j-1} \rightarrow $$
$$\dots \rightarrow \qcfs{n}{n-j}{n+1}{i, n+1+j-i}$$
\item $$\qcfs{n}{n-j}{n+1}{i, n+1+j-i}=\qcfs{n}{n-j, N}{n+1}{i, n+1+j-i}$$
$$\rightarrow \qcfs{n}{n-j+1, N-1}{n+1}{i+1, n+j-i} \rightarrow $$
$$\qcfs{n}{n-j+2, N-2}{n+1}{i+2, n+j-i-1} \rightarrow $$
$$\dots \rightarrow [\qcfs{n}{n-i, N-j+i}{n+1}{j, n+1}] \textrm{ } \tcfd{n-i, N-j+i}{n+1}{j}$$
\end{enumerate}

\item When $i+j \geq n$, $i < j$, and $j \neq n$ the function attached to $x_{ij}$ mutates  in two stages, consisting of $n-j$ and $j-i$ mutations, respectively:
\begin{enumerate}
\item $$\tcfr{n}{N-j}{j+i-n, N-i} \rightarrow \qcfs{n}{1}{N-j-1}{j+i-n+1, N-i-1} \rightarrow $$
$$\qcfs{n}{2}{N-j-2}{j+i-n+2, N-i-2} \rightarrow$$
$$\dots  \rightarrow \qcfs{n}{n-j}{n+1}{i, n+1-i+j}$$
\item $$\qcfs{n}{n-j}{n+1}{i, n+1-i+j} [\qcfs{n}{n-j, N}{n+1}{i, n+1-i+j}] \rightarrow $$
$$\qcfs{n}{n-j+1, N-1}{n+1}{i+1, n-i+j} \rightarrow $$
$$\qcfs{n}{n-j+2, N-2}{n+1}{i+2, n-i+j-1} \rightarrow$$
$$\dots  \rightarrow [\qcfs{n}{n-i, N-j+i}{n+1}{j, n+1}]  \textrm{ } \tcfd{n-i, N-j+i}{n+1}{j} $$
\end{enumerate}

\item In the case that is most different from situation when $G=Sp_{2n}$, we have that if $j=n$, then mutations happen in one stage consisting of $n-i$ mutations:
$$\sqrt{\qcfs{n}{0}{n+1}{i, N-i}} [\sqrt{\qcfs{n}{0, N}{n+1}{i, N-i}}] \rightarrow $$
$$\sqrt{\qcfs{n}{1, N-1}{n+1}{i+1, N-i-1}} \rightarrow $$
$$\sqrt{\qcfs{n}{2, N-2}{n+1}{i+2, N-i-2}} \rightarrow$$
$$\dots  \rightarrow [\sqrt{\qcfs{n}{n-i, N-n+i}{n+1}{n, n+1}}]  \textrm{ } \sqrt{\tcfd{n-i, N-n+i}{n+1}{n}}$$

\item When $i+j < n$ and $i \geq j$, the function attached to $x_{ij}$ mutates in two stages, consisting of $n-i-j$ and $j$ mutations, respectively:
\begin{enumerate}
\item $$\tcfr{i+j}{N-j}{N-i} \rightarrow \qcf{i+j+1}{1}{N-j-1}{N-i-1} \rightarrow $$
$$\qcf{i+j+2}{2}{N-j-2}{N-i-2} \rightarrow $$
$$\dots \rightarrow \qcf{n}{n-i-j}{n+1+i}{n+1+j}$$
\item $$\qcf{n}{n-i-j}{n+1+i}{n+1+j} \rightarrow \qcfs{n}{n-i-j+1}{n+i}{1, n+j} \rightarrow $$
$$\qcfs{n}{n-i-j+2}{n+i-1}{2, n+j-1} \rightarrow $$
$$\dots \rightarrow [\qcfs{n}{n-i}{n+1+i-j}{j, n+1}]  \textrm{ } \tcfd{n-i}{n+1+i-j}{j}$$
\end{enumerate}

\item When $i+j \geq n$ and $i \geq j$, the function attached to $x_{ij}$ mutates in one stage consisting of $n-i$ mutations:
$$\tcfr{n}{N-j}{j+i-n, N-i} \rightarrow \qcfs{n}{1}{N-j-1}{j+i-n+1, N-i-1} \rightarrow $$
$$\qcfs{n}{2}{N-j-2}{j+i-n+2, N-i-2} \rightarrow$$
$$\dots  \rightarrow [\qcfs{n}{n-i}{n+1+i-j}{j, n+1}] \textrm{ } \tcfd{n-i}{n+1+i-j}{j}$$

\end{itemize}

The mutation sequence when $i \leq 0$ is completely parallel. We include it in an appendix for reference.

In all these sequences, for each mutation, two parameters increase, and two decrease. Within a stage, the same parameters increase or decrease. The only exception is that sometimes after the last mutation, one removes the factor $\dur{n}{n+1}$ (or $\dld{n}{n+1}$ when $i \leq 0$). The expressions in square brackets indicate the functions before removing factors of $\dur{n}{n+1}$ (or $\dld{n}{n+1}$ when $i \leq 0$).

\end{theorem}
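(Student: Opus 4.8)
The plan is to follow the template of the proof of the analogous flip theorem for $\Conf_4 \A_{Sp_{2n}}$, using the Langlands-duality dictionary established earlier in this section. The evolution of the quiver through the $3n-2$ stages of the mutation sequence has already been recorded in Figures 35a--35h for $Spin_7$, and the general pattern is visibly the same as in the $Sp_{2n}$ case with black and white vertices exchanging colors; so the content of the theorem reduces to checking that, at each individual mutation, the pair of functions indicated in the statement satisfies the cluster exchange relation encoded by the quiver at that moment. Because the flip quiver has no circled arrows, lifting the sequence from $\Conf_4 \A_{Spin_{2n+1}}$ to $\Conf_4 \A_{SL_{2n+1}}$ (with $N = 2n+1$) is unambiguous, so each white-vertex exchange relation becomes a specialization of a four-flag identity on $\Conf_4 \A_{SL_{N}}$.

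The first step is to assemble the governing identities. For the white vertices ($j < n$) these are the $Spin$-analogues of the three octahedron-type recurrences used in the $Sp_{2n}$ proof: one for $\qcfs{a}{b}{c}{d}$ with $a+b+c+d = 2N$; one --- in two versions, according to whether the central column carries $V_{\omega_n}$ or $V_{\omega_{n+1}}$, as dictated by the substitution principle for $Spin$ formulas --- for $\qcfs{n}{a}{b}{c,d}$ and $\qcfs{n+1}{a}{b}{c,d}$; and one for $\qcfs{n}{a,b}{n+1}{c,d}$ with $a+b+c+d = 2N$. As in the $Sp_{2n}$ case, each of these is a specialization of a single master identity on $\Conf_4 \A_{SL_{N}}$ --- a dualized octahedron recurrence applied to a suitable degenerate family of flags --- and can alternatively be verified directly with webs and skein relations. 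The four cases of the theorem then use these identities in the stated orders (first/second/third set when $i+j<n$ and $i<j$; second/third when $i+j\geq n$ and $i<j$; first/second when $i+j<n$ and $i\geq j$; second alone when $i+j\geq n$ and $i\geq j$), with the degenerate mutations at the beginning and end of each stage handled by the degenerate-index identities that express a $\qcfs{\cdot}{\cdot}{\cdot}{\cdot}$ as a product of a lower four- or three-flag function with an edge function such as $\dur{n}{n+1}$ or $\dld{n}{n+1}$; these are precisely the factors displayed inside the square brackets in the theorem statement.

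The genuinely new ingredient --- and the step I expect to be the main obstacle --- is the case $j = n$, where the function $\sqrt{\qcfs{n}{a,b}{n+1}{c,d}}$ attached to the black (spin) vertex $x_{in}$ mutates in a single stage of $n-i$ steps rather than in two or three stages. The difficulty is the interplay of the mutation rule with the multipliers $d_i$: a white neighbour of $x_{in}$ mutates using the \emph{square} of the function at $x_{in}$, so on the $SL_{2n+1}$ lift one works with $\qcfs{n}{a,b}{n+1}{c,d}$ itself and the relevant exchange relation reduces to the white-vertex identities above, but mutating $x_{in}$ itself forces square-root versions of the octahedron identity. I would derive these exactly as in the proof of the second transposition above: one combines an $SL_{2n+1}$ octahedron relation with the duality $\qcfs{n}{a,b}{n+1}{c,d} = \qcfs{n+1}{N-b,N-a}{n}{N-d,N-c}$ and the ``squaring'' relations of the form $\qcfs{n+1}{a,b}{n+1}{c,d} = \sqrt{2\,XY}$, where $X$ and $Y$ are the two adjacent values of $\qcfs{n}{a,b}{n+1}{\cdot}$; these squaring relations encode the fact that, as $Spin_{2n+1}$-representations, $V_{\omega_n}$ and $V_{\omega_{n+1}}$ each carry half the weight of $\bigwedge\nolimits^n V = V_{2\omega_n}$ --- which is also why $\sqrt{\qcfs{n}{a,b}{n+1}{c,d}}$ is well-defined when $a+b = c+d = N$, and why the constant $2$ appears throughout.

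Once these base identities are in hand, what remains is bookkeeping of a now familiar kind: matching the numerous degenerate boundary cases at stage transitions, checking that stripping off the factor $\dur{n}{n+1}$ (respectively $\dld{n}{n+1}$ when $i \leq 0$) after the last mutation of a stage is compatible with the exchange relation there, and confirming that the single-stage family $\sqrt{\qcfs{n}{a,b}{n+1}{c,d}}$ for $j = n$ is indeed closed under its mutations. All of this is dictated mechanically by the $Sp_{2n}$ flip theorem together with the Langlands substitution principle (replace $a$ by $a$, $2n-a$ by $2n+1-a$, each $n$ by $n$ or $n+1$ according to context, and insert a square root at each black vertex), so in writing the proof I would state the base identities in full, indicate their degenerations, and carry out the verification only in the representative cases, exactly as in the earlier proofs. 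The parallel mutation sequence for $i \leq 0$ is handled identically and is recorded in the appendix for reference.
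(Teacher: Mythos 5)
Your proposal matches the paper's proof in both structure and substance: the same three octahedron-type recurrences (for $\qcfs{a}{b}{c}{d}$, for $\qcfs{n}{a}{b}{c,d}$ with its dual $\qcfs{n+1}{\cdot}{\cdot}{\cdot,\cdot}$ version, and for $\qcfs{n}{a,b}{n+1}{c,d}$) deployed case by case with the same degenerate-index identities producing the bracketed $\dur{n}{n+1}$ factors, and the same treatment of the $j=n$ spin vertices via duality plus ``squaring'' relations of the form $\sqrt{2\,XY}$ derived from a further octahedron identity. No gaps; this is essentially the paper's argument.
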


\begin{proof} The proof comes down to a handful of identities used in conjunction, as in previous proofs of this type. Here are the identities we use:

\begin{itemize}
\item Let $0 \leq a, b, c, d \leq N$, and $a+b+c+d=2N$. 
$$\qcfs{a}{b}{c}{d}\qcfs{a+1}{b+1}{c-1}{d-1}=$$
$$\qcfs{a}{b+1}{c-1}{d}\qcfs{a+1}{b}{c}{d-1}+\qcfs{a+1}{b}{c-1}{d}\qcfs{a}{b+1}{c}{d-1}.$$

\item Let $0 \leq a, b, c, d \leq N$, and $a+b+c+d=N+n+1$. 
$$\qcfs{n}{a}{b}{c, d}\qcfs{n}{a+1}{b-1}{c+1, d-1}=$$
$$\qcfs{n}{a+1}{b-1}{c, d}\qcfs{n}{a}{b}{c+1, d-1}+\qcfs{n}{a+1}{b}{c, d-1}\qcfs{n}{a}{b-1}{c+1, d}.$$
There is also a dual identity when $a+b+c+d=2N+n$ that we use when $i < 0$:
$$\qcfs{n+1}{a}{b}{c, d}\qcfs{n+1}{a-1}{b+1}{c+1, d-1}=$$
$$\qcfs{n+1}{a}{b}{c+1, d-1}\qcfs{n+1}{a-1}{b+1}{c, d}+\qcfs{n+1}{a-1}{b}{c+1, d}\qcfs{n+1}{a}{b+1}{c, d-1}.$$

\item Let $0 \leq a, b, c, d \leq N$, and $a+b+c+d=4n+2$. 
$$\qcfs{n}{a, b}{n+1}{c, d}\qcfs{n}{a+1, b-1}{n+1}{c+1, d-1}=$$
$$\qcfs{n}{a+1, b-1}{n+1}{c, d}\qcfs{n}{a, b}{n+1}{c+1, d-1}+\qcfs{n}{a+1, b}{n+1}{c, d-1}\qcfs{n}{a, b-1}{n+1}{c+1, d}.$$

\item Let $0 \leq a, b, c, d \leq N$ such that $a+b+c+d=2N$, $a \leq b$ and $c \leq d$. If $a+d=b+c=N$.,
$$\qcfs{a}{b}{c}{d}=\dld{b}{c}\dur{a}{d}$$

\item Let $0 \leq a, b, c, d \leq N$ such that $a+b+c+d=2N$, $a \leq b$ and $c \leq d$. If $a$ or $b=n$ or $c$ or $d=n+1$,
$$\qcfs{n}{n, b}{n+1}{c, d}=\tcfu{n}{b}{c, d}\dld{n}{n+1},$$
$$\qcfs{n}{a, n}{n+1}{c, d}=\tcfu{n}{a}{c, d}\dld{n}{n+1},$$
$$\qcfs{n}{a, b}{n+1}{n+1, d}=\tcfd{a, b}{n+1}{d}\dur{n}{n+1},$$
$$\qcfs{n}{a, b}{n+1}{c, n+1}=\tcfd{a, b}{n+1}{c}\dur{n}{n+1}.$$

\item The final set of identities was mentioned previously. Let $0 \leq a, b, c, d \leq N$ such that $a+b+c+d=2N$, $a \leq b$ and $c \leq d$. When $a=0$, $b=N$, $c=0$ or $d=N$, we have
$$\qcfs{n}{0, b}{n+1}{c, d}=:\qcfs{n}{b}{n+1}{c, d},$$
$$\qcfs{n}{a, N}{n+1}{c, d}=:\qcfs{n}{a}{n+1}{c, d},$$
$$\qcfs{n}{a, b}{n+1}{0, d}=:\qcfs{n}{a, b}{n+1}{d},$$
$$\qcfs{n}{a, b}{n+1}{c, N}=:\qcfs{n}{a, b}{n+1}{c}.$$
If $a=0$ and $d=N$, we will have
$$\qcfs{n}{0, b}{n+1}{c, N}=\qcfs{n}{b}{n+1}{c}.$$
A similar equality holds when $b=N, c=0$. If $a=0$, $b=N$, $c=0$ and $d=N$, we will have that 
$$\qcfs{n}{0, N}{n+1}{0, N}=\qcfs{n}{0}{n+1}{0}.$$

\end{itemize}

The proof of the mutation identities is much like when $G=Sp_{2n}$. The first three sets of identities are the most important. They are variations on the octahedron recurrence. When $i+j < n$ and $i < j$, the three stages use the first, second and third set of identities, respectively. When $i+j \geq n$ and $i < j \neq n$, the two stages use the second and third set of identities, respectively. When $i+j < n$ and $i \geq j$, the two stages use the first and second set of identities, respectively. When $i+j \geq n$ and $i \geq j$, the one stage uses only the second set of identities.

The last three sets of identities are used to give degenerate versions of the previous three sets of identities.

The main novelty occurs when $j=n$. Here we are mutating black vertices. Here we will need to derive some new identities. The general mutation identity when $j=n$ has the following form:

$$\sqrt{\qcfs{n}{a, N-a}{n+1}{b, N-b}}\sqrt{\qcfs{n}{a+1, N-a-1}{n+1}{b+1, N-b-1}}=$$
$$\sqrt{\qcfs{n}{a+1, N-a-1}{n+1}{b, N-b}}\sqrt{\qcfs{n}{a, N-a}{n+1}{b+1, N-b-1}}+$$
$$\qcfs{n}{a+1, N-a}{n+1}{b, N-b-1}.$$

The above identity in turn follows from the following identities:
\begin{itemize}
\item $$\qcfs{n}{a, N-a}{n+1}{b, N-b}\qcfs{n}{a+1, N-a}{n}{b+1, N-b-1}=$$
$$\qcfs{n}{a+1, N-a}{n}{b, N-b}\qcfs{n}{a, N-a}{n+1}{b+1, N-b-1}+$$
$$\qcfs{n}{a+1, N-a}{n+1}{b, N-b-1}\qcfs{n}{a, N-a}{n}{b+1, N-b}$$
\item $$\qcfs{n}{a+1, N-a}{n}{b+1, N-b-1}=$$
$$\sqrt{2\qcfs{n}{a, N-a}{n+1}{b+1, N-b-1}\qcfs{n}{a+1, N-a-1}{n+1}{b+1, N-b-1}}$$
\item $$\qcfs{n}{a+1, N-a}{n}{b, N-b}=$$
$$\sqrt{2\qcfs{n}{a, N-a}{n+1}{b, N-b}\qcfs{n}{a+1, N-a-1}{n+1}{b, N-b}}$$
\item $$\qcfs{n}{a, N-a}{n}{b+1, N-b}=$$
$$\sqrt{2\qcfs{n}{a, N-a}{n+1}{b+1, N-b-1}\qcfs{n}{a, N-a}{n+1}{b, N-b}}$$
\end{itemize}

Simply substitute each term on the left hand side of the last three identities with the corresponding term on the right-hand side into the first identity. Cancelling will give the general mutation identity. All other mutation identities for $j=n$ come from this one using degeneracies.

The first of the above identities is of a type we have seen before. The last three identities are equivalent. We will prove the first of the last three. Observe that a variation on the octahedron recurrence gives
$$\qcfs{n}{a+1, N-a}{n}{b+1, N-b-1}\qcfs{n+1}{a, N-a-1}{n+1}{b+1, N-b-1}=$$
$$\qcfs{n+1}{a, N-a}{n}{b+1, N-b-1}\qcfs{n}{a+1, N-a-1}{n+1}{b+1, N-b-1}+$$
$$\qcfs{n+1}{a+1, N-a-1}{n}{b+1, N-b-1}\qcfs{n}{a, N-a}{n+1}{b+1, N-b-1}.$$

Note that by duality
$$\qcfs{n}{a+1, N-a}{n}{b+1, N-b-1}=\qcfs{n+1}{a, N-a-1}{n+1}{b+1, N-b-1},$$
$$\qcfs{n+1}{a, N-a}{n}{b+1, N-b-1}=\qcfs{n}{a+1, N-a-1}{n+1}{b+1, N-b-1},$$
$$\qcfs{n+1}{a+1, N-a-1}{n}{b+1, N-b-1}=\qcfs{n}{a, N-a}{n+1}{b+1, N-b-1},$$
so we get 
$$\qcfs{n}{a+1, N-a}{n}{b+1, N-b-1}=$$
$$\sqrt{2\qcfs{n}{a, N-a}{n+1}{b+1, N-b-1}\qcfs{n}{a+1, N-a-1}{n+1}{b+1, N-b-1}}$$
as desired.

\end{proof}

\section{The cluster algebra structure on $\Conf_m G/U$ for $G=Spin_{2n}$}

We now define the cluster algebra structure on $\Conf_m G/U$ when $G=Spin_{2n}$. In fact, to emphasize the parallels with the case of $G=Spin_{2n+1}$, we will let $G=Spin_{2n+2}$. When $G=Spin_{2n+2}$, the cluster algebra structure, along with the mutations realizing $S_3$ symmetries and the flip of triangulation, will be an unfolding of the same structures for $G=Spin_{2n+1}$

We will utilize what we understand about functions on $\Conf_m \A_{SL_{2n+2}}$ in order to study $\Conf_m \A_{Spin_{2n+2}}$. However, because the Dynkin diagram of $Spin_{2n+2}$ is not obtained from $SL_{2n+2}$ by folding, as was the case for $Sp_{2n}$ and $Spin_{2n+1}$, there will be additional complications, particularly regarding signs.

Recall that $Spin_{2n+2}$ is the double cover of the group $SO_{2n+2}$, which is the subgroup of $SL_{2n+2}$ preserving a symmetric quadratic form. We take the quadratic form given in the basis standard basis $e_1, \dots, e_{2n+2}$ by
$$<e_i, e_{2n+3-i}>=(-1)^{i-1}$$
for $1 \leq i \leq n+1$, and $<e_i,e_j>=0$ otherwise.

\begin{rmk} Note that the signature of the quadratic form is $(n+1,n+1)$, so that taking real points gives the split real form of $SO_{2n+2}$. The cluster algebra structure on $\Conf_m \A_{Spin_{2n+2}}$ gives another way of defining the positive structure on $\A_{Spin_{2n+2}, S}$, which gives a parameterization of the Hitchin component for the group $Spin_{2n+2}$ and the surface $S$.
\end{rmk}

The maps
$$Spin_{2n+2} \twoheadrightarrow SO_{2n+2} \hookrightarrow SL_{2n+2}$$
induce maps
$$\Conf_m \A_{Spin_{2n+2}} \rightarrow \Conf_m \A_{SO_{2n+2}} \rightarrow \Conf_m \A_{SL_{2n+2}}.$$

Let us describe these maps concretely. The variety $\A_{SO_{2n+2}}$ parameterizes chains of isotropic vector spaces 
$$V_1 \subset V_2 \subset \cdots \subset V_{n+1} \subset V$$ inside the $2n+2$-dimensional standard representation $V$, where $\operatorname{dim} V_i= i$, and where each $V_i$ is equipped with a volume form.

Equivalently, a point of $\A_{SO_{2n+2}}$ is given by a sequence of vectors 
$$v_1, v_2, \dots, v_{n+1},$$
where $$V_i:=<v_1, \dots, v_i>$$ is isotropic, and where $v_i$ is only determined up to adding linear combinations of $v_j$ for $j < i$.

The volume form on $V_i$ is then $v_1 \wedge \cdots \wedge v_i$.

From the sequence of vectors $v_1, \dots, v_{n+1}$, we can complete to a basis $v_1, v_2, \dots v_{2n+2}$, where $<v_i, v_{2n+3-i}>=(-1)^{i-1}$, and $<v_i,v_j>=0$ otherwise. Equivalently, the quadratic form induces an isomorphism $<-,-> : V \rightarrow V^*$. At the same time, there are perfect pairings
$$\bigwedge\nolimits^k V \times \bigwedge\nolimits^k V^* \rightarrow F$$
$$\bigwedge\nolimits^{2n+2-k} V \times \bigwedge\nolimits^k V \rightarrow F$$
that induce an isomorphism
$$\bigwedge\nolimits^{2n+2-k} V \simeq \bigwedge\nolimits^k V^*.$$ Composing this with the inverse of the isomorphism 
$$<-,-> : \bigwedge\nolimits^k V \rightarrow \bigwedge\nolimits^k V^*$$
gives an isomorphism 
$$\bigwedge\nolimits^{2n+2-k} V \simeq \bigwedge\nolimits^k V^* \simeq \bigwedge\nolimits^k V.$$
Then $v_{n+2}, \dots, v_{2n+2}$ are chosen so that this isomorphism takes $v_1 \wedge \cdots v_{i}$ to $v_1 \wedge \cdots v_{2n+2-i}$ for $i\leq n+1$.

Then $v_1, v_2, \dots v_{2n+2}$ determines a point of $\A_{SL_{2n+2}}$, as $\A_{SL_{2n+2}}$ parameterizes chains of vector subspaces  
$$V_1 \subset V_2 \subset \cdots \subset V_{2n+2} = V$$ along with volume forms $v_1 \wedge \cdots v_{i}$, $1 \leq i \leq 2n+1$.

From the embedding 
$$\A_{SO_{2n+2}} \hookrightarrow \A_{SL_{2n+2}},$$
one naturally gets an embedding $\Conf_m \A_{SO_{2n+2}} \hookrightarrow \Conf_m \A_{SL_{2n+2}}$. We can then pull back functions from $\Conf_m \A_{SL_{2n+2}}$ to get functions on $\Conf_m \A_{SO_{2n+2}}$. However, we are ultimately interested in functions on $\Conf_m \A_{Spin_{2n+2}}$.

The functions on $\Conf_m \A_{Spin_{2n+2}}$ that we will use to define the cluster structure on $\Conf_m \A_{Spin_{2n+2}}$ will be invariants of tensor products of representations of $Spin_{2n+2}$. For $m=3$, they will lie inside 
$$[V_{\lambda} \otimes V_{\mu} \otimes V_{\nu}]^G$$
where $\lambda, \mu, \nu$ are elements of the dominant cone inside the weight lattice. In general, not all such functions will come from pulling back functions on $\Conf_m \A_{SL_{2n+2}}$.

However, suppose that 
$$f \in [V_{\lambda} \otimes V_{\mu} \otimes V_{\nu}]^G \subset \mathcal{O}(\Conf_m \A_{Spin_{2n+2}}).$$
Then
$$f^2 \in [V_{2\lambda} \otimes V_{2\mu} \otimes V_{2\nu}]^G \subset \mathcal{O}(\Conf_m \A_{Spin_{2n+2}}).$$
However, because $2\lambda, 2\mu, 2\nu$ are dominant weights for $SO_{2n+2}$, $f^2$ may be viewed as a function on $\Conf_m \A_{SO_{2n+2}}.$ This function is then a pull-back of a function on $\Conf_m \A_{SL_{2n+2}}.$ Therefore functions on $\Conf_m \A_{Spin_{2n+2}}$ are either the pull-backs of functions on $\Conf_m \A_{SL_{2n+2}}$ or square-roots of such functions. The square-root here corresponds to the fact that $Spin_{2n+2}$ is a double cover of $SO_{2n+2}$. The choice of the branch of the square-root that we take is determined by the positive structure on $\Conf_m \A_{Spin_{2n+2}}$: if $f$ is a positive function on $\Conf_m \A_{SL_{2n+2}}$ such that its square-root is a function on $\Conf_m \A_{Spin_{2n+2}}$, there is a unique choice of $\sqrt{f}$ that is positive on $\Conf_m \A_{Spin_{2n+2}}$. That is the square-root that we will always take. We discuss this issue further in the section on signs.

It will often be convenient to write down functions on $\Conf_m \A_{Spin_{2n+2}}$ in a slightly different way. Recall that $Spin_{2n+2}$ is associated to the Dynkin diagram $D_{n+1}$:

\begin{multicols}{2}
\begin{center}
\begin{tikzpicture}
    \draw (-1,0) node[anchor=east]  {$D_n$};

    \node[dnode,label=below:$1$] (1) at (0,0) {};
    \node[dnode,label=below:$2$] (2) at (1,0) {};
    \node[dnode,label=below:$n-3$] (3) at (3,0) {};
    \node[dnode,label=below:$n-2$] (4) at (4,0) {};
    \node[dnode,label=above:$n-1$] (5) at (5,0.5) {};
    \node[dnode,label=below:$n$] (6) at (5,-0.5) {};

    \path (1) edge[sedge] (2)
          (2) edge[sedge,dashed] (3)
          (3) edge[sedge] (4)
          (4) edge[sedge] (5)
              edge[sedge] (6)
          ;

\draw[yshift=-2cm]
  node[below,text width=6cm] 
  {
  Figure 36 $D_n$ Dynkin diagram
  };

\end{tikzpicture}
\end{center}
\end{multicols}

This diagram has an order two automorphism that gives an outer automorphism of $Spin_{2n+2}$ of order two. Under this map, the flag given by the sequence of vectors $v_1, v_2, \dots v_{2n+2}$ gets sent to the flag given by the sequence of vectors 
$$v_1, v_2, \dots, v_n, v_{n+2}, v_{n+1}, v_{n+3}, \dots, v_{2n+2}.$$
In other words, the vectors $v_{n+1}$ and $v_{n+2}$ switch places.

The existence of this automorphism of $Spin_{2n+2}$ means that there is a second map
$$\Conf_m \A_{Spin_{2n+2}} \rightarrow \Conf_m \A_{SL_{2n+2}}.$$
We can also pull back functions from $\Conf_m \A_{SL_{2n+2}}$ under this second map. We do not get any new functions in this way, but we will often get simpler expressions for our functions in this way. Thus the functions we consider will involve the volume forms $v_1 \wedge \cdots \wedge v_i$ as well as the volume form 
$$v_1 \wedge \cdots \wedge v_n \wedge v_{n+2}.$$

\subsection{Construction of the seed}

We are now ready to construct the seed for the cluster structure on $\Conf_m \A$ when $G=Spin_{2n+2}$. Throughout this section, $G=Spin_{2n+2}$ unless otherwise noted.

The nodes of the diagram correspond to $n+1$ roots that all have the same length. To describe the cluster structure on $\Conf_3 \A$, we need to give the following data: the set $I$ parameterizing vertices, the functions on $\Conf_3 \A$ corresponding to each vertex, and the $B$-matrix for this seed.

The $B$-matrix is encoded via a quiver which  consists of $n^2+3n+2$ vertices, all of which have $d_i=1$. We color all the vertices black. There are $n+1$ edge functions for each edge of the triangle, and $n^2-1$ face functions. The entries of the $B$-matrix are $\pm 1$, and an arrow from $i$ to $j$ means that $b_{ij}>0$.

In Figure 37a, we see the quiver for $Spin_8$. The generalization for other values of $n$ should be clear.

\begin{center}
\begin{tikzpicture}[scale=2]
\begin{scope}[xshift=-1cm]
  \foreach \x in {1, 2}
    \foreach \y in {0, 1, 2, 3}
      \node[] (x\y\x) at (\x,-\y) {\Large $\ontop{x_{\y\x}}{\bullet}$};
  \node (x33) at (4,-3.8) {\Large $\ontop{x_{33}}{\bullet}$};
  \node (x23) at (4,-2.8) {\Large $\ontop{x_{23}}{\bullet}$};
  \node (x13) at (4,-1.8) {\Large $\ontop{x_{13}}{\bullet}$};
  \node (x03) at (4,-0.8) {\Large $\ontop{x_{03}}{\bullet}$};
  \node (x33*) at (3.5,-2.2) {\Large $\ontop{x_{33^*}}{\bullet}$};
  \node (x23*) at (3.5,-1.2) {\Large $\ontop{x_{23^*}}{\bullet}$};
  \node (x13*) at (3.5,-0.2) {\Large $\ontop{x_{13^*}}{\bullet}$};
  \node (x03*) at (3.5,0.8) {\Large $\ontop{x_{03^*}}{\bullet}$};
  \node[] (y1) at (0,-2)  {\Large $\ontop{y_1}{\bullet}$};
  \node[] (y2) at (0,-1)  {\Large $\ontop{y_2}{\bullet}$};
  \node[] (y3) at (5,-1.5)  {\Large $\ontop{y_3}{\bullet}$};
  \node[] (y3*) at (4.5,0.1)  {\Large $\ontop{y_3}{\bullet}$};

  \draw [->] (x31) to (x21);
  \draw [->] (x21) to (x11);
  \draw [->] (x11) to (x01);
  \draw [->] (x32) to (x22);
  \draw [->] (x22) to (x12);
  \draw [->] (x12) to (x02);
  \draw [->] (x33) to (x23);
  \draw [->] (x23) to (x13);
  \draw [->] (x13) to (x03);
  \draw [->, dashed] (y2) .. controls +(up:2) and +(up:2) .. (y3);
  \draw [->, dashed] (y1) to (y2);

  \draw [->, dashed] (x03) to (x02);
  \draw [->, dashed] (x02) to (x01);
  \draw [->] (y3) to (x13);
  \draw [->] (x13) to (x12);
  \draw [->] (x12) to (x11);
  \draw [->] (x11) to (y2);
  \draw [->] (x23) to (x22);
  \draw [->] (x22) to (x21);
  \draw [->] (x21) to (y1);
  \draw [->, dashed] (x33) to (x32);
  \draw [->, dashed] (x32) to (x31);

  \draw [->] (x01) to (x12);
  \draw [->] (x02) to (x13);
  \draw [->] (x03) to (y3);
  \draw [->] (y2) to (x21);
  \draw [->] (x11) to (x22);
  \draw [->] (x12) to (x23);
  \draw [->] (y1) to (x31);
  \draw [->] (x21) to (x32);
  \draw [->] (x22) to (x33);

  \draw [->] (x33*) to (x23*);
  \draw [->] (x23*) to (x13*);
  \draw [->] (x13*) to (x03*);
  \draw [->, dashed] (y2) .. controls +(up:2) and +(up:2) .. (y3*);

  \draw [->, dashed] (x03*) to (x02);
  \draw [->] (y3*) to (x13*);
  \draw [->] (x13*) to (x12);
  \draw [->] (x23*) to (x22);
  \draw [->, dashed] (x33*) to (x32);

  \draw [->] (x02) to (x13*);
  \draw [->] (x03*) to (y3*);
  \draw [->] (x12) to (x23*);
  \draw [->] (x22) to (x33*);

\end{scope}

\draw[yshift=-4.5cm,xshift=1cm]
  node[below,text width=6cm] 
  {
  Figure 37a. Quiver encoding the cluster structure for $\Conf_3 \A_{Spin_{8}}$.
  };

\end{tikzpicture}
\end{center}

The diagram is busy because the vertices $x_{i3}$ and $y_3$ are doubled by the vertices $x_{i3^*}$ and $y_{3^*}$. In Figure 37b we can see the quiver without the vertices $x_{i3}$ and $y_3$. In Figure 37c, we can see the quiver without the vertices  $x_{i3^*}$ and $y_{3^*}$.

\begin{center}
\begin{tikzpicture}[scale=2]
\begin{scope}[xshift=-1cm]
  \foreach \x in {1, 2}
    \foreach \y in {0, 1, 2, 3}
      \node[] (x\y\x) at (\x,-\y) {\Large $\ontop{x_{\y\x}}{\bullet}$};
  \node (x33*) at (3.5,-2.2) {\Large $\ontop{x_{33^*}}{\bullet}$};
  \node (x23*) at (3.5,-1.2) {\Large $\ontop{x_{23^*}}{\bullet}$};
  \node (x13*) at (3.5,-0.2) {\Large $\ontop{x_{13^*}}{\bullet}$};
  \node (x03*) at (3.5,0.8) {\Large $\ontop{x_{03^*}}{\bullet}$};
  \node[] (y1) at (0,-2)  {\Large $\ontop{y_1}{\bullet}$};
  \node[] (y2) at (0,-1)  {\Large $\ontop{y_2}{\bullet}$};
  \node[] (y3*) at (4.5,0.1)  {\Large $\ontop{y_3}{\bullet}$};

  \draw [->] (x31) to (x21);
  \draw [->] (x21) to (x11);
  \draw [->] (x11) to (x01);
  \draw [->] (x32) to (x22);
  \draw [->] (x22) to (x12);
  \draw [->] (x12) to (x02);
  \draw [->, dashed] (y1) to (y2);

  \draw [->, dashed] (x02) to (x01);
  \draw [->] (x12) to (x11);
  \draw [->] (x11) to (y2);
  \draw [->] (x22) to (x21);
  \draw [->] (x21) to (y1);
  \draw [->, dashed] (x32) to (x31);

  \draw [->] (x01) to (x12);
  \draw [->] (y2) to (x21);
  \draw [->] (x11) to (x22);
  \draw [->] (y1) to (x31);
  \draw [->] (x21) to (x32);

  \draw [->] (x33*) to (x23*);
  \draw [->] (x23*) to (x13*);
  \draw [->] (x13*) to (x03*);
  \draw [->, dashed] (y2) .. controls +(up:2) and +(up:2) .. (y3*);

  \draw [->, dashed] (x03*) to (x02);
  \draw [->] (y3*) to (x13*);
  \draw [->] (x13*) to (x12);
  \draw [->] (x23*) to (x22);
  \draw [->, dashed] (x33*) to (x32);

  \draw [->] (x02) to (x13*);
  \draw [->] (x03*) to (y3*);
  \draw [->] (x12) to (x23*);
  \draw [->] (x22) to (x33*);

\end{scope}

\draw[yshift=-4.5cm,xshift=1cm]
  node[below,text width=6cm] 
  {
  Figure 37b. Quiver encoding the cluster structure for $\Conf_3 \A_{Spin_{8}}$ without the vertices $x_{i3}$ and $y_3$.
  };

\end{tikzpicture}
\end{center}

\begin{center}
\begin{tikzpicture}[scale=2]
\begin{scope}[xshift=-1cm]
  \foreach \x in {1, 2}
    \foreach \y in {0, 1, 2, 3}
      \node[] (x\y\x) at (\x,-\y) {\Large $\ontop{x_{\y\x}}{\bullet}$};
  \node (x33) at (4,-3.8) {\Large $\ontop{x_{33}}{\bullet}$};
  \node (x23) at (4,-2.8) {\Large $\ontop{x_{23}}{\bullet}$};
  \node (x13) at (4,-1.8) {\Large $\ontop{x_{13}}{\bullet}$};
  \node (x03) at (4,-0.8) {\Large $\ontop{x_{03}}{\bullet}$};
  \node[] (y1) at (0,-2)  {\Large $\ontop{y_1}{\bullet}$};
  \node[] (y2) at (0,-1)  {\Large $\ontop{y_2}{\bullet}$};
  \node[] (y3) at (5,-1.5)  {\Large $\ontop{y_3}{\bullet}$};

  \draw [->] (x31) to (x21);
  \draw [->] (x21) to (x11);
  \draw [->] (x11) to (x01);
  \draw [->] (x32) to (x22);
  \draw [->] (x22) to (x12);
  \draw [->] (x12) to (x02);
  \draw [->] (x33) to (x23);
  \draw [->] (x23) to (x13);
  \draw [->] (x13) to (x03);
  \draw [->, dashed] (y2) .. controls +(up:2) and +(up:2) .. (y3);
  \draw [->, dashed] (y1) to (y2);

  \draw [->, dashed] (x03) to (x02);
  \draw [->, dashed] (x02) to (x01);
  \draw [->] (y3) to (x13);
  \draw [->] (x13) to (x12);
  \draw [->] (x12) to (x11);
  \draw [->] (x11) to (y2);
  \draw [->] (x23) to (x22);
  \draw [->] (x22) to (x21);
  \draw [->] (x21) to (y1);
  \draw [->, dashed] (x33) to (x32);
  \draw [->, dashed] (x32) to (x31);

  \draw [->] (x01) to (x12);
  \draw [->] (x02) to (x13);
  \draw [->] (x03) to (y3);
  \draw [->] (y2) to (x21);
  \draw [->] (x11) to (x22);
  \draw [->] (x12) to (x23);
  \draw [->] (y1) to (x31);
  \draw [->] (x21) to (x32);
  \draw [->] (x22) to (x33);

\end{scope}

\draw[yshift=-4.5cm,xshift=1cm]
  node[below,text width=6cm] 
  {
  Figure 37c. Quiver encoding the cluster structure for $\Conf_3 \A_{Spin_{8}}$ without the vertices  $x_{i3^*}$ and $y_{3^*}$.
  };

\end{tikzpicture}
\end{center}

A dotted arrow means that $b_{ij}=\pm \frac{1}{2}$.

For simplicity, all future diagrams will only contain the vertices $x_{in}$ and $y_n$, and not the vertices $x_{in^*}$ and $y_{n^*}$, which merely double them.

We will no longer use single letters like $i, j$ to denote vertices of the quiver, because it will be convenient for us to use the pairs $(i, j)$ to parameterize the vertices of the quiver. In the formulas in the remainder of this section, we will not refer to the particular entries of the $B$-matrix, $b_{ij}$. Instead, the values of the entries of the $B$-matrix will be encoded in quivers. This will hopefully avoid any notational confusion.

Label the vertices of the quiver $x_{ij}$ and $y_k$, where $0 \leq i \leq n+1$, $j=1, 2, 3, \dots, n, n^*$, $k=1, 2, 3, \dots, n, n^*$. The vertices $y_k$ and $x_{ij}$ for $i= 0$ or $n$ are frozen. We will sometimes write $x_{i,j}$ for $x_{ij}$ for orthographic reasons. Note that the dotted arrows only go between frozen vertices, thus the entries $b_{ij}$ of the $B$-matrix are integral unless $i$ and $j$ are both frozen, and thus the $B$-matrix defines a cluster algebra.

Let us now recall some facts about the representation theory of $Spin_{2n+2}$. The fundamental representations of $Spin_{2n+2}$ are labelled by the fundamental weights $\omega_1, \dots, \omega_n, \omega_{n^*}$. $Spin_{2n+2}$ has a standard $2n+1$-dimensional representation $V$. Let $<-,->$ be the orthogonal pairing. Then for $i < n$ the representation $V_{\omega_i}$ corresponding to $\omega_i$ is precisely $\bigwedge\nolimits^i V$. The representations $V_{\omega_n}, V_{\omega_{n^*}}$ are the \emph{spin representations} of $Spin_{2n+2}$. When $n$ is even, the spin representations are dual to each other. When $n$ is odd, the spin representations are self-dual. The the direct sum of the representations $V_{2\omega_{n^*}}$ and $V_{2\omega_n}$ is isomorphic to $\bigwedge\nolimits^{n+1} V$, and $V_{\omega_n+\omega_{n^*}}$ is isomorphic to $\bigwedge\nolimits^{n} V$.

We now say which functions are attached to the vertices of the quiver. Recall the functions defined via the webs from Figures 3, 4, and 6. It will be convenient to describe the functions attached to $x_{ij}$ for $j \leq n-1$ and $y_k$ for $k \leq n-1$ first:

\begin{enumerate}
\item For $k \leq n-1$, assign the function $\dud{k}{2n+2-k}(-1)^k=\dud{2n+2-k}{k}$ to $y_k$. 
\item When $i \geq j$, assign the function $\tcfr{n-i}{n+2+i-j}{j}(-1)^{n-i}$ to $x_{ij}$.
\item When $i < j$ and $i \neq 0$, we assign the function $\tcfr{n-i, 2n+2+i-j}{n+2}{j}(-1)^{n-i}$ to $x_{ij}$. 
\item When $i=0$, we assign the function $\tcfr{2n+2-j}{}{j}$ to $x_{ij}$.
\end{enumerate}

Now, when $j$ or $k$ are equal to $n$ or $n^*$, the story is somewhat more complicated. As in the case of $Spin_{2n+1}$, the functions involve square-roots. But then there are two additional complications: first, we will need to slightly modify some of the functions which we previously defined in order to deal with the fact that there are two spin representations of $Spin_{2n+2}$; and second, the somewhat different behavior of these spin representations for $n$ odd and $n$ even means we will need to treat these cases separately.

Recall that in Figure 3, we defined functions of the form $\tcfr{a, b}{c}{d}$. We will now need to define some new functions of the form $\tcfr{a, b}{c}{n+1*}$, where $a+b+c=3n+3$. This is a function on the space of configurations of three principal flags for the group $Spin_{2n+2}$. Suppose these flags are given in terms of the three flags
$$u_1, \dots, u_N;$$
$$v_1, \dots, v_N;$$
$$w_1, \dots, w_N;$$
where $N=2n+2$.

Now consider the forms
$$U_{a} := u_1 \wedge \cdots \wedge u_{a},$$
$$U_b := u_1 \wedge \cdots \wedge u_b,$$
$$V_c := v_1 \wedge \cdots \wedge v_c,$$ 
$$W_{n+1*} := w_1 \wedge \cdots \wedge w_n \wedge w_{n+2}.$$

The function $\tcfr{a, b}{c}{n+1^*}$ is defined in the same way as the function $\tcfr{a, b}{c}{n+1}$, except everywhere where one had $W_{n+1}:= w_1 \wedge \cdots \wedge w_n \wedge w_{n+1}$, one replaces this by $W_{n+1^*} := w_1 \wedge \cdots \wedge w_n \wedge w_{n+2}.$

Thus there is a natural map 
$$\phi_{a+c-N, N-a}: \bigwedge\nolimits^{c} V \rightarrow \bigwedge\nolimits^{a+c-N} V \otimes \bigwedge\nolimits^{N-a} V.$$
There are also natural maps 
$$U_b \wedge - \wedge W_{n+1*} :  \bigwedge\nolimits^{a+c-N} V \rightarrow \bigwedge\nolimits^{N} V \simeq F$$ and 
$$U_a \wedge - : \bigwedge\nolimits^{N-a} V \rightarrow \bigwedge\nolimits^{N} V \simeq F.$$
Applying these maps to the first and second factors of $\phi_{a+c-N,N-a}(V_c)$, respectively, and then multiplying, we get get the value of our function. This is a function on $\Conf_3 \A_{SL_{N}}$ which pulls back to a function on $\Conf_3 \A_{Spin_{N}}$.

Recall that there is an outer automorphism of the group $Spin_{2n+2}$. On the level of flags, this automorphism takes the $n+1$-form $W_{n+1}$ to the $n+1$-form $W_{n+1*}$. All the new functions we will need to define the cluster algebra structure on $\Conf_3 \A_{Spin_{N}}$ will involve taking a previously defined function and substituting $n+1*$ for $n+1$ for some subset of the arguments. For example, it is straightforward to define the functions $\sqrt{-\dud{n+1^*}{n+1}}$ and $\sqrt{\tcfr{n-i, n+2+i}{n+1}{n+1^*}}$, which we will use below.

We will distinguish two cases: $n$ even or $n$ odd. First suppose $n$ is even. Then we assign functions as follows:

\begin{enumerate}
\item Assign the function $\sqrt{-\dud{n+1^*}{n+1}}$ to $y_n$ and $\sqrt{\dud{n+1}{n+1^*}}$ to $y_{n^*}$. 
\item Assign the function $\sqrt{-\tcfr{}{n+1^*}{n+1}}$ to $x_{nn}$ and $\sqrt{\tcfr{}{n+1}{n+1^*}}$ to $x_{nn^*}$. 
\item When $0 < i < n$, $i$ odd, we assign the function $\sqrt{-\tcfr{n-i, n+2+i}{n+1}{n+1}}$ to $x_{in}$ and $\sqrt{\tcfr{n-i, n+2+i}{n+1^*}{n+1^*}}$ to $x_{in^*}$. 
\item When $0 < i < n$, $i$ even, we assign the function $\sqrt{-\tcfr{n-i, n+2+i}{n+1^*}{n+1}}$ to $x_{in}$ and $\sqrt{\tcfr{n-i, n+2+i}{n+1}{n+1^*}}$ to $x_{in^*}$.
\item Assign the function $\sqrt{-\tcfr{n+1^*}{}{n+1}}$ to $x_{0n}$ and $\sqrt{\tcfr{n+1}{}{n+1^*}}$ to $x_{0n^*}$. 
\end{enumerate}

When $n$ is odd, we assign functions as follows:

\begin{enumerate}
\item Assign the function $\sqrt{\dud{n+1}{n+1}}$ to $y_n$ and $\sqrt{-\dud{n+1^*}{n+1^*}}$ to $y_{n^*}$. 
\item Assign the function $\sqrt{\tcfr{}{n+1}{n+1}}$ to $x_{nn}$ and $\sqrt{-\tcfr{}{n+1^*}{n+1^*}}$ to $x_{nn^*}$. 
\item When $0 < i < n$, $i$ odd, we assign the function $\sqrt{\tcfr{n-i, n+2+i}{n+1^*}{n+1}}$ to $x_{in}$ and $\sqrt{-\tcfr{n-i, n+2+i}{n+1}{n+1^*}}$ to $x_{in^*}$. 
\item When $0 < i < n$, $i$ even, we assign the function $\sqrt{\tcfr{n-i, n+2+i}{n+1}{n+1}}$ to $x_{in}$ and $\sqrt{-\tcfr{n-i, n+2+i}{n+1^*}{n+1^*}}$ to $x_{in^*}$.
\item Assign the function $\sqrt{\tcfr{n+1}{}{n+1}}$ to $x_{0n}$ and $\sqrt{-\tcfr{n+1^*}{}{n+1^*}}$ to $x_{0n^*}$. 
\end{enumerate}

This completely describes the cluster structure on $\Conf_3 \A_{Spin_{2n+2}}$. The fact that we can take the square-roots of the functions assigned to $x_{in}$ and $y_n$ and get functions that are well-defined on $\Conf_3 \A_{Spin_{2n+2}}$ follows from the computations with reduced words that we perform later. Note that the cluster structure is not symmetric with respect to the three flags. Performing various $S_3$ symmetries, we obtain six different possible cluster structures on  $\Conf_3 \A_{Spin_{2n+2}}$. These six structures are related by sequences of mutations that we describe in the next section. Below, in Figure 38, we depict two of the cluster structures for $\Conf_3 \A_{Spin_{12}}$ obtained from our original cluster structure by an $S_3$ symmetry.

\begin{center}
\begin{tikzpicture}[scale=2.4]
  \node (x01) at (-1,0) {$\tcfr{9}{}{1}$};
  \node (x02) at (0,0) {$\tcfr{8}{}{2}$};
  \node (x03) at (1,0) {$\tcfr{7}{}{3}$};
  \node (x04) at (2,0) {$\sqrt{-\tcfr{5^*}{}{5}}$};
  \node (x11) at (-1,-1) {$-\tcfr{3}{6}{1}$};
  \node (x12) at (0,-1) {$-\tcfr{3, 9}{6}{2}$};
  \node (x13) at (1,-1) {$-\tcfr{3, 8}{6}{3}$};
  \node (x14) at (2,-1) {$\sqrt{-\tcfr{3, 7}{5}{5}}$};
  \node (x21) at (-1,-2) {$\tcfr{2}{7}{1}$};
  \node (x22) at (0,-2) {$\tcfr{2}{6}{2}$};
  \node (x23) at (1,-2) {$\tcfr{2, 9}{6}{3}$};
  \node (x24) at (2,-2) {$\sqrt{-\tcfr{2, 8}{5^*}{5}}$};
  \node (x31) at (-1,-3) {$-\tcfr{1}{8}{1}$};
  \node (x32) at (0,-3) {$-\tcfr{1}{7}{2}$};
  \node (x33) at (1,-3) {$-\tcfr{1}{6}{3}$};
  \node (x34) at (2,-3) {$\sqrt{-\tcfr{1, 9}{5}{5}}$};
  \node (x41) at (-1,-4) {$\tcfr{}{9}{1}$};
  \node (x42) at (0,-4) {$\tcfr{}{8}{2}$};
  \node (x43) at (1,-4) {$\tcfr{}{7}{3}$};
  \node (x44) at (2,-4) {$\sqrt{-\tcfr{}{5^*}{5}}$};
  \node (y1) at (-2,-3) {$-\dud{1}{9}$};
  \node (y2) at (-2,-2) {$\dud{2}{8}$};
  \node (y3) at (-2,-1) {$-\dud{3}{7}$};
  \node (y4) at (3,-1) {$\sqrt{-\dud{5^*}{5}}$};

  \draw [->] (x41) to (x31);
  \draw [->] (x31) to (x21);
  \draw [->] (x21) to (x11);
  \draw [->] (x11) to (x01);
  \draw [->] (x42) to (x32);
  \draw [->] (x32) to (x22);
  \draw [->] (x22) to (x12);
  \draw [->] (x12) to (x02);
  \draw [->] (x43) to (x33);
  \draw [->] (x33) to (x23);
  \draw [->] (x23) to (x13);
  \draw [->] (x13) to (x03);
  \draw [->] (x44) to (x34);
  \draw [->] (x34) to (x24);
  \draw [->] (x24) to (x14);
  \draw [->] (x14) to (x04);
  \draw [->, dashed] (y3) .. controls +(up:2) and +(up:2) .. (y4);
  \draw [->, dashed] (y2) to (y3);
  \draw [->, dashed] (y1) to (y2);

  \draw [->, dashed] (x04) to (x03);
  \draw [->, dashed] (x03) to (x02);
  \draw [->, dashed] (x02) to (x01);
  \draw [->] (y4) to (x14);
  \draw [->] (x14) to (x13);
  \draw [->] (x13) to (x12);
  \draw [->] (x12) to (x11);
  \draw [->] (x11) to (y3);
  \draw [->] (x24) to (x23);
  \draw [->] (x23) to (x22);
  \draw [->] (x22) to (x21);
  \draw [->] (x21) to (y2);
  \draw [->] (x34) to (x33);
  \draw [->] (x33) to (x32);
  \draw [->] (x32) to (x31);
  \draw [->] (x31) to (y1);
  \draw [->, dashed] (x44) to (x43);
  \draw [->, dashed] (x43) to (x42);
  \draw [->, dashed] (x42) to (x41);

  \draw [->] (x01) to (x12);
  \draw [->] (x02) to (x13);
  \draw [->] (x03) to (x14);
  \draw [->] (x04) to (y4);
  \draw [->] (y3) to (x21);
  \draw [->] (x11) to (x22);
  \draw [->] (x12) to (x23);
  \draw [->] (x13) to (x24);
  \draw [->] (y2) to (x31);
  \draw [->] (x21) to (x32);
  \draw [->] (x22) to (x33);
  \draw [->] (x23) to (x34);
  \draw [->] (y1) to (x41);
  \draw [->] (x31) to (x42);
  \draw [->] (x32) to (x43);
  \draw [->] (x33) to (x44);

\draw[yshift=-5.5cm,xshift=1cm]
  node[below,text width=6cm] 
  {
  Figure 38a. One cluster structure for $\Conf_3 \A_{Spin_{10}}$.
  };

\end{tikzpicture}
\end{center}

\begin{center}
\begin{tikzpicture}[scale=2.6]

  \node (x01) at (-2,-1) {$\dud{9}{1}$};
  \node (x02) at (-2,-2) {$\dud{8}{2}$};
  \node (x03) at (-2,-3) {$\dud{7}{3}$};
  \node (x04) at (-2,-4) {$\sqrt{-\dud{5^*}{5}}$};
  \node (x11) at (-1,-1) {$\tcfr{3}{1}{6}$};
  \node (x12) at (-1,-2) {$\tcfr{3, 9}{2}{6}$};
  \node (x13) at (-1,-3) {$\tcfr{3, 8}{3}{6}$};
  \node (x14) at (-1,-4) {$\sqrt{-\tcfr{3, 7}{5}{5}}$};
  \node (x21) at (0,-1) {$-\tcfr{2}{1}{7}$};
  \node (x22) at (0,-2) {$\tcfr{2}{2}{6}$};
  \node (x23) at (0,-3) {$\tcfr{2, 9}{3}{6}$};
  \node (x24) at (0,-4) {$\sqrt{\tcfr{2, 8}{5}{5^*}}$};
  \node (x31) at (1,-1) {$\tcfr{1}{1}{8}$};
  \node (x32) at (1,-2) {$-\tcfr{1}{2}{7}$};
  \node (x33) at (1,-3) {$\tcfr{1}{3}{6}$};
  \node (x34) at (1,-4) {$\sqrt{-\tcfr{1, 9}{5}{5}}$};
  \node (x41) at (2,-1) {$-\tcfr{}{1}{9}$};
  \node (x42) at (2,-2) {$\tcfr{}{2}{8}$};
  \node (x43) at (2,-3) {$-\tcfr{}{3}{7}$};
  \node (x44) at (2,-4) {$\sqrt{\tcfr{}{5}{5^*}}$};
  \node (y1) at (1,0) {$-\dur{1}{9}$};
  \node (y2) at (0,0) {$\dur{2}{8}$};
  \node (y3) at (-1,0) {$-\dur{3}{7}$};
  \node (y4) at (-1,-5) {$\sqrt{-\dur{5^*}{5}}$};

  \draw [->] (x01) to (x11);
  \draw [->] (x11) to (x21);
  \draw [->] (x21) to (x31);
  \draw [->] (x31) to (x41);
  \draw [->] (x02) to (x12);
  \draw [->] (x12) to (x22);
  \draw [->] (x22) to (x32);
  \draw [->] (x32) to (x42);
  \draw [->] (x03) to (x13);
  \draw [->] (x13) to (x23);
  \draw [->] (x23) to (x33);
  \draw [->] (x33) to (x43);
  \draw [->] (x04) to (x14);
  \draw [->] (x14) to (x24);
  \draw [->] (x24) to (x34);
  \draw [->] (x34) to (x44);
  \draw [->, dashed] (y4) .. controls +(left:2) and +(left:2) .. (y3);
  \draw [->, dashed] (y3) to (y2);
  \draw [->, dashed] (y2) to (y1);

  \draw [->, dashed] (x01) to (x02);
  \draw [->, dashed] (x02) to (x03);
  \draw [->, dashed] (x03) to (x04);
  \draw [->] (y3) to (x11);
  \draw [->] (x11) to (x12);
  \draw [->] (x12) to (x13);
  \draw [->] (x13) to (x14);
  \draw [->] (x14) to (y4);
  \draw [->] (y2) to (x21);
  \draw [->] (x21) to (x22);
  \draw [->] (x22) to (x23);
  \draw [->] (x23) to (x24);
  \draw [->] (y1) to (x31);
  \draw [->] (x31) to (x32);
  \draw [->] (x32) to (x33);
  \draw [->] (x33) to (x34);
  \draw [->, dashed] (x41) to (x42);
  \draw [->, dashed] (x42) to (x43);
  \draw [->, dashed] (x43) to (x44);

  \draw [->] (x12) to (x01);
  \draw [->] (x13) to (x02);
  \draw [->] (x14) to (x03);
  \draw [->] (y4) to (x04);
  \draw [->] (x21) to (y3);
  \draw [->] (x22) to (x11);
  \draw [->] (x23) to (x12);
  \draw [->] (x24) to (x13);
  \draw [->] (x31) to (y2);
  \draw [->] (x32) to (x21);
  \draw [->] (x33) to (x22);
  \draw [->] (x34) to (x23);
  \draw [->] (x41) to (y1);
  \draw [->] (x42) to (x31);
  \draw [->] (x43) to (x32);
  \draw [->] (x44) to (x33);

\draw[yshift=-5.5cm]
  node[below,text width=6cm] 
  {
  Figure 38b. Another cluster structures for $\Conf_3 \A_{Spin_{10}}$ related by to the first by an $S_3$ symmetry.
  };

\end{tikzpicture}
\end{center}

Let us describe in more detail how to obtain these other cluster structures. If the $S_3$ symmetry is a rotation, we just rotate the quiver. If the $S_3$ symmetry is a transposition, we transpose the quiver and also reverse the arrows.

Unlike the cases of $G=Sp_{2n}$ and $G=Spin_{2n+1}$, the functions attached to the vertices do not come from permuting the arguments in our notation for the function. Permuting the arguments only gives the correct functions up to a sign. This can be seen in the examples above. This is important enough that we will discuss this separately in the next section.

The cluster structure for $\Conf_m \A$ comes from triangulating an $m$-gon and then attaching the cluster structure on $\Conf_3 \A$ to each triangle and then using the procedure of amalgamation. As before, the vertices $y_k$, $x_{0j}$ and $x_{nj}$ will lie on the edges of the triangle, and the functions attached to them are {\em edge functions}. On $\Conf_3 \A$ the edge vertices are frozen, and the functions attached to these edges depend on only two of the three flags. All other functions are {\em face functions}.

To form the quiver for $\Conf_m \A_{Spin_{2n+2}}$, we first take a triangulation of an $m$-gon. On each of the $m-2$ triangles, attach any one of the six quivers formed from performing $S_3$ symmetries on the quiver for $\Conf_3 \A_{Spin_{2n+2}}$. Each edge of each of these triangles has $n$ frozen vertices. For any two triangles sharing an edge, the corresponding vertices on those edges are identified. Those vertices become unfrozen.

If vertices $i$ and $j$ are glued with $i'$ and $j'$ to get new vertices $i''$ and $j''$, then we declare that $$b_{i''j''}=b_{ij}+b_{i'j'}.$$
In other words, two dotted arrows in the same direction glue to give us a solid arrow, whereas two dotted arrows in the opposite direction cancel to give us no arrow. One can easily check that, again, any gluing will result in no dotted arrows using the unfrozen vertices. The arrows involving vertices that were not previously frozen remain the same. Figure 39 below shows one gluing between two triangles for $Spin_8$. As will be the case in the remainder of the paper, doubled vertices are not shown.

\begin{center}
\begin{tikzpicture}[scale=2.4]
  \node (x-31) at (-3,0) {$-\dul{1}{7}$};
  \node (x-21) at (-2,0) {$-\tcfl{1}{2}{5}$};
  \node (x-11) at (-1,0) {$\tcfl{1}{1}{6}$};
  \node (x01) at (0,0) {$-\dud{1}{7}$};
  \node (x11) at (1,0) {$\tcfr{2}{7}{7}$};
  \node (x21) at (2,0) {$\tcfr{3}{7}{6}$};
  \node (x31) at (3,0) {$\ddr{7}{1}$};
  \node (x-32) at (-3,-1) {$\dul{2}{6}$};
  \node (x-22) at (-2,-1) {$\tcfl{2}{2, 7}{5}$};
  \node (x-12) at (-1,-1) {$-\tcfl{2}{1}{5}$};
  \node (x02) at (0,-1) {$\dud{2}{6}$};
  \node (x12) at (1,-1) {$\tcfr{3}{6}{7}$};
  \node (x22) at (2,-1) {$-\tcfr{3}{6}{1, 6}$};
  \node (x32) at (3,-1) {$\ddr{6}{2}$};
  \node (x-33) at (-3,-2) {$\sqrt{\dul{4}{4}}$};
  \node (x-23) at (-2,-2) {$\sqrt{\tcfl{4}{2, 6}{4}}$};
  \node (x-13) at (-1,-2) {$\sqrt{\tcfl{4}{1, 7}{4^*}}$};
  \node (x03) at (0,-2) {$\sqrt{\dud{4}{4}}$};
  \node (x13) at (1,-2) {$\sqrt{\tcfr{4^*}{4}{1,7}}$};
  \node (x23) at (2,-2) {$\sqrt{\tcfr{4}{4}{2,6}}$};
  \node (x33) at (3,-2) {$\sqrt{\ddr{4}{4}}$};
  \node (y-1) at (-0.5,1) {$\dld{1}{7}$};
  \node (y-2) at (-1.5,1) {$\dld{2}{6}$};
  \node (y-3) at (-2.5,-3) {$\sqrt{\dld{4}{4}}$};
  \node (y1) at (0.5,1) {$\tcfr{1}{}{7}$};
  \node (y2) at (1.5,1) {$\tcfr{2}{}{6}$};
  \node (y3) at (2.5,-3) {$\sqrt{\tcfr{4}{}{4}}$};

  \draw [->] (x01) to (x11);
  \draw [->] (x11) to (x21);
  \draw [->] (x21) to (x31);
  \draw [->] (x02) to (x12);
  \draw [->] (x12) to (x22);
  \draw [->] (x22) to (x32);
  \draw [->] (x03) to (x13);
  \draw [->] (x13) to (x23);
  \draw [->] (x23) to (x33);
  \draw [->, dashed] (y1) to (y2);
  \draw [->, dashed] (y2) .. controls +(right:1) and +(up:1) .. (y3);

  \draw [->] (x03) to (x02);
  \draw [->] (x02) to (x01);

  \draw [->] (x13) to (x12);
  \draw [->] (x12) to (x11);
  \draw [->] (x11) to (y1);
  \draw [->] (y3) to (x23);
  \draw [->] (x23) to (x22);
  \draw [->] (x22) to (x21);
  \draw [->] (x21) to (y2);
  \draw [->, dashed] (x33) to (x32);
  \draw [->, dashed] (x32) to (x31);

  \draw [->] (y1) to (x01);
  \draw [->] (x11) to (x02);
  \draw [->] (x12) to (x03);
 \draw [->] (y2) to (x11);
  \draw [->] (x21) to (x12);
  \draw [->] (x22) to (x13);
  \draw [->] (x31) to (x22);
  \draw [->] (x32) to (x23);
  \draw [->] (x33) to (y3);

  \draw [->] (x01) to (x-11);
  \draw [->] (x-11) to (x-21);
  \draw [->] (x-21) to (x-31);
  \draw [->] (x02) to (x-12);
  \draw [->] (x-12) to (x-22);
  \draw [->] (x-22) to (x-32);
  \draw [->] (x03) to (x-13);
  \draw [->] (x-13) to (x-23);
  \draw [->] (x-23) to (x-33);
  \draw [->, dashed] (y-1) to (y-2);
  \draw [->, dashed] (y-2) .. controls +(left:1) and +(up:1) .. (y-3);

  \draw [->] (x-13) to (x-12);
  \draw [->] (x-12) to (x-11);
  \draw [->] (x-11) to (y-1);
  \draw [->] (y-3) to (x-23);
  \draw [->] (x-23) to (x-22);
  \draw [->] (x-22) to (x-21);
  \draw [->] (x-21) to (y-2);
  \draw [->, dashed] (x-33) to (x-32);
  \draw [->, dashed] (x-32) to (x-31);

  \draw [->] (y-1) to (x01);
  \draw [->] (x-11) to (x02);
  \draw [->] (x-12) to (x03);
  \draw [->] (y-2) to (x-11);
  \draw [->] (x-21) to (x-12);
  \draw [->] (x-22) to (x-13);
  \draw [->] (x-31) to (x-22);
  \draw [->] (x-32) to (x-23);
  \draw [->] (x-33) to (y-3);

\draw[yshift=-3.85cm]
  node[below,text width=6cm] 
  {
  Figure 39. The functions and quiver for the cluster algebra on $\Conf_4 \A_{Spin_{8}}$.
  };

\end{tikzpicture}
\end{center}

\subsection{Signs and spin representations}

In this section, we discuss in detail the signs involved in defining the cluster variables for $\Conf_m \A_{Spin_{2n+2}}$. This is a somewhat delicate issue, and can certainly be ignored on a first reading. For the remainder of the paper, we would like to ignore sign issues, on the one hand, for the sake of simplicity, and on the other hand, to avoid having to treat the different cases that, as our calculations become more complex, ultimately will depend on the value of $n \mod 4$. Instead of keeping track of all these signs, we will give a framework for computing them.

In defining the various functions we have used, like $\tcfr{a, b}{c}{d}$, we have been careful to define them so that they would be positive functions on  $\Conf_m \A_{SL_{n}}$, $\Conf_m \A_{Sp_{2n}}$,  and $\Conf_m \A_{Spin_{2n+1}}$. Unfortunately, these functions are sometimes positive and sometimes negative on $\Conf_m \A_{Spin_{2n+2}}$. Moreover, as we saw above, the sign of these functions also depends on the parity $n$. As we will see, if we additionally look at the rotations of these functions, the signs will depend on $n \mod 4$. The divergence of all these cases is reflection of Bott periodicity. We will attempt to clarify the situation by isolating the various difficulties and dealing with them separately. 

There are two main difficulties. The first source of complication is that the functions we have defined are well-adapted to computations in $SL_N$. In types $B$ and $C$, the maps $Sp_{2n} \hookrightarrow SL_{2n}$ and $Spin_{2n+1} \rightarrow SL_{2n+1}$ come from folding of the Dynkin diagrams, and therefore preserve positive structures. However, in type $D$, this is not the case. The map $Spin_{2n+2} \rightarrow SL_{2n+2}$ does not preserve positive structures, i.e., the pull-back of a positive function on $\Conf_m \A_{SL_{2n+2}}$ is not necessarily positive on $\Conf_m \A_{Spin_{2n+2}}$.

The second issue is the twisted cyclic shift map, which behaves differently for $SL_{2n+2}$ and $Spin_{2n+2}$. In the following, we will usually let $N=2n+2$, but we will also allow $N$ to be odd for sake of comparison, and to also emphasize the relationship with Bott periodicity.

Let us consider functions of the form $\tcfr{a, b}{c}{d}$ on $\Conf_3 \A_{SL_{N}}$. We defined the functions 
$$\tcfr{c}{d}{a, b} := T^*\tcfr{a, b}{c}{d}$$
$$\tcfr{d}{a, b}{c} := (T^2)^*\tcfr{a, b}{c}{d}$$
where $T$ is the twisted cyclic shift map for $SL_{N}$. This allowed us to define functions in a cluster coming from a rotation of our original cluster.

Suppose we have the functions for a given cluster on $\Conf_3 \A_{Spin_{N}}$. If we want to give the functions in the cluster coming from a rotation of the original cluster, we must pull back by the twisted cyclic shift map for $Spin_N$. However, the elements $s_G$ for $G=Spin_{N}$ and $G=SL_{N}$ are not necessarily the same. In fact, $s_{Spin_{N}} \in Spin_{N}$ is the nontrivial lift of the identity in $SO_{N}$ when 
$$N \equiv 3, 4, 5, 6 \mod 8$$
and $s_{Spin_{N}}$ is the identity when
$$N \equiv 0, 1, 2, 7 \mod 8.$$
Here is one way of determining when $s_{Spin_{N}}$ is the identity. Let $\rho$ be the half-sum of the positive roots, as usual. In the case that $N=2n+2$, $\omega_n$ and $\omega_{n^*}$ are the highest weights of the spin representations, and $s_{Spin_{N}}$ is the identity when $<\omega_n, 2\rho>=<\omega_{n^*}, 2\rho>$ is even and the non-trivial lift of the identity when $<\omega_n, 2\rho>$ is odd. In the case that $N=2n+1$, $\omega_n$ is the highest weight of the spin representation, and $s_{Spin_{N}}$ is the identity when $<\omega_n, 2\rho>$ is even and the non-trivial lift of the identity when $<\omega_n, 2\rho>$ is odd.

On the other hand, $s_{SL_{N}} \in SL_{N}$ is negative of the identity element when $N$ is even and the identity element when $N$ is odd.

(Under the embedding $Sp_{2n} \hookrightarrow SL_{2n}$, $s_{Sp_{2n}}$ is sent to $s_{SL_{2n}}$. Under the map $Spin_{2n+1} \rightarrow SL_{2n+1}$, the element $s_{Spin_{2n+1}}$ is sent to $s_{SL_{2n+1}}$. Note that in the latter case, $s_{Spin_{2n+1}}$ is some lift of the identity in $SO_{2n+1}$, while $s_{SL_{2n+1}}$ is the identity in $SL_{2n+1}$.)

Let us compare what the elements $s_{Spin_{N}}$ and $s_{SL_{N}}$ do on the level of flags. Let $N=2n+2$ be even. Suppose we have a principal affine flag $A \in \A_{Spin_N}$. Then this flag is given an $SL_N$ flag given by a chain of vector spaces 
$$V_1 \subset V_2 \subset \cdots \subset V_N$$
with volume forms on each of these subspaces. Note that the spaces $V_1, \dots, V_{n+1}$ along with their volume forms determine the rest of the flag. This flag can be represented by the sequence of vectors $v_1, v_2, \dots, v_N$, where $V_i$ has volume form $v_1 \wedge v_2 \wedge \cdots \wedge v_i$. Additionally, to make this a $Spin_N$ flag, we choose a ``square-root'' of the form $v_1 \wedge v_2 \wedge \cdots \wedge v_{n+1}$, which is given by choosing a highest weight vector in the spin representation $V_{\omega_n}$. This then forces a choice of a ``square-root'' of the form $v_1 \wedge v_2 \wedge \cdots \wedge v_n \wedge v_{n+2}$, given by a highest weight vector in the  spin representation $V_{\omega_{n^*}}$.

Now $s_{SL_{N}}$ acts by $-1$ on $V$, so 
$$s_{SL_N}(v_1 \wedge v_2 \wedge \cdots \wedge v_i) = (-1)^i v_1 \wedge v_2 \wedge \cdots \wedge v_i.$$
On the other hand, $s_{Spin_N}$ acts by $1$ on $V$, so 
$$s_{Spin_N}(v_1 \wedge v_2 \wedge \cdots \wedge v_i) = v_1 \wedge v_2 \wedge \cdots \wedge v_i.$$
However, in the cases where $s_{Spin_N}$ is a non-trivial element ($N \equiv 4, 6 \mod 8$), it acts by $-1$ on the spin representations.

More generally, the functions on $\A_{Spin_N}$ are naturally isomorphic to the direct sum of its irreducible representations:
$$\bigoplus_{\lambda \in \Lambda_+} V_{\lambda}.$$
The representations come in two types: those that factor through $SO_N$, and those that don't. $s_{Spin_N}$ acts by $1$ on the former and $\pm 1$ on the latter, depending on $N \mod 8$.

For example, if $T$ is the twisted cyclic shift on $\Conf_3 \A_{Spin_{N}}$, then rotation of the function $\tcfr{n-i, 2n+2+i-j}{n+2}{j}$ is given by
$$(T^{-1})^*\tcfr{n-i, 2n+2+i-j}{n+2}{j} = \tcfr{j}{n-i, 2n+2+i-j}{n+2}(-1)^j.$$
As another example, let us apply the twisted cyclic shift to the function 
$$\sqrt{-\tcfr{n-i, n+2+i}{n+1}{n+1}}.$$
Suppose that $N \equiv 4, 6 \mod 8$. Note that
$$(T^{-1})^*\tcfr{n-i, n+2+i}{n+1}{n+1} = (-1)^{n+1}\tcfr{n+1}{n-i, n+2+i}{n+1}.$$
In other words, the function $(T_{-1})^*\tcfr{n-i, n+2+i}{n+1}{n+1}$ is just a strict rotation of the function $\tcfr{n-i, n+2+i}{n+1}{n+1}$. However, 
$$(T^{-1})^*\sqrt{-\tcfr{n-i, n+2+i}{n+1}{n+1}} = -\sqrt{(-1)^{n}\tcfr{n+1}{n-i, n+2+i}{n+1}}.$$
What this means is that to obtain the function $(T^{-1})^*\sqrt{-\tcfr{n-i, n+2+i}{n+1}{n+1}}$, we rotate the function $\sqrt{-\tcfr{n-i, n+2+i}{n+1}{n+1}}$ to get $\sqrt{(-1)^{n}\tcfr{n+1}{n-i, n+2+i}{n+1}}$ and then multiply by $-1$.

The reason for the multiplication by $-1$ is as follows. Note that the function $$\sqrt{-\tcfr{n-i, n+2+i}{n+1}{n+1}}$$ lies in the space
$$[V_{\omega_{n-i}} \otimes V_{\omega_{n}} \otimes V_{\omega_{n}}]^{Spin_N}.$$
Applying the twisted cyclic shift will give a function in the space
$$[V_{\omega_{n}} \otimes V_{\omega_{n-i}} \otimes V_{\omega_{n}}]^{Spin_N}.$$
In the twisted cyclic shift, one factor of $V_{\omega_{n}}$, which is a spin representation, is moved from the third slot to the first slot. The twisted cyclic shift will then act by $-1$ on this factor when $N \equiv 4, 6 \mod 8$.

Similarly, when we want to find the cluster structure corresponding to a transposition, then transposing the arguments only gives the correct function up to a sign. We showed the correct signs for the functions for one of the transpositions of the cluster structure in Figure 38. The negative signs, instead of occuring in every other row, occur here on every other lower diagonal. This pattern persists in general. The signs for other transpositions come from applying the twisted cyclic shift map to this cluster.

Finally, we would like to say something about square-roots. Some of our functions on $\Conf_m \A_{Spin_{N}}$ were defined in terms of square-roots of other functions. This happens when a function lies in an invariant space where two or more of the representations involved is a spin representation. We would like to pick out the correct square-root. In order to do this, we will need a way to describe vectors in these invariant spaces. In the following, we will treat both the cases when $N=2n+1$ is even and when $N=2n$ is odd.

We start by recalling some facts about spin representations. Let $V$ be an $N$-dimensional vector space with a non-degenerate quadratic form $Q(-,-)$. From such data, we can form the Clifford algebra $C(V)$. It is the quotient of the free tensor algebra on $V$ by the relation

$$v \otimes w + w \otimes v = 2Q(v,w).$$

Let $W \subset V$ be as maximal isotropic subspace. It has dimenions $n$. Let $\bigwedge^{\bullet} W$ be the exterior algebra of $W$. $C(V)$ and $\bigwedge\nolimits^{\bullet} W$ have a natural $\Z/2\Z$-grading that comes from considering elements of $V$ to be odd.

Then when $N=2n$ is even, 
$$C(V) \simeq \End \bigwedge\nolimits^{\bullet} W.$$
Moreover, 
$$C^{\textrm{even}}(V) \simeq \End(\bigwedge\nolimits^{\textrm{even}} W) \oplus \End(\bigwedge\nolimits^{\textrm{odd}} W).$$
Let us describe the action of $C(V)$ on $\bigwedge\nolimits^{\bullet} W$. Write $V \simeq W \oplus W'$, where $W'$ is an isotropic subspace complementary to $W$. Then $w \in W$ acts as 
$$w \wedge - : \bigwedge\nolimits^{i} W \rightarrow \bigwedge\nolimits^{i+1} W.$$
$w' \in W'$ maps $\bigwedge\nolimits^{i} W$ to $\bigwedge\nolimits^{i-1} W$. It maps $w \in \bigwedge\nolimits^{1} W$ to $Q(w', w)$, and the action extends to $\bigwedge\nolimits^{i} W$ by using Leibniz and the sign rule (recall that $C(V)$ and $\bigwedge\nolimits^{\bullet} W$ have a natural $\Z/2\Z$-grading.)

When $N=2n+1$ is odd, we can $C(V)$ is isomorphic to two copies of $\End \bigwedge\nolimits^{\bullet} W.$ Let us describe two actions of $C(V)$ on $\bigwedge\nolimits^{\bullet} W$. Write $V \simeq W \oplus U \oplus W'$, where $W'$ is another $n$-dimensional isotropic subspace, and $U$ is a one-dimensional space spanned by $u$ where $Q(u,u)=1$. Then as before, we allow $w \in W$ to act by $w \wedge$, and $w' \in W'$ to act by sending $w \in \bigwedge\nolimits^{1} W$ to $Q(w', w)$, and extending by a signed Leibniz rule. $u$ can act in one of two ways: it can act by $1$ on $\bigwedge\nolimits^{\textrm{even}} W$ and $-1$ on $\bigwedge\nolimits^{\textrm{odd}} W$; or it can act by $-1$ on $\bigwedge\nolimits^{\textrm{even}} W$ and $1$ on $\bigwedge\nolimits^{\textrm{odd}} W$. These two maps of $C(V)$ to $\End \bigwedge\nolimits^{\bullet} W$ realize the isomorphism
$$C(V) \simeq \End(\bigwedge\nolimits^{\bullet} W) \oplus \End(\bigwedge\nolimits^{\bullet} W).$$
Moreover, we have that
$$C^{\textrm{even}}(V) \simeq \End(\bigwedge\nolimits^{\bullet} W).$$

Recall that the lie algebra $\mathfrak{so}_N \simeq \bigwedge\nolimits^2 V$ of $Spin_N$ can be embedded in $C(V)$ as the span of elements of the form $v_1 \cdot v_2 - v_2 \cdot v_1$ in $C(V)$. Then when $N=2n$ is even, we get two representations of $\mathfrak{so}_N \subset C^{\textrm{even}}(V)$: $\bigwedge\nolimits^{\textrm{even}} W$ and $\bigwedge\nolimits^{\textrm{odd}} W$. These are precisely the spin representations of $Spin_N$. When $n$ is even, $\bigwedge\nolimits^{\textrm{even}} W$ has highest weight $\omega_n$ and $\bigwedge\nolimits^{\textrm{odd}} W$ has highest weight $\omega_{n^*}$. If $n$ is odd, $\bigwedge\nolimits^{\textrm{even}} W$ has highest weight $\omega_{n^*}$ and $\bigwedge\nolimits^{\textrm{odd}} W$ has highest weight $\omega_{n}$.

When $N$ is odd, we get one spin representation, given by $\bigwedge\nolimits^{\bullet} W.$

Now we are ready to define invariants in tensor products of representations. First we will define invariants of a tensor product of two spin representations.

Let $N=2n$ be even. Let us write down bases for the spin representations. First choose a basis $e_1, \dots, e_{2n}$ where $Q(e_i, e_{2n+1-i})=(-1)^{i-1}$ for $i \leq n$. Then there is a maximal torus consisting of diagonal elements that preserve $Q$. For each $i$, $1 \leq i \leq n$, there is a cocharacter given by sending $\lambda \in \C^*$ to the map which takes $e_i$ to $\lambda e_i$ and $e_{2n+1-i}$ to $\lambda^{-1} e_{2n+1-i}$ and leaves all other basis elements fixed. These $n$ cocharacters form a basis for the Cartan $\mathfrak{h}$. There is a dual basis  $L_1, \dots, L_n$ of $\mathfrak{h}^*$. We may let $W$ be the span of $e_1, \dots, e_n$. Then if $I = \{i_1, i_2, \dots, i_k\} \subset \{1, 2, \dots, n\}$ where $i_1 < i_2 < \cdots < i_k$, then let 
$$e_I := e_{i_1} \wedge e_{\i_2} \wedge \cdots \wedge e_{i_k}.$$
Then $e_I$ has weight 
$$\omega_I := \frac{1}{2} (\sum_{i \in I} L_i -\sum_{j \notin I} L_j).$$

First suppose $n$ is even. We will then define a pairing $\phi: V_{\omega_n} \times V_{\omega_n} \rightarrow \C$. Then let $e_I, e_J \in \bigwedge\nolimits^{\textrm{even}} W$. If $I$ and $J$ are subsets of $\{1, 2, \dots, n\}$, then we will declare $\phi(e_I,e_J)=0$ unless $I$ and $J$ are complementary subsets of $\{1, 2, \dots, n\}$, in which case
$$\phi(e_I,e_J)=(-1)^{<\omega_n-\omega_I,2\rho>}.$$
Similarly, we have a pairing  $\phi: V_{\omega_{n^*}} \times V_{\omega_{n^*}} \rightarrow \C$. Then let $e_I, e_J \in \bigwedge\nolimits^{\textrm{odd}} W$. If $I$ and $J$ are subsets of $\{1, 2, \dots, n\}$, then we will declare $\phi(e_I,e_J)=0$ unless $I$ and $J$ are complementary subsets of $\{1, 2, \dots, n\}$, in which case
$$\phi(e_I,e_J)=(-1)^{<\omega_{n^*}-\omega_I,2\rho>}.$$

Now suppose $n$ is odd. We can then define a pairing $\phi: V_{\omega_n} \times V_{\omega_{n^*}} \rightarrow \C$. Then let $e_I \bigwedge\nolimits^{\textrm{even}} W$ and $e_J \in \bigwedge\nolimits^{\textrm{odd}} W$. If $I$ and $J$ are subsets of $\{1, 2, \dots, n\}$, then we will declare $\phi(e_I,e_J)=0$ unless $I$ and $J$ are complementary subsets of $\{1, 2, \dots, n\}$, in which case
$$\phi(e_I,e_J)=(-1)^{<\omega_n-\omega_I,2\rho>}.$$
Similarly, we have a pairing  $\phi: V_{\omega_{n^*}} \times V_{\omega_{n}} \rightarrow \C$. Then let $e_I \in \bigwedge\nolimits^{\textrm{odd}} W$ and $e_J \in \bigwedge\nolimits^{\textrm{even}} W$. If $I$ and $J$ are subsets of $\{1, 2, \dots, n\}$, then we will declare $\phi(e_I,e_J)=0$ unless $I$ and $J$ are complementary subsets of $\{1, 2, \dots, n\}$, in which case
$$\phi(e_I,e_J)=(-1)^{<\omega_{n^*}-\omega_I,2\rho>}.$$

Then $\phi$ in all the above cases gives an invariant of the tensor product of two spin representations. This gives correct square-roots for functions of the form $\sqrt{(n; n)}, \sqrt{(n^*; n^*)}, \sqrt{(n; n^*)}$ and $\sqrt{(n^*; n)}$, respectively.

Now let $N=2n+1$ be odd. Here, we may choose a basis $e_1, \dots, e_{2n+1}$ where $Q(e_i, e_{2n+2-i})=(-1)^{i-1}$ for $i \leq n$. Then there is a maximal torus consisting of diagonal elements that preserve $Q$. For each $i$, $1 \leq i \leq n$, there is a cocharacter given by sending $\lambda \in \C^*$ to the map which takes $e_i$ to $\lambda e_i$ and $e_{2n+2-i}$ to $\lambda^{-1} e_{2n+1-i}$ and leaves all other basis elements fixed. These $n$ cocharacters form a basis for the Cartan $\mathfrak{h}$. There is a dual basis  $L_1, \dots, L_n$ of $\mathfrak{h}^*$. We may let $W$ be the span of $e_1, \dots, e_n$. Then if $I = \{i_1, i_2, \dots, i_k\} \subset \{1, 2, \dots, n\}$ where $i_1 < i_2 < \cdots < i_k$, then let 
$$e_I := e_{i_1} \wedge e_{\i_2} \wedge \cdots \wedge e_{i_k}.$$
Then $e_I$ has weight 
$$\omega_I := \frac{1}{2} (\sum_{i \in I} L_i -\sum_{j \notin I} L_j).$$

We can then define a pairing $\phi: V_{\omega_n} \times V_{\omega_n} \rightarrow \C$. If $I$ and $J$ are subsets of $\{1, 2, \dots, n\}$, then we will declare $\phi(e_I,e_J)=0$ unless $I$ and $J$ are complementary subsets of $\{1, 2, \dots, n\}$, in which case
$$\phi(e_I,e_J)=(-1)^{<\omega_n-\omega_I,2\rho>}.$$
This defines the correct square-root for the function $\sqrt{(n; n+1)}$

Finally, let $S_1$ and $S_2$ be two spin representations of $Spin_N$, where $N=2n$ or $2n+1$. Let $k \leq n-1$. We would like to define an invariant, when it exists, in the space
$$[V_{\omega_{k}} \otimes S_1 \otimes S_2]^{Spin_N}.$$
We can do this for  simultaneously for $N$ even or odd. We will do this by constructing a $Spin_N$-invariant map 
$$V_{\omega_{k}} \otimes S_1 \otimes S_2 \rightarrow \C.$$
Note that there is a natural map 
$$\bigwedge\nolimits^k V \rightarrow C(V)$$
given by
$$v_1 \wedge v_2 \wedge \dots \wedge v_k \rightarrow \frac{1}{k!}\sum_{\sigma \in S_n} v_{\sigma(1)} \cdot v_{\sigma(2)} \cdot \dots \cdot v_{\sigma(k)}.$$
When $k$ is even or odd, $\bigwedge\nolimits^k V$ maps to $C^{\textrm{even}}(V)$ or $C^{\textrm{odd}}(V)$, respectively. Thus in the case when $N$ is even, an element of $\bigwedge\nolimits^k V$, viewed inside $C(V)$, gives a map from each spin representation to itself $k$ is even, and gives a map from one spin representation to the other one when $k$ is odd. (When $N$ is odd an element of $\bigwedge\nolimits^k V$ always gives a map from the unique spin representation to itself.) Then if we have $v \otimes s_1 \otimes s_2 \in V_{\omega_{k}} \otimes S_1 \otimes S_2$, we can map this to $\phi{s_1, v \cdot s_2}$ in the instances where $v \cdot s_2$ lies in the spin representation dual to $S_1$ (which depends on $k$, etc.).

The above invariant in $[V_{\omega_{k}} \otimes S_1 \otimes S_2]^{Spin_N}$ gives the correct square-root for the functions $\sqrt{\tcfr{k, N-k}{n}{n+1}}$ when $N$ is odd, and $\sqrt{\pm(k, N-k; n^{(*)}; n^{(*)})}$ when $n$ is even.

(The presence of the superscripts $*$ will depend on the parities of $k$ and $n$, while the sign under the radical will depend on the presence of the superscripts $*$.)

From now on, we will suppress all signs for the sake of simplicity in all our future computations. The analysis above allows the interested reader to supply signs for all the functions that arise in the mutations that follow.

\subsection{Folding}

Let us now discuss how the quiver for $\Conf_3 \A_{Spin_{2n+2}}$ comes from an unfolding for the quiver for $\Conf_3 \A_{Spin_{2n+1}}$. From the description above, it is clear that there is an automorphism $\sigma$ of the quiver for the cluster algebra structure on $\Conf_3 \A_{Spin_{2n+2}}$. Namely, we can define
$$\sigma(x_{in})=x_{in*}$$
$$\sigma(x_{in*})=x_{in}$$
$$\sigma(y_{n})=x_{n*}$$
$$\sigma(y_{n*})=x_{n}$$
and $\sigma$ fixes all other vertices. Then folding the quiver under the automorphism $\sigma$ gives the quiver for $\Conf_3 \A_{Spin_{2n+1}}$.

Let us say this in another way. Let $\sigma$ be the Dynkin diagram automorphism of $D_{n+1}$ having quotient $B_{n}$. Then $\sigma$ induces a map on the root system for $Spin_{2n+2}$, and hence on the fundamental weights and the dominant weights. It also induces an outer automorphism of $Spin_{2n+2}$ having fixed locus $Spin_{2n+1}$, and an involution on the spaces $\Conf_m \A_{Spin_{2n+1}}$. Let $\pi$ be the map from the vertices of $D_{n+1}$ to the vertices of $B_n$. This induces a map $\pi$ sending fundamental weights to corresponding fundamental weights, and therefore projects the weight space for $Spin_{2n+2}$ to the weight space for $Spin_{2n+1}$.

It turns out that the cluster algebra structure on $\Conf_3 \A_{SL_{2n}}$ is preserved by this involution, and that, moreover, the initial seed that we constructed above is preserved by this involution. Folding this seed gives the cluster algebra structure on $\Conf_3 \A_{Spin_{2n+1}}$.

\begin{observation} Let $f$ be a function on $\Conf_3 \A_{Spin_{2n+2}}$ that lies in the invariant space
$$[V_{\lambda} \otimes V_{\mu} \otimes V_{\nu}]^{Spin_{2n+2}}.$$
Then $\sigma^*(f)$ lies in the invariant space
$$[V_{\sigma(\lambda)} \otimes V_{\sigma(\mu)} \otimes V_{\sigma(\nu)}]^{Spin_{2n+2}}.$$
\end{observation}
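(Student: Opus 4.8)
The plan is to reduce the statement to the elementary fact that an automorphism of a reductive group permutes its irreducible representations according to the induced action on the weight lattice, combined with the description of $\mathcal{O}(\Conf_3 \A_G)$ in terms of invariants of triple tensor products.

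First I would fix a pinning of $G = Spin_{2n+2}$ compatible with the Borel $B$, torus $T$, and maximal unipotent $U$ used to define $\A_G = G/U$, and realize $\sigma$ as the pinned automorphism attached to the order-two diagram automorphism of $D_{n+1}$. Since a pinned automorphism preserves $B$, $T$, and $U$, it descends to an automorphism, still called $\sigma$, of $\A_G = G/U$, twisted-equivariant in the sense that $\sigma(g \cdot A) = \sigma(g) \cdot \sigma(A)$ for $g \in G$, $A \in \A_G$. Hence $\sigma$ acts diagonally and compatibly on $\Conf_3 \A_G = (\A_G)^3 / G$, and $f \mapsto \sigma^*(f)$ is an algebra automorphism of $\mathcal{O}(\Conf_3 \A_G)$. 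I would also check at this stage that this lift is the one whose fixed locus is $Spin_{2n+1}$, matching the flag-level description $W_{n+1} \leftrightarrow W_{n+1^*}$ given earlier in the text.

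Next I would recall the $G$-equivariant decomposition $\mathcal{O}(\A_G) \cong \bigoplus_{\lambda \in \Lambda_+} V_\lambda$ stated in the excerpt, and observe that $\sigma^*$ carries the $\lambda$-isotypic component to the $\sigma(\lambda)$-isotypic one: precomposing a representation of highest weight $\lambda$ with $\sigma^{-1}$ gives a representation of highest weight $\sigma(\lambda)$ (using only that $\sigma$ preserves $B$, hence sends highest-weight vectors to highest-weight vectors, and acts on $T$-weights by the transpose of its action on cocharacters), and $\sigma^*$ on $\mathcal{O}(\A_G)$ intertwines these identifications. Passing to the triple external tensor product, $\sigma^{\otimes 3}$ sends the summand $V_\lambda \otimes V_\mu \otimes V_\nu$ of the ambient space $\mathcal{O}\bigl((\A_G)^3\bigr)$ isomorphically onto $V_{\sigma(\lambda)} \otimes V_{\sigma(\mu)} \otimes V_{\sigma(\nu)}$.

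Finally I would take $G$-invariants: because $\sigma$ is a group automorphism of $G$, a function on $\Conf_3 \A_G$ is $G$-invariant if and only if its image under $\sigma^*$ is, so $\sigma^*$ restricts to an isomorphism $[V_\lambda \otimes V_\mu \otimes V_\nu]^G \xrightarrow{\ \sim\ } [V_{\sigma(\lambda)} \otimes V_{\sigma(\mu)} \otimes V_{\sigma(\nu)}]^G$, which is exactly the asserted statement. The only genuinely delicate point is the bookkeeping in the first step: ensuring the abstract diagram automorphism of $D_{n+1}$ is lifted to a group automorphism actually preserving the chosen $U$ (so that it acts on $\A_G$ at all) and identifying that lift with the involution described concretely on flags. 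Everything after that is formal, so I expect that identification to be where the real (if modest) work lies; it can be dispatched by invoking the standard existence and uniqueness of pinned automorphisms and comparing with the explicit action on volume forms already recorded in the text.
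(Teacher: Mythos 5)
Your proposal is correct and is essentially the argument the paper has in mind: the paper states this as an unproved Observation, and the intended justification is exactly the formal one you give — a pinned lift of the diagram automorphism acts on $\A_G=G/U$ and hence on $\Conf_3\A_G$, permutes the isotypic components of $\mathcal{O}(\A_G)\cong\bigoplus_\lambda V_\lambda$ according to the induced action on dominant weights, and commutes with taking $G$-invariants. Your flagged point about matching the pinned lift with the concrete flag-level involution ($v_{n+1}\leftrightarrow v_{n+2}$) is the right place to be careful, and your appeal to existence and uniqueness of pinned automorphisms (together with the fact that $\sigma$ is an involution, so the $\sigma$ versus $\sigma^{-1}$ bookkeeping is immaterial) disposes of it.
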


\begin{observation} Let $f$ be a function on $\Conf_3 \A_{Spin_{2n+2}}$ that lies in the invariant space
$$[V_{\lambda} \otimes V_{\mu} \otimes V_{\nu}]^{SL_{2n}}.$$
Then as a function on $\Conf_3 \A_{Spin_{2n+1}}$, $f$ lies in the invariant space
$$[V_{\pi(\lambda)} \otimes V_{\pi(\mu)} \otimes V_{\pi(\nu)}]^{Spin_{2n+1}}.$$
\end{observation}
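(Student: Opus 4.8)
The statement to prove is the second \emph{Observation}: that a function $f$ on $\Conf_3 \A_{Spin_{2n+2}}$ lying in the invariant space $[V_\lambda \otimes V_\mu \otimes V_\nu]^{SL_{2n}}$, when regarded as a function on $\Conf_3 \A_{Spin_{2n+1}}$ via the folding involution, lies in $[V_{\pi(\lambda)} \otimes V_{\pi(\mu)} \otimes V_{\pi(\nu)}]^{Spin_{2n+1}}$, where $\pi$ is the projection from the weight lattice of $Spin_{2n+2}$ to that of $Spin_{2n+1}$ induced by the $D_{n+1} \to B_n$ folding.

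\textbf{Approach.} The plan is to reduce everything to the elementary bookkeeping of how weights and representations behave under restriction along the inclusion $Spin_{2n+1} \hookrightarrow Spin_{2n+2}$ (the fixed subgroup of the outer automorphism $\sigma$). First I would recall the standard fact, used implicitly throughout the paper, that $\mathcal{O}(\A_G) \cong \bigoplus_{\lambda \in \Lambda_+} V_\lambda$ as a $G$-module (each isotypic piece $V_\lambda$ appearing with multiplicity one, realized as the highest-weight line together with its $G$-orbit of functions on the flag). Hence $\mathcal{O}(\Conf_3 \A_G)$ decomposes as $\bigoplus_{\lambda,\mu,\nu} [V_\lambda \otimes V_\mu \otimes V_\nu]^G$, and the condition ``$f$ lies in a given invariant space'' is exactly the condition that $f$, as a function on the triple product of principal affine spaces, transforms under each factor $\A_G$ according to the graded piece indexed by $\lambda$ (resp. $\mu$, $\nu$). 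This is the content behind the phrasing in the paragraph preceding Observation 5.3 of the excerpt.

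\textbf{Key steps.} (1) Identify the morphism of spaces: the embedding $Spin_{2n+1} \hookrightarrow Spin_{2n+2}$ as the $\sigma$-fixed subgroup induces a map $\A_{Spin_{2n+1}} \to \A_{Spin_{2n+2}}$ (this is noted at the start of Section 5, via $\A_{SO_{2n+1}} \hookrightarrow \A_{SL_{2n+1}}$ etc., but the cleaner statement is directly through the Spin groups), and hence $\Conf_3 \A_{Spin_{2n+1}} \to \Conf_3 \A_{Spin_{2n+2}}$; pulling back along this map is precisely ``regarding $f$ as a function on $\Conf_3 \A_{Spin_{2n+1}}$.'' (2) Show that restricting a function from a single $\A_{Spin_{2n+2}}$ to $\A_{Spin_{2n+1}}$ takes the graded piece $V_\lambda$ into the graded piece $V_{\pi(\lambda)}$. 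For this I would use that $\pi$ on fundamental weights is exactly the map induced by the folding $\pi\colon$ vertices of $D_{n+1} \to$ vertices of $B_n$ (defined in the Folding subsection: $\omega_{n-1}, \omega_n \mapsto \omega_n$ and $\omega_i \mapsto \omega_i$ for $i \le n-2$), and check the statement on highest weight vectors: the highest weight vector of $V_\lambda$ for $Spin_{2n+2}$ restricts to the highest weight vector of $V_{\pi(\lambda)}$ for $Spin_{2n+1}$ with respect to the compatible choice of Borel, because the simple coroots of $B_n$ map to sums of simple coroots of $D_{n+1}$ in the $\sigma$-invariant pattern, so the $Spin_{2n+1}$-weight of the restricted highest weight vector is read off by composing with $\pi$. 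Since $V_{\pi(\lambda)}$ appears with multiplicity one in $\mathcal{O}(\A_{Spin_{2n+1}})$, the cyclic $Spin_{2n+1}$-submodule generated by this restricted highest weight vector is exactly $V_{\pi(\lambda)}$, and $f$'s $\lambda$-component lands there. (3) Apply this in each of the three tensor factors separately, then invoke that $Spin_{2n+1}$-invariance is inherited from $Spin_{2n+2}$-invariance (the former is a subgroup), to conclude $f \in [V_{\pi(\lambda)} \otimes V_{\pi(\mu)} \otimes V_{\pi(\nu)}]^{Spin_{2n+1}}$.

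\textbf{Main obstacle.} The routine part is steps (1) and (3); the real content is step (2), and within it the only genuinely delicate point is making sure that the \emph{choice of maximal torus and Borel} for $Spin_{2n+1}$ is compatible with that of $Spin_{2n+2}$ so that ``restrict the weight along $\pi$'' is literally correct rather than correct up to a Weyl-group twist or a shift. Concretely, one must check that under the explicit bases chosen in Sections 4 and 5 (the quadratic forms $\langle e_i, e_{2n+3-i}\rangle = (-1)^{i-1}$ for $D_{n+1}$ and $\langle e_i, e_{2n+2-i}\rangle = (-1)^{i-1}$ for $B_n$, which are set up precisely so the embeddings respect positive structure), the diagonal torus of $Spin_{2n+1}$ sits inside that of $Spin_{2n+2}$ as the $\sigma$-fixed points, and the induced map on characters is $\pi$. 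Once that compatibility is in hand — and it essentially is, because the whole point of the sign conventions in the excerpt is to arrange exactly this — the argument is a short diagram chase on highest weight vectors. I would also remark that the statement is really the representation-theoretic shadow of the geometric fact $\Conf_3 \A_{Spin_{2n+1}} = (\Conf_3 \A_{Spin_{2n+2}})^\sigma$, so an alternative (and perhaps cleaner) proof replaces step (2) with: restriction of functions along a closed embedding of the $\sigma$-fixed locus is a surjection of rings, $\sigma$-equivariant, and the $\sigma$-coinvariants of $V_\lambda$ as an $\mathfrak{so}_{2n+1}$-module is $V_{\pi(\lambda)}$ by Jantzen's restriction/folding results; but the highest-weight-vector argument above is self-contained and suffices.
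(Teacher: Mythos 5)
The paper gives no proof of this statement: it is asserted as an unproved "Observation" (mirroring the analogous unproved observations in the $SL_{2n}/Sp_{2n}$ folding section), so your write-up is supplying an argument the author treats as routine. Your argument is correct in substance, and you correctly isolate the one point with real content: that the maximal torus of $Spin_{2n+1}$ sits inside that of $Spin_{2n+2}$ as the $\sigma$-fixed subtorus in a way compatible with the chosen pinnings, so that restriction of characters along $H_{B_n}\hookrightarrow H_{D_{n+1}}$ is literally the folding map $\pi$ (e.g.\ both half-spin weights $\tfrac12(L_1+\cdots+L_n\mp L_{n+1})$ restrict to $\tfrac12(L_1+\cdots+L_n)=\omega_n$ once $L_{n+1}$ is killed on the fixed subtorus). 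Two remarks. First, note the statement as printed contains a typo inherited from the type-$A$ section: the superscript on the invariant space should be $Spin_{2n+2}$, not $SL_{2n}$; you implicitly correct this. Second, your step (2) as written proves only that the \emph{highest weight vector} of the graded piece $V_\lambda\subset\mathcal{O}(\A_{Spin_{2n+2}})$ restricts into $V_{\pi(\lambda)}$; a general $f\in V_\lambda$ is not in the cyclic $Spin_{2n+1}$-submodule generated by that vector, since $V_\lambda|_{Spin_{2n+1}}$ is reducible. The fix is the grading argument you already set up in your opening paragraph: the restriction map $\mathcal{O}(\Conf_3\A_{Spin_{2n+2}})\to\mathcal{O}(\Conf_3\A_{Spin_{2n+1}})$ is equivariant for the right $H^3$-actions, so the entire $(\lambda,\mu,\nu)$-graded piece lands in the $(\pi(\lambda),\pi(\mu),\pi(\nu))$-graded piece, which by multiplicity one is exactly $V_{\pi(\lambda)}\otimes V_{\pi(\mu)}\otimes V_{\pi(\nu)}$; invariance under $Spin_{2n+1}\subset Spin_{2n+2}$ is then immediate, as you say. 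With that substitution the proof is complete and is surely what the author had in mind.
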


\begin{observation} Consider our initial cluster for $\Conf_3 \A_{Spin_{2n+2}}$. Suppose $v$ is a vertex in the quiver for this cluster. Let $f_v$ is the function attached to $v$. Then
$$f_v \in [V_{\lambda} \otimes V_{\mu} \otimes V_{\nu}]^{Spin_{2n+2}},$$
$$f_{\sigma(v)} \in [V_{\sigma(\lambda)} \otimes V_{\sigma(\mu)} \otimes V_{\sigma(\nu)}]^{Spin_{2n+2}}.$$
However, on $\Conf_3 \A_{Spin_{2n+1}}$, $f_v=f_{\sigma(v)}$. This means that we must have $\pi(\lambda)=\pi(\sigma(\lambda))$, $\pi(\mu)=\pi(\sigma(\mu))$, and $\pi(\nu)=\pi(\sigma(\nu))$.
\end{observation}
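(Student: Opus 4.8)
The statement to be proved is the third Observation: for a vertex $v$ in the initial cluster for $\Conf_3 \A_{Spin_{2n+2}}$, if the attached function $f_v$ lies in $[V_\lambda \otimes V_\mu \otimes V_\nu]^{Spin_{2n+2}}$ and $f_{\sigma(v)}$ lies in $[V_{\sigma(\lambda)} \otimes V_{\sigma(\mu)} \otimes V_{\sigma(\nu)}]^{Spin_{2n+2}}$, then because $f_v = f_{\sigma(v)}$ as functions on $\Conf_3 \A_{Spin_{2n+1}}$, one must have $\pi(\lambda) = \pi(\sigma(\lambda))$, $\pi(\mu) = \pi(\sigma(\mu))$, $\pi(\nu) = \pi(\sigma(\nu))$.

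\begin{proof}
The first assertion, that $f_{\sigma(v)} \in [V_{\sigma(\lambda)} \otimes V_{\sigma(\mu)} \otimes V_{\sigma(\nu)}]^{Spin_{2n+2}}$, is exactly the content of the first Observation applied to the pullback along the quiver automorphism $\sigma$, using that $\sigma$ on vertices is induced by the diagram automorphism of $D_{n+1}$ and hence by the outer automorphism of $Spin_{2n+2}$: one checks on the explicit list of functions in Section 5.1 (the rules for $x_{ij}$, $y_k$, and their starred doubles) that $\sigma^*$ sends the function attached to $v$ to the function attached to $\sigma(v)$, which is the definition of the automorphism $\sigma$ of the quiver. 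Granting this, the remaining point is the relation $f_v = f_{\sigma(v)}$ on $\Conf_3 \A_{Spin_{2n+1}}$ and the weight-space conclusion.

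The plan is as follows. First I would recall that the involution $\sigma$ on $\Conf_3 \A_{Spin_{2n+2}}$ (induced by the $D_{n+1}$ diagram automorphism) has fixed locus the image of $\Conf_3 \A_{Spin_{2n+1}}$; this is the folding picture set up in this section. Restricting a function $f$ on $\Conf_3 \A_{Spin_{2n+2}}$ to this fixed locus, i.e.\ viewing it as a function on $\Conf_3 \A_{Spin_{2n+1}}$, identifies $f$ with $\sigma^* f$, since on the fixed locus $\sigma$ acts trivially. Applying this with $f = f_v$ and using that $\sigma^* f_v = f_{\sigma(v)}$ (the previous paragraph), we get $f_v = f_{\sigma(v)}$ as functions on $\Conf_3 \A_{Spin_{2n+1}}$ — this reproves the displayed equality in the statement and is the source of all the constraints.

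Next I would invoke the second Observation: a function on $\Conf_3 \A_{Spin_{2n+2}}$ lying in the invariant space $[V_\lambda \otimes V_\mu \otimes V_\nu]^{Spin_{2n+2}}$, when restricted to $\Conf_3 \A_{Spin_{2n+1}}$, lies in $[V_{\pi(\lambda)} \otimes V_{\pi(\mu)} \otimes V_{\pi(\nu)}]^{Spin_{2n+1}}$, where $\pi$ is the projection of weight lattices induced by the map of Dynkin diagrams. Applying this to $f_v$ gives that $f_v$, as a function on $\Conf_3 \A_{Spin_{2n+1}}$, lies in $[V_{\pi(\lambda)} \otimes V_{\pi(\mu)} \otimes V_{\pi(\nu)}]^{Spin_{2n+1}}$; applying it to $f_{\sigma(v)}$ (which lies in $[V_{\sigma(\lambda)} \otimes V_{\sigma(\mu)} \otimes V_{\sigma(\nu)}]^{Spin_{2n+2}}$) gives that $f_{\sigma(v)}$, as a function on $\Conf_3 \A_{Spin_{2n+1}}$, lies in $[V_{\pi(\sigma(\lambda))} \otimes V_{\pi(\sigma(\mu))} \otimes V_{\pi(\sigma(\nu))}]^{Spin_{2n+1}}$. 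Since $f_v = f_{\sigma(v)}$ as a (nonzero) function on $\Conf_3 \A_{Spin_{2n+1}}$, it lies in the intersection of these two graded pieces of $\mathcal{O}(\Conf_3 \A_{Spin_{2n+1}})$. Now I would use that the decomposition of $\mathcal{O}(\A_G) \cong \bigoplus_\lambda V_\lambda$ into isotypic components is multiplicity-free in each weight, so the corresponding grading of $\mathcal{O}(\Conf_3 \A_{Spin_{2n+1}})$ by triples of dominant weights has the property that a nonzero homogeneous element lies in a unique graded piece; hence the two triples must coincide, giving $\pi(\lambda) = \pi(\sigma(\lambda))$, $\pi(\mu) = \pi(\sigma(\mu))$, $\pi(\nu) = \pi(\sigma(\nu))$, as claimed.

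The main obstacle is the verification that $\sigma^* f_v = f_{\sigma(v)}$ for all vertices $v$ — that is, that the geometric automorphism $\sigma$ of $\Conf_3 \A_{Spin_{2n+2}}$ really does permute the initial cluster functions by the combinatorial rule $\sigma(x_{in}) = x_{in^*}$, etc. This comes down to understanding precisely how the outer automorphism of $Spin_{2n+2}$ (which swaps the two spin representations $V_{\omega_n} \leftrightarrow V_{\omega_{n^*}}$ and swaps the $(n{+}1)$-forms $W_{n+1} \leftrightarrow W_{n+1^*}$) acts on each of the explicit functions in Section 5.1, including the square-root functions and their sign twists, which depend delicately on the parity of $n$ and on $n \bmod 4$ as discussed in the Signs section. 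Once one accepts, as the text does by reference to the folding construction, that $\sigma$ fixes the initial seed and permutes its cluster variables as stated, the rest of the argument is the short formal chain above.
\end{proof}
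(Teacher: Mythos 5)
Your proposal is correct and follows the same route the paper intends: the paper states this Observation without proof, treating it as the immediate consequence of the two preceding Observations together with the facts that $\sigma$ fixes the initial seed and that the grading of $\mathcal{O}(\Conf_3 \A_{Spin_{2n+1}})$ by triples of dominant weights is multiplicity-free, which is exactly the chain you spell out. The only implicit point worth flagging is that the final step needs $f_v \neq 0$ as a function on $\Conf_3 \A_{Spin_{2n+1}}$ (otherwise it lies in every graded piece), which holds since the restricted cluster variables are the nonzero cluster variables of the folded seed.
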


As we mutate the cluster for $\Conf_3 \A_{Spin_{2n+1}}$, we continue to get clusters that unfold to give clusters for $\Conf_3 \A_{Spin_{2n+2}}$. We will later give sequences of mutations that realize various $S_3$ symmetries for the cluster algebra on $\Conf_3 \A_{Spin_{2n+2}}$, and also the flip on $\Conf_4 \A_{Spin_{2n+2}}$. All these sequences of mutations will just be unfoldings of the analogous sequence of mutations for $\Conf_3 \A_{Spin_{2n+1}}$ or $\Conf_4 \A_{Spin_{2n+1}}$. This gives us the following principle which will underlie the computations of the $S_3$ symmetries on $\Conf_3 \A_{Spin_{2n+2}}$ and the flip on $\Conf_4 \A_{Spin_{2n+2}}$:

\begin{observation} We can compute the formulas for the cluster variables on $\Conf_3 \A_{Spin_{2n+2}}$ and $\Conf_4 \A_{Spin_{2n+2}}$ that appear at various stages of mutation in the following way: Start with the formula for the corresponding cluster variable on $\Conf_m \A_{Spin_{2n+1}}$. Replace every instance of ``$a$'' where $1 \leq a \leq n-1$ by ``$a$,'' and replace every instance of ``$2n+1-a$'' where $1 \leq a \leq n-1$ by ``$2n+2-a$.'' Every instance of ``$n$'' should be replaced by either ``$n$,'' ``$n+1$,'' or ``$n+1^*$,'' depending on the context. One then obtains the formula for the cluster variable on $\Conf_m \A_{Spin_{2n+2}}$.
\end{observation}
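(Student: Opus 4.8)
The statement to prove is the final Observation: that the cluster variables appearing at intermediate stages of the $S_3$-symmetry and flip mutation sequences for $\Conf_m \A_{Spin_{2n+2}}$ are obtained from the corresponding cluster variables for $\Conf_m \A_{Spin_{2n+1}}$ by the substitution rule ($a \mapsto a$ for small $a$, $2n+1-a \mapsto 2n+2-a$, and $n$ going to $n$, $n+1$, or $n+1^*$ depending on context). The plan is to prove this by induction on the number of mutations, using the folding/unfolding framework set up in the preceding subsection. The base case is the initial seed, where the claim holds by the explicit description of the functions assigned to vertices in both the $Spin_{2n+1}$ and $Spin_{2n+2}$ constructions: the edge and face functions for $Spin_{2n+2}$ are literally written (up to signs, which we have agreed to suppress) by applying the substitution rule to those for $Spin_{2n+1}$, and the doubled vertices $x_{in}, x_{in^*}$ (resp.\ $y_n, y_{n^*}$) map under the folding to $x_{in}$ (resp.\ $y_n$), consistently with the projection $\pi$ of weight spaces.

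For the inductive step, the key point is that every mutation sequence used for $\Conf_m \A_{Spin_{2n+2}}$ is, by construction, the unfolding of the corresponding mutation sequence for $\Conf_m \A_{Spin_{2n+1}}$ under the quiver automorphism $\sigma$ that swaps $x_{in} \leftrightarrow x_{in^*}$ and $y_n \leftrightarrow y_{n^*}$. Recall from the discussion of folding (referencing \cite{FZ2}) that mutating a vertex downstairs in a folded cluster algebra corresponds to unfolding, mutating all $\sigma$-lifts of that vertex upstairs (which have no arrows between them, so the order is immaterial), and refolding. Thus I would: first verify that the initial seed for $\Conf_3 \A_{Spin_{2n+2}}$ is $\sigma$-invariant (this is immediate from the explicit quiver in Figure 37a and the function assignments, using $\tcfr{a,b}{c}{d}^\sigma$-type identities relating starred and unstarred functions via the outer automorphism of $Spin_{2n+2}$); second, check that each mutation in the sequence \eqref{23Sp}, \eqref{13Sp}, and the flip sequence preserves $\sigma$-invariance of the seed, so that the folded seed at each stage is well-defined and its folding is the corresponding stage of the $Spin_{2n+1}$ sequence; third, observe that the exchange relation downstairs is obtained from the exchange relation upstairs by folding (setting $f_v = f_{\sigma(v)}$), and that the upstairs exchange relations for $\Conf_m \A_{SL_{2n+2}}$ are exactly the octahedron-recurrence-type identities already established in the proofs of the theorems for $\Conf_3 \A_{Sp_{2n}}$ and $\Conf_4 \A_{Sp_{2n}}$, with the index substitution $N = 2n \mapsto N = 2n+2$ having no effect on the form of those identities. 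Combining the induction hypothesis (the stage-$k$ variables satisfy the substitution rule) with the fact that the exchange relation has the same shape upstairs and downstairs then forces the stage-$(k+1)$ variables to satisfy the substitution rule as well, since each $\tcfr{\cdots}{\cdots}{\cdots}$ or $\qcfs{\cdots}{\cdots}{\cdots}{\cdots}$ factor appearing in the $Spin_{2n+1}$ exchange relation has a unique counterpart for $Spin_{2n+2}$ determined by the substitution.

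The main obstacle I anticipate is not the combinatorial bookkeeping of the mutation sequences—those are handled uniformly by the unfolding principle—but rather the subtlety with the spin vertices $x_{in}$, $y_n$ and their starred copies, together with the attendant signs and the dependence on $n \bmod 4$. Specifically, one must confirm that the square-root functions $\sqrt{\pm\tcfr{n-i,n+2+i}{n+1^{(*)}}{n+1^{(*)}}}$ appearing at intermediate stages are genuinely well-defined functions on $\Conf_3 \A_{Spin_{2n+2}}$ (not merely on the cover), that the choice of which representation ($\omega_n$ vs.\ $\omega_{n^*}$) appears in each tensor factor is the one dictated by the substitution rule (where ``$n$ should be replaced by $n$, $n+1$, or $n+1^*$ depending on the context''—and this context is precisely determined by tracking the outer automorphism $\sigma$ through the mutation), and that the pairing $\phi$ on spin representations defined in the signs subsection is compatible with these exchange relations. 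I would handle this by invoking the analysis of the preceding subsection (the concrete Clifford-algebra description of the spin representations and the pairing $\phi$) to certify well-definedness of each square-root at each stage, and by noting that since we have agreed to suppress signs, the only remaining content is the identification of which fundamental weight lands in which tensor slot—which is forced by $\sigma$-equivariance of the whole mutation sequence. The worst case to check carefully is the flip theorem's analogue of the case $j = n$ (the one ``most different from the situation when $G = Sp_{2n}$''), where one mutates spin vertices and the identities involve $\qcfs{n}{\cdots}{n+1}{\cdots}$; here one imports the corresponding identities already proved for $\Conf_4 \A_{Spin_{2n+1}}$ and unfolds them, using that the dual identities for $Spin_{2n+2}$ follow from the $Spin_{2n+1}$ ones by the same Clifford-theoretic duality between $\omega_n$ and $\omega_{n^*}$.
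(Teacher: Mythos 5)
Your proposal is correct and follows essentially the same route as the paper: the Observation is justified there by exactly the folding/unfolding principle you describe (the $\sigma$-invariant seed for $Spin_{2n+2}$ folds to the $Spin_{2n+1}$ seed, mutation sequences are $\sigma$-equivariant unfoldings, and the weight-space bookkeeping via $\pi$ and $\sigma$ forces the substitution rule), with the spin-vertex and sign subtleties deferred to the Clifford-algebra discussion just as you propose. The only caveat is that your inductive step should cite the $Spin_{2n+2}$-level identities (pulled back from $\Conf_m\A_{SL_{2n+2}}$ via the two maps differing by the outer automorphism) rather than the $Sp_{2n}$ ones directly, but this is a labeling slip, not a gap.
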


All the formulas we derive for $Spin_{2n+2}$ will follow this principle.

Finally, because any two vertices that are identified under the folding of cluster variables (for example $x_{in}$ and $x_{in^*}$) are exchanged under the involution $\sigma$, we have that the formulas for computing the functions attached to these vertices obeys another principle:

\begin{observation} Suppose we have a cluster that is fixed under the involution $\sigma$. (This is the case for our initial cluster and any cluster obtained from the initial one in which whenever we mutate a vertex $v$ we also mutate $\sigma(v)$.) Then if $v$ is a vertex in this cluster, the formula for $f_{\sigma(v)}$ is obtained from the formula for $f_v$ by switching all occurences of $n+1$ and $n+1^*$.
\end{observation}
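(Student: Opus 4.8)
The plan is to prove the last \emph{Observation} by exploiting the compatibility between the involution $\sigma$ and the mutation rule, combined with the fact that $\sigma$ acts on the whole setup (quiver, functions, and cluster algebra) by the symmetry ``switch $n+1 \leftrightarrow n+1^*$.'' First I would make precise what is being claimed: there is an operation $\tau$ on the symbols appearing in our function notation $\tcfr{\cdot}{\cdot}{\cdot}$, $\qcfs{\cdot}{\cdot}{\cdot}{\cdot}$, etc., which replaces every index equal to $n+1$ by $n+1^*$ and vice versa (and fixes all indices $\leq n$), and the assertion is that if $f_v$ is the function at a vertex $v$ of a $\sigma$-fixed cluster, then $f_{\sigma(v)}$ is obtained from $f_v$ by applying $\tau$ to its notation. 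Equivalently, $\sigma^*(f_v) = \tau(f_v)$ as functions on $\Conf_m \A_{Spin_{2n+2}}$. This is the statement I will actually establish.

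Next I would verify the base case: for the initial cluster $\Sigma_0$ (or the initial cluster of the $\Conf_4$ flip quiver), the claim holds by inspection of the explicit function assignments given in the construction of the seed. Indeed, the functions at $x_{in}$ and $x_{in^*}$ (and at $y_n$ and $y_{n^*}$) are listed in pairs that differ precisely by interchanging $n+1$ and $n+1^*$ in their webs/contractions; this is exactly how the doubled vertices were defined, using the outer automorphism of $Spin_{2n+2}$ which on flags sends $W_{n+1} \mapsto W_{n+1^*}$. The vertices fixed by $\sigma$ (those $x_{ij}, y_k$ with $j,k \leq n-1$) carry functions with no index equal to $n$, so $\tau$ fixes them and $\sigma(v) = v$, so the claim is vacuous there. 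This pins down the base case: $\sigma^*(f_v) = \tau(f_v)$ for all $v$ in $\Sigma_0$.

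Then comes the inductive step. Suppose $\Sigma$ is a $\sigma$-fixed cluster for which the claim holds, and we mutate at a vertex $v$ and also at $\sigma(v)$ (this is forced if we want to stay $\sigma$-fixed, and is exactly the hypothesis of the Observation). Because $\sigma$ is a quiver automorphism, it commutes with the mutation rule \eqref{eq:matmut}, so the mutated quiver is again $\sigma$-fixed and $\sigma$ sends the new vertex $v'$ to the new vertex $\sigma(v)'$. For the functions, I apply the exchange relation \eqref{eq:Atrans}: the new function $f_{v'}$ is a Laurent expression in the old functions $f_w$ for $w$ a neighbor of $v$ (and $f_v$ itself). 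Applying $\sigma^*$ to this relation and using that $\sigma$ carries the local neighborhood of $v$ bijectively to the local neighborhood of $\sigma(v)$ (with the same incoming/outgoing signs, by quiver-equivariance), together with the inductive hypothesis $\sigma^*(f_w) = \tau(f_w)$, shows $\sigma^*(f_{v'})$ equals the same Laurent expression with every entry replaced by its $\tau$-image; but by the Langlands/folding principle already invoked in the previous Observations — that the cluster variables at each stage are genuine (not merely rational) functions on the configuration spaces, which is the case in every cluster we consider — this Laurent expression, read in $\tau$-transformed notation, \emph{is} the formula for $f_{\sigma(v)'}$. Hence the claim propagates.

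The main obstacle is the subtlety flagged in the ``Signs and spin representations'' section: the operation $\tau$ on notation is only well-defined up to a sign when spin representations are involved, and I have been suppressing those signs throughout. So the honest version of the argument needs the book-keeping framework of that section to guarantee that $\sigma^*$ really does act as the naive $\tau$ on notation \emph{with the correct signs}, i.e.\ that the sign ambiguities on both sides match. Concretely, one checks that $\sigma$ (the outer automorphism with fixed locus $Spin_{2n+1}$) acts on the one-dimensional invariant spaces $[V_\lambda \otimes V_\mu \otimes V_\nu]^{Spin_{2n+2}}$ by a scalar which is $+1$ or $-1$ according to the same parity recipe ($\langle \omega_n, 2\rho\rangle \bmod 2$, etc.) that governs our chosen square-root branches, so that $\sigma^*(f_v)$ and $\tau(f_v)$ agree on the nose rather than up to sign. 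Once that compatibility is in place — and it is precisely what the explicit sign conventions of the seed construction were arranged to ensure — the inductive argument above goes through verbatim. The routine parts (commuting $\sigma$ with \eqref{eq:matmut} and \eqref{eq:Atrans}, reading off the base case) I would not spell out in full.
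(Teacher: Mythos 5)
Your argument is correct and matches the paper's (implicit) justification: the paper states this as an Observation without a formal proof, relying exactly on the points you make — that the initial seed is built so that $f_{x_{in}}$ and $f_{x_{in^*}}$ differ by the outer automorphism (which on flags swaps $W_{n+1}$ and $W_{n+1^*}$), that $\sigma$ is a quiver automorphism so mutation at $\sigma$-orbits preserves equivariance, and that the exchange relations therefore propagate the identity $\sigma^*(f_v)=f_{\sigma(v)}$. Your flagging of the sign issue is also consonant with the paper, which explicitly suppresses these signs and defers them to the bookkeeping of the ``Signs and spin representations'' section.
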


Finally, let us briefly say something about folding on the level of reduced words in the Weyl group. We will see in the next section that the cluster algebra structure on $\Conf_3 \A_{Spin_{2n+2}}$ comes from the longest word in the Weyl group for $Spin_{2n+2}$. Let the generators for this Weyl group be $s_1, s_2, \dots, s_{n-1}, s_n, s_{n*}$. Then the longest element of the Weyl group is
$$(s'_n s'_{n-1} \dots s'_1)^n \rightarrow (s_n s_{n*} s_{n-1} s_{n-2} \dots s_{1})^n$$

Let $s'_1, s'_2, \dots, s'_n$ be the generators for the Weyl group of $Spin_{2n+1}$. There is an injection from the Weyl group of $Spin_{2n+1}$ to the Weyl group of $Spin_{2n+2}$ that takes
$$s'_n \rightarrow s_n s_{n*},$$
$$s'_i \rightarrow s_i.$$
This map carries the longest element of the Weyl group of $Spin_{2n+1}$ to the longest element of the Weyl group of $Spin_{2n+2}$:
$$(s'_n s'_{n-1} \dots s'_1)^n \rightarrow (s_n s_{n*} s_{n-1} s_{n-2} \dots s_{1})^n$$
Therefore the reduced word for the longest element of the Weyl group of $Spin_{2n+2}$ folds to give the reduced word for the longest element of the Weyl group of $Spin_{2n+1}$, and the folding that gives the cluster structure on $\Conf_3 \A_{Sp_{2n}}$ from the cluster structure on $\Conf_3 \A_{SL_{2n}}$ really takes place on the level of Weyl groups.

\subsection{Reduced words}

We now relate the cluster structure on $\Conf_3 \A_{Spin_{2n+2}}$ given in the previous section to Berenstein, Fomin and Zelevinsky's cluster structure on $B$, the Borel in the group $G$ (\cite{BFZ}). This will allow us to see that the cluster structure described above induces a positive structure on $\A_{G,S}$ identical to the one given in \cite{FG1}. For a reader not interested in the positive structure on $\A_{G,S}$, and more interested in just understanding the cluster structure on $\A_{G,S}$, this section is not logically necessary.

As before, we will restrict our attention to triples of principal flags of the form $(U^-, \overline{w_0}U^-, b\cdot \overline{w_0}U^-).$ Consider the map

$$i: b \in B^- \rightarrow (U^-, \overline{w_0}U^-, b\cdot \overline{w_0}U^-) \in \Conf_3 \A_{Spin_{2n+2}}.$$

Let us recall the constructions of \cite{BFZ}. For $u, v$ elements of the Weyl group $W$ of $G$, we have the double Bruhat cell
$$G^{u,v}=B^+ \cdot u \cdot B^+ \cap B^- \cdot v \cdot B^v.$$
The cell $G^{w_0,e}$ is the on open part of $B^-$.

\begin{prop} The cluster algebra constructed above on $\Conf_3 \A_{Spin_{2n+2}}$, when restricted to the image of $i$, coincides with the cluster algebra structure given in \cite{BFZ} on $B^-=G^{w_0,e}.$
\end{prop}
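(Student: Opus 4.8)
The plan is to mimic, essentially verbatim, the proof of the analogous proposition for $Sp_{2n}$ (and $Spin_{2n+1}$) given earlier in the paper, but being careful about two new features in type $D$: the existence of \emph{two} spin nodes $n$ and $n^*$, and the sign subtleties discussed in the section on signs and spin representations. First I would recall that, following \cite{BFZ}, a cluster structure on the open double Bruhat cell $G^{w_0,e}$, which is the open part of $B^-$, is obtained from a choice of reduced word for $w_0$. Here I would take the word $w_0 = (s_n s_{n^*} s_{n-1}\cdots s_2 s_1)^n$, so that the associated sequence is $i_1,\dots,i_{n-1},i_{n^*},i_n$ repeated $n$ times, in parallel with the choices made for $Sp_{2n}$ and $Spin_{2n+1}$. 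The cluster variables of \cite{BFZ} are then the generalized minors $\Delta_{\omega_j,\omega_j}$ for the $n+1$ values $j \in \{1,\dots,n-1,n,n^*\}$, together with $\Delta_{u_{ij}\omega_j,\omega_j}$ where $u_{ij}=(s_n s_{n^*} s_{n-1}\cdots s_1)^{i-1} s_n s_{n^*} s_{n-1}\cdots s_j$ is the appropriate subword.

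The main body of the argument is the explicit identification of each function $f_{ij}$ attached to the vertex $x_{ij}$ (and $f_k$ attached to $y_k$) in our seed with the generalized minor $\Delta_{u_{ij}\omega_j,\omega_j}$, evaluated on the triple $(U^-,\overline{w_0}U^-,b\cdot\overline{w_0}U^-)$. For this I would fix an explicit embedding $Spin_{2n+2}\twoheadrightarrow SO_{2n+2}\hookrightarrow SL_{2n+2}$ using the quadratic form $\langle e_i,e_{2n+3-i}\rangle=(-1)^{i-1}$ as in the setup of Section 5, together with a pinning in which $E_{\alpha_i},F_{\alpha_i}$ are given by the standard elementary matrices (with the two spin generators $E_{\alpha_n},E_{\alpha_{n^*}}$ chosen to preserve the positive structure, analogously to the $\sqrt 2$-normalized generators used for $Spin_{2n+1}$). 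With this choice, $\Delta_{\omega_j,\omega_j}(x)$ for $x\in B^-$ is the determinant of the top-left $j\times j$ minor when $j\le n-1$, while $\Delta_{\omega_n,\omega_n}$ and $\Delta_{\omega_{n^*},\omega_{n^*}}$ are (signed) square-roots of such determinants reflecting that the spin representations have half the weight of $\bigwedge^{n+1}V$ restricted to $Spin_{2n+2}$. Likewise $\Delta_{u_{ij}\omega_j,\omega_j}(x)$ depends only on a cyclically-shifted block of rows $2n+3-i,\dots$ and the first $j$ (or $n$) columns, so it is again a minor or a square-root of one. On the other side, evaluating our functions $\tcfr{\,\cdot\,}{\cdot}{\cdot}$ (and their square-roots and sign-twisted versions) on the triple $(U^-,\overline{w_0}U^-,b\cdot\overline{w_0}U^-)$ — with $U^-$ represented by $e_{2n+2},-e_{2n+1},\dots$ so that $\overline{w_0}U^-$ is the standard flag $e_1,\dots,e_{n+1}$ — reduces each $f_{ij}$ to exactly the same minor or square-root of a minor by a direct unwinding of the definitions of the maps $\phi$, the wedge contractions, and the natural isomorphisms $\bigwedge^{2n+2-k}V\simeq\bigwedge^k V$. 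Matching these two computations vertex-by-vertex, across all four cases ($i=0$; $i\ge j$; $i<j$, $j<n$; and $j\in\{n,n^*\}$), establishes the proposition. Finally, since the $B$-matrices/quivers agree — ours was defined in the previous section precisely so that reading it off produces the \cite{BFZ} exchange matrix for this reduced word, and the folding/unfolding remarks show it matches — the two cluster algebras coincide.

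The step I expect to be the main obstacle is controlling signs, and in particular pinning down the correct branch of each square-root, in the vertices $x_{ij}$ with $j\in\{n,n^*\}$. This is exactly where type $D$ departs from types $B$ and $C$: the map $Spin_{2n+2}\to SL_{2n+2}$ does not come from a folding, so pullbacks of positive functions on $\Conf_3\A_{SL_{2n+2}}$ need not be positive, and moreover whether the twisted cyclic shift $s_G$ acts trivially on a spin representation depends on $n\bmod 4$ (via $\langle\omega_n,2\rho\rangle$). To handle this I would invoke the explicit Clifford-algebra model of the spin representations and the pairings $\phi$ constructed in the signs section, which pin down canonical bases $e_I$ of weight $\omega_I$ and canonical invariants in $[V_{\omega_k}\otimes S_1\otimes S_2]^{Spin_{2n+2}}$; evaluating these invariants on the flags above yields determinants of minors up to the prescribed sign $(-1)^{\langle\cdot,2\rho\rangle}$, and the unique positive choice of square-root on $\Conf_3\A_{Spin_{2n+2}}$ is then forced. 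Because the rest of the paper adopts the convention of suppressing all such signs, in the write-up I would carry out the sign/branch bookkeeping only for one representative case (say $x_{0n}$ and $x_{in}$ with $i<n$) and then appeal to the general framework of the signs section, noting that all other vertices are handled identically, and invoking the folding observation that the $Spin_{2n+2}$ formulas are obtained from the $Spin_{2n+1}$ formulas by the stated substitution rule together with the $n\leftrightarrow n+1,\,n+1^*$ doubling.
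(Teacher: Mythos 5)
Your proposal follows essentially the same route as the paper: the same reduced word $(s_n s_{n^*} s_{n-1}\cdots s_1)^n$, the same explicit pinning and embedding into $SL_{2n+2}$, and the same vertex-by-vertex matching of the web functions evaluated on $(U^-,\overline{w_0}U^-,b\cdot\overline{w_0}U^-)$ with the generalized minors $\Delta_{u_{ij}\omega_j,\omega_j}$ as determinants (or square-roots of determinants) of explicit minors, with the sign and square-root-branch issues deferred to the Clifford-algebra framework of the signs section. The only small slip is your parenthetical claim that the spin-node generators are chosen to preserve the positive structure — the paper explicitly notes that in type $D$ this map does \emph{not} preserve positive structures — but you correct for this yourself when you treat the sign bookkeeping, so the argument stands.
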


\begin{proof} $G^{w_0,e}$ is the on open part of $B^-$. Following \cite{BFZ}, to get a cluster structure on this subset, we must choose a reduced-word for $w_0$. In the numbering of the nodes of the Dynkin diagram given above for $Spin_{2n+2}$, we choose the reduced word expression $$w_0=(s_n s_{n^*} s_{n-1} \cdots s_2 s_1)^n.$$
Here our convention is that the above word corresponds to the string $i_1, i_2, \dots, i_{n-1}, i_{n^*}, i_n$ repeated $n$ times. 

Now let $G_0 = U^- H U^+ \subset G$ be the open subset of elements of $G$ having Gaussian decomposition $x=[x]_- [x]_0 [x]_+$. Then for any two elements $u, v \in W$, and any fundamental weight $\omega_i$, we have the {\em generalized minor} $\Delta_{u\omega_i, v\omega_i}(x)$ defined by

$$\Delta_{u\omega_i, v\omega_i}(x) := ([\overline{u}^{-1}x \overline{v}]_0)^{\omega_i}.$$

In our situation, we are interested in such minors when $u, v=e$, or when $v=e$ and $u=u_{ij}=(s_n s_{n^*} s_{n-1} \cdots s_2s_1)^{i-1} s_{n}s_{n^*}s_{n-1}\cdots s_j$ for $1 \leq i \leq n$ and $n \geq j \geq 1$ or $j=n^*$.

Then the cluster functions on $B^-$ given in \cite{BFZ} are $\Delta_{\omega_i, \omega_i}$ for $1 \leq i \leq n$ (these are the functions associated to $u, v=e$), and 
$$\Delta_{u_{ij}\omega_j, \omega_j},$$
which are the functions associated to $v=e$ and $u=u_{ij}=(s_n s_{n-1} \cdots s_2s_1)^i s_{n}s_{n-1}\cdots s_j$. Note that $u_{ij}$ is the subword of $u$ that stops on the $i^{\textrm th}$ iteration of $s_j$.

We have the following claims:

\begin{enumerate}
\item Recall that when $i=0$, we assign the function $\tcfr{2n+2-j}{}{j}$ to $x_{ij}$ for $j<n$,  $\sqrt{\pm(n+1^{(*)}; 0; n+1)}$ to $x_{in}$, and $\sqrt{\pm(n+1^{(*)}; 0; n+1^*)}$ to $x_{in^*}$. Then 
$$\tcfr{2n+2-j}{}{j} = \Delta_{\omega_j, \omega_j},$$
for $j < n$,
$$\sqrt{\pm(n+1^{(*)}; 0; n+1)} = \Delta_{\omega_n, \omega_n}.$$
and
$$\sqrt{\pm(n+1^{(*)}; 0; n+1^*)} = \Delta_{\omega_{n^*}, \omega_{n^*}}.$$

\item Recall that for $i \geq j \neq n$, we assign the function $\tcfr{n-i}{n+2+i-j}{j}(-1)^{n-i}$ to $x_{ij}$. When $i=n$ and $j=n$ or $n^*$, we assign the function $\sqrt{\pm(0; n+1^{(*)}; n+1)}$ to $x_{nn}$ and $\sqrt{\pm(0; n+1^{(*)}; n+1^*)}$ to $x_{nn^*}$. Then 
$$\tcfr{n-i}{n+1+i-j}{j}(-1)^{n-i} = \Delta_{u_{ij}\omega_j, \omega_j}$$ 
for $j \neq n$, while for $j=n$ or $n^*$, 
$$\sqrt{\pm(0; n+1^{(*)}; n+1)} = \Delta_{u_{nn}\omega_n, \omega_n}$$
and
$$\sqrt{\pm(0; n+1^{(*)}; n+1^*)} = \Delta_{u_{nn^*}\omega_{n^*}, \omega_{n^*}}$$

\item Recall that when $i < j <n$, we assign the function $\tcfr{n-i, 2n+2+i-j}{n+2}{j}(-1)^{n-i}$ to $x_{ij}$. Then 
$$\tcfr{n-i, 2n+2+i-j}{n+1}{j} = \Delta_{u_{ij}\omega_j, \omega_j}.$$
When $i <  n$ and $j=n$ or $n^*$, we assign the function $\sqrt{\pm(n-i, n+2+i; n+1^{(*)}; n+1)}$ to $x_{in}$ and the function $\sqrt{\pm(n-i, n+2+i; n+1^{(*)}; n+1^*)}$ to $x_{in}$. Then 
$$\sqrt{\pm(n-i, n+2+i; n+1^{(*)}; n+1)} = \Delta_{u_{in}\omega_n, \omega_n}$$
and
$$\sqrt{\pm(n-i, n+2+i; n+1^{(*)}; n+1^*)} = \Delta_{u_{in^*}\omega_{n^*}, \omega_{n^*}}.$$

\end{enumerate}

Thus, in all cases the function assigned to $x_{ij}$ is precisely $\Delta_{u_{ij}\omega_j, \omega_j}$.

The proof of these claims is a straightforward calculation.

It is convenient to fix maps 
$Spin_{2n+2} \twoheadrightarrow SO_{2n+2} \hookrightarrow SL_{2n+2}.$
Choose the quadratic form so that $<e_i,e_{2n+3-i}>=(-1)^{i-1}$ and all other pairings of basis elements are zero.  Now choose a pinning such that under the map $Spin_{2n+2} \hookrightarrow SL_{2n+2}$,

$$E_{\alpha_n}=E_{n,n+2}+E_{n+1,n+3}, F_{\alpha_n}=E_{n+2,n}+E_{n+3,n+1},$$
$$E_{\alpha_{n^*}}=E_{n,n+1}+E_{n+2,n+3}, F_{\alpha_n}=E_{n+1,n}+E_{n+3,n+2},$$
and for $1\leq i<n$,
$$E_{\alpha_i}=E_{i,i+1}+E_{2n+2-i,2n+3-i},  F_{\alpha_i}=E_{i+1,i}+E_{2n+3-i,2n+2-i},$$
were $E_{i,j}$ is the $(i,j)$-elementary matrix, i.e., the matrix with a $1$ in the $(i,j)$ position and $0$ in all other positions. This map does not preserve positive structures.

With respect to this embedding, we can directly calculate $\Delta_{\omega_j, \omega_j}(x)$ where $x \in B^-$. When $x$ is embedded in $SL_{2n+2}$, $\Delta_{\omega_j, \omega_j}(x)$ is simply the determinant of the minor consisting of the first $j$ rows and the first $j$ columns for $j<n$, $\Delta_{\omega_n+\omega_{n^*}, \omega_n+\omega_{n^*}}(x)$ is the determinant of the minor consisting of the first $n$ rows and columns, $\Delta_{\omega_n, \omega_n}(x)$ is the square-root of the determinant of the minor consisting of the first $n+1$ rows and columns, and 
$$\Delta_{\omega_{n^*}, \omega_{n^*}}(x)=\Delta_{\omega_n+\omega_{n^*}, \omega_n+\omega_{n^*}}(x)/\Delta_{\omega_n, \omega_n}(x).$$
Note that 
$$\Delta_{\omega_{n^*}, \omega_{n^*}}(x)=\sigma^*(\Delta_{\omega_n, \omega_n}(x)).$$

Similarly, one calculates that $\Delta_{u_{ij}\omega_j, \omega_j}(x)$ only depends on the entries in rows $n+i+2, n+i+1, \dots, n+3, 1, 2, \dots n-i$ as well as rows $n+2$ and $n+1$ and the first $n+2$ columns. In particular, for $j< n$, $\Delta_{u_{ij}\omega_j, \omega_j}(x)$ is the determinant of the minor consisting of the first $j$ of those rows (in the order listed above) and the first $j$ columns.  We also have that
$$\Delta_{u_{i,n-1}(\omega_n+\omega_{n^*}), \omega_n+\omega_{n^*}}(x)=\Delta_{u_{in}(\omega_n+\omega_{n^*}), \omega_n+\omega_{n^*}}(x)=\Delta_{u_{in^*}(\omega_n+\omega_{n^*}), \omega_n+\omega_{n^*}}(x)$$
is the determinant of the minor consisting of the $n$ rows $n+i+2, n+i+1, \dots, n+3, 1, 2, \dots n-i$ and the first $n$ columns. 

Meanwhile, $\Delta_{u_{in}\omega_n, \omega_n}(x)$ is calculated by taking the square-root of the minor consisting of the rows $n+i+2, n+i+1, \dots, n+3, 1, 2, \dots n-i$ as well as either row $n+2$ (if $i$ is odd) or $n+1$ (if $i$ is even) and the first $n+1$ columns. $\Delta_{u_{in^*}\omega_{n^*}, \omega_{n^*}}(x)$ can be calculated in one of two ways: using the quotient $\Delta_{u_{in^*}\omega_{n^*}, \omega_{n^*}}(x)=\Delta_{u_{in^*}(\omega_n+\omega_{n^*}), \omega_n+\omega_{n^*}}(x)/\Delta_{u_{in^*}\omega_n, \omega_n}(x)$, or by applying $\sigma$ to $\Delta_{u_{in}\omega_n, \omega_n}(x)$, so that it is the square-root of the minor consisting of the rows $n+i+2, n+i+1, \dots, n+3, 1, 2, \dots n-i$ as well as either row $n+2$ (if $i$ is even) or $n+1$ (if $i$ is odd) and the columns $1, 2, \dots, n, n+2$.

We then must calculate the functions $\tcfr{2n+2-j}{}{j}, \tcfr{n-i}{n+2+i-j}{j}, \tcfr{n-i, 2n+2+i-j}{n+2}{j}$ and $\sqrt{\pm(n-i, n+2+i; n+1^{(*)}; n+1^*)}$ on the triple of flags $(U^-, \overline{w_0}U^-, b\cdot \overline{w_0}U^-)$. Under the embedding  $Spin_{2n+2} \hookrightarrow SL_{2n+2}$, we should choose the flag $U^-$ to be be $e_{2n+2}, -e_{2n+1}, e_{2n}, dots, (-1)^{n} e_{n+2}$, so that $\overline{w_0}U^-$ is given by the flag $e_1, e_2, \dots, e_{n+1}$. Direct calculation then shows that $\tcfr{2n+2-j}{}{j}$, $\tcfr{n-i}{n+2+i-j}{j}$, $\tcfr{n-i, 2n+2+i-j}{n+2}{j}$ and $\sqrt{\pm(n-i, n+2+i; n+1^{(*)}; n+1^*)}$ are given by the appropriate determinants of minors or square-roots of these determinants of minors.

\end{proof}

Finally, note that we have the following equalities of functions:

\begin{equation} \label{dualities3}
\begin{split}
\dud{k}{2n+2-k}&=\dud{2n+2-k}{k} \\
\tcfr{n-i}{n+2+i-j}{j}&=\tcfr{n+2+i}{n-i+j}{2n+2-j} \\
\tcfr{n-i, 2n+2+i-j}{n+2}{j}&=\tcfr{n+2+i, j-i}{n}{2n+2- j}
\end{split}
\end{equation}
These equalities are valid up to sign (see the previous section). They arise because the quadratic form induces an isomorphism between $\bigwedge\nolimits^i V$ and $\bigwedge\nolimits^{2n+2-i} V$.

\subsubsection{The first transposition}

Let $(A,B,C) \in \Conf_3 \A_{Spin_{2n+2}}$ be a triple of flags. The sequence of mutations that realizes that $S_3$ symmetry $(A,B,C) \rightarrow (A,C,B)$ is the same as \eqref{23Sp} (which gave the $S_3$ symmetry for $G=Sp_{2n}$ and $Spin_{2n+1}$):

\begin{equation}
\begin{gathered}
x_{11}, x_{21}, x_{22}, x_{12}, x_{13}, x_{23}, x_{33}, x_{32}, x_{33,} \dots, x_{1,n-1}, \dots, x_{n-1, n-1}, \dots, x_{n-1, 1}, \\
x_{11}, x_{21}, x_{22}, x_{12},  \dots x_{1,n-2}, \dots, x_{n-2, n-2}, \dots, x_{n-2, 1}, \\
\dots, \\
x_{11}, x_{21}, x_{22}, x_{12}, \\
x_{11} \\
\end{gathered}
\end{equation}

The sequence can be thought of as follows: At any step of the process, we mutate all $x_{ij}$ such that $\max(i,j)$ is constant. It will not matter in which order we mutate these $x_{ij}$ because the vertices we mutate have no arrows between them. So we first mutate the $x_{ij}$ such that $\max(i,j)=1$, then the $x_{ij}$ such that $\max(i,j)=2$, then the $x_{ij}$ such that $\max(i,j)=3$, etc. The sequence of maximums that we use is 
$$1, 2, 3, \dots n-1, 1, 2, \dots n-2 \dots, 1, 2, 3, 1, 2, 1.$$

The evolution of the quiver for $\Conf_3 \A_{Spin_{2n+2}}$ is just as in the case for $\Conf_3 \A_{Sp_{2n}}$, as pictured in Figures 13 and 14, with the only difference being that each white vertex is replaced by two black vertices. 

In Figure 40, we depict how the quiver for $\Conf_3 \A_{Spin_10}$ changes after performing the sequence of mutations of $x_{ij}$ having maximums $1$; $1, 2$; and $1, 2, 3$.

\begin{center}
\begin{tikzpicture}[scale=2.2]
\begin{scope}[xshift=-1.5cm]

  \node (x01) at (1,0) {$\dud{3}{7}$};
  \node (x02) at (2,0) {$\tcfr{8}{}{2}$};
  \node (x03) at (3,0) {$\tcfr{7}{}{3}$};
  \node (x04) at (4,0) {$\sqrt{\tcfr{5^*}{}{5}}$};
  \node (x11) at (1,-1) {$\boldsymbol{\tcfr{2,9}{7}{2}}$};
  \node (x12) at (2,-1) {$\tcfr{3, 9}{6}{2}$};
  \node (x13) at (3,-1) {$\tcfr{3, 8}{6}{3}$};
  \node (x14) at (4,-1) {$\sqrt{\tcfr{3, 7}{5}{5}}$};
  \node (x21) at (1,-2) {$\tcfr{2}{7}{1}$};
  \node (x22) at (2,-2) {$\tcfr{2}{6}{2}$};
  \node (x23) at (3,-2) {$\tcfr{2, 9}{6}{3}$};
  \node (x24) at (4,-2) {$\sqrt{\tcfr{2, 8}{5^*}{5}}$};
  \node (x31) at (1,-3) {$\tcfr{1}{8}{1}$};
  \node (x32) at (2,-3) {$\tcfr{1}{7}{2}$};
  \node (x33) at (3,-3) {$\tcfr{1}{6}{3}$};
  \node (x34) at (4,-3) {$\sqrt{\tcfr{1, 9}{5}{5}}$};
  \node (x41) at (1,-4) {$\tcfr{}{9}{1}$};
  \node (x42) at (2,-4) {$\tcfr{}{8}{2}$};
  \node (x43) at (3,-4) {$\tcfr{}{7}{3}$};
  \node (x44) at (4,-4) {$\sqrt{\tcfr{}{5^*}{5}}$};

  \node (y1) at (0,-3) {$\dud{1}{9}$};
  \node (y2) at (0,-2) {$\dud{2}{8}$};
  \node (y3) at (0,-1) {$\tcfr{9}{}{1}$};
  \node (y4) at (5,-1) {$\sqrt{\dud{5^*}{5}}$};

  \draw [->] (x41) to (x31);
  \draw [->] (x31) to (x21);
  \draw [->] (x11) to (x21);
  \draw [->] (x01) to (x11);
  \draw [->] (x42) to (x32);
  \draw [->] (x32) to (x22);
  \draw [->] (x12) to (x02);
  \draw [->] (x43) to (x33);
  \draw [->] (x33) to (x23);
  \draw [->] (x23) to (x13);
  \draw [->] (x13) to (x03);
  \draw [->] (x44) to (x34);
  \draw [->] (x34) to (x24);
  \draw [->] (x24) to (x14);
  \draw [->] (x14) to (x04);
  \draw [->, dashed] (x01) .. controls +(45:1) and +(up:2) .. (y4);
  \draw [->, dashed] (y2) .. controls +(135:1) and +(left:2) .. (x01);
  \draw [->, dashed] (y1) to (y2);

  \draw [->, dashed] (x04) to (x03);
  \draw [->, dashed] (x03) to (x02);
  \draw [->, dashed] (x02) .. controls +(135:1) and +(up:1.5) .. (y3);
  \draw [->] (y4) to (x14);
  \draw [->] (x14) to (x13);
  \draw [->] (x13) to (x12);
  \draw [->] (x11) to (x12);
  \draw [->] (y3) to (x11);
  \draw [->] (x24) to (x23);
  \draw [->] (x23) to (x22);
  \draw [->] (x21) to (y2);
  \draw [->] (x34) to (x33);
  \draw [->] (x33) to (x32);
  \draw [->] (x32) to (x31);
  \draw [->] (x31) to (y1);
  \draw [->, dashed] (x44) to (x43);
  \draw [->, dashed] (x43) to (x42);
  \draw [->, dashed] (x42) to (x41);

  \draw [->] (x12) to (x01);
  \draw [->] (x02) to (x13);
  \draw [->] (x03) to (x14);
  \draw [->] (x04) to (y4);
  \draw [->] (x21) to (y3);
  \draw [->] (x22) to (x11);
  \draw [->] (x12) to (x23);
  \draw [->] (x13) to (x24);
  \draw [->] (y2) to (x31);
  \draw [->] (x21) to (x32);
  \draw [->] (x22) to (x33);
  \draw [->] (x23) to (x34);
  \draw [->] (y1) to (x41);
  \draw [->] (x31) to (x42);
  \draw [->] (x32) to (x43);
  \draw [->] (x33) to (x44);

\end{scope}

\draw[yshift=-5cm,xshift=1cm]
  node[below,text width=6cm] 
  {
  Figure 40a The quiver and the functions for $\Conf_3 \A_{Spin_{10}}$ after performing the sequence of mutations of $x_{ij}$ having maximums $1$.
  };

\end{tikzpicture}
\end{center}

\begin{center}
\begin{tikzpicture}[scale=2.2]

\begin{scope}[xshift=-1.5cm]

  \node (x01) at (1,0) {$\tcfr{8}{}{2}$};
  \node (x02) at (2,0) {$\dud{3}{7}$};
  \node (x03) at (3,0) {$\tcfr{7}{}{3}$};
  \node (x04) at (4,0) {$\sqrt{\tcfr{5^*}{}{5}}$};
  \node (x11) at (1,-1) {${\tcfr{2,9}{7}{2}}$};
  \node (x12) at (2,-1) {$\boldsymbol{\tcfr{2, 8}{7}{3}}$};
  \node (x13) at (3,-1) {$\tcfr{3, 8}{6}{3}$};
  \node (x14) at (4,-1) {$\sqrt{\tcfr{3, 7}{5}{5}}$};
  \node (x21) at (1,-2) {$\boldsymbol{\tcfr{1,9}{8}{2}}$};
  \node (x22) at (2,-2) {$\boldsymbol{\tcfr{1,9}{7}{3}}$};
  \node (x23) at (3,-2) {$\tcfr{2, 9}{6}{3}$};
  \node (x24) at (4,-2) {$\sqrt{\tcfr{2, 8}{5^*}{5}}$};
  \node (x31) at (1,-3) {$\tcfr{1}{8}{1}$};
  \node (x32) at (2,-3) {$\tcfr{1}{7}{2}$};
  \node (x33) at (3,-3) {$\tcfr{1}{6}{3}$};
  \node (x34) at (4,-3) {$\sqrt{\tcfr{1, 9}{5}{5}}$};
  \node (x41) at (1,-4) {$\tcfr{}{9}{1}$};
  \node (x42) at (2,-4) {$\tcfr{}{8}{2}$};
  \node (x43) at (3,-4) {$\tcfr{}{7}{3}$};
  \node (x44) at (4,-4) {$\sqrt{\tcfr{}{5^*}{5}}$};

  \node (y1) at (0,-3) {$\dud{1}{9}$};
  \node (y2) at (0,-2) {$\tcfr{9}{}{1}$};
  \node (y3) at (0,-1) {$\dud{2}{8}$};
  \node (y4) at (5,-1) {$\sqrt{\dud{5^*}{5}}$};

  \draw [->] (x41) to (x31);
  \draw [->] (x21) to (x31);
  \draw [->] (x21) to (x11);
  \draw [->] (x11) to (x01);
  \draw [->] (x42) to (x32);
  \draw [->] (x22) to (x32);
  \draw [->] (x02) to (x12);
  \draw [->] (x43) to (x33);
  \draw [->] (x13) to (x03);
  \draw [->] (x44) to (x34);
  \draw [->] (x34) to (x24);
  \draw [->] (x24) to (x14);
  \draw [->] (x14) to (x04);
  \draw [->, dashed] (x02) .. controls +(up:1) and +(up:2) .. (y4);
  \draw [->, dashed] (y3) .. controls +(up:2) and +(up:1) .. (x02);
  \draw [->, dashed] (y1) .. controls +(left:1) and +(left:1) .. (y3);

  \draw [->, dashed] (x04) to (x03);
  \draw [->, dashed] (x03) .. controls +(up:1) and +(up:1) .. (x01);
  \draw [->, dashed] (x01)  .. controls +(left:2) and +(left:1) .. (y2);
  \draw [->] (y4) to (x14);
  \draw [->] (x14) to (x13);
  \draw [->] (x12) to (x13);
  \draw [->] (x12) to (x11);
  \draw [->] (x11) to (y3);
  \draw [->] (x24) to (x23);
  \draw [->] (x22) to (x23);
  \draw [->] (y2) to (x21);
  \draw [->] (x34) to (x33);
  \draw [->] (x31) to (y1);
  \draw [->, dashed] (x44) to (x43);
  \draw [->, dashed] (x43) to (x42);
  \draw [->, dashed] (x42) to (x41);

  \draw [->] (x01) to (x12);
  \draw [->] (x13) to (x02);
  \draw [->] (x03) to (x14);
  \draw [->] (x04) to (y4);
  \draw [->] (y3) to (x21);
  \draw [->] (x11) to (x22);
  \draw [->] (x23) to (x12);
  \draw [->] (x13) to (x24);
  \draw [->] (x31) to (y2);
  \draw [->] (x32) to (x21);
  \draw [->] (x33) to (x22);
  \draw [->] (x23) to (x34);
  \draw [->] (y1) to (x41);
  \draw [->] (x31) to (x42);
  \draw [->] (x32) to (x43);
  \draw [->] (x33) to (x44);

\end{scope}

\draw[yshift=-5cm,xshift=1cm]
  node[below,text width=6cm] 
  {
  Figure 40b The quiver and the functions for $\Conf_3 \A_{Spin_{10}}$ after performing the sequence of mutations of $x_{ij}$ having maximums $1, 2$.
  };

\end{tikzpicture}
\end{center}

\begin{center}
\begin{tikzpicture}[scale=2.2]

\begin{scope}[xshift=-1.5cm]

  \node (x01) at (1,0) {$\tcfr{8}{}{2}$};
  \node (x02) at (2,0) {$\tcfr{7}{}{3}$};
  \node (x03) at (3,0) {$\dud{3}{7}$};
  \node (x04) at (4,0) {$\sqrt{\dud{5^*}{5}}$};
  \node (x11) at (1,-1) {${\tcfr{2,9}{7}{2}}$};
  \node (x12) at (2,-1) {${\tcfr{2, 8}{7}{3}}$};
  \node (x13) at (3,-1) {$\boldsymbol{\tcfr{2, 7}{7}{4}}$};
  \node (x14) at (4,-1) {$\sqrt{\tcfr{3, 7}{5}{5}}$};
  \node (x21) at (1,-2) {${\tcfr{1,9}{8}{2}}$};
  \node (x22) at (2,-2) {${\tcfr{1,9}{7}{3}}$};
  \node (x23) at (3,-2) {$\boldsymbol{\tcfr{1,8}{7}{4}}$};
  \node (x24) at (4,-2) {$\sqrt{\tcfr{2, 8}{5}{5^*}}$};
  \node (x31) at (1,-3) {$\boldsymbol{\tcfr{9}{9}{2}}$};
  \node (x32) at (2,-3) {$\boldsymbol{\tcfr{9}{8}{3}}$};
  \node (x33) at (3,-3) {$\boldsymbol{\tcfr{9}{7}{4}}$};
  \node (x34) at (4,-3) {$\sqrt{\tcfr{1, 9}{5}{5}}$};
  \node (x41) at (1,-4) {$\tcfr{}{9}{1}$};
  \node (x42) at (2,-4) {$\tcfr{}{8}{2}$};
  \node (x43) at (3,-4) {$\tcfr{}{7}{3}$};
  \node (x44) at (4,-4) {$\sqrt{\tcfr{}{5}{5^*}}$};

  \node (y1) at (0,-3) {$\tcfr{9}{}{1}$};
  \node (y2) at (0,-2) {$\dud{1}{9}$};
  \node (y3) at (0,-1) {$\dud{2}{8}$};
  \node (y4) at (5,-1) {$\sqrt{\tcfr{5^*}{}{5}}$};

  \draw [->] (x31) to (x41);
  \draw [->] (x31) to (x21);
  \draw [->] (x21) to (x11);
  \draw [->] (x11) to (x01);
  \draw [->] (x32) to (x42);
  \draw [->] (x32) to (x22);
  \draw [->] (x22) to (x12);
  \draw [->] (x12) to (x02);
  \draw [->] (x33) to (x43);
  \draw [->] (x03) to (x13);
  \draw [->] (x34) to (x44);
  \draw [->] (x24) to (x34);
  \draw [->] (x14) to (x24);
  \draw [->] (x04) to (x14);
  \draw [->, dashed] (x03) to (x04);
  \draw [->, dashed] (y3) .. controls +(up:2) and +(up:1) .. (x03);
  \draw [->, dashed] (y2) to (y3);

  \draw [->, dashed] (y4) .. controls +(up:2) and +(up:1) .. (x02);
  \draw [->, dashed] (x02) to (x01);
  \draw [->, dashed] (x01) .. controls +(left:2) and +(left:1) .. (y1);
  \draw [->] (x14) to (y4);
  \draw [->] (x13) to (x14);
  \draw [->] (x13) to (x12);
  \draw [->] (x12) to (x11);
  \draw [->] (x11) to (y3);
  \draw [->] (x23) to (x24);
  \draw [->] (x23) to (x22);
  \draw [->] (x22) to (x21);
  \draw [->] (x21) to (y2);
  \draw [->] (x33) to (x34);
  \draw [->] (y1) to (x31);
  \draw [->, dashed] (x43) to (x44);
  \draw [->, dashed] (x42) to (x43);
  \draw [->, dashed] (x41) to (x42);

  \draw [->] (x01) to (x12);
  \draw [->] (x02) to (x13);
  \draw [->] (x14) to (x03);
  \draw [->] (x04) to (y4);
  \draw [->] (y3) to (x21);
  \draw [->] (x11) to (x22);
  \draw [->] (x12) to (x23);
  \draw [->] (x24) to (x13);
  \draw [->] (y2) to (x31);
  \draw [->] (x21) to (x32);
  \draw [->] (x22) to (x33);
  \draw [->] (x34) to (x23);
  \draw [->] (x41) to (y1);
  \draw [->] (x42) to (x31);
  \draw [->] (x43) to (x32);
  \draw [->] (x44) to (x33);

\end{scope}

\draw[yshift=-5cm,xshift=1cm]
  node[below,text width=6cm] 
  {
  Figure 40c The quiver and the functions for $\Conf_3 \A_{Spin_{10}}$ after performing the sequence of mutations of $x_{ij}$ having maximums $1,  2, 3$.
  };

\end{tikzpicture}
\end{center}

Note that in Figure 40c, we have changed which of the doubled vertices shown. This is because mutation of vertices adjacent to the doubled vertices changes how they are connected to each other. It is not difficult to see that we end up with the functions and quiver as depicted.

In Figure 41, we depict the state of the quiver after performing the sequence of mutations of $x_{ij}$ having maximums $1, 2, 3$; $1, 2, 3, 1, 2$; and $1, 2, 3, 1, 2, 1$.

\begin{center}
\begin{tikzpicture}[scale=2.2]

\begin{scope}[xshift=-1.5cm]

  \node (x01) at (1,0) {$\tcfr{7}{}{3}$};
  \node (x02) at (2,0) {$\dud{2}{8}$};
  \node (x03) at (3,0) {$\dud{3}{7}$};
  \node (x04) at (4,0) {$\sqrt{\dud{5^*}{5}}$};
  \node (x11) at (1,-1) {$\boldsymbol{\tcfr{1,8}{8}{3}}$};
  \node (x12) at (2,-1) {$\boldsymbol{\tcfr{1,7}{8}{4}}$};
  \node (x13) at (3,-1) {${\tcfr{2, 7}{7}{4}}$};
  \node (x14) at (4,-1) {$\sqrt{\tcfr{3, 7}{5}{5}}$};
  \node (x21) at (1,-2) {$\boldsymbol{\tcfr{8}{9}{3}}$};
  \node (x22) at (2,-2) {$\boldsymbol{\tcfr{8}{8}{4}}$};
  \node (x23) at (3,-2) {${\tcfr{1,8}{7}{4}}$};
  \node (x24) at (4,-2) {$\sqrt{\tcfr{2, 8}{5}{5^*}}$};
  \node (x31) at (1,-3) {${\tcfr{9}{9}{2}}$};
  \node (x32) at (2,-3) {${\tcfr{9}{8}{3}}$};
  \node (x33) at (3,-3) {${\tcfr{9}{7}{4}}$};
  \node (x34) at (4,-3) {$\sqrt{\tcfr{1, 9}{5}{5}}$};
  \node (x41) at (1,-4) {$\tcfr{}{9}{1}$};
  \node (x42) at (2,-4) {$\tcfr{}{8}{2}$};
  \node (x43) at (3,-4) {$\tcfr{}{7}{3}$};
  \node (x44) at (4,-4) {$\sqrt{\tcfr{}{5}{5^*}}$};

  \node (y1) at (0,-3) {$\tcfr{9}{}{1}$};
  \node (y2) at (0,-2) {$\tcfr{8}{}{2}$};
  \node (y3) at (0,-1) {$\dud{1}{9}$};
  \node (y4) at (5,-1) {$\sqrt{\tcfr{5^*}{}{5}}$};

  \draw [->] (x11) to (x01);
  \draw [->] (x21) to (x11);
  \draw [->] (x21) to (x31);
  \draw [->] (x31) to (x41);
  \draw [->] (x02) to (x12);
  \draw [->] (x22) to (x32);
  \draw [->] (x32) to (x42);
  \draw [->] (x03) to (x13);
  \draw [->] (x13) to (x23);
  \draw [->] (x23) to (x33);
  \draw [->] (x33) to (x43);
  \draw [->] (x04) to (x14);
  \draw [->] (x14) to (x24);
  \draw [->] (x24) to (x34);
  \draw [->] (x34) to (x44);
  \draw [->, dashed] (y4) .. controls +(up:2) and +(up:1) .. (x01);
  \draw [->, dashed] (x01) .. controls +(left:2) and +(left:1) .. (y2);
  \draw [->, dashed] (y2) to (y1);

  \draw [->, dashed] (y3) .. controls +(up:2) and +(up:1) .. (x02);
  \draw [->, dashed] (x02) to (x03);
  \draw [->, dashed] (x03) to (x04);
  \draw [->] (x11) to (y3);
  \draw [->] (x12) to (x11);
  \draw [->] (x12) to (x13);
  \draw [->] (x13) to (x14);
  \draw [->] (x14) to (y4);
  \draw [->] (y2) to (x21);
  \draw [->] (x22) to (x23);
  \draw [->] (x23) to (x24);
  \draw [->] (y1) to (x31);
  \draw [->] (x31) to (x32);
  \draw [->] (x32) to (x33);
  \draw [->] (x33) to (x34);
  \draw [->, dashed] (x41) to (x42);
  \draw [->, dashed] (x42) to (x43);
  \draw [->, dashed] (x43) to (x44);

  \draw [->] (x01) to (x12);
  \draw [->] (x13) to (x02);
  \draw [->] (x14) to (x03);
  \draw [->] (y4) to (x04);
  \draw [->] (y3) to (x21);
  \draw [->] (x11) to (x22);
  \draw [->] (x23) to (x12);
  \draw [->] (x24) to (x13);
  \draw [->] (x31) to (y2);
  \draw [->] (x32) to (x21);
  \draw [->] (x33) to (x22);
  \draw [->] (x34) to (x23);
  \draw [->] (x41) to (y1);
  \draw [->] (x42) to (x31);
  \draw [->] (x43) to (x32);
  \draw [->] (x44) to (x33);

\end{scope}

\draw[yshift=-5cm,xshift=1cm]
  node[below,text width=6cm] 
  {
  Figure 41a The quiver and the functions for $\Conf_3 \A_{Spin_{10}}$ after performing the sequence of mutations of $x_{ij}$ having maximums $1,  2, 3, 1, 2$.
  };

\end{tikzpicture}
\end{center}

\begin{center}
\begin{tikzpicture}[scale=2.2]

\begin{scope}[xshift=-1.5cm]

  \node (x01) at (1,0) {$\dud{1}{9}$};
  \node (x02) at (2,0) {$\dud{2}{8}$};
  \node (x03) at (3,0) {$\dud{3}{7}$};
  \node (x04) at (4,0) {$\sqrt{\dud{5^*}{5}}$};
  \node (x11) at (1,-1) {$\boldsymbol{\tcfr{7}{9}{4}}$};
  \node (x12) at (2,-1) {${\tcfr{1,7}{8}{4}}$};
  \node (x13) at (3,-1) {${\tcfr{2, 7}{7}{4}}$};
  \node (x14) at (4,-1) {$\sqrt{\tcfr{3, 7}{5}{5}}$};
  \node (x21) at (1,-2) {${\tcfr{8}{9}{3}}$};
  \node (x22) at (2,-2) {${\tcfr{8}{8}{4}}$};
  \node (x23) at (3,-2) {${\tcfr{1,8}{7}{4}}$};
  \node (x24) at (4,-2) {$\sqrt{\tcfr{2, 8}{5}{5^*}}$};
  \node (x31) at (1,-3) {${\tcfr{9}{9}{2}}$};
  \node (x32) at (2,-3) {${\tcfr{9}{8}{3}}$};
  \node (x33) at (3,-3) {${\tcfr{9}{7}{4}}$};
  \node (x34) at (4,-3) {$\sqrt{\tcfr{1, 9}{5}{5}}$};
  \node (x41) at (1,-4) {$\tcfr{}{9}{1}$};
  \node (x42) at (2,-4) {$\tcfr{}{8}{2}$};
  \node (x43) at (3,-4) {$\tcfr{}{7}{3}$};
  \node (x44) at (4,-4) {$\sqrt{\tcfr{}{5}{5^*}}$};

  \node (y1) at (0,-3) {$\tcfr{9}{}{1}$};
  \node (y2) at (0,-2) {$\tcfr{8}{}{2}$};
  \node (y3) at (0,-1) {$\tcfr{7}{}{3}$};
  \node (y4) at (5,-1) {$\sqrt{\tcfr{5^*}{}{5}}$};

  \draw [->] (x01) to (x11);
  \draw [->] (x11) to (x21);
  \draw [->] (x21) to (x31);
  \draw [->] (x31) to (x41);
  \draw [->] (x02) to (x12);
  \draw [->] (x12) to (x22);
  \draw [->] (x22) to (x32);
  \draw [->] (x32) to (x42);
  \draw [->] (x03) to (x13);
  \draw [->] (x13) to (x23);
  \draw [->] (x23) to (x33);
  \draw [->] (x33) to (x43);
  \draw [->] (x04) to (x14);
  \draw [->] (x14) to (x24);
  \draw [->] (x24) to (x34);
  \draw [->] (x34) to (x44);
  \draw [->, dashed] (y4) .. controls +(up:2) and +(up:2) .. (y3);
  \draw [->, dashed] (y3) to (y2);
  \draw [->, dashed] (y2) to (y1);

  \draw [->, dashed] (x01) to (x02);
  \draw [->, dashed] (x02) to (x03);
  \draw [->, dashed] (x03) to (x04);
  \draw [->] (y3) to (x11);
  \draw [->] (x11) to (x12);
  \draw [->] (x12) to (x13);
  \draw [->] (x13) to (x14);
  \draw [->] (x14) to (y4);
  \draw [->] (y2) to (x21);
  \draw [->] (x21) to (x22);
  \draw [->] (x22) to (x23);
  \draw [->] (x23) to (x24);
  \draw [->] (y1) to (x31);
  \draw [->] (x31) to (x32);
  \draw [->] (x32) to (x33);
  \draw [->] (x33) to (x34);
  \draw [->, dashed] (x41) to (x42);
  \draw [->, dashed] (x42) to (x43);
  \draw [->, dashed] (x43) to (x44);

  \draw [->] (x12) to (x01);
  \draw [->] (x13) to (x02);
  \draw [->] (x14) to (x03);
  \draw [->] (y4) to (x04);
  \draw [->] (x21) to (y3);
  \draw [->] (x22) to (x11);
  \draw [->] (x23) to (x12);
  \draw [->] (x24) to (x13);
  \draw [->] (x31) to (y2);
  \draw [->] (x32) to (x21);
  \draw [->] (x33) to (x22);
  \draw [->] (x34) to (x23);
  \draw [->] (x41) to (y1);
  \draw [->] (x42) to (x31);
  \draw [->] (x43) to (x32);
  \draw [->] (x44) to (x33);

\end{scope}

\draw[yshift=-5cm,xshift=1cm]
  node[below,text width=6cm] 
  {
  Figure 41b The quiver and the functions for $\Conf_3 \A_{Spin_{10}}$ after performing the sequence of mutations of $x_{ij}$ having maximums $1, 2, 3, 1, 2, 1$.
  };

\end{tikzpicture}
\end{center}

From these diagrams the various quivers in the general case of $\Conf_3 \A_{Spin_{2n+2}}$ should be clear.

\begin{theorem}
If $\max(i,j)=k$, then $x_{ij}$ is mutated a total of $n-k$ times.

Recall that when $i \geq j$, we assign the function $\tcfr{n-i}{n+2+i-j}{j}$ to $x_{ij}$. Thus the function attached to $x_{ij}$ transforms as follows:
$$\tcfr{n-i}{n+2+i-j}{j} \rightarrow \tcfr{2n+1, n-i-1}{n+i-j+3}{j+1} \rightarrow \tcfr{2n, n-i-2}{n+i-j+4}{j+2} \rightarrow \dots$$ 
$$\rightarrow \tcfr{n+i+3, 1}{2n+1-j}{n-i+j-1} \rightarrow \tcfr{n+i+2}{2n+2-j}{n-i+j}=\tcfr{n-i}{j}{n+2+i-j}$$

When $i < j$ and $i \neq 0$, we assign the function $\tcfr{n-i, 2n+2+i-j}{n+2}{j}$ to $x_{ij}$. Thus the function attached to $x_{ij}$ transforms as follows:
$$\tcfr{n-i, 2n+2+i-j}{n+2}{j} \rightarrow \tcfr{n-i-1, 2n+1+i-j}{n+3}{j+1} \rightarrow \tcfr{n-i-2, 2n+i-j}{n+4}{j+2} \rightarrow \dots$$
$$\rightarrow \tcfr{j-i+1, n+i+3}{2n+1-j}{n-1} \rightarrow \tcfr{j-i, n+i+2}{2n+2-j}{n}=\tcfr{n-i, 2n+2+i-j}{j}{n+2}$$

\end{theorem}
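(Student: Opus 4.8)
The strategy parallels exactly the proof of the analogous theorem for $\Conf_3 \A_{Sp_{2n}}$ (the $S_3$ first transposition theorem in Section 3.4) and for $\Conf_3 \A_{Spin_{2n+1}}$ (Section 4.5.1), so I would structure the argument as a reduction to those. First I would note that, by the folding observation of Section 5.3, the cluster algebra structure on $\Conf_3 \A_{Spin_{2n+2}}$ together with the mutation sequence $\eqref{23Sp}$ is obtained by unfolding the corresponding structure and sequence on $\Conf_3 \A_{Spin_{2n+1}}$. Concretely: the sequence $\eqref{23Sp}$ mutates $x_{ij}$ with $\max(i,j)=k$ exactly $n-k$ times, and each such mutation for $Spin_{2n+2}$ is the unfolding of the corresponding mutation for $Spin_{2n+1}$ (each white vertex of the $B_n$ quiver being replaced by a pair of black vertices $x_{in}$, $x_{in^*}$). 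Since unfolding mutations commute with the folding map on cluster variables, it suffices to check the two claimed transformation chains term by term, and for the vertices with $j < n$ this is \emph{identical} to the verification already carried out for $Sp_{2n}$ and $Spin_{2n+1}$, modulo the index substitutions of the Observation at the end of Section 5.3: replace $a \mapsto a$ for $1 \le a \le n-1$, replace $2n+1-a \mapsto 2n+2-a$, and replace each occurrence of $n$ by $n$, $n+1$, or $n+1^*$ as dictated by context. I would state this reduction explicitly and then point out that all the identities needed — the octahedron-type relation $\tcfr{a,b}{c}{d}\tcfr{a-1,b-1}{c+1}{d+1}=\tcfr{a-1,b}{c+1}{d}\tcfr{a,b-1}{c}{d+1}+\tcfr{a-1,b}{c}{d+1}\tcfr{a,b-1}{c+1}{d}$ and its degenerations — are precisely the cactus-sequence identities already established in Section 3.3 (Proposition on the cactus transformation) and re-derived for $Spin_{2n+1}$, so nothing new is required for these vertices.

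\textbf{Key steps.} Step 1: invoke the folding Observation from Section 5.3 to reduce the computation of each cluster variable appearing at an intermediate stage to the corresponding $Spin_{2n+1}$ computation. Step 2: verify the quiver evolution — that after the subsequence of mutations with maxima $1, 2, \dots, k$, the quiver has the shape depicted in Figures 40 and 41, and in particular that $x_{ij}$ with $\max(i,j) = k$ is ready for its $(n-k)$-th mutation exactly when it should be. This is again an unfolding of the $Spin_{2n+1}$ quiver evolution (Figures 28–29). Step 3: for the vertices $x_{ij}$ with $i \geq j$ and those with $i < j$, $i \neq 0$, $j < n$, verify the two displayed transformation chains by checking that each consecutive pair satisfies the exchange relation; here invoke the $Spin_{2n+1}$ and $Sp_{2n}$ computations verbatim with the index dictionary. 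Step 4 — the genuinely new part — handle the vertices $x_{in}$ and $x_{in^*}$ with $j = n$. Here the two doubled black vertices are involved, and because mutating a vertex adjacent to the doubled pair changes how $x_{in}$ and $x_{in^*}$ connect to each other (as remarked after Figure 40c), one must track the sign and the $n+1$ versus $n+1^*$ decorations carefully. I would use the last Observation of Section 5.3: $f_{\sigma(v)}$ is obtained from $f_v$ by swapping all occurrences of $n+1$ and $n+1^*$, which lets one deduce the formula for $x_{in^*}$ from that for $x_{in}$ for free. The transformation of $x_{in}$ itself is then an unfolding of the transformation of the black vertex $x_{in}$ of $Spin_{2n+1}$, which was verified in the second-transposition proof of Section 4.5.2 using the square-root identities of the type $\tcfr{a, N-a}{n+1, n+1}{b-1, N-b} = \sqrt{2\,\tcfr{a,N-a}{n+1,n}{b-1,N-b+1}\tcfr{a,N-a}{n+1,n}{b,N-b}}$. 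The $D_{n+1}$ analogues of these, involving two spin representations and the forms $W_{n+1}$, $W_{n+1^*}$, follow from the same web-and-skein computations together with the duality identities $\eqref{dualities3}$.

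\textbf{Main obstacle.} The hard part will be Step 4 — the bookkeeping of signs and of the $n+1$/$n+1^*$ decorations for the vertices $x_{in}$, $x_{in^*}$. Unlike the $B$ and $C$ cases, the map $Spin_{2n+2} \to SL_{2n+2}$ does not come from folding a Dynkin diagram, so the functions $\tcfr{a,b}{c}{d}$ are not uniformly positive on $\Conf_3 \A_{Spin_{2n+2}}$ and the sign depends on $n \bmod 4$ (Bott periodicity), as discussed in the section on signs. I would adopt the convention stated there — suppress signs in the displayed formulas — and argue that the sign bookkeeping is routine given the framework of that section: every function is a square-root of a pull-back from $SL_{2n+2}$, the branch is pinned down by positivity on $\Conf_3 \A_{Spin_{2n+2}}$, and the exchange relations are compatible with this choice because they already were for $Spin_{2n+1}$ and signs are multiplicative. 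Thus the substantive content — that the unfolded exchange relations hold — reduces to the identities of the cactus sequence plus the square-root identities, all of which are available; what remains is careful indexing, which I would present by exhibiting the first few mutations explicitly (as in the $Sp_{2n}$ and $Spin_{2n+1}$ proofs) and asserting the general pattern.
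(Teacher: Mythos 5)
Your Steps 1--3 coincide with the paper's argument: the proof is run exactly as in the $Sp_{2n}$ case, with every exchange relation reducing to a cactus-sequence identity after the index substitutions of the folding dictionary. The gap is in Step 4, which you single out as the main obstacle but aim at the wrong target. In the first-transposition sequence \eqref{23Sp} the vertices $x_{in}$ and $x_{in^*}$ are never mutated (they have $\max(i,j)=n$, hence $n-n=0$ mutations), so there is no ``transformation of $x_{in}$ itself'' to track, and the second-transposition identities of the form $\tcfr{a,N-a}{n+1,n+1}{b-1,N-b}=\sqrt{2\,(\cdots)(\cdots)}$ that you import from Section 4.5.2 are not the ones needed here. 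What actually has to be checked is the mutation of the adjacent vertices $x_{i,n-1}$: each of their exchange monomials contains one factor coming from $x_{in}$ and one from $x_{in^*}$, i.e.\ a product of two \emph{distinct} square-root functions, one decorated with $n+1$ and one with $n+1^*$. You gesture at this (``mutating a vertex adjacent to the doubled pair'') but then pivot back to tracking the untouched vertices.

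The missing ingredient is the product identity
$$\sqrt{\tcfr{n-i, n+2+i}{n+1^*}{n+1^*}} \cdot \sqrt{\tcfr{n-i, n+2+i}{n+1}{n+1}} = \tcfr{n-i, n+2+i}{n}{n+2}$$
together with its mixed $(n+1,n+1^*)$ variant, both consequences of $\Delta_{u_{in^*}\omega_{n^*},\omega_{n^*}}\cdot\Delta_{u_{in^*}\omega_{n},\omega_{n}}=\Delta_{u_{in^*}(\omega_n+\omega_{n^*}),\omega_n+\omega_{n^*}}$. This collapses the product of the two doubled-vertex contributions to the single function $\tcfr{n-i,n+2+i}{n}{n+2}$ that the cactus-sequence identity for mutating $x_{i,n-1}$ expects; only then does the reduction of Steps 1--3 go through. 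Note that your appeal to unfolding does not supply this for free: under folding the two doubled variables are set \emph{equal}, so the $B_n$ exchange relation sees the square of a single function, whereas on $\Conf_3\A_{Spin_{2n+2}}$ the two functions are genuinely different and one must prove that their product is the right thing. The sign bookkeeping you discuss is a separate (and indeed routine) matter and does not substitute for this identity.
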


\begin{proof}

The proof is identical to the case where $G=Sp_{2n}$. Note that we do note mutate any of the vertices that have square-roots. However, the functions with square-roots are involved in the mutations, as we mutate vertices adjacent to them, namely, the vertices $x_{i,n-1}$.

We then need to use that

\begin{equation}\label{products}
\begin{gathered}
\sqrt{\tcfr{n-i, n+2+i}{n+1^*}{n+1^*}} \cdot \sqrt{\tcfr{n-i, n+2+i}{n+1}{n+1}} = \tcfr{n-i, n+2+i}{n}{n+2}, \\
\sqrt{\tcfr{n-i, n+2+i}{n+1}{n+1^*}} \cdot \sqrt{\tcfr{n-i, n+2+i}{n+1^*}{n+1}} = \tcfr{n-i, n+2+i}{n}{n+2}. \\
\end{gathered}
\end{equation}
Both these identities are a consequence of the fact that
$$\Delta_{u_{in^*}\omega_{n^*}, \omega_{n^*}}(x) \cdot \Delta_{u_{in^*}\omega_n, \omega_n}(x)=\Delta_{u_{in^*}(\omega_n+\omega_{n^*}), \omega_n+\omega_{n^*}}(x).$$

Using these identities, we reduce all the mutation identities to those appearing in the cactus sequence.

\end{proof}

The above sequence of mutations takes us from one seed for the cluster algebra structure on $\Conf_3 \A_{Spin_{2n+2}}$ to another seed where the roles of the second and third principal flags have been reversed. Thus, we have realized the first of the transpositions necessary to construct all the $S_3$ symmetries of $\Conf_3 \A_{Spin_{2n+2}}$.

\subsubsection{The second transposition}

Let us now give the sequence of mutations that realizes that $S_3$ symmetry $(A,B,C) \rightarrow (C,B,A).$

The sequence of mutations is as in the case of $Sp_{2n}$, \eqref{13Sp}:
\begin{equation}
\begin{gathered}
x_{n-1,n}, x_{n-1,n^*}, x_{n-2,n-1}, x_{n-2,n}, x_{n-2,n^*}, x_{n-3,n-2}, x_{n-3,n-1}, x_{n-3,n}, x_{n-3,n^*}, \\
\dots, x_{1,2}, \dots, x_{1,n}, x_{1,n^*} \\
x_{n-1,n}, x_{n-1,n^*}, x_{n-2,n-1}, x_{n-2,n}, x_{n-2,n^*}, x_{n-3,n-2}, x_{n-3,n-1}, x_{n-3,n}, x_{n-3,n^*}, \\
\dots, x_{2,3}, \dots, x_{2,n}, \dots, x_{2,n^*}, \\
x_{n-1,n}, x_{n-1,n^*}, x_{n-2,n-1}, x_{n-2,n}, x_{n-2,n^*}, x_{n-3,n-2}, x_{n-3,n-1}, x_{n-3,n}, x_{n-3,n^*}, \\
\dots, x_{3,4}, \dots, x_{3,n}, \dots, x_{3,n^*}, \\
\dots \\
x_{n-1,n}, x_{n-1,n^*}, x_{n-2,n-1}, x_{n-2,n}, x_{n-2,n^*}, x_{n-3,n-2}, x_{n-3,n-1}, x_{n-3,n}, x_{n-3,n^*}, \\
x_{n-1,n}, x_{n-1,n^*}, x_{n-2,n-1}, x_{n-2,n}, x_{n-2,n^*}, \\
x_{n-1,n}, x_{n-1,n^*}, \\
\end{gathered}
\end{equation}

The sequence can be thought of as follows: We only mutate those $x_{ij}$ with $i < j$. At any step of the process, we mutate all $x_{ij}$ in the $k^{\textrm th}$ row (the $k^{\textrm th}$ row consists of $x_{ij}$ such that $i=k$) such that $i < j$ (we will consider that $n-1< n$ and $n-1<n^*$). It will not matter in which order we mutate these $x_{ij}$. The sequence of rows that we mutate is
$$n-1, n-2 \dots, 2,1 n-1, n-2, \dots, 2, n, \dots, 3, \dots, n-1, n-2, n-1.$$ 

As in the previous transposition, the evolution of the quiver for $\Conf_3 \A_{Spin_{2n+2}}$ is just as in the cases for $\Conf_3 \A_{Sp_{2n}}$ and $\Conf_3 \A_{Spin_{2n+1}}$, as pictured in Figures 15 and 16, with the only difference being that black and white vertices switch colors. 

In Figure 42, we depict how the quiver for $\Conf_3 \A_{Spin_{12}}$ changes after performing the sequence of mutations of $x_{ij}$ in rows $4$; $4,3$; $4,3,2$; and $4,3,2,1$.

\begin{center}

\end{center}

From these diagrams the various quivers in the general case of $\Conf_3 \A_{Spin_{2n+2}}$ should be clear. 

Recall the functions defined via Figures 17. They will appear when we perform the sequence of mutations above. In particular, we make use of functions of the form
$$(n-i, n+2+i; n+1^{(*)}, n+1^{(*)}; n-j, n+2+j).$$
These functions are invariants (unique up to scale) of the tensor products
$$[V_{2\omega_{n-i}} \otimes V_{4\omega_{n}} \otimes V_{2\omega_{n-j}}]^{Spin_{2n+2}},$$
$$[V_{2\omega_{n-i}} \otimes V_{4\omega_{n^*}} \otimes V_{2\omega_{n-j}}]^{Spin_{2n+2}},$$
or
$$[V_{2\omega_{n-i}} \otimes V_{2\omega_{n}+2\omega_{n^*}} \otimes V_{2\omega_{n-j}}]^{Spin_{2n+2}}.$$

These functions will have square-roots which are invariants (again, unique up to scale) of the tensor product
$$[V_{2\omega_{n-i}} \otimes V_{2\omega_{n}} \otimes V_{2\omega_{n-j}}]^{Spin_{2n+2}},$$
$$[V_{2\omega_{n-i}} \otimes V_{2\omega_{n^*}} \otimes V_{2\omega_{n-j}}]^{Spin_{2n+2}},$$
$$[V_{2\omega_{n-i}} \otimes V_{\omega_{n}+\omega_{n^*}} \otimes V_{2\omega_{n-j}}]^{Spin_{2n+2}}.$$
Thus
$$\sqrt{(n-i, n+2+i; n+1^{(*)}, n+1^{(*)}; n-j, n+2+j)}$$
is a well-defined function on $\Conf_3 \A_{Spin_{2n+2}}$.

Note that in the above sequence of mutations, $x_{ij}$ is mutated $i$ times if $i<j$. We can now state the main theorem of this section.

\begin{theorem}
If $i < j$, then $x_{ij}$ is mutated a total of $i$ times. Recall that when $i < j<n$, we assign the either the function $\tcfr{n-i, 2n+1+i-j}{n}{j}$ to $x_{ij}$. In these cases, $j < n$, the function attached to $x_{ij}$ transforms as follows:
$$\tcfr{n-i, 2n+2+i-j}{n+2}{j} \rightarrow \tcfr{n-i+1, 2n+1+i-j}{n, n+2}{j-1, n+3} \rightarrow $$
$$\tcfr{n-i+2, 2n+i-j}{n, n+2}{j-2, n+4} \rightarrow \dots \rightarrow \tcfr{n-1, 2n+3-j}{n, n+2}{j-i+1, n+1+i} $$
$$\rightarrow \tcfr{2n+2-j}{n}{j-i, n+i+2}=\tcfr{j}{n+2}{n-i, 2n+2+i-j}$$

The first transformation can be seen as the composite of two steps,
$$\tcfr{n-i, 2n+2+i-j}{n+2}{j} \rightarrow \tcfr{n-i, 2n+2+i-j}{n, n+2}{j, n+2} $$
$$\rightarrow \tcfr{n-i+1, 2n+1+i-j}{n, n+2}{j-1, n+3},$$
while the last transformation can also be seen as the composite of two steps, 
$$\tcfr{n-1, 2n+3-j}{n, n+2}{j-i+1, n+i+1} \rightarrow \tcfr{n, 2n+2-j}{n, n+2}{j-i, n+i+2} $$
$$\rightarrow \tcfr{2n+2-j}{n}{j-i, n+i+2}.$$
Then with each transformation, two of the parameters increase by one, and two decrease by one.

For $j=n$ or $n^*$, the mutation sequence is similar, but a bit more subtle, and depends on whether $n-i$ is odd or even. First note that the functions attached to $x_{in}$ and $x_{in^*}$ involve square-roots. Let us first consider $x_{in}$. We have the following cases:

\begin{itemize}
\item If $n$ is even and $i$ is odd, then the square of the function attached to $x_{in}$ transforms as follows:
$$\tcfr{n-i, n+i+2}{n+1}{n+1} \rightarrow \tcfr{n-i+1, n+i+1}{n+1^*, n+1^*}{n-1, n+3} \rightarrow $$
$$\tcfr{n-i+2, n+i}{n+1, n+1}{n-2, n+4} \rightarrow \dots \rightarrow \tcfr{n-1, n+3}{n+1, n+1}{n-i+1, n+i+1} $$
$$\rightarrow \tcfr{n+1^*}{n+1^*}{n-i, n+i+2}.$$

\item If $n$ is odd and $i$ is even, we have:
$$\tcfr{n-i, n+i+2}{n+1^*}{n+1} \rightarrow \tcfr{n-i+1, n+i+1}{n+1, n+1}{n-1, n+3} \rightarrow $$
$$\tcfr{n-i+2, n+i}{n+1^*, n+1^*}{n-2, n+4} \rightarrow \dots \rightarrow \tcfr{n-1, n+3}{n+1, n+1}{n-i+1, n+i+1} $$
$$\rightarrow \tcfr{n+1^*}{n+1^*}{n-i, n+i+2}.$$

\item If $n$ and $i$ are both even, we have:
$$\tcfr{n-i, n+i+2}{n+1^*}{n+1} \rightarrow \tcfr{n-i+1, n+i+1}{n+1, n+1^*}{n-1, n+3} \rightarrow $$
$$\tcfr{n-i+2, n+i}{n+1^*, n+1}{n-2, n+4} \rightarrow \dots \rightarrow \tcfr{n-1, n+3}{n+1, n+1^*}{n-i+1, n+i+1}$$
$$\rightarrow \tcfr{n+1^*}{n+1}{n-i, n+i+2}.$$

\item If $n$ and $i$ are both odd, we have:
$$\tcfr{n-i, n+i+2}{n+1}{n+1} \rightarrow \tcfr{n-i+1, n+i+1}{n+1^*, n+1}{n-1, n+3} \rightarrow $$
$$\tcfr{n-i+2, n+i}{n+1, n+1^*}{n-2, n+4} \rightarrow \dots \rightarrow \tcfr{n-1, n+3}{n+1, n+1^*}{n-i+1, n+i+1} $$
$$\rightarrow \tcfr{n+1}{n+1}{n-i, n+i+2}.$$
\end{itemize}

The case of $x_{in^*}$ switches all occurences of $n+1$ and $n+1^*$.

\end{theorem}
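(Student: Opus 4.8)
The plan is to follow exactly the strategy used for the two transpositions in the $Sp_{2n}$ and $Spin_{2n+1}$ cases: the quiver evolution has already been shown (Figures 42--43) to be the ``doubled'' version of the $Sp_{2n}$ quiver evolution, so the only thing to verify is that the claimed functions satisfy the cluster exchange relations dictated by that quiver at each stage. For the vertices $x_{ij}$ with $j < n$, this is literally the same computation as in the $Spin_{2n+1}$ theorem: every exchange relation degenerates to one of the ``octahedron-type'' identities for the functions $\tcfr{a,b}{n,n+2}{c,d}$ (and their specializations), and these identities are obtained from the $SL_N$ identities of Figure~17(b)-style webs by pulling back along $\Conf_3\A_{Spin_{2n+2}}\hookrightarrow\Conf_3\A_{SL_{2n+2}}$, exactly as in the $Sp_{2n}$ proof. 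The circled arrows in Figures~42--43 are again the bookkeeping device recording which lift of a doubled vertex to $\Conf_3\A_{SL_{2n+2}}$ to use when computing a given mutation; I would invoke the same lifting construction verbatim. The ignoring-of-signs convention established in the ``Signs and spin representations'' section lets me suppress the $(-1)^{(\cdots)}$ factors throughout, with the understanding that they can be restored by the recipe given there.

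First I would handle the generic case $j<n$ by direct appeal to the $Spin_{2n+1}$ computation together with the unfolding principle: since the $Spin_{2n+2}$ seed is the $\sigma$-unfolding of the $Spin_{2n+1}$ seed and the mutation sequence \eqref{13Sp} (suitably doubled on the rows containing $n$ and $n^*$) respects $\sigma$, the $Spin_{2n+2}$ exchange relations project to the $Spin_{2n+1}$ ones, which have already been proved. So the content is entirely in the vertices $x_{in}$ and $x_{in^*}$. For these I would follow the template of the $Spin_{2n+1}$ second-transposition proof: the mutation of a (formerly black, now doubled) vertex $x_{in}$ is computed by squaring, so the identity to prove has the schematic form
\begin{align*}
&\sqrt{\tcfr{a,b}{n+1^{(*)}}{n+1^{(*)}}}\,\sqrt{\tcfr{a+1,b-1}{n+1^{(*)},n+1^{(*)}}{c-1,d+1}}=\\
&\quad\sqrt{\tcfr{a,b}{n+1^{(*)},n+1^{(*)}}{c-1,d+1}}\,\sqrt{\tcfr{a+1,b-1}{n+1^{(*)}}{n+1^{(*)}}}+\tcfr{a+1,b}{n+1^{(*)},n+1^{(*)}}{c-1,d},
\end{align*}
and this in turn reduces, exactly as in the $Spin_{2n+1}$ case, to one genuine octahedron-type identity (for the four-spin-argument functions) together with three ``square-root-splitting'' identities of the shape $\tcfr{a,b}{n,n+2}{c,d}=\sqrt{2\,\tcfr{a,b}{n+1,n+1^*}{c,d}\,\tcfr{a,b}{n+1^*,n+1}{c,d}}$ or $\tcfr{a,b}{n,n+2}{c,d}=\sqrt{\tcfr{a,b}{n+1,n+1}{c,d}\,\tcfr{a,b}{n+1^*,n+1^*}{c,d}}$. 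The latter I would prove by the same mechanism as the analogous $Spin_{2n+1}$ identities: a variant of the octahedron recurrence on $\Conf_3\A_{SL_{2n+2}}$ plus the duality relations \eqref{dualities3} (which exchange $\bigwedge^i V\simeq\bigwedge^{2n+2-i}V$ and hence swap the roles of $n+1$ and $n+1^*$), together with the factorization $\Delta_{u_{in^*}\omega_{n^*},\omega_{n^*}}\cdot\Delta_{u_{in^*}\omega_n,\omega_n}=\Delta_{u_{in^*}(\omega_n+\omega_{n^*}),\omega_n+\omega_{n^*}}$ from the reduced-word proposition, i.e. the relations \eqref{products}.

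The case analysis on the parity of $n$ and of $i$ is exactly what makes the superscripts $*$ in the stated formulas appear in the pattern they do: in the doubled quiver the two vertices $x_{in}$ and $x_{in^*}$ are connected to each other (through $x_{i,n-1}$) in a way that depends on $n-i \bmod 2$, so a mutation of $x_{in}$ alternately uses the function attached to $x_{i,n-1}$ in its ``$n+1$'' form or its ``$n+1^*$'' form. I would track this by carrying along the involution-symmetry observation: whatever formula one derives for $f_{x_{in}}$, the formula for $f_{x_{in^*}}$ is obtained by swapping all $n+1\leftrightarrow n+1^*$, so it suffices to settle $x_{in}$ in each of the four parity cases and then invoke that symmetry for $x_{in^*}$. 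The main obstacle I anticipate is precisely the sign/superscript bookkeeping: verifying that the four parity sub-cases of the $x_{in}$ mutation really do produce the claimed alternating pattern of $n+1$ versus $n+1^*$ (and that the factors of $2$ from the square-root-splitting identities cancel correctly against the ones appearing in the generic four-spin octahedron identity) requires a careful, if routine, induction on the number of mutations already performed, using Figures~42--43 to pin down which circled arrows are present at each stage. Everything else is a degeneration of the identities already in hand from the $Sp_{2n}$ and $Spin_{2n+1}$ sections.
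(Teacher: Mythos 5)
Your proposal matches the paper's proof in essentially every respect: the quiver evolution is taken as given from Figures 42--43, the $j<n$ mutations reduce to the octahedron-type identities for $\tcfr{a,b}{n+2,n}{c,d}$ and their degenerations pulled back from $SL_{2n+2}$, and the spin vertices $x_{in}$, $x_{in^*}$ are handled by one four-spin octahedron identity combined with square-root-splitting identities (the paper's versions have middle argument $(n+2,n+1^*)$ and are then combined with \eqref{products} and \eqref{products2}, which are exactly the identities you cite), with the parity case analysis and the $n+1\leftrightarrow n+1^*$ involution closing out $x_{in^*}$. One small caution: your appeal to "the exchange relations project to the $Spin_{2n+1}$ ones" runs the folding argument in the wrong logical direction (validity after folding does not imply validity before), but this is harmless since you also give the correct direct verification via the $SL_{2n+2}$ web identities, which is what the paper actually does.
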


\begin{proof}

We have already described the quivers at the various stages of mutation. We must then check that the functions above satisfy the identities of the associated cluster transformations.

This is the first sequence of mutations where we have to mutate the vertices $x_{in}$ and $x_{in^*}$. The mutation of these vertices is the most difficult to handle.

We will need the following facts, which are analogous to the previous facts used for the case of $Spin_{2n+2}$. Let $N=2n+2$:
\begin{itemize}
\item Let $1 \leq a, b, c, d \leq N$, and $a+b+c+d=2N$. 
$$\tcfr{a, b}{n+2, n}{c, d}\tcfr{a+1, b-1}{n+2, n}{c-1, d+1}=$$
$$\tcfr{a, b}{n+2, n}{c-1, d+1}\tcfr{a+1, b-1}{n+2, n}{c, d}+\tcfr{a+1, b}{n+2, n}{c-1,d}\tcfr{a, b-1}{n+2, n}{c, d+1}.$$
\item If $a+c=n+2$ and $b+d=3n+2$,
$$\tcfr{a, b}{n+2, n}{c, d}=\tcfr{a}{n}{c}\tcfr{b}{n+2}{d}.$$
\item If $a+c=n$ and $b+d=3n+4$,
$$\tcfr{a, b}{n+2, n}{c, d}=\tcfr{a}{n+2}{c}\tcfr{b}{n}{d}.$$
\item If $a=n$,
$$\tcfr{n, b}{n+2, n}{c, d}=\dud{n}{n+2}\tcfr{b}{n}{c, d}.$$
Similarly, we have
$$\tcfr{a, b}{n+2, n}{c, n+2}=\tcfr{}{n}{n+2}\tcfr{a, b}{n+2}{c}.$$
\item We will need the duality identities of \eqref{dualities3}, and also the following duality identity:
$$\tcfr{a, b}{n+2, n}{c, d}=\tcfr{N-b, N-a}{n+2, n}{N-d, N-c}$$
\item We also need another group of identities, similar to \eqref{products}:
\begin{equation}\label{products2}
\begin{gathered}
\sqrt{\tcfr{a, N-a}{n+1, n+1^*}{b, N-b}} \cdot \sqrt{\tcfr{a, N-a}{n+1^*, n+1}{b, N-b}} \\
= \tcfr{a, N-a}{n, n+2}{b, N-b} \\
\sqrt{\tcfr{a, N-a}{n+1, n+1}{b, N-b}} \cdot \sqrt{\tcfr{a, N-a}{n+1^*, n+1^*}{b, N-b}} \\
= \tcfr{a, N-a}{n, n+2}{b, N-b} \\
\end{gathered}
\end{equation}

\end{itemize}

The first five identities can be proved by the same method as we used for $G=Sp_{2n}$ and $N=2n$. The last identity can be reduced to \eqref{products}.

All cluster mutations of $x_{ij}$, $j<n$ are obtained as before: they are either the first identity in the above list, or they are degenerations of this identity, and are obtained from the first one by applying one of the remaining identities.

The mutation of the vertices $x_{in}$ and $x_{in^*}$ requires some more work. We will also make use of functions of the form
$$\tcfr{a, b}{x, y}{c, d}$$
where $x, y=n+1$ or $n+1^*$, and $a+b+c+d=2N$. We again use the webs pictured in Figure 17*.

The most general identity takes different forms depending on the parity of $n$, $i$ and $j$. For example, we need to show:

$$\sqrt{\tcfr{n-i, n+2+i}{n+1^*, n+1}{n-j, n+2+j}} \cdot$$
$$\sqrt{\tcfr{n+1-i, n+1+i}{n+1, n+1^*}{n-1-j, n+3+j}} = $$
$$\sqrt{\tcfr{n-i, n+2+i}{n+1, n+1}{n-1-j, n+3+j}} \cdot$$
$$\sqrt{\tcfr{n+1-i, n+1+i}{n+1^*, n+1^*}{n-j, n+2+j}}+$$
$$\tcfr{n+1-i, n+2+i}{n+2, n}{n-1-j, n+2+j}.$$
Note that by \eqref{dualities2},
$$\tcfr{n-i, n+1+i}{n+2, n}{n-j, n+3+j}=\tcfr{n+1-i, n+2+i}{n+2, n}{n-1-j, n+2+j},$$
which explains the seeming asymmetry of the last term.

The general identity above follows directly from the following identities. Let $i, j < n$. Then 

\begin{itemize}
\item 
$$\tcfr{n-i, n+2+i}{n+2, n}{n-j, n+2+j}\tcfr{n+1-i, n+1+i}{n+2, n+1^*}{n-1-j, n+2+j}=$$
$$\tcfr{n+1-i, n+1+i}{n+2, n}{n-j, n+2+j}\tcfr{n-i, n+2+i}{n+2, n+1^*}{n-1-j, n+2+j}+$$
$$\tcfr{n+1-i, n+2+i}{n+2, n}{n-1-j, n+2-j}\tcfr{n-i, n+1+i}{n+2, n+1^*}{n-j, n+2+j},$$

\item 
$$\tcfr{n+1-i, n+1+i}{n+2, n+1^*}{n-1-j, n+2+j}=$$
$$\sqrt{2\tcfr{n+1-i, n+1+i}{n+1, n+1^*}{n-1-j, n+3+j}} \cdot$$
$$\sqrt{\tcfr{n+1-i, n+1+i}{n+1^*, n+1^*}{n-j, n+2+j}},$$

\item 
$$\tcfr{n-i, n+2+i}{n+2, n+1^*}{n-1-j, n+2+j}=$$
$$\sqrt{2\tcfr{n-i, n+2+i}{n+1^*, n+1^*}{n-1-j, n+3+j}} \cdot$$
$$\sqrt{\tcfr{n-i, n+2+i}{n+1, n+1^*}{n-j, n+2+j}},$$

\item 
$$\tcfr{n-i, n+1+i}{n+2, n+1^*}{n-j, n+2+j}=$$
$$\sqrt{2\tcfr{n-i, n+2+i}{n+1, n+1^*}{n-j, n+2+j}} \cdot$$
$$\sqrt{\tcfr{n+1-i, n+1+i}{n+1^*, n+1^*}{n-j, n+2+j}}.$$

\end{itemize}

These identities are proved as before. Used in combination with \eqref{products} and \eqref{products2}, we get the identity we seek.

The other mutations of black vertices will be degenerate specializations of the above general identity. For example, for $n \geq 3$, $n$ odd, the first mutation of $x_{n-1,n}$ is
$$\sqrt{\tcfr{1, N-1}{n+1^*}{n+1}\tcfr{2, N-2}{n+1, n+1}{n-1,n+3}}=$$
$$\sqrt{\tcfr{2, N-2}{n+1}{n+1}}\tcfr{1}{n+2}{n-1}+\sqrt{\tcfr{}{n+1}{n+1}}\tcfr{2, N-1}{n+2}{n-1}.$$ 
To derive this, we use the general identity
$$\sqrt{\tcfr{1, N-1}{n+1^*, n+1^*}{n+1, n+1^*}} \sqrt{\tcfr{2, N-2}{n+1, n+1}{n-1, n+3}} = $$
$$\sqrt{\tcfr{1, N-1}{n+2, n}{n-1, n+3}}\sqrt{\tcfr{2, N-2}{n+1, n+1^*}{n+1, n+1^*}}+$$
$$\tcfr{2, N-1}{n+2, n}{n-1, n+2}.$$
plus the facts
$$\tcfr{1, N-1}{n+1^*, n+1^*}{n+1, n+1^*}=\tcfr{1, N-1}{n+1^*}{n+1}\tcfr{}{n+1^*}{n+1^*},$$
$$\tcfr{1, N-1}{n+2, n}{n-1, n+3}=\tcfr{1}{n+2}{n-1}\tcfr{N-1}{n}{n+3}=\tcfr{1}{n+2}{n-1}^2,$$
$$\tcfr{2, N-2}{n+1, n+1^*}{n+1, n+1^*}=\tcfr{2, N-2}{n+1}{n+1}\tcfr{}{n+1^*}{n+1^*},$$
$$\tcfr{2, N-1}{n+2, n}{n-1, n+2}=\tcfr{2, N-1}{n+2}{n-1}\tcfr{}{n}{n+2},$$
$$\sqrt{\tcfr{}{n+1}{n+1}\tcfr{}{n+1^*}{n+1^*}}=\tcfr{}{n}{n+2}.$$

\end{proof}

\subsection{The sequence of mutations for a flip}

In this section, we will give a sequence of mutations that relates two of the clusters coming from different triangulations of the $4$-gon. Combined with the previous section, this allows us to connect by mutations all $72$ different clusters we have constructed for $\Conf_4 \A_{Spin_{2n+2}}$.

Given a configuration $(A,B,C,D) \in \Conf_4 \A_{Spin_{2n+2}}$, we will give a sequence of mutations that relates a cluster coming from the triangulation $ABC, ACD$ to a cluster coming from the triangulation $ABD, BCD$.

We will need to relabel the quiver with vertices $x_{ij}$, $y_k$, with $-n \leq i \leq n$, $j = 1, 2, \dots, n-1, n, n^*$ and $j = 1, 2, \dots, n-1, n, n^*, -1, -2, \dots, -(n-1), -n, -n^*$. The quiver we will start with is as in Figure 44, pictured for $Spin_8$. We don't picture the vertices $x_{in^*}$, which double the vertices $x_{in}$, or the vertices $y_{n^*}$ and $y_{-n^*}$, which double the vertices $y_{n}$ and $y_{-n}$.

Figure 44 Quiver for $\Conf_4 \A_{Spin_{8}}$ 

\begin{center}
\begin{tikzpicture}[scale=2.4]
  \foreach \x in {-3,-2,-1,0,1,2,3}
    \foreach \y in {1, 2}
      \node[] (x\x\y) at (\x,-\y) {\Large $\ontop{x_{\x\y}}{\bullet}$};
  \foreach \x in {-3,-2,-1,0,1,2,3}
    \foreach \y in {3}
      \node[] (x\x\y) at (\x,-\y) {\Large $\ontop{x_{\x\y}}{\bullet}$};
  \node (y-1) at (-0.5,0) {\Large $\ontop{y_{-1}}{\bullet}$};
  \node (y-2) at (-1.5,0) {\Large $\ontop{y_{-2}}{\bullet}$};
  \node (y-3) at (-2.5,-4) {\Large $\ontop{y_{-3}}{\bullet}$};
  \node (y1) at (0.5,0) {\Large $\ontop{y_1}{\bullet}$};
  \node (y2) at (1.5,0) {\Large $\ontop{y_2}{\bullet}$};
  \node (y3) at (2.5,-4) {\Large $\ontop{y_{3}}{\bullet}$};

  \draw [->] (x01) to (x11);
  \draw [->] (x11) to (x21);
  \draw [->] (x21) to (x31);
  \draw [->] (x02) to (x12);
  \draw [->] (x12) to (x22);
  \draw [->] (x22) to (x32);
  \draw [->] (x03) to (x13);
  \draw [->] (x13) to (x23);
  \draw [->] (x23) to (x33);
  \draw [->, dashed] (y1) to (y2);
  \draw [->, dashed] (y2) .. controls +(right:1) and +(up:1) .. (y3);

  \draw [->] (x03) to (x02);
  \draw [->] (x02) to (x01);

  \draw [->] (x13) to (x12);
  \draw [->] (x12) to (x11);
  \draw [->] (x11) to (y1);
  \draw [->] (y3) to (x23);
  \draw [->] (x23) to (x22);
  \draw [->] (x22) to (x21);
  \draw [->] (x21) to (y2);
  \draw [->, dashed] (x33) to (x32);
  \draw [->, dashed] (x32) to (x31);

  \draw [->] (y1) to (x01);
  \draw [->] (x11) to (x02);
  \draw [->] (x12) to (x03);
 \draw [->] (y2) to (x11);
  \draw [->] (x21) to (x12);
  \draw [->] (x22) to (x13);
  \draw [->] (x31) to (x22);
  \draw [->] (x32) to (x23);
  \draw [->] (x33) to (y3);

  \draw [->] (x01) to (x-11);
  \draw [->] (x-11) to (x-21);
  \draw [->] (x-21) to (x-31);
  \draw [->] (x02) to (x-12);
  \draw [->] (x-12) to (x-22);
  \draw [->] (x-22) to (x-32);
  \draw [->] (x03) to (x-13);
  \draw [->] (x-13) to (x-23);
  \draw [->] (x-23) to (x-33);
  \draw [->, dashed] (y-1) to (y-2);
  \draw [->, dashed] (y-2) .. controls +(left:1) and +(up:1) .. (y-3);

  \draw [->] (x-13) to (x-12);
  \draw [->] (x-12) to (x-11);
  \draw [->] (x-11) to (y-1);
  \draw [->] (y-3) to (x-23);
  \draw [->] (x-23) to (x-22);
  \draw [->] (x-22) to (x-21);
  \draw [->] (x-21) to (y-2);
  \draw [->, dashed] (x-33) to (x-32);
  \draw [->, dashed] (x-32) to (x-31);

  \draw [->] (y-1) to (x01);
  \draw [->] (x-11) to (x02);
  \draw [->] (x-12) to (x03);
  \draw [->] (y-2) to (x-11);
  \draw [->] (x-21) to (x-12);
  \draw [->] (x-22) to (x-13);
  \draw [->] (x-31) to (x-22);
  \draw [->] (x-32) to (x-23);
  \draw [->] (x-33) to (y-3);

\draw[yshift=-3.85cm]
  node[below,text width=6cm] 
  {
  Figure 44. The quiver for the cluster algebra on $\Conf_4 \A_{Spin_{8}}$ without doubled vertices. The associated functions are pictured in Figure 39.
  };

\end{tikzpicture}
\end{center}

Let $N=2n+2$. We will treat the case of $n$ even. The case where $n$ is odd is similar.

First make an assigment of functions to the edge vertices:

\begin{alignat*}{1}
\dur{k}{N-k} &\longleftrightarrow y_k, \textrm{ for } 0 < k \leq n-1; \\
\dur{n+1}{n+1} &\longleftrightarrow y_n; \\
\dur{n+1^*}{n+1^*} &\longleftrightarrow y_{n^*}; \\
\dld{|k|}{N-|k|} &\longleftrightarrow y_k, \textrm{ for } -(n-1) \leq k <0; \\
\dld{n+1}{n+1} &\longleftrightarrow y_{-n}, \\
\dld{n+1^*}{n+1^*} &\longleftrightarrow y_{-n^*}, \\
\dul{j}{N-j} &\longleftrightarrow x_{-n,j} \textrm{ for } 0 < j \leq n-1; \\
\dul{n+1}{n+1} &\longleftrightarrow x_{-n,n}; \\
\dul{n+1^*}{n+1^*} &\longleftrightarrow x_{-n,n^*}; \\
\ddr{N-j}{j} &\longleftrightarrow x_{nj}; \textrm{ for } 0 < j \leq n-1; \\
\ddr{n+1}{n+1} &\longleftrightarrow x_{nn}; \\
\ddr{n+1^*}{n+1^*} &\longleftrightarrow x_{nn^*}; \\
\dud{j}{N-j} &\longleftrightarrow x_{0j} \textrm{ for } 0 < j \leq n-1; \\
\dud{n+1}{n+1} &\longleftrightarrow x_{0n} \\
\dud{n+1^*}{n+1^*} &\longleftrightarrow x_{0n^*} \\
\end{alignat*}

The face functions in the triangle where $i>0$ are
\begin{alignat*}{1}
\tcfr{i+j}{N-j}{N-i} &\longleftrightarrow x_{ij}, \textrm{ for } 0<i<n, i+j \leq n; \\
\tcfr{n}{N-j}{j+i-n, N-i} &\longleftrightarrow x_{ij}, \textrm{ for } 0<i<n, i+j > n, j<n ; \\
\tcfr{n+1^*}{n+1}{i, N-i} &\longleftrightarrow x_{in}, \textrm{ for } 0<i<n; i \textrm{ odd }\\
\tcfr{n+1}{n+1}{i, N-i} &\longleftrightarrow x_{in}, \textrm{ for } 0<i<n; i \textrm{ even }\\
\tcfr{n+1}{n+1^*}{i, N-i} &\longleftrightarrow x_{in^*}, \textrm{ for } 0<i<n; i \textrm{ odd }\\
\tcfr{n+1^*}{n+1^*}{i, N-i} &\longleftrightarrow x_{in^*}, \textrm{ for } 0<i<n; i \textrm{ even }\\
\end{alignat*}
while the face functions in the triangle where $i<0$ are
\begin{alignat*}{1}
\tcfl{j}{|i|}{N-|i|-j} &\longleftrightarrow x_{ij}, \textrm{ for } -n<i<0, |i|+j \leq n; \\
\tcfl{j}{|i|, N+n-|i|-j}{n+2} &\longleftrightarrow x_{ij}, \textrm{ for } -n<i<0, |i|+j > n, j<n; \\
\tcfl{n+1}{|i|, N-|i|}{n+1^*} &\longleftrightarrow x_{ij}, \textrm{ for } -n<i<0; i \textrm{ odd }\\
\tcfl{n+1}{|i|, N-|i|}{n+1} &\longleftrightarrow x_{ij}, \textrm{ for } -n<i<0; i \textrm{ even }\\
\tcfl{n+1^*}{|i|, N-|i|}{n+1} &\longleftrightarrow x_{ij}, \textrm{ for } -n<i<0; i \textrm{ odd }\\
\tcfl{n+1^*}{|i|, N-|i|}{n+1^*} &\longleftrightarrow x_{ij}, \textrm{ for } -n<i<0; i \textrm{ even }\\
\end{alignat*}

\begin{rmk} Note that our labelling of the vertices is somewhat different from before. The vertices labelled $x_{ij}$ here correspond to the vertices labelled $x_{n-|i|, j}$ in the previous sections dealing with $\Conf_3 \A_{Spin_{2n+2}}$.
\end{rmk}

\begin{rmk} Note that to obtain the function attached to $x_{in^*}$ (respectively, $y_{n^*}$) from the function attached to $x_{in}$ (respectively, $y_{n}$), we simply switch every occurence of $n+1$ and $n+1^*$.
\end{rmk}

The functions above are defined by pulling back via the various natural maps 
$$p_1, p_2, p_3, p_4: \Conf_4 \A_{Spin_{2n+2}} \rightarrow \Conf_3 \A_{Spin_{2n+2}}$$
that map a configuration $(A,B,C,D)$ to $(B, C, D)$, $(A, C, D)$, $(A, B, D)$, $(A,B,C)$, respectively. Pulling back functions from $\Conf_3 \A_{Spin_{2n+2}}$ allows us to define functions on $\Conf_4 \A_{Spin_{2n+2}}$.

There is also a map
$$T: \Conf_4 \A  \rightarrow \Conf_4 \A$$
which sends
$$(A,B,C,D) \rightarrow (s_G \cdot D, A, B, C)$$
which allows us to define, for example
$$T^*\tcfu{n}{j}{n+1-i, N+i-j} =: \tcfr{j}{n+1-i, N+i-j}{n}.$$ The forgetful maps and twist maps, combined with the constructions below, will furnish all the functions necessary for the computation of the flip mutation sequence.

As in the cases of $G=Sp_{2n}$ and $Spin_{2n+1}$, we will have to use some functions which depend on all four flags. Let $N=2n+2$. Let $0 \leq a, b, c, d \leq N$ such that $a+b+c+d=4n+4=2N$ and $b+c \leq N$. Then we will define a function that we will call 
$$\qcfs{a}{b}{c}{d}.$$

It is a function on $\Conf_4 \A_{Spin_{2n+2}}$ which is pulled back from a function on $\Conf_4 \A_{SL_{N}}$. The function on $\Conf_4 \A_{SL_{N}}$ is given by an invariant in the space
$$[V_{\omega_a}  \otimes V_{\omega_b}  \otimes V_{\omega_c} \otimes V_{\omega_d}]^{SL_N}.$$
The function is given, as before, by the web in Figure 19.

We again use the symbol ``:'' because the function does not have cyclic symmetry. In other words,
$$T^*\qcfs{a}{b}{c}{d} \neq \qcfs{b}{c}{d}{a}.$$
Instead, we use the notation
$$T^*\qcfs{a}{b}{c}{d} =: \qcfb{b}{c}{d}{a}.$$
We can also define
$$(T^2)^*\qcfs{a}{b}{c}{d}=:\qcfs{c}{d}{a}{b}.$$

We now define a second type of function on  $\Conf_4 \A_{Spin_{2n+2}}$. If $a+b+c+d=2N+n$, we define the function
$$\qcfs{n+2}{a}{b}{c, d}.$$
It is pulled back from a function on $\Conf_4 \A_{SL_{2n+2}}$ and given by the invariant in the space
$$[V_{\omega_{n+2}}  \otimes V_{\omega_a}  \otimes V_{\omega_b} \otimes V_{\omega_c+\omega_d}]^{SL_N}$$
picked out by the web in Figure 33. Using the twist map $T$, we can also define the functions $\qcfb{a}{b}{c, d}{n+2}$, $\qcfs{b}{c, d}{n+2}{a}$, and $\qcfb{c, d}{n+2}{a}{b}$.

Using duality, there is also a function $\qcfs{n}{a}{b}{c, d}$ on $\Conf_4 \A_{Spin_{2n+2}}$ for $0 \leq a, b, c, d \leq N$, $a+b+c+d=3n+4=N+n+2$, and $c \leq d$. 

This function is pulled back from the function on $\Conf_4 \A_{SL_{2n+2}}$ given by an invariant in the space
$$[V_{\omega_n}  \otimes V_{\omega_a}  \otimes V_{\omega_b} \otimes V_{\omega_c+\omega_d}]^{SL_N}.$$
This vector space is generally multi-dimensional. To pick out the correct invariant, we use the same web as in Figure 20b.

Using the twist map $T$, we can also define the functions $\qcfb{a}{b}{c, d}{n}$, $\qcfs{b}{c, d}{n}{a}$, and $\qcfb{c, d}{n}{a}{b}$.

Finally, we will work with the function $\qcfs{x}{a, b}{y}{c, d}$ on $\Conf_4 \A_{Spin_{2n+2}}$, where $x, y = n, n+1, n+1^*$ or $n+2$. This is pulled back from a function on $\Conf_4 \A_{SL_{N}}$ via an appropriate map.

Let us first define the function $\qcfs{x}{a, b}{y}{c, d}$ on $\Conf_4 \A_{SL_{N}}$  where $x, y= n, n+1$ or $n+2$. It is given by an invariant in the space
$$[V_{\omega_x}  \otimes V_{\omega_a+\omega_b}  \otimes V_{\omega_y} \otimes V_{\omega_c+\omega_d}]^{SL_N}.$$
This vector space is generally multi-dimensional. To pick out the correct invariant, we use the web in Figure 34.

Now if the function $\qcfs{x}{a, b}{y}{c, d}$ on $\Conf_4 \A_{Spin_{2n+2}}$ has either $x$ or $y=n+1^*$, then we pull the function back from $\Conf_4 \A_{SL_{N}}$ using the map $\Conf_4 \A_{Spin_{2n+2}} \rightarrow \Conf_4 \A_{SL_{N}}$ that differs from the standard one by the outer automorphism of $Spin_{2n+2}$ on those flags that have an argument with the symbol $*$.

Note that
$$\qcfs{x}{a, b}{y}{c, d}=(T^2)^*\qcfs{y}{c, d}{x}{a, b}.$$

Note that when $a=0$, $b=N$, $c=0$ or $d=N$, we have
$$\qcfs{x}{0, b}{y}{c, d}=:\qcfs{x}{b}{y}{c, d},$$
$$\qcfs{x}{a, N}{y}{c, d}=:\qcfs{x}{a}{y}{c, d},$$
$$\qcfs{x}{a, b}{y}{0, d}=:\qcfs{x}{a, b}{y}{d},$$
$$\qcfs{x}{a, b}{y}{c, N}=:\qcfs{x}{a, b}{y}{c}.$$
If $a=0$ and $d=N$, we will have
$$\qcfs{x}{0, b}{y}{c, N}=\qcfs{x}{b}{y}{c},$$
where $\qcfs{x}{b}{y}{c} $ is as defined above. A similar equality holds when $b=N, c=0$. If $a=0$, $b=N$, $c=0$ and $d=N$, we will have that 
$$\qcfs{x}{0, N}{y}{0, N}=\qcfs{x}{0}{y}{0}.$$

Finally, note that if $a+b=c+d=N$, $x, y=n+1$ or $n+1^*$,then 
$$\sqrt{\qcfs{x}{a, b}{y}{c, d}}$$
is a well-defined function on $\Conf_4 \A_{Spin_{2n+2}}$. This is because the representations $V_{\omega_a}$ and $V_{\omega_b}$ of $SL_N$ give the same representations of $Spin_N$, and $V_{\omega_x}$ and $V_{\omega_y}$, as representations of $Spin_N$, have twice the weight of one of the spin representations. Thus, for example, when $n$ is odd and $a+c$ is odd, 
$$\sqrt{\qcfs{n+1}{a, b}{n+1^*}{c, d}}$$
is a well-defined function on $\Conf_4 \A_{Spin_{2n+2}}$.

Now we give the sequence of mutations realizing the flip of a triangulation. The sequence of mutations is a lift of the sequence of mutations for the cases of $Sp_{2n}$ and $Spin_{2n+1}$. The sequence of mutations leaves $x_{-n,j}, x_{nj}, y_k$ untouched as they are frozen variables. Hence we only mutate $x_{ij}$ for $-n \leq i \leq n$. We now describe the sequence of mutations. The sequence of mutations will have $3n-2$ stages. At the $r^{\textrm{th}}$ step, we mutate all vertices such that 
$$|i|+j \leq r,$$
$$j-|i| + 2n -2 \geq r,$$
$$|i|+j \equiv r \mod 2.$$

Note that the first inequality is empty for $r \geq 2n-1$, while the second inequality is empty for $r \leq n$. 

\begin{rmk} For the sake of the above inequalities, we $n+1^*=n+1$, so that whenever we mutate $x_{in}$ we will also mutate $x_{in^*}$. The vertices $x_{in}$ and $x_{in^*}$ will not have arrows between them in the quivers that are obtained in the stages of our mutation process, so they can be mutated in any order.
\end{rmk}

For example, for $Spin_8$, the sequence of mutations is

\begin{equation}
\begin{gathered}
x_{01}, \\
x_{-1,1}, x_{02}, x_{11},  \\
x_{-2,1}, x_{-1,2}, x_{01}, x_{03}, x_{03^*}, x_{12}, x_{21},  \\
x_{-2,2}, x_{-1,1}, x_{-1,3}, x_{-1,3^*}, x_{02}, x_{11}, x_{13}, x_{13^*}, x_{22},  \\
x_{-2,3}, x_{-2,3^*}, x_{-1,2}, x_{01}, x_{03}, x_{03^*}, x_{12}, x_{23}, x_{23^*}, \\
x_{-1,3}, x_{-1,3}, x_{02}, x_{13}, x_{13^*}, \\
x_{03}, x_{03^*}.
\end{gathered}
\end{equation}

In Figure 45, we depict how the quiver for $\Conf_4 \A_{Spin_{8}}$ changes after each of the seven stages of mutation.

\begin{center}

\end{center}

The analogue $\Conf_4 \A_{Spin_{2n+2}}$ should be clear.

We now have the main theorem of this section:

\begin{theorem}
We first analyze the situation when $i > 0$. The vertex $x_{ij}$ is mutated a total of $n-i$ times. There are four cases.
\begin{itemize}
\item When $i+j < n$ and $i < j$, the function attached to $x_{ij}$ mutates in three stages, consisting of $n-i-j, i,$ and $j-i$ mutations, respectively:
\begin{enumerate}
\item $$\tcfr{i+j}{N-j}{N-i} \rightarrow \qcfs{i+j+1}{1}{N-j-1}{N-i-1} \rightarrow $$
$$\qcfs{i+j+2}{2}{N-j-2}{N-i-2} \rightarrow $$
$$\dots \rightarrow \qcfs{n}{n-i-j}{n+2+i}{n+2+j}$$
\item $$\qcfs{n}{n-i-j}{n+2+i}{n+2+j}=\qcfs{n}{n-i-j}{n+2+i}{0, n+2+j} \rightarrow $$
$$\qcfs{n}{n-i-j+1}{n+1+i}{1, n+1+j} \rightarrow$$
$$ \qcfs{n}{n-i-j+2}{n+i}{2, n+j} \rightarrow $$
$$\dots \rightarrow \qcfs{n}{n-j}{n+2}{i, n+2+j-i}$$
\item $$\qcfs{n}{n-j}{n+2}{i, n+2+j-i}=\qcfs{n}{n-j, N}{n+2}{i, n+2+j-i}$$
$$\rightarrow \qcfs{n}{n-j+1, N-1}{n+2}{i+1, n+1+j-i} \rightarrow $$
$$\qcfs{n}{n-j+2, N-2}{n+2}{i+2, n+j-i} \rightarrow $$
$$\dots \rightarrow [\qcfs{n}{n-i, N-j+i}{n+2}{j, n+2}] \textrm{ } \tcfd{n-i, N-j+i}{n+2}{j}$$
\end{enumerate}

\item When $i+j \geq n$, $i < j$, and $j \neq n$ the function attached to $x_{ij}$ mutates  in two stages, consisting of $n-j$ and $j-i$ mutations, respectively:
\begin{enumerate}
\item $$\tcfr{n}{N-j}{j+i-n, N-i} \rightarrow \qcfs{n}{1}{N-j-1}{j+i-n+1, N-i-1} \rightarrow $$
$$\qcfs{n}{2}{N-j-2}{j+i-n+2, N-i-2} \rightarrow$$
$$\dots  \rightarrow \qcfs{n}{n-j}{n+2}{i, n+2-i+j}$$
\item $$\qcfs{n}{n-j}{n+2}{i, n+2-i+j} [\qcfs{n}{n-j, N}{n+2}{i, n+2-i+j}] \rightarrow $$
$$\qcfs{n}{n-j+1, N-1}{n+2}{i+1, n+1-i+j} \rightarrow $$
$$\qcfs{n}{n-j+2, N-2}{n+2}{i+2, n-i+j} \rightarrow$$
$$\dots  \rightarrow [\qcfs{n}{n-i, N-j+i}{n+2}{j, n+2}]  \textrm{ } \tcfd{n-i, N-j+i}{n+2}{j} $$
\end{enumerate}

\item In the most interesting case, we have that if $j=n$ or $j=n^*$, the mutations happen in one stage consisting of $n-i$ mutations. If $j=n$ and $i$ odd, we have:
$$\sqrt{\qcfs{n+1^*}{0}{n+1}{i, N-i}} [\sqrt{\qcfs{n+1^*}{0, N}{n+1}{i, N-i}}] \rightarrow $$
$$\sqrt{\qcfs{n+1}{1, N-1}{n+1^*}{i+1, N-i-1}} \rightarrow $$
$$\sqrt{\qcfs{n+1^*}{2, N-2}{n+1}{i+2, N-i-2}} \rightarrow$$
$$\dots  \rightarrow [\sqrt{\qcfs{n+1^*}{n-i, N-n+i}{n+1}{n, n+2}}]  \textrm{ } \sqrt{\tcfd{n-i, N-n+i}{n+1}{n+1}}$$
Note in the last step we use
$$\sqrt{\qcfs{n+1^*}{n-i, N-n+i}{n+1}{n, n+2}}=\sqrt{\qcfs{n+1^*}{a, N-a}{n+1}{n+1, n+1^*}}.$$
If $j=n$, and $i$ is even, we have:
$$\sqrt{\qcfs{n+1}{0}{n+1}{i, N-i}} [\sqrt{\qcfs{n+1}{0, N}{n+1}{i, N-i}}] \rightarrow $$
$$\sqrt{\qcfs{n+1^*}{1, N-1}{n+1^*}{i+1, N-i-1}} \rightarrow $$
$$\sqrt{\qcfs{n+1}{2, N-2}{n+1}{i+2, N-i-2}} \rightarrow$$
$$\dots  \rightarrow [\sqrt{\qcfs{n+1^*}{n-i, N-n+i}{n+1^*}{n, n+2}}]  \textrm{ } \sqrt{\tcfd{n-i, N-n+i}{n+1^*}{n+1}}.$$
In the last step we use
$$\sqrt{\qcfs{n+1^*}{n-i, N-n+i}{n+1^*}{n, n+2}}=\sqrt{\qcfs{n+1^*}{a, N-a}{n+1^*}{n+1, n+1^*}}.$$
To obtain the formulas in the case where $j=n^*$, switch all occurences of $n+1$ and $n+1^*$.

\item When $i+j < n$ and $i \geq j$, the function attached to $x_{ij}$ mutates in two stages, consisting of $n-i-j$ and $j$ mutations, respectively:
\begin{enumerate}
\item $$\tcfr{i+j}{N-j}{N-i} \rightarrow \qcf{i+j+1}{1}{N-j-1}{N-i-1} \rightarrow $$
$$\qcf{i+j+2}{2}{N-j-2}{N-i-2} \rightarrow $$
$$\dots \rightarrow \qcf{n}{n-i-j}{n+2+i}{n+2+j}$$
\item $$\qcf{n}{n-i-j}{n+2+i}{n+2+j} \rightarrow \qcfs{n}{n-i-j+1}{n+1+i}{1, n+1+j} \rightarrow $$
$$\qcfs{n}{n-i-j+2}{n+i}{2, n+j} \rightarrow $$
$$\dots \rightarrow [\qcfs{n}{n-i}{n+2+i-j}{j, n+2}]  \textrm{ } \tcfd{n-i}{n+2+i-j}{j}$$
\end{enumerate}

\item When $i+j \geq n$ and $i \geq j$, the function attached to $x_{ij}$ mutates in one stage consisting of $n-i$ mutations:
$$\tcfr{n}{N-j}{j+i-n, N-i} \rightarrow \qcfs{n}{1}{N-j-1}{j+i-n+1, N-i-1} \rightarrow $$
$$\qcfs{n}{2}{N-j-2}{j+i-n+2, N-i-2} \rightarrow$$
$$\dots  \rightarrow [\qcfs{n}{n-i}{n+2+i-j}{j, n+2}] \textrm{ } \tcfd{n-i}{n+2+i-j}{j}$$

\end{itemize}

The mutation sequence when $i \leq 0$ is completely parallel. We include it in an appendix for reference.

In all these sequences, for each mutation, two parameters increase, and two decrease. Within a stage, the same parameters increase or decrease. The only exception is that sometimes after the last mutation, one removes the factor $\dur{n}{n+2}$, $\sqrt{\dur{n+1}{n+1}}$ or $\sqrt{\dur{n+1}{n+1}}$. The expressions in square brackets indicate the functions before removing these factors.

\end{theorem}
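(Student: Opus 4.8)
The plan is to follow exactly the template established for $Sp_{2n}$ and $Spin_{2n+1}$: reduce every cluster mutation in the flip sequence to a small catalogue of web identities, and verify that the functions listed in the theorem statement satisfy those identities stage by stage. The key structural observation, already emphasized in the section on folding, is that the flip sequence for $\Conf_4 \A_{Spin_{2n+2}}$ is literally the \emph{unfolding} of the flip sequence for $\Conf_4 \A_{Spin_{2n+1}}$: whenever we mutate a vertex $x_{in}$ we simultaneously mutate $x_{in^*}$, and by the final observation of the folding section the function attached to $x_{in^*}$ is obtained from the one attached to $x_{in}$ by switching all occurrences of $n+1$ and $n+1^*$. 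So I would first establish that the quiver transforms exactly as in Figures 45a--45h (and their $Spin_{2n+1}$ analogues in Figures 35a--35h), which is a purely combinatorial check that the unfolding commutes with mutation; this is where the $B$-matrix bookkeeping lives and it is essentially identical to the $Spin_{2n+1}$ computation with each white vertex doubled into two black ones.

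\textbf{The identities.} The heart of the proof is the list of octahedron-type identities in the proof sketch: the generic four-term relations for $\qcfs{a}{b}{c}{d}$, for $\qcfs{n}{a}{b}{c,d}$ (and its dual), and for $\qcfs{x}{a,b}{y}{c,d}$ with $x,y\in\{n,n+1,n+1^*,n+2\}$; together with the degeneration identities that split a $\qcfs{\cdot}{\cdot}{\cdot}{\cdot}$ with $n$ or $N$ in one slot into a product of a $\dur{\cdot}{\cdot}$ (or $\dld{\cdot}{\cdot}$) and a three-flag function; and finally the product identities \eqref{products} and \eqref{products2} relating square-roots in the two spin representations to a single $\tcfr{\cdot}{n,n+2}{\cdot}$ type function. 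Each of these is proved by drawing the relevant web and applying skein relations, exactly as was done for the analogous $Spin_{2n+1}$ identities; the pulling-back discussion at the start of the section (any function on $\Conf_m\A_{Spin_{2n+2}}$ is either a pullback from $SL_{2n+2}$ or a square-root of one) guarantees these web computations make sense. I would prove the generic identities first, deduce the degenerate ones by specializing an argument to $0$, $n$, or $N$, and handle the spin-representation identities \eqref{products2} by reducing them to \eqref{products} via the outer automorphism $\sigma$ and the relation $\Delta_{u_{in^*}\omega_{n^*},\omega_{n^*}}\cdot\Delta_{u_{in^*}\omega_n,\omega_n}=\Delta_{u_{in^*}(\omega_n+\omega_{n^*}),\omega_n+\omega_{n^*}}$.

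\textbf{Assembling the stages.} With the identity catalogue in hand, I would go case by case through the four (really five, counting the $j=n$ special case) configurations of $(i,j)$ in the theorem, matching each stage of the mutation sequence to one identity type: stage~1 (when present) uses the four-term $\qcfs{a}{b}{c}{d}$ relation; stage~2 uses the $\qcfs{n}{a}{b}{c,d}$ relation; stage~3 uses the $\qcfs{n}{a,b}{n+2}{c,d}$ relation. The bookkeeping of which parameters increase/decrease is uniform within a stage, so each stage is a single induction on the mutation count. The genuinely new phenomenon, and the step I expect to be the main obstacle, is the case $j=n$ (equivalently $j=n^*$): here one is mutating the vertices $x_{in},x_{in^*}$ whose attached functions are \emph{square-roots}, the correct branch depends on the parity of $n-i$, and the mutation relation mixes $n+1$ and $n+1^*$ in a way governed by the Bott-periodicity sign analysis of the signs section. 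I would handle this exactly as in the $Spin_{2n+1}$ flip theorem: write the desired square-root identity, square it, and exhibit it as a consequence of a genuine (non-square-root) octahedron identity for $\qcfs{n+1^*}{a,N-a}{n+1}{c,d}$-type functions together with the product identities \eqref{products2} that convert products of the two spin branches into $\tcfr{\cdot}{n,n+2}{\cdot}$ functions; the parity dependence then just tracks which of $n+1$, $n+1^*$ appears. Since (as stated) all signs are being suppressed, the only real content beyond the $Spin_{2n+1}$ case is checking that the square-root branches are consistent through the whole stage, which follows once one knows the stage's endpoint function \emph{is} the square of an honest cluster variable — and that is exactly what the product identities provide.
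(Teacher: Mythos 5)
Your proposal matches the paper's own proof: the same catalogue of octahedron-type identities for $\qcfs{a}{b}{c}{d}$, $\qcfs{n}{a}{b}{c,d}$, and $\qcfs{n}{a,b}{n+2}{c,d}$, the same degeneration identities, the same assignment of identity types to the three stages in each case of $(i,j)$, and the same treatment of the $j=n, n^*$ case by substituting the square-root product identities of type \eqref{products2} into a genuine (non-square-root) octahedron relation and cancelling. Your added emphasis on the unfolding from $Spin_{2n+1}$ is consistent with how the paper motivates the computation but does not change the argument.
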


\begin{proof} The proof comes down to a handful of identities used in conjunction, as in previous proofs of this type. Except for some differences in the indices, the identities are essentially the same as in the cases where $G=Sp_{2n}$ or $Spin_{2n+1}$. Here are the identities we use:

\begin{itemize}
\item Let $0 \leq a, b, c, d \leq N$, and $a+b+c+d=2N$. 
$$\qcfs{a}{b}{c}{d}\qcfs{a+1}{b+1}{c-1}{d-1}=$$
$$\qcfs{a}{b+1}{c-1}{d}\qcfs{a+1}{b}{c}{d-1}+\qcfs{a+1}{b}{c-1}{d}\qcfs{a}{b+1}{c}{d-1}.$$

\item Let $0 \leq a, b, c, d \leq N$, and $a+b+c+d=N+n+2$. 
$$\qcfs{n}{a}{b}{c, d}\qcfs{n}{a+1}{b-1}{c+1, d-1}=$$
$$\qcfs{n}{a+1}{b-1}{c, d}\qcfs{n}{a}{b}{c+1, d-1}+\qcfs{n}{a+1}{b}{c, d-1}\qcfs{n}{a}{b-1}{c+1, d}.$$
There is also a dual identity when $a+b+c+d=2N+n$ that we use when $i < 0$:
$$\qcfs{n+2}{a}{b}{c, d}\qcfs{n+2}{a-1}{b+1}{c+1, d-1}=$$
$$\qcfs{n+2}{a}{b}{c+1, d-1}\qcfs{n+2}{a-1}{b+1}{c, d}+\qcfs{n+2}{a-1}{b}{c+1, d}\qcfs{n+2}{a}{b+1}{c, d-1}.$$

\item Let $0 \leq a, b, c, d \leq N$, and $a+b+c+d=4n+4$. 
$$\qcfs{n}{a, b}{n+2}{c, d}\qcfs{n}{a+1, b-1}{n+2}{c+1, d-1}=$$
$$\qcfs{n}{a+1, b-1}{n+2}{c, d}\qcfs{n}{a, b}{n+2}{c+1, d-1}+\qcfs{n}{a+1, b}{n+2}{c, d-1}\qcfs{n}{a, b-1}{n+2}{c+1, d}.$$

\item Let $0 \leq a, b, c, d \leq N$ such that $a+b+c+d=2N$, $a \leq b$ and $c \leq d$. If $a+d=b+c=N$.,
$$\qcfs{a}{b}{c}{d}=\dld{b}{c}\dur{a}{d}$$

\item Let $0 \leq a, b, c, d \leq N$ such that $a+b+c+d=2N$, $a \leq b$ and $c \leq d$. If $a$ or $b=n$ or $c$ or $d=n+2$,
$$\qcfs{n}{n, b}{n+2}{c, d}=\tcfu{n}{b}{c, d}\dld{n}{n+2},$$
$$\qcfs{n}{a, n}{n+2}{c, d}=\tcfu{n}{a}{c, d}\dld{n}{n+2},$$
$$\qcfs{n}{a, b}{n+2}{n+2, d}=\tcfd{a, b}{n+2}{d}\dur{n}{n+2},$$
$$\qcfs{n}{a, b}{n+2}{c, n+2}=\tcfd{a, b}{n+2}{c}\dur{n}{n+2}.$$

\item Let $0 \leq a, b, c, d \leq N$ such that $a+b+c+d=2N$, $a \leq b$ and $c \leq d$. When $a=0$, $b=N$, $c=0$ or $d=N$, we have
$$\qcfs{n}{0, b}{n+2}{c, d}=:\qcfs{n}{b}{n+2}{c, d},$$
$$\qcfs{n}{a, N}{n+2}{c, d}=:\qcfs{n}{a}{n+2}{c, d},$$
$$\qcfs{n}{a, b}{n+2}{0, d}=:\qcfs{n}{a, b}{n+2}{d},$$
$$\qcfs{n}{a, b}{n+2}{c, N}=:\qcfs{n}{a, b}{n+2}{c}.$$
If $a=0$ and $d=N$, we will have
$$\qcfs{n}{0, b}{n+2}{c, N}=\qcfs{n}{b}{n+2}{c}.$$
A similar equality holds when $b=N, c=0$. If $a=0$, $b=N$, $c=0$ and $d=N$, we will have that 
$$\qcfs{n}{0, N}{n+2}{0, N}=\qcfs{n}{0}{n+2}{0}.$$

\item The last set of identities is similar to \eqref{products} and \eqref{products2}. Which one we use depends on the parity of $a+b$:
\begin{equation}
\begin{gathered}
\sqrt{\qcfs{n+1^*}{a, N-a}{n+1}{b, N-b}} \cdot \sqrt{\qcfs{n+1}{a, N-a}{n+1^*}{b, N-b}} \\
= \qcfs{n}{a, N-a}{n+2}{b, N-b} \\
\sqrt{\qcfs{n+1}{a, N-a}{n+1}{b, N-b}} \cdot \sqrt{\qcfs{n+1^*}{a, N-a}{n+1^*}{b, N-b}} \\
= \qcfs{n}{a, N-a}{n+2}{b, N-b} \\
\end{gathered}
\end{equation}

\end{itemize}

The proof of the mutation identities is much like when $G=Sp_{2n}$. The first three sets of identities are the most important. They are variations on the octahedron recurrence. When $i+j < n$ and $i < j$, the three stages use the first, second and third set of identities, respectively. When $i+j \geq n$ and $i < j \neq n$, the two stages use the second and third set of identities, respectively. When $i+j < n$ and $i \geq j$, the two stages use the first and second set of identities, respectively. When $i+j \geq n$ and $i \geq j$, the one stage uses only the second set of identities.

The fourth through sixth identities are used to give degenerate versions of the previous three sets of identities. The last set of identities is used when mutating vertices adjacent to $x_{in}$ or $x_{in^*}$.

The main novelty occurs when mutating $x_{ij}$ for $j=n$ or $n^*$. We will handle the case when $j=n$. Here we will need to derive some new identities. The general mutation identity when $j=n$ has one of the following forms.

If $a+b$ is even, we have
$$\sqrt{\qcfs{n+1}{a, N-a}{n+1}{b, N-b}}\sqrt{\qcfs{n+1^*}{a+1, N-a-1}{n+1^*}{b+1, N-b-1}}=$$
$$\sqrt{\qcfs{n+1}{a+1, N-a-1}{n+1^*}{b, N-b}}\sqrt{\qcfs{n+1^*}{a, N-a}{n+1}{b+1, N-b-1}}+$$
$$\qcfs{n}{a+1, N-a}{n+2}{b, N-b-1}.$$

If $a+b$ is odd, we have
$$\sqrt{\qcfs{n+1^*}{a, N-a}{n+1}{b, N-b}}\sqrt{\qcfs{n+1}{a+1, N-a-1}{n+1^*}{b+1, N-b-1}}=$$
$$\sqrt{\qcfs{n+1^*}{a+1, N-a-1}{n+1^*}{b, N-b}}\sqrt{\qcfs{n+1}{a, N-a}{n+1}{b+1, N-b-1}}+$$
$$\qcfs{n}{a+1, N-a}{n+2}{b, N-b-1}.$$

We will treat the case where $a+b$ is even. The other case is parallel. The above identity in turn follows from the following identities:
\begin{itemize}
\item $$\qcfs{n}{a, N-a}{n+2}{b, N-b}\qcfs{n}{a+1, N-a}{n+1^*}{b+1, N-b-1}=$$
$$\qcfs{n}{a+1, N-a}{n+1^*}{b, N-b}\qcfs{n}{a, N-a}{n+2}{b+1, N-b-1}+$$
$$\qcfs{n}{a+1, N-a}{n+2}{b, N-b-1}\qcfs{n}{a, N-a}{n+1^*}{b+1, N-b}$$
\item $$\qcfs{n}{a+1, N-a}{n+1^*}{b+1, N-b-1}=$$
$$\sqrt{2\qcfs{n+1}{a, N-a}{n+1^*}{b+1, N-b-1}\qcfs{n+1^*}{a+1, N-a-1}{n+1^*}{b+1, N-b-1}}$$
\item $$\qcfs{n}{a+1, N-a}{n+1^*}{b, N-b}=$$
$$\sqrt{2\qcfs{n+1^*}{a, N-a}{n+1^*}{b, N-b}\qcfs{n+1}{a+1, N-a-1}{n+1^*}{b, N-b}}$$
\item $$\qcfs{n}{a, N-a}{n+1^*}{b+1, N-b}=$$
$$\sqrt{2\qcfs{n+1^*}{a, N-a}{n+1^*}{b+1, N-b-1}\qcfs{n+1}{a, N-a}{n+1^*}{b, N-b}}$$
\end{itemize}

Simply substitute each term on the left hand side of the last three identities with the corresponding term on the right-hand side into the first identity. Cancelling will give the general mutation identity. All other mutation identities for $j=n$ come from this one using degeneracies.



\end{proof}

\section{The space $\X_{G,S}$}

In this section, we explain how to derive the structure of a cluster $\X$-variety on $\Conf_m \B_G$. In \cite{FG2}, the authors explain how to construct from a cluster $\A$-variety the corresponding $\X$-variety. We have shown above that $\Conf_m \A_G$ is a cluster $\A$-variety when $G$ is a classical group. We would like to show the following:

\begin{theorem} $\Conf_m \B_G$ has the structure of a cluster $\X$-variety. This is the $\X$-variety which is attached, via the constructions of \cite{FG2}, to cluster structure that we have constructed on $\Conf_m \A_G$.
\end{theorem}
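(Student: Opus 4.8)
The plan is to leverage the general machinery of Fock--Goncharov: once one has a cluster $\A$-variety structure on a space, the associated cluster $\X$-variety is determined combinatorially by the same collection of seeds, with the cluster transformations given by \eqref{eq:Xtrans} instead of \eqref{eq:Atrans}, and the map $p$ of Section~\ref{cluster} intertwines the two. Thus there is a canonical cluster $\X$-variety $\X_{|\Sigma|}$ attached to the seeds $\Sigma$ we have built for $\Conf_m \A_G$. What must be proven is that this abstract $\X_{|\Sigma|}$ is isomorphic, as a positive variety, to $\Conf_m \B_G$, compatibly with the structures of \cite{FG1}. The strategy is to reduce, exactly as in the $\A$-case, to the case where $S$ is a disk (so we are looking at $\Conf_m \B$), then to a single triangle ($\Conf_3 \B$), and then to glue along edges; and at each level to match our combinatorial $\X$-coordinates with the face and edge functions on $\Conf_m \B$ coming from \cite{FG1} and Section~2.4.

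First I would set up the comparison for a single triangle. Recall from \eqref{ConfB} the birational parametrization $(U^-)^n/H \to \Conf_{n+2}(\B)$; the cluster structure on $\Conf_3 \B_G$ should be the $\X$-variety dual to the cluster structure on $\Conf_3 \A_G$ constructed in Sections~3--5, and this is exactly the $\X$-analogue of the Berenstein--Fomin--Zelevinsky cluster structure on $B^-=G^{w_0,e}$ under the identification of Proposition~3.3 (and its analogues for $Spin_{2n+1}$, $Spin_{2n+2}$). Concretely, I would show that the monomials $p^*(X_v) = \prod_w A_w^{B_{vw}}$ in the cluster $\A$-variables $A_w$ (the face and edge functions attached to vertices of our quiver) coincide with the Fock--Goncharov $\X$-coordinates on $\Conf_3 \B_G$; this is a direct computation using the quiver (Figures~2, 24, 37) and the explicit functions, and it is the same kind of computation already carried out in \cite{FG1} for type $A$. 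Since the $\A$-side comparison with \cite{FG1} is already established in the "Reduced words" subsections, this step is mostly bookkeeping: one transports the identification of $\A$-coordinates through the canonical map $p$ and the known relationship \cite{W} between the \cite{FG4} amalgamation picture and the \cite{BFZ} picture.

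Next I would handle the gluing. The cluster $\X$-variety structure on $\Conf_m \A_G$ — equivalently on $\Conf_m \B_G$ — is obtained by amalgamating the triangle seeds along interior edges, exactly as described after Figure~7 (and Figures~26, 39); amalgamation of seeds is precisely the operation that, on the $\X$-side, corresponds to composing/gluing the $\X$-tori, and Fock--Goncharov show it is compatible with the glued positive structure on $\Conf_m \B_G$ built from edge and face functions in Section~2.4 ("Reduction to the case of $\Conf_m \A$ or $\Conf_m \B$"). So I would verify: (i) the edge functions, which on the $\A$-side are frozen and carry the gluing constraint, become, on the $\X$-side, the variables that get unfrozen and whose $\X$-coordinates record the gluing data; (ii) the resulting positive atlas on the glued $\X$-variety matches the positive atlas on $\Conf_m \B_G$ defined in \cite{FG1}, which follows from the triangle case plus functoriality of amalgamation. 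Finally, for a general surface $S$, one invokes $\pi_1$-equivariance: an ideal triangulation of $S$ gives a triangulation of the infinite polygon on the cyclic set $C$, and $\pi_1$-equivariance collapses this to a finite seed, exactly as in the $\A$-case, yielding the cluster $\X$-variety structure on $\X_{G',S}$ and identifying $(\A_{G,S},\X_{G',S})$ as a cluster ensemble; invariance under change of triangulation follows from the flip and $S_3$-symmetry mutation sequences of Sections~3--5 together with the commuting square relating $\mu_k$ on $\A$ and on $\X$.

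The main obstacle I anticipate is the precise matching of normalizations — signs, choices of lifts $\overline{w}$, the element $s_G$, and, most delicately, the square-root functions in types $B$ and $D$. On the $\X$-side these subtleties are somewhat milder, since $\X$-coordinates are Laurent monomials $\prod_w A_w^{B_{vw}}$ and a consistent sign convention on the $A_w$ can make the signs cancel; but one must check that $p^*(X_v)$ is genuinely a regular positive function on $\Conf_m \B_G$ (not merely on a cover), and in the $Spin$ cases that the square-roots do not obstruct this. The analysis of Section~5.2 ("Signs and spin representations") should suffice to push this through: since $p^*(X_v)$ involves each $A_w$ to an integer power $B_{vw}$ and the problematic functions (those attached to white/spin vertices) enter the relevant monomials an even number of times or with matched $\sqrt{\ }$ partners via the identities \eqref{products}, \eqref{products2}, the square-roots cancel in $p^*(X_v)$. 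Verifying this cancellation in each of the four structural cases ($Sp_{2n}$, $Spin_{2n+1}$, and $Spin_{2n+2}$ for $n$ even and odd) is the one place where real work beyond citing \cite{FG2} is required; everything else reduces to the triangle computation and the formal properties of amalgamation.
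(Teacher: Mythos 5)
Your overall architecture coincides with the paper's: both use the canonical map $p^*(X_i)=\prod_j A_j^{B_{ij}}$ from \cite{FG2}, reduce to a single triangle, compare with the double Bruhat cell $G^{w_0,e}$, and then glue along edges. But your plan glosses over, or slightly misdirects effort away from, the two computations that actually carry the proof. First, the descent of $p^*(X_i)$ from $\Conf_3\A_G$ to $\Conf_3\B_G$ is not a matter of square-root cancellation: the mechanism is that each cluster variable $A_j$ lies in a definite graded piece $[V_\lambda\otimes V_\mu\otimes V_\nu]^G$ of the functions on $\Conf_3\A_G$, so the torus $H^3$ acts on $A_j$ by the character $\lambda(h_1)\mu(h_2)\nu(h_3)$, and one checks that the total $H^3$-weight of the Laurent monomial $\prod_j A_j^{B_{ij}}$ vanishes. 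Since $\Conf_3\B_G$ is the quotient of $\Conf_3\A_G$ by $H^3$, this weight computation is exactly what makes $p^*(X_i)$ a function on $\Conf_3\B_G$; it handles the spin cases automatically because the half-integral weights of the square-root functions are still definite weights, so your worry about covers does not arise on the $\X$-side.

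Second, in the gluing step it is not enough to say that the unfrozen edge variables "record the gluing data"; you need the edge $\X$-coordinates to be a torsor over the gluing torus $H$ in $\Conf_4\B_G\simeq\Conf_3\B_G\times H\times\Conf_3\B_G$. The concrete computation is that the shear action of $h\in H$ multiplies the edge coordinate attached to $x_{0j}$ by $\alpha_j(h)$, the $j$-th simple root, and the torsor property then follows because $G'$ is \emph{adjoint}, so the simple roots span the character lattice of $H$. This use of adjointness is an essential ingredient that your proposal never invokes. Finally, for the claim that the face coordinates birationally parameterize $\Conf_3\B_G$, the paper does not transport the BFZ identification through $p$ as you suggest, but writes down the explicit factorization $u^-=\rho(b^-)$ with $b^-$ a product of terms $F_jH_{\omega_j^\vee}(X_{ij}^{-1})$, citing \cite{FG4} and \cite{W}; your route is plausible but you should be aware that the published comparison is stated for the $\A$-coordinates, so some argument is still needed to conclude that the induced $\X$-coordinates separate points of an open set of $\Conf_3\B_G$.
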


Let us recall the constructions of \cite{FG2} discussed in \label{cluster}. Suppose we have a cluster $\A$-variety with seed $\Sigma=(I, I_0,B, d)$. Then for every non-frozen index $i \in I$, there is a cluster variable $X_i$. There is a map from $p: \A_{\Sigma} \to \X_{\Sigma}$ given by 

$$p^*(X_i) = \prod_{j \in I}A_j^{B_{ij}}.$$

Let $\A$ be the cluster $\A$-variety $\Conf_3 \A_G$, and let $\Sigma$ be the initial seed we constructed. Then let us first compute the functions $p^*(X_i)$ and see that they descend to $Conf_3 \B_G$. Recall that all the cluster functions $A_j$ that we constructed on $\Conf_3 \A_G$ were invariants of tensor products:

$$A_j \in [V_{\lambda} \otimes V_{\mu} \otimes V_{\nu}]^G.$$

Now recall that $G/U$ has a left action of $H$, the Cartan subgroup. The functions on $G/U$ decompose as 
$$\bigoplus_{\lambda \in \Lambda_+} V_{\lambda}.$$
Moreover, $h \in H$ acts on the summand $V_{\lambda}$ by $\lambda(h)$.

Correspondingly, on $\Conf_3 \A_G$ there is an action of $H^3$, and $(h_1, h_2, h_3)$ acts on the summand 
$$[V_{\lambda} \otimes V_{\mu} \otimes V_{\nu}]^G$$
by 
$$\lambda(h_1)\mu(h_2)\nu(h_3).$$

From this action, it is easy to check case by case that the action of $H^3$ on $p^*(X_i)$ is trivial. In other words, the function $p^*(X_i)$ descends to the quotient of $\Conf_3 \A_G$ by $H^3$, which is precisely $\Conf_3 \B_G$.

Now we must check that the torus $\X_{\Sigma}$ is birational to $\Conf_3 \B_G$. From the above, we clearly have a map $p': \Conf_3 \B_G \to \X_{\Sigma}$, so that all the functions $X_i$ can be viewed as functions on $\Conf_3 \B_G$. We will show that they parameterize an open set in $\Conf_3 \B_G$.

To do this, we will adapt the results of \cite{FG4} and \cite{W} on parameterization of double Bruhat cells $G^{u,v}$, applied to the particular Bruhat cell $G^{w_0,e}$. Let us recall the setup. Recall that the functions on $\Conf_3 \A_G$ were associated to a reduced word composition for $w_0$. For the cases where $G= Sp_{2n}, Spin_{2n+1}, Spin_{2n+2}$, they were as follows:
$$w_{0,Sp_{2n}}=w_{0,Spin_{2n+1}}=(s_n s_{n-1} \cdots s_2 s_1)^n.$$
$$w_{0,Spin_{2n+2}}=(s_n s_{n^*} s_{n-1} \cdots s_2 s_1)^n.$$
The functions $A_{ij}$ attached to the vertices $x_{ij}$ for $i>0$ were associated to the subwords
$$u_{ij}=(s_n s_{n-1} \cdots s_2 s_1)^{i-1}s_n s_{n-1} \cdots s_j,$$
$$u_{ij}=(s_n s_{n^*} s_{n-1} \cdots s_2 s_1)^{i-1}s_n s_{n^*} s_{n-1} \cdots s_j.$$

Let $X_{ij}$ be the $\X$-function attached to the vertex $x_{ij}$ for $0 < i <n$ (the vertices $x_{ij}$ for $i=0$ or $n$ are frozen, so do not give variables on the $\X$-space). It is known that there is a parameterization of $\Conf_3 \B_G$ given by three flags
$$(B^+ ,u^-B^+, B^-),$$
where $u^-$ is determined up to the adjoint action of $H$. Let $b^-$ be an element of $B^-$. Then there is a natural projection 
$$\pi: B^- \rightarrow H=B/[B,B]$$
The choice of opposite flags $B^+$ and $B^-$ gives an inclusion
$$i: H \rightarrow B^-.$$
Then let
$$\rho(b^-):=i(\pi(b^-))^{-1}b^-.$$
This associates to each element of $B^-$ an element of $U^-$. We will be interested in $\rho(b^-)$ up to the adjoint action of $H$.

Then the co-ordinates $X_{ij}$ give a parameterization of $u^-$ by the following formula:
$$u^-=\rho(b^-),$$
where
$$b^-=F_n H_{\omega^{\vee}_n}(X_{n-1,n}^{-1})\dots F_2H_{\omega^{\vee}_2}(X_{n-1,2}^{-1})F_1H_{\omega^{\vee}_1}(X_{n-1,1}^{-1})\dots $$
$$F_n H_{\omega^{\vee}_n}(X_{1n}^{-1})\dots F_2H_{\omega^{\vee}_2}(X_{12}^{-1})F_1H_{\omega^{\vee}_1}(X_{11}^{-1}) F_n\dots F_3 F_2 F_1$$
when $G= Sp_{2n}$ or $Spin_{2n+1}$, and
$$b^-=F_n H_{\omega^{\vee}_n}(X_{n-1,n}^{-1}) F_{n^*} H_{\omega^{\vee}_{n^*}}(X_{n-1,n^*}^{-1}) \dots F_2H_{\omega^{\vee}_2}(X_{n-1,2}^{-1})F_1H_{\omega^{\vee}_1}(X_{n-1,1}^{-1})\dots $$
$$F_n H_{\omega^{\vee}_n}(X_{1n}^{-1})F_{n^*} H_{\omega^{\vee}_{n^*}}(X_{1n^*}^{-1})\dots F_2H_{\omega^{\vee}_2}(X_{12}^{-1})F_1H_{\omega^{\vee}_1}(X_{11}^{-1})F_nF_{n^*}\dots F_3 F_2 F_1$$
when $G=Spin_{2n+1}$. Let us explain the notation above. Here, $F_i$ are the usual generators of the $u^-$ associated to the simple roots. $\omega^{\vee}_i$ is the fundamental weight attached to the $i^{th}$ node of the Dynkin diagram for the simply connected form of $G^{\vee}$. The weights for $G^{\vee}$ are the coweights for the adjoint form of $G$, and $H_{\omega^{\vee}_i}$ is the cocharacter attached to this coweight.

Thus the functions $X_{ij}$ give a parameterization of $\Conf_3 \B_G$.

\begin{rmk} Replacing the the $B$-matrix by its negation does nothing to change the $\A$ space, but it replaces all the functions $X_{ij}$ on the $\X$-space by their inverses. It is a matter of convention how one chooses the signs of the $B$-matrix. If one uses the opposite convention to the one we have chosen, one would replace all the expressions $H_{\omega^{\vee}_j}(X_{ij}^{-1})$ above by $H_{\omega^{\vee}_j}(X_{ij})$, slightly simplying the above formulas.
\end{rmk}

\begin{rmk} A close reader will notice that our formulas differ slightly from those found in \cite{FG1}. We correct here an error in that paper. The formulas in \cite{FG1} give a factorization of an element in $B^-$ in terms of snakes, which are defined using {\emph intersections} of subspaces coming from a configuration of three flags. The resulting parameters $X$ (which correspond to our $X_{ij}$) in their formulas are then calculated using snakes. However, the $X$-coordinates in \cite{FG1} are defined in terms of {\emph projections} onto various subspaces. Thus they are {\emph dual} to the snake parameters. This explains the discrepancy above.
\end{rmk}

We then need to check that for any gluing of triangles to get a structure of an $\A$-space on $\Conf_4 \A_G$, the $\X$-coordinates on the edge gluing the two triangles parameterize gluings of configurations in $\Conf_3 \B_G$ to get a configuration in $\Conf_4 \B_G$. In other words, there is an equivalence
$$\Conf_4 \B_G \simeq \Conf_3 \B_G \times H \times \Conf_3 \B_G.$$
Let us examine the $\X$-coordinates on the $\X$-space corresponding to the $\A$-space $\Conf_4 \A_G$. We showed above that the face $\X$-coordinates parameterize the two copies of $\Conf_3 \B_G.$ Then we need to see that the edge $\X$ coordinates parameterize $H$, the space of gluings.

Explicitly, we have that $H$ acts by shearing the configuration of four flags in the following way:
$$h: (B^+, u^-B^+, B^-, u^+B^-) \rightarrow (B^+, u^-B^+, B^-, hu^+B^-).$$
It is then a simple matter to check that the edge $\X$-coordinates a torsor for $H$. For simplicity, let us consider the cluster structure on $\Conf_4 \A_G$ for $G=Sp_{2n}$ with the vertices labelled as in Figure 18: we have vertices $x_{ij}$, $y_k$, with $-n \leq i \leq n$, $1 \leq j \leq n$ and $1 \leq |k| \leq n$. Then the edge vertices are $x_{0j}$. An easy calculation shows that

\begin{prop} An element $h \in H$ acts on $x_{0j}$ by $\alpha_{j}(h)$, where $\alpha_j$ is the $j$-th simple root of $G'$.
\end{prop}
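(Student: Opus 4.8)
The plan is to verify the claim by a direct computation using the explicit parameterization of $\Conf_4 \B_G$ by $\X$-coordinates described above, combined with the fact that $p^*(X_{0j})$ is a monomial in the cluster variables $A_j$ with exponents given by the row $B_{0j,\bullet}$ of the exchange matrix. First I would recall that, by the commutative square relating $p$ and mutations, and by the construction of the seed on $\Conf_4 \A_G$ via amalgamation (gluing two triangles along an edge), the vertex $x_{0j}$ sits on the glued edge with exactly the arrows depicted in Figure 18: incoming and outgoing arrows to $x_{1j}$, $x_{-1,j}$ (the face vertices in the two adjacent triangles that lie ``above'' and ``below'' $x_{0j}$), to the neighboring edge vertices $x_{0,j-1}$, $x_{0,j+1}$, and to the relevant $y_k$. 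Thus $p^*(X_{0j})$ is an explicit Laurent monomial in the face and edge functions, and I want to evaluate how $h \in H$, acting by the shearing $h:(B^+, u^-B^+, B^-, u^+B^-) \mapsto (B^+, u^-B^+, B^-, hu^+B^-)$, transforms this monomial.

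The key steps, in order, are as follows. (1) Record the $H^4$-weights of every cluster function appearing in $p^*(X_{0j})$: each $A$-function lies in some $[V_\lambda \otimes V_\mu \otimes V_\nu \otimes V_\kappa]^G$ and so transforms under $(h_1,h_2,h_3,h_4)$ by $\lambda(h_1)\mu(h_2)\nu(h_3)\kappa(h_4)$; in our case only the fourth flag is sheared, so I only need the $V_\kappa$-component, i.e.\ the dependence of each function on the fourth flag. (2) Observe that for the edge functions $x_{0j}$ themselves and for all the face functions of the triangle $ABD$ (the one containing the fourth flag $D$) the relevant weight is a fundamental weight (or, after the square-root conventions in types $B$ and $D$, half of one or a sum of two), while the face functions of the triangle $ABC$ do not involve the fourth flag at all and contribute trivially. (3) Sum the contributions with the signs/multiplicities dictated by the entries $B_{0j,\bullet}$ read off the quiver. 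Because the exchange relation at $x_{0j}$ in the amalgamated seed is precisely the one obtained from gluing, the net weight by which $h$ scales $p^*(X_{0j})$ telescopes: the contributions of the face functions on the $D$-side of the edge, weighted by the $\pm 1$ entries of the quiver, collapse to the single simple root $\alpha_j$ of the adjoint group $G'$ — this is exactly the familiar identity that the "edge parameter" at a node $j$ of a reduced-word wiring diagram transforms by the corresponding simple root. (4) Conclude that $X_{0j}(h\cdot p) = \alpha_j(h)\, X_{0j}(p)$, which is the assertion; the case $j=n$ (and $j=n^*$ for $D_n$) is handled identically after accounting for the square-root/doubling conventions spelled out in the sign section, and the analogous statement for $G=Spin_{2n+1}, Spin_{2n+2}$ follows from the parallel quiver structure on the glued edge.

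The main obstacle I anticipate is bookkeeping rather than conceptual: one must be careful that the weights assigned to the face functions of the triangle $ABD$, when combined according to the quiver's arrows at $x_{0j}$, genuinely telescope to $\alpha_j$ and not to $\alpha_j$ plus some residual fundamental-weight terms coming from the edge vertices $x_{0,j\pm 1}$ or the frozen $y_k$. Here the decisive input is that $p^*(X_{0j}) = \prod_\ell A_\ell^{B_{0j,\ell}}$ and that $\sum_\ell B_{0j,\ell}\,(\text{weight of }A_\ell \text{ on the fourth flag})$ is exactly the difference of consecutive fundamental weights appearing in the chain $V_{\omega_{j-1}} \subset V_{\omega_j} \subset V_{\omega_{j+1}}$ attached to that edge, i.e.\ $\alpha_j = \omega_j - \tfrac12(\omega_{j-1}+\omega_{j+1})$ type relations (with the usual adjustments at the end nodes of each Dynkin diagram). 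One verifies this by matching the incoming/outgoing arrows at $x_{0j}$ in Figures 18, 32, 44 against the list of face functions and invoking the dualities \eqref{dualities}, \eqref{dualities2}, \eqref{dualities3} to cancel the spurious terms. In types $B$ and $D$ a secondary point of care is that the functions at $x_{in}$ carry square-roots, so the corresponding weights are halved; but since $\alpha_n$ for $Sp_{2n}$ is long while $\alpha_n$ for $Spin_{2n+1}$ is short, and for $Spin_{2n+2}$ the two spin nodes are symmetric, the arithmetic works out precisely so that the edge coordinate still scales by the simple root $\alpha_j$ of $G'$. Once the weight computation is in hand, the proposition is immediate.
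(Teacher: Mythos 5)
Your proposal is essentially the paper's proof: both reduce the claim to evaluating $\sum_\ell B_{0j,\ell}\cdot(\text{weight of }A_\ell)$ via the monomial formula $p^*(X_i)=\prod_j A_j^{B_{ij}}$ together with the weight grading of the cluster functions under the torus action on each flag. The only cosmetic difference is the choice of lift of the shear to $\Conf_4\A_G$ — the paper acts on the third and fourth flags simultaneously so that the computation decomposes over the two triangles via the triple-grading of the functions on $\Conf_3\A_G$, whereas you act only on the fourth flag (the two lifts differ by an element of $H^4$, under which $p^*(X_{0j})$ is invariant, so both are legitimate); just note that the Cartan-matrix identity you ultimately need reads $\alpha_j=2\omega_j-\omega_{j-1}-\omega_{j+1}$ at the generic nodes, not $\omega_j-\tfrac12(\omega_{j-1}+\omega_{j+1})$.
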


Note that because $G'$ is adjoint, the simple roots $\alpha_j$ span the weight lattice.

\begin{proof} The proof reduces to a computation. Suppose we have a configuration of flags
$$(B^-, u^+B^-, B^+, u^-B^+).$$
We can lift it to a configuration in $\Conf_4 \A_G$:
$$(U^-, u^+U^-, U^+, u^-U^+).$$
We can calculate the action of $H$ on $\Conf_4 \_G$ by lifting it to an action of $H$ on $\Conf_4 \A_G$:
$$h: (U^-, A_1, U^+, A_2) \rightarrow (U^- ,A_1, hU^+,hA_2).$$
Then we can easily calculate how $H$ acts on the functions on $\Conf_4 \A_G$. $\Conf_4 \A_G$ is glued from two copies of $\Conf_3 \A_G$, and the action on these two copies is as follows:
$$h: (U^-, A_1, U^+) \rightarrow (U^-, A_1, hU^+).$$
$$h: (U^-, U^+, A_2) \rightarrow (U^-, hU^+, hA_2)=(h^{-1}U^-, U^+, A_2).$$

Now the ring of functions on $\Conf_3 \A_G$ has a triple-grading by dominant weights; there is a grading by dominant weights for each flag. Suppose we have a function 
$$f \in [V_{\lambda} \otimes V_{\mu} \otimes V_{\nu}]^G.$$
Then the map
$$h: (U^-, A_1, U^+) \rightarrow (U^-, A_1, hU^+)$$
acts on this function by $\nu(h)$, i.e. $f(h\cdot x) = \nu(h) f(x)$, and the map
$$h: (U^-, U^+, A_2) \rightarrow (h^{-1}U^-, U^+, A_2)$$
acts on this function by $\lambda(w_0(h^{-1})),$ i.e. $f(h\cdot x) = \lambda(w_0(h^{-1})) f(x)$. Using the formula $p^*(X_i) = \prod_{j \in I}A_j^{B_{ij}},$ we get our result.

\end{proof}

Thus the edge coordinates on the $\X$ space give the usual ``shear'' coordinates. The usual cutting and gluing arguments allow us to conclude the following:

\begin{theorem} The spaces $\A_{G,S}$ and $\X_{G',S}$ together have the structure of a cluster ensemble.
\end{theorem}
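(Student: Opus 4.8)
The plan is to assemble the final theorem from the pieces already established in the excerpt, patching the surface and marked-point combinatorics via ideal triangulations and the gluing/mutation results proved in Sections 3--6. First I would record what is already in hand: for $G$ a classical group, Sections 3--5 endow $\Conf_m \A_G$ with a cluster $\A$-variety structure, together with explicit mutation sequences realizing the $S_3$-symmetries on each triangle and the flip of a quadrilateral; and the first theorem of Section 6 (whose proof is essentially complete in the excerpt) endows $\Conf_m \B_{G'}$ with the corresponding cluster $\X$-variety structure, compatible under the canonical map $p$. The remaining work is to pass from the polygon (disk) case to an arbitrary hyperbolic surface $S$, and to check that the resulting pair of cluster varieties is precisely the cluster ensemble whose positive real points are $\A_{G,S}$ and $\X_{G',S}$.

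The key steps, in order, would be: (1) Fix an ideal triangulation $\tau$ of $S$; lift it to a $\pi_1(S)$-equivariant triangulation of the infinite polygon on the cyclic set $C$, as recalled in Section 2.3. Build the seed $\Sigma_\tau$ for $\Conf$ of equivariant configurations by amalgamating one copy of the $\Conf_3$-seed per triangle along shared edges, exactly as in the $\Conf_m$ construction of Sections 3.1 (and its $B$/$D$ analogues), but now taking the $\pi_1(S)$-quotient so that only finitely many vertices remain. (2) Invoke Theorem 2.6 (Fock--Goncharov) identifying $\A_{G,S}$ (resp.\ $\X_{G',S}$) birationally with the stack of equivariant twisted positive configurations of principal flags (resp.\ flags) on $C$; combined with the reduced-word propositions of Sections 3.2/4.3/5.5, this shows the torus $\A_{\Sigma_\tau}$ is birational to $\A_{G,S}$ and that the cluster coordinates induce the Fock--Goncharov positive atlas. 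The analogous statement for $\X_{\Sigma_\tau}$ and $\X_{G',S}$ follows from the Section 6 computation, in particular the proposition that the edge $\X$-coordinates give the standard shear coordinates and hence the correct gluing parameter $H$. (3) Show that seeds coming from different triangulations are mutation-equivalent: by the standard fact that any two ideal triangulations of $S$ are related by a finite sequence of flips (and the choices of vertex-ordering on each triangle by $S_3$-symmetries), it suffices to apply the flip mutation sequence (the main theorems of Sections 3.5, 4.6, 5.7 for $Sp_{2n}$, $Spin_{2n+1}$, $Spin_{2n+2}$) and the $S_3$ mutation sequences (Sections 3.4, 4.5, 5.6) locally inside the triangulated polygon, then pass to the $\pi_1(S)$-quotient; equivariance guarantees the infinite sequence descends to a finite one on $S$. (4) Check compatibility of the $\A$- and $\X$-sides: the map $p$ of Section 2.6 intertwines the mutations (as recorded in the commuting square there), and on each triangle plus gluing edge it realizes the monomial map $\Conf_m\A_G \to \Conf_m\B_{G'}$ computed in Section 6; assembling over the triangulation yields the cluster ensemble map $\A_{|\Sigma_\tau|} \to \X_{|\Sigma_\tau|}$ globally.

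I expect the main obstacle to be Step (3): verifying that the flip and $S_3$ mutation sequences, which are proved for a single quadrilateral/triangle inside $\Conf_4\A_G$ or $\Conf_3\A_G$, actually glue correctly when a triangle sits inside a larger triangulated surface and shares all three of its edges with neighbors. One must confirm that each mutation in those sequences only touches unfrozen vertices that remain ``interior'' to the relevant triangle or quadrilateral once the ambient frozen vertices are unfrozen during amalgamation — i.e.\ that the amalgamation procedure of Section 3.1 is compatible with the mutation sequences, so that performing a flip in one quadrilateral of $S$ does not disturb the seeds attached to faraway triangles. This is morally clear from the locality of mutations and of the amalgamation gluing rule $b_{i''j''}=b_{ij}+b_{i'j'}$, and the diagrams (Figures 7, 22, 35, 45) make it visually evident, but writing it cleanly requires tracking the frozen/unfrozen bookkeeping through the $\pi_1(S)$-quotient. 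A secondary, more bookkeeping-level obstacle is the sign subtleties for $D_n$ discussed in Section 5.2: one should note that signs only affect the identification of specific cluster variables with tensor-product invariants, not the abstract cluster structure or the positive atlas, so they do not obstruct the ensemble statement itself. Once these are dispatched, the theorem follows by combining Theorem 2.6, the per-triangle results, and the flip/$S_3$ connectivity with the standard disk-to-surface cutting-and-gluing argument invoked at the end of Section 6.
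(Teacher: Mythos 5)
Your proposal is correct and follows essentially the same route as the paper: the per-triangle cluster structures of Sections 3--5, amalgamation along an ideal triangulation, the flip and $S_3$ mutation sequences to connect triangulations, the shear-coordinate identification of the edge $\X$-variables with the gluing torus $H$, and the map $p$ to assemble the ensemble — the paper compresses your steps (1)--(4) into the phrase ``the usual cutting and gluing arguments.'' Your discussion of the locality of the flip sequences under amalgamation fills in detail the paper leaves implicit, but it is not a different argument.
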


\section{Applications}

Let $(\A,\X)$ form a cluster ensemble, attached to the seed data $\Sigma = (I,I_0,B,d)$. Then the results of \cite{FG2}, \cite{FG3} and \cite{FG5} give a deformation quantization $\X_q$ of the space $\X$, as well as representations of this quantum algebra on a Hilbert space. Moreover, on this Hilbert space, we get a natural projective unitary action of the {\emph cluster modular group} of $(\A,\X)$.

The results of this paper can be interpreted as saying that, first of all, $(\A_{G,S}, \X_{G',S})$ is a cluster ensemble whenever $G$ is a classical group; and second, that in these cases, the cluster modular group of $(\A_{G,S}, \X_{G',S})$ contains the mapping class group of $S$. Thus, we get a projective unitary representation of the mapping class group of $S$ coming from the higher Teichmuller spaces $\A_{G,S}$ and $\X_{G',S}$. This is the kind of data one expects to get from a modular functor. It remains an open question whether these projective unitary representations fit together to give a modular functor. This is was conjectured by Teschner in \cite{T}.

In this section, we sketch how to construct the space $\X_q$ and the Hilbert space it acts on.

Consider the seed $\Sigma = (I,I_0,B,d)$. The $B$-matrix encodes the Poisson structure on $\X$. Let $$\widehat{\varepsilon_{ij}}=b_{ij}d_j.$$
There is a canonical Poisson structure that is given in the torus chart $\X_{\Sigma}$ by
\[
\{X_i,X_j\} = \widehat{\varepsilon_{ij}}_{ij}X_iX_j.
\]
This at first glance seems to depend on which torus chart $\X_{\Sigma}$ we are using, but it turns out to be invariant under mutations.

In order to quantize the space $\X$, we first construct a $q$-deformation of each $\X_{\Sigma}$, which we will call $\X_{\Sigma,q}$. $\X_{\Sigma,q}$ will be a quantum torus. It is given by generators $X_i, i \in I$ and commutation relations

\begin{equation} 
q^{-\widehat \varepsilon_{ij}} X_i X_j = 
q^{-\widehat \varepsilon_{ji}} X_jX_i
\end{equation}

Thus each seed gives a quantum torus. We now need to glue together these quantum tori, i.e., find rational maps between the non-commutative algebras $\X_{\Sigma,q}$ that quantize the maps between the algebras $\X_{\Sigma}$.

Recall that if $\Sigma'$ is obtained from $\Sigma$ by mutation at $k \in I \setminus I_0$, we have a birational map $\mu_k: \X_\Sigma \to \X_{\Sigma'}$.  It is defined by
\begin{align}
\mu_k^*(X'_i) = \begin{cases}
X_i(1+X_k^{-1})^{-b_{ik}} & \textrm{ if } b_{ik}>0, i \neq k,  \\
X_i(1+X_k)^{-b_{ik}} & \textrm{ if } b_{ik}<0, i \neq k,  \\
X_k^{-1} & i = k.
\end{cases}
\end{align}

Now set $$q_k=q^{1/d_k}.$$ We define the quantum mutation $\mu^q_k: \X_{\Sigma,q} \to \X_{\Sigma',q}$ by

\begin{align}
\mu^{q*}_k(X'_i) = \begin{cases}
X_i \left((1+q^{-1}_kX_k^{-1})(1+q_k^{-3}X_k^{-1}) \ldots (1+q_k^{1-2|b_{ik}|}X_k^{-1})\right)^{-1} & \textrm{ if } b_{ik}>0, i \neq k,  \\
X_i (1+q_kX_k)(1+q^3_kX_k) \ldots (1+q_k^{2|b_{ik}|-1}X_k) & \textrm{ if } b_{ik}<0, i \neq k,  \\
X_k^{-1} & i = k.
\end{cases}
\end{align}

Setting $q=1$ recovers the space $\X$. The fact that $\mu^{q*}$ gives a map of algebras follows from decomposing it as a quantum torus isomorphism and conjugation by the quantum dilogarithm, \cite{FG2}.

Now consider the special case of the space  $\X_{G',S}$. Suppose that $S$ has $n$ boundary components. Then the monodromy around each boundary component is an element of the Borel subgroup $B' \subset G'$. There is a natural projection $B' \rightarrow H$. Therefore, we get a map 
$$\pi: \X_{G',S} \rightarrow H^n.$$
It turns out the the functions on $H^n$ are precisely the center of the algebra of functions on $\X_{G',S}$, i.e., they Poisson-commute with everything. The fibers of $\pi$ are the symplectic leaves of $\X_{G',S}$.

Every choice of a point in $c \in H^n$ prescribes values for the center of the algebra of functions on $\X_{G',S,q}$. The remaining functions essentially form a Heisenberg algebra, and hence have a unique unitary representation on a Hilbert space $\mathcal{H}_c$. This Hilbert space gives a projective unitary representation of the mapping class group of $S$.

\section{Appendix}

In this section, we include the formulas for the mutation sequence for the flip of $\Conf_4 \A_{G}$ when $i \leq 0$.

\subsection{$Sp_{2n}$}

The mutation sequence for the flip of $\Conf_4 \A_{Sp_{2n}}$ when $i \leq 0$:

\begin{itemize}
\item When $|i|+j < n$ and $|i| < j$, the function attached to $x_{ij}$ mutates in three stages, consisting of $n-|i|-j, |i|,$ and $j-|i|$ mutations, respectively:
\begin{enumerate}
\item $$\tcfl{j}{|i|}{N-|i|-j} = \qcfs{j}{|i|}{N-|i|-j}{N} \rightarrow \qcfs{j+1}{|i|+1}{N-|i|-j-1}{N-1} \rightarrow $$
$$\qcfs{j+2}{|i|+2}{N-|i|-j-2}{N-2} \rightarrow $$
$$\dots \rightarrow \qcfs{n-|i|}{n-j}{n}{n+|i|+j}$$
\item $$\qcfs{n-|i|}{n-j}{n}{n+|i|+j}=\qcfs{n-|i|}{n-j, N}{n}{n+|i|+j} \rightarrow $$
$$\qcfs{n-|i|+1}{n-j+1, N-1}{n}{n+|i|+j-1} \rightarrow$$
$$\qcfs{n-|i|+2}{n-j+2, N-2}{n}{n+|i|+j-2} \rightarrow $$
$$\dots \rightarrow \qcfs{n}{n-j+|i|, N-|i|}{n}{n+j}$$
\item $$\qcfs{n}{n-j+|i|, N-|i|}{n}{n+j}=\qcfs{n}{n-j+|i|, N-|i|}{n}{0, n+j} \rightarrow $$
$$\qcfs{n}{n-j+|i|+1, N-|i|-1}{n}{1, n+j-1} \rightarrow $$
$$\qcfs{n}{n-j+|i|+2, N-|i|-2}{n}{2, n+j-2} \rightarrow $$
$$\dots \rightarrow [\qcfs{n}{n, N-j}{n}{j-|i|, n+|i|}] \textrm{ } \tcfu{n}{N-j}{j-|i|, n+|i|}$$
\end{enumerate}
Here, if additionally $i=0$, we remove an additional factor of $\dur{n}{n}$ from the last term to get $\dlr{N-j}{j}$ after the last mutation.

\item When $|i|+j \geq n$ and $|i| < j$, the function attached to $x_{ij}$ mutates  in two stages, consisting of $n-j,$ and $j-|i|$ mutations, respectively:
\begin{enumerate}
\item $$\tcfl{j}{|i|, N+n-|i|-j}{n}=\qcf{j}{|i|, N+n-|i|-j}{n}{N} \rightarrow $$
$$\qcfs{j+1}{|i|+1, N+n-|i|-j-1}{n}{N-1} \rightarrow $$
$$\qcfs{j+2}{|i|+2, N+n-|i|-j-2}{n}{N-2} \rightarrow$$
$$\dots  \rightarrow \qcfs{n}{n+|i|-j, N-|i|}{n}{n+j}$$
\item $$\qcfs{n}{n+|i|-j, N-|i|}{n}{n+j}=\qcfs{n}{n+|i|-j, N-|i|}{n}{0, n+j} \rightarrow $$
$$\qcfs{n}{n+|i|-j+1, N-|i|-1}{n}{1, n+j-1} \rightarrow $$
$$\qcfs{n}{n+|i|-j+2, N-|i|-2}{n}{2, n+j-2} \rightarrow$$
$$\dots  \rightarrow [\qcfs{n}{n, N-j}{n}{j-|i|, n+|i|}]  \textrm{ } \qcfs{n}{N-j}{0}{j-|i|, n+|i|} $$
\end{enumerate}

\item When $|i|+j < n$ and $|i| \geq j$, the function attached to $x_{ij}$ mutates in two stages, consisting of $n-|i|-j,$ and $j$ mutations, respectively:
\begin{enumerate}
\item $$\tcfl{j}{|i|}{N-|i|-j} = \qcfs{j}{|i|}{N-|i|-j}{N} \rightarrow $$
$$\qcfs{j+1}{|i|+1}{N-|i|-j-1}{N-1} \rightarrow $$
$$\qcfs{j+2}{|i|+2}{N-|i|-j-2}{N-2} \rightarrow $$
$$\dots \rightarrow \qcfs{n-|i|}{n-j}{n}{n+|i|+j}$$
\item $$\qcfs{n-|i|}{n-j}{n}{n+|i|+j}=\qcfs{n-|i|}{n-j, N}{n}{n+|i|+j} \rightarrow $$
$$\qcfs{n-|i|+1}{n-j+1, N-1}{n}{n+|i|+j-1} \rightarrow$$
$$\qcfs{n-|i|+2}{n-j+2, N-2}{n}{n+|i|+j-2} \rightarrow $$
$$\dots \rightarrow [\qcfs{n-|i|+j}{n, N-j}{n}{n+|i|}]  \textrm{ } \tcfu{n-|i|+j}{N-j}{n+|i|}$$
\end{enumerate}

\item When $|i|+j \geq n$ and $|i| \geq j$, the function attached to $x_{ij}$ mutates in one stage consisting of $n-|i|$ mutations:
$$\tcfl{j}{|i|, N+n-|i|-j}{n}=\qcfs{j}{|i|, N+n-|i|-j}{n}{N} \rightarrow $$
$$\qcfs{j+1}{|i|+1, N+n-|i|-j-1}{n}{N-1} \rightarrow $$
$$\qcfs{j+2}{|i|+2, N+n-|i|-j-2}{n}{N-2} \rightarrow$$
$$\dots  \rightarrow [\qcfs{n-|i|+j}{n, N-j}{n}{n+|i|}] \textrm{ } \tcfu{n-|i|+j}{N-j}{n+|i|}$$

\end{itemize}

\subsection{$Spin_{2n+1}$}

The mutation sequence for the flip of $\Conf_4 \A_{Spin_{2n+1}}$ when $i \leq 0$:

\begin{itemize}
\item When $|i|+j < n$ and $|i| < j$, the function attached to $x_{ij}$ mutates in three stages, consisting of $n-|i|-j, |i|,$ and $j-|i|$ mutations, respectively:
\begin{enumerate}
\item $$\tcfl{j}{|i|}{N-|i|-j} = \qcfs{j}{|i|}{N-|i|-j}{N} \rightarrow \qcfs{j+1}{|i|+1}{N-|i|-j-1}{N-1} \rightarrow $$
$$\qcfs{j+2}{|i|+2}{N-|i|-j-2}{N-2} \rightarrow $$
$$\dots \rightarrow \qcfs{n-|i|}{n-j}{n+1}{n+1+|i|+j}$$
\item $$\qcfs{n-|i|}{n-j}{n+1}{n+1+|i|+j}=\qcfs{n-|i|}{n-j, N}{n+1}{n+1+|i|+j} \rightarrow $$
$$\qcfs{n-|i|+1}{n-j+1, N-1}{n+1}{n+|i|+j} \rightarrow$$
$$\qcfs{n-|i|+2}{n-j+2, N-2}{n+1}{n+|i|+j-1} \rightarrow $$
$$\dots \rightarrow \qcfs{n}{n-j+|i|, N-|i|}{n+1}{n+1+j}$$
\item $$\qcfs{n}{n-j+|i|, N-|i|}{n+1}{n+1+j}=\qcfs{n}{n-j+|i|, N-|i|}{n+1}{0, n+1+j} \rightarrow $$
$$\qcfs{n}{n-j+|i|+1, N-|i|-1}{n+1}{1, n+j} \rightarrow $$
$$\qcfs{n}{n-j+|i|+2, N-|i|-2}{n+1}{2, n+j-1} \rightarrow $$
$$\dots \rightarrow [\qcfs{n}{n, N-j}{n+1}{j-|i|, n+1+|i|}] \textrm{ } \tcfu{n}{N-j}{j-|i|, n+1+|i|}$$
\end{enumerate}
Here, if additionally $i=0$, we remove an additional factor of $\dur{n}{n+1}$ from the last term to get $\dlr{N-j}{j}$ after the last mutation.

\item When $|i|+j \geq n$, $|i| < j \neq n$, the function attached to $x_{ij}$ mutates  in two stages, consisting of $n-j,$ and $j-|i|$ mutations, respectively:
\begin{enumerate}
\item $$\tcfl{j}{|i|, N+n-|i|-j}{n+1}=\qcf{j}{|i|, N+n-|i|-j}{n+1}{N} \rightarrow $$
$$\qcfs{j+1}{|i|+1, N+n-|i|-j-1}{n+1}{N-1} \rightarrow $$
$$\qcfs{j+2}{|i|+2, N+n-|i|-j-2}{n+1}{N-2} \rightarrow$$
$$\dots  \rightarrow \qcfs{n}{n+|i|-j, N-|i|}{n+1}{n+1+j}$$
\item $$\qcfs{n}{n+|i|-j, N-|i|}{n+1}{n+1+j}=\qcfs{n}{n+|i|-j, N-|i|}{n+1}{0, n+1+j} \rightarrow $$
$$\qcfs{n}{n+|i|-j+1, N-|i|-1}{n+1}{1, n+j} \rightarrow $$
$$\qcfs{n}{n+|i|-j+2, N-|i|-2}{n+1}{2, n+j-1} \rightarrow$$
$$\dots  \rightarrow [\qcfs{n}{n, N-j}{n+1}{j-|i|, n+1+|i|}]  \textrm{ } \qcfs{n}{N-j}{0}{j-|i|, n+1+|i|} $$
\end{enumerate}

\item In the most interesting case, we have that if $j=n$, then mutations happen in one stage consisting of $n-|i|$ mutations:
$$\sqrt{\qcfs{n}{|i|, N-|i|}{n+1}{0}} [\sqrt{\qcfs{n}{|i|, N-|i|}{n+1}{0, N}}] \rightarrow $$
$$\sqrt{\qcfs{n}{|i|+1, N-|i|-1}{n+1}{1, N-1}} \rightarrow $$
$$\sqrt{\qcfs{n}{|i|+2, N-|i|-2}{n+1}{2, N-2}} \rightarrow$$
$$\dots  \rightarrow [\sqrt{\qcfs{n}{n, n+1}{n+1}{n-|i|, N-n+|i|}}]  \textrm{ } \sqrt{\tcfu{n}{n+1}{n-|i|, N-n+|i|}}$$

\item When $|i|+j < n$ and $|i| \geq j$, the function attached to $x_{ij}$ mutates in two stages, consisting of $n-|i|-j,$ and $j$ mutations, respectively:
\begin{enumerate}
\item $$\tcfl{j}{|i|}{N-|i|-j} = \qcfs{j}{|i|}{N-|i|-j}{N} \rightarrow $$
$$\qcfs{j+1}{|i|+1}{N-|i|-j-1}{N-1} \rightarrow $$
$$\qcfs{j+2}{|i|+2}{N-|i|-j-2}{N-2} \rightarrow $$
$$\dots \rightarrow \qcfs{n-|i|}{n-j}{n+1}{n+1+|i|+j}$$
\item $$\qcfs{n-|i|}{n-j}{n+1}{n+1+|i|+j}=\qcfs{n-|i|}{n-j, N}{n+1}{n+1+|i|+j} \rightarrow $$
$$\qcfs{n-|i|+1}{n-j+1, N-1}{n+1}{n+|i|+j} \rightarrow$$
$$\qcfs{n-|i|+2}{n-j+2, N-2}{n+1}{n+|i|+j-1} \rightarrow $$
$$\dots \rightarrow [\qcfs{n-|i|+j}{n, N-j}{n+1}{n+1+|i|}]  \textrm{ } \tcfu{n-|i|+j}{N-j}{n+1+|i|}$$
\end{enumerate}

\item When $|i|+j \geq n$ and $|i| \geq j$, the function attached to $x_{ij}$ mutates in one stage consisting of $n-|i|$ mutations:
$$\tcfl{j}{|i|, N+n-|i|-j}{n+1}=\qcfs{j}{|i|, N+n-|i|-j}{n+1}{N} \rightarrow $$
$$\qcfs{j+1}{|i|+1, N+n-|i|-j-1}{n+1}{N-1} \rightarrow $$
$$\qcfs{j+2}{|i|+2, N+n-|i|-j-2}{n+1}{N-2} \rightarrow$$
$$\dots  \rightarrow [\qcfs{n-|i|+j}{n, N-j}{n+1}{n+1+|i|}] \textrm{ } \tcfu{n-|i|+j}{N-j}{n+1+|i|}$$

\end{itemize}

\subsection{$Spin_{2n+2}$}

The mutation sequence for the flip of $\Conf_4 \A_{Spin_{2n+2}}$ when $i \leq 0$:

\begin{itemize}
\item When $|i|+j < n$ and $|i| < j$, the function attached to $x_{ij}$ mutates in three stages, consisting of $n-|i|-j, |i|,$ and $j-|i|$ mutations, respectively:
\begin{enumerate}
\item $$\tcfl{j}{|i|}{N-|i|-j} = \qcfs{j}{|i|}{N-|i|-j}{N} \rightarrow \qcfs{j+1}{|i|+1}{N-|i|-j-1}{N-1} \rightarrow $$
$$\qcfs{j+2}{|i|+2}{N-|i|-j-2}{N-2} \rightarrow $$
$$\dots \rightarrow \qcfs{n-|i|}{n-j}{n+2}{n+2+|i|+j}$$
\item $$\qcfs{n-|i|}{n-j}{n+2}{n+2+|i|+j}=\qcfs{n-|i|}{n-j, N}{n+2}{n+2+|i|+j} \rightarrow $$
$$\qcfs{n-|i|+1}{n-j+1, N-1}{n+2}{n+1+|i|+j} \rightarrow$$
$$\qcfs{n-|i|+2}{n-j+2, N-2}{n+2}{n+|i|+j} \rightarrow $$
$$\dots \rightarrow \qcfs{n}{n-j+|i|, N-|i|}{n+1}{n+1+j}$$
\item $$\qcfs{n}{n-j+|i|, N-|i|}{n+2}{n+2+j}=\qcfs{n}{n-j+|i|, N-|i|}{n+2}{0, n+2+j} \rightarrow $$
$$\qcfs{n}{n-j+|i|+1, N-|i|-1}{n+2}{1, n+1+j} \rightarrow $$
$$\qcfs{n}{n-j+|i|+2, N-|i|-2}{n+2}{2, n+j} \rightarrow $$
$$\dots \rightarrow [\qcfs{n}{n, N-j}{n+2}{j-|i|, n+2+|i|}] \textrm{ } \tcfu{n}{N-j}{j-|i|, n+2+|i|}$$
\end{enumerate}
Here, if additionally $i=0$, we remove an additional factor of $\dur{n}{n+2}$ from the last term to get $\dlr{N-j}{j}$ after the last mutation.

\item When $|i|+j \geq n$, $|i| < j \neq n$, the function attached to $x_{ij}$ mutates  in two stages, consisting of $n-j,$ and $j-|i|$ mutations, respectively:
\begin{enumerate}
\item $$\tcfl{j}{|i|, N+n-|i|-j}{n+2}=\qcf{j}{|i|, N+n-|i|-j}{n+2}{N} \rightarrow $$
$$\qcfs{j+1}{|i|+1, N+n-|i|-j-1}{n+2}{N-1} \rightarrow $$
$$\qcfs{j+2}{|i|+2, N+n-|i|-j-2}{n+2}{N-2} \rightarrow$$
$$\dots  \rightarrow \qcfs{n}{n+|i|-j, N-|i|}{n+2}{n+2+j}$$
\item $$\qcfs{n}{n+|i|-j, N-|i|}{n+2}{n+2+j}=\qcfs{n}{n+|i|-j, N-|i|}{n+2}{0, n+2+j} \rightarrow $$
$$\qcfs{n}{n+|i|-j+1, N-|i|-1}{n+2}{1, n+1+j} \rightarrow $$
$$\qcfs{n}{n+|i|-j+2, N-|i|-2}{n+2}{2, n+j} \rightarrow$$
$$\dots  \rightarrow [\qcfs{n}{n, N-j}{n+2}{j-|i|, n+2+|i|}]  \textrm{ } \qcfs{n}{N-j}{0}{j-|i|, n+2+|i|} $$
\end{enumerate}

\item In the most interesting case, we have that if $j=n$ or $j=n^*$, the mutations happen in one stage consisting of $n-|i|$ mutations.
If $j=n$ and $i$ odd, we have:
$$\sqrt{\qcfs{n+1}{|i|, N-|i|}{n+1^*}{0}} [\sqrt{\qcfs{n+1}{|i|, N-|i|}{n+1^*}{0, N}}] \rightarrow $$
$$\sqrt{\qcfs{n+1^*}{|i|+1, N-|i|-1}{n+1}{1, N-1}} \rightarrow $$
$$\sqrt{\qcfs{n+1}{|i|+2, N-|i|-2}{n+1^*}{2, N-2}} \rightarrow$$
$$\dots  \rightarrow [\sqrt{\qcfs{n+1}{n, n+2}{n+1^*}{n-|i|, N-n+|i|}}]  \textrm{ } \sqrt{\tcfu{n+1}{n+1}{n-|i|, N-n+|i|}}$$
Note in the last step we use
$$\sqrt{\qcfs{n+1}{n, n+2}{n+1^*}{a, N-a}}=\sqrt{\qcfs{n+1}{n+1, n+1^*}{n+1^*}{a, N-a}}$$
If $j=n$, and $i$ is even, we have:
$$\sqrt{\qcfs{n+1}{|i|, N-|i|}{n+1}{0}} [\sqrt{\qcfs{n+1}{|i|, N-|i|}{n+1}{0, N}}] \rightarrow $$
$$\sqrt{\qcfs{n+1^*}{|i|+1, N-|i|-1}{n+1^*}{1, N-1}} \rightarrow $$
$$\sqrt{\qcfs{n+1}{|i|+2, N-|i|-2}{n+1}{2, N-2}} \rightarrow$$
$$\dots  \rightarrow [\sqrt{\qcfs{n+1^*}{n, n+2}{n+1^*}{n-|i|, N-n+|i|}}]  \textrm{ } \sqrt{\tcfu{n+1^*}{n+1}{n-|i|, N-n+|i|}}$$
Note in the last step we use
$$\sqrt{\qcfs{n+1^*}{n, n+2}{n+1^*}{a, N-a}}=\sqrt{\qcfs{n+1^*}{n+1, n+1^*}{n+1^*}{a, N-a}}.$$
To obtain the formulas in the case where $j=n^*$, switch all occurences of $n+1$ and $n+1^*$.

\item When $|i|+j < n$ and $|i| \geq j$, the function attached to $x_{ij}$ mutates in two stages, consisting of $n-|i|-j,$ and $j$ mutations, respectively:
\begin{enumerate}
\item $$\tcfl{j}{|i|}{N-|i|-j} = \qcfs{j}{|i|}{N-|i|-j}{N} \rightarrow $$
$$\qcfs{j+1}{|i|+1}{N-|i|-j-1}{N-1} \rightarrow $$
$$\qcfs{j+2}{|i|+2}{N-|i|-j-2}{N-2} \rightarrow $$
$$\dots \rightarrow \qcfs{n-|i|}{n-j}{n+2}{n+2+|i|+j}$$
\item $$\qcfs{n-|i|}{n-j}{n+2}{n+2+|i|+j}=\qcfs{n-|i|}{n-j, N}{n+2}{n+2+|i|+j} \rightarrow $$
$$\qcfs{n-|i|+1}{n-j+1, N-1}{n+2}{n+1+|i|+j} \rightarrow$$
$$\qcfs{n-|i|+2}{n-j+2, N-2}{n+2}{n+|i|+j} \rightarrow $$
$$\dots \rightarrow [\qcfs{n-|i|+j}{n, N-j}{n+2}{n+2+|i|}]  \textrm{ } \tcfu{n-|i|+j}{N-j}{n+2+|i|}$$
\end{enumerate}

\item When $|i|+j \geq n$ and $|i| \geq j$, the function attached to $x_{ij}$ mutates in one stage consisting of $n-|i|$ mutations:
$$\tcfl{j}{|i|, N+n-|i|-j}{n+2}=\qcfs{j}{|i|, N+n-|i|-j}{n+2}{N} \rightarrow $$
$$\qcfs{j+1}{|i|+1, N+n-|i|-j-1}{n+2}{N-1} \rightarrow $$
$$\qcfs{j+2}{|i|+2, N+n-|i|-j-2}{n+2}{N-2} \rightarrow$$
$$\dots  \rightarrow [\qcfs{n-|i|+j}{n, N-j}{n+2}{n+2+|i|}] \textrm{ } \tcfu{n-|i|+j}{N-j}{n+2+|i|}$$

\end{itemize}

\end{document}